\documentclass[11pt, twoside,leqno]{article}

\usepackage{amsmath}
\usepackage{amssymb}
\usepackage{titletoc}
\usepackage{mathrsfs}
\usepackage{amsthm}

\usepackage{indentfirst}
\usepackage{color}
\usepackage{txfonts}

\usepackage{graphicx}

\textwidth=15truecm
\textheight=21.26truecm
\oddsidemargin 0.46cm
\evensidemargin 0.46cm

%

%
%
%
%
%
%
%

\allowdisplaybreaks

\pagestyle{myheadings}
\markboth{\footnotesize\rm\sc Jun Cao, Yu Liu, Dachun Yang and Chao Zhang}
{\footnotesize\rm\sc Gaussian Estimates for Heat Kernels of Higher Order Schr\"odinger Operators}

\def\red{\color{red}}
\def\blue{\color{blue}}

\parindent=13pt

\def\ls{\lesssim}

\def\fz{\infty}

\def\r{\right}
\def\lf{\left}

\def\supp{{\mathop\mathrm{\,supp\,}}}

\def\rr{{\mathbb R}}

\def\rn{{{\rr}^n}}
\def\zz{{\mathbb Z}}
\def\nn{{\mathbb N}}
\def\cc{{\mathbb C}}

\newcommand{\wz}{\widetilde}

\def\cl{{\mathcal L}}
\def\az{\alpha}
\def\lz{\lambda}

\def\dz {\delta}

\def\bz{\beta}

\def\gz{{\gamma}}

\def\tz{\theta}
\def\sz{\sigma}

\def\wz{\widetilde}

\def\ls{\lesssim}

\def\pat{\partial}

\def\hs{\hspace{0.3cm}}

\def\dsum{\displaystyle\sum}
\def\dint{\displaystyle\int}
\def\dfrac{\displaystyle\frac}
\def\dsup{\displaystyle\sup}

\newtheorem{theorem}{Theorem}[section]
\newtheorem{lemma}[theorem]{Lemma}
\newtheorem{corollary}[theorem]{Corollary}
\newtheorem{proposition}[theorem]{Proposition}
\theoremstyle{definition}
\newtheorem{remark}[theorem]{Remark}

\newtheorem{definition}[theorem]{Definition}

\numberwithin{equation}{section}
\def\supp{{\mathop\mathrm{\,supp\,}}}

\def\loc{{\mathop\mathrm{loc\,}}}

\numberwithin{equation}{section}

\begin{document}

\title{\bf\Large
Gaussian Estimates for Heat Kernels of Higher Order Schr\"odinger Operators with
Potentials in Generalized Schechter Classes
\footnotetext{\hspace{-0.35cm}
2020 {\it Mathematics Subject Classification}. Primary: 35K08;
Secondary: 35J10, 35J30, 47G40.
\endgraf {\it Key words and phrases}.
heat kernel, Schr\"odinger operator, higher order differential operator, Schechter class,
perturbation, Bessel potential.
\endgraf
This project is supported by the National Natural Science Foundation of China
(Grant Nos. 11771446, 11871254, 11671031, 11971058, 11761131002, 11671185 and 11971431),
the Natural Science Foundation of Zhejiang Province (Grant No. LY18A010006)
and the first Class Discipline of Zhejiang-A (Zhejiang Gongshang University-Statistics).}}
\author{Jun Cao, Yu Liu, Dachun Yang\footnote{Corresponding author,
E-mail: \texttt{dcyang@bnu.edu.cn}/{\red December 08, 2020}/Final version.}\ \ and Chao Zhang}
\date{}
\maketitle

\vspace{-0.8cm}

\begin{center}
\begin{minipage}{13cm}
{\small {\bf Abstract.}
Let $m\in\nn$,
$P(D):=\sum_{|\az|=2m}(-1)^m a_\az
D^\az$ be a $2m$-order homogeneous elliptic operator with real
constant coefficients on $\rn$, and $V$ a measurable function
on $\mathbb{R}^n$. In this article, the authors introduce a new
generalized Schechter class
concerning $V$ and show that the higher order Schr\"odinger operator
$\mathcal{L}:=P(D)+V$ possesses a heat kernel that satisfies the Gaussian upper bound and
the H\"older regularity when $V$ belongs to this new class.
The Davies--Gaffney estimates for the associated semigroup and
their local versions are also given. These results pave the way for many further studies
on the analysis of $\mathcal{L}$.}
\end{minipage}
\end{center}

\tableofcontents

\section{Introduction\label{s1}}
The analysis of the Schr\"odinger operator is an important
topic in various fields of mathematics and physics
(see, for instance, \cite{Kato95,ReedSimon75-II,Simon84,Sim00}).
Many aspects of this analysis focus on the estimates of the corresponding
heat kernel, because the latter encodes plenty of information related to the
operator, such as the structure of the parabolic equation and the geometry
of the underlying space (see, for instance, \cite{BGK12,Dav90,Gri09,GHL14,LiYau86}) and hence
has wide applications; see, for instance, \cite{Dav90,Ku99,Ouha05}
for the spectral properties of differential operators,
\cite{CaChYaYa14,DeDiYa18,Dev19,JiLi20,LiDo10,Soya10}
for the boundedness of some singular integral operators
such as Riesz transforms, and
 \cite{DuYaZh14,DzZi99,DzZi04,HLMMY11,HuLiLi18,JiaYaZh09,WuYa16,YaZh11}
for the function spaces associated with the
Schr\"odinger operator.

In what follows, we use $L_\loc^1(\rn)$ to denote the set of all locally
integrable functions on $\rn$.
To estimate the heat kernel of the Schr\"odinger operator, various
conditions on the potential are proposed. Let $-\Delta+V$ be
the time-independent Schr\"odinger operator on the Euclidean space
$\rn$ with $\Delta:=\sum_{j=1}^n\frac{\pat^2}{\pat x_j^2}$ being the Laplace operator
and the potential $V$ a measurable function on $\rn$. If $V\in L_\loc^1(\rn)$ is
nonnegative, then, by the Feynman--Kac formula
(see, for instance, \cite{Simon84}), it is well known that $-\Delta+V$ possesses a heat
kernel $p_t$ on $(0,\fz)\times \rn\times\rn$ that satisfies the
Gaussian upper bound: there exist some positive constants $C$ and $c_1$ such that,
for any $t\in (0,\fz)$ and $x$, $y\in\rn$,
\begin{align}\label{eqn-2GUB}
0\le p_t(x,y)\le  \frac{C}{t^{n/2}}\exp\lf\{-c_1\frac{|x-y|^2}{t}\r\}.
\end{align}
Note that the condition $0\le V\in L_\loc^1(\rn)$
is satisfied when $V(x):=|x|^a$ for any $x\in \rn\setminus \{\vec 0_n\}$ with
$a\in (-n,\fz)$. Here and thereafter, $\vec 0_n$ denotes the origin of $\rn$.
If $V$ has a negative
part, then the Gaussian upper bound  \eqref{eqn-2GUB} still holds true as long
as $V\in L^p(\rn)$ with $p\in (n/2,\fz)$ (see \cite{Zhang03}). It is known that the Lebesgue
space $L^p(\rn)$, $p\in (n/2,\fz)$, is contained in a larger Kato class
$K_2(\rn)$ (see \cite{Simon84} and \eqref{eqn-katoc} below for the
definition of $K_2(\rn)$). In the same article, Simon showed that
the heat kernel of $-\Delta+V$ satisfies the local Gaussian upper bound:
there exist positive constants $C$, $c_1$, and  $w$ such that, for
any $t\in (0,\fz)$ and $x$, $y\in\rn$,
\begin{align}\label{eqn-2lGUB}
0\le p_t(x,y)\le \frac{C}{t^{n/2}}\exp\lf\{wt-c_1\frac{|x-y|^2}{t}\r\},
\end{align}
if $V$ is in $K_2(\rn)$ (see also \cite{LiSe98}).
The positive constant $w$ in \eqref{eqn-2lGUB} can be zero if $V$
satisfies some additional conditions (see \cite{BoDzSz19,DaSi91,Zhang03}).
Recall that, if $V(x):=-|x|^a$ for any $x\in \rn\setminus \{\vec 0_n\}$,
then $V\in K_2(\rn)$ if and only if $a\in (-2,0)$
(see \cite{Simon84} or Remark \ref{ex-GSC} below).
For the critical case $a=-2$, it is known that
the Gaussian upper bounds, \eqref{eqn-2GUB} and \eqref{eqn-2lGUB},
may break down (see \cite{IsKaOu17,MoTe05}).

In what follows, let $\nn:=\{1,2,\ldots\}$, $\zz_+:=\nn\cup\{0\}$,
$D:=(\frac{\pat}{\pat x_1},\cdots,\frac{\pat}{\pat x_n})$, and,
for any $\az:=(\az_1,\cdots,\az_n)\in\zz_+^n:=(\zz_n)^n$,
$|\az|:=\az_1+\cdots+\az_n$ and
$D^\az:=\frac{\pat^{\az_1}}{\pat x_1^{\az_1}}\cdots\frac{\pat^{\az_n}}{\pat x_n^{\az_n}}$.
Let $\cl:=P(D)+V$ be a higher order Schr\"odinger operator on $\rn$
with $P(D)$ being some higher order elliptic operator. The
techniques that can be used to estimate the heat kernel of $\cl$ are limited,
due to the failure of many good properties of the Laplacian
$\Delta$ when the order of the unperturbed operator becomes higher. For instance, it is
known that the biharmonic heat kernel of the operator $\Delta^2$ may
change sign infinitely many times (see \cite{FeGaGr08}). This
indicates that the associated semigroup no longer preserves
the positivity and hence we lose the effect of the Feynman--Kac formula,
which is a fundamental tool in the analysis of the second
order Schr\"odinger operator (see \cite{BaGa13,Dav97} for some excellent
expositions on the state of art of higher-order elliptic
operators and their perturbations).

Let $m\in\nn$ satisfy $n<2m$. If the unperturbed operator
$$P(D)=\sum_{|\az|=m=|\bz|}(-1)^m D^\az(a_{\az,\bz}(x)D^\bz)$$ is a
homogeneous uniformly elliptic operator of order $2m$ on $\rn$ and
the potential $V\in L_\loc^1(\rn)$ is nonnegative locally integrable on $\rn$,
Barbatis and Davies \cite{BaDa96} showed that $\cl$ possesses a heat kernel
$p_t$ on $(0,\fz)\times \rn\times\rn$ that satisfies the following
higher order Gaussian upper bound: there exist some positive constants $C$ and $c_2$
such that, for any $t\in (0,\fz)$ and $x$, $y\in\rn$,
\begin{align}\label{eqn-mmGUB}
\lf|p_t(x,y)\r|\le \frac{C}{t^{n/(2m)}}\exp\lf\{-c_2\frac{|x-y|^{2m/(2m-1)}}{t^{1/(2m-1)}}\r\}.
\end{align}
The key idea used to prove \eqref{eqn-mmGUB} in \cite{BaDa96} is
an exponential perturbation argument. To be precise, let $Q(f,f)$ be the
quadratic form associated with $\cl$ for some suitable function
$f$ in its domain. Barbatis and Davies considered the exponential
perturbation $$Q_{\lz,\phi}(f,f):=Q\lf(e^{\lz\phi} f,e^{-\lz\phi} f\r)$$
of $Q(f,f)$ for some $\lz\in\rr$ and $\phi$ being some smooth
function on $\rn$. They proved an
$L^2(\rn)$ form perturbation estimate of the type that,
for any $\epsilon\in (0,\fz)$, there exists a positive constant
$C_{(\epsilon)}$, independent of $f$, such that, for any $f$ in the domain
of $Q$ on $L^2(\rn)$,
\begin{align}\label{eqn-pqf}
\lf|Q_{\lz,\phi}(f,f)-Q(f,f)\r|\le \epsilon Q(f,f)+C_{(\epsilon)}\lz^{2m}\|f\|_{L^2(\rn)}^2.
\end{align}
Here and thereafter, for any $q\in [1,\fz]$, we use $L^q(\rn)$ to denote the set of
all measurable functions $f$ on $\rn$ such that
\begin{align*}
\lf\|f\r\|_{L^q(\rn)}:=\lf[\dint_\rn \lf|f(x)\r|^q\,dx\r]^{1/q}<\fz
\end{align*}
with the usual modification made when $q=\fz$.
This, together with the Sobolev inequality
\begin{align}\label{eqn-SBI}
W^{m,2}(\rn)\subset L^\fz(\rn),
\end{align}
indicates the following
ultracontractivity of the associated semigroup $\{e^{-t\cl_{\lz,\phi}}\}_{t>0}$
that there exists a positive constant $C$ such that, for any $g\in L^2(\rn)$,
\begin{align}\label{eqn-ucs}
\lf\|e^{-t\cl_{\lz,\phi}}g\r\|_{L^\fz(\rn)}\le C t^{-n/(2m)}\|g\|_{L^2(\rn)};
\end{align}
from this and an optimizing argument, it follows that \eqref{eqn-mmGUB} holds
true. Note that, since the Sobolev inequality \eqref{eqn-SBI} holds only in the
case $n<2m$, the above argument works also only in this case.

To extend the estimates of the heat kernel
to the general case $n\in\nn$, Deng et al. \cite{DeDiYa14} studied
the situation that the unperturbed operator
\begin{align*}
P(D)=\sum_{|\az|=2m}(-1)^m  a_{\az}D^\az
\end{align*}
is a non-negative homogeneous elliptic operator of order $2m$, which has
real constant coefficients $\{a_\az\}_{|\az|=2m}$, and
$V$ is some Kato perturbation of $P(D)$, where the latter means,
for any $\epsilon\in (0,\fz)$, there exists a positive constant $C_{(\epsilon)}$ such that,
for any $f\in \mathcal{C}_{\mathrm c}^\fz(\rn)$ (the set of all infinitely differentiable
functions with compact support),
\begin{align}\label{Katop}
\lf\|Vf\r\|_{L^1(\rn)}\le \epsilon \lf\|P(D) f\r\|_{L^1(\rn)}+C_{(\epsilon)}\lf\|f\r\|_{L^1(\rn)}.
\end{align}
Under this condition, Deng et al. \cite{DeDiYa14} proved the following
$L^1(\rn)$ endpoint operator perturbation estimates of the type that:
for any $\epsilon\in (0,\fz)$, there exists a positive constant
$C_{(\epsilon)}$ such that, for any $f$ in the domain of $\cl$ on $L^1(\rn)$,
\begin{align}\label{eqn-pqf1}
\lf\|\cl_{\lz,\phi} f-P(D)f\r\|_{L^1(\rn)}\le \epsilon \lf\|P(D)\r\|_{L^1(\rn)}
+C_{(\epsilon)}\lf(1+\lz^{2m}\r)\|f\|_{L^1(\rn)},
\end{align}
where $\cl_{\lz,\phi} $ denotes the exponential perturbation of $\cl$ associated with
the form $Q_{\lz,\phi}$ as in \eqref{eqn-pqf}.
Since $[L^1(\rn)]^\ast=L^\fz(\rn)$, one can avoid to use the Sobolev inequality
\eqref{eqn-SBI}, but apply the dual and some iteration arguments,
in order to derive \eqref{eqn-ucs}.
In view of this, Deng et al. \cite{DeDiYa14} proved the following local
version of \eqref{eqn-mmGUB} under the case $n\in\nn$
and $P(D)$ being a nonnegative homogeneous elliptic operator of order $2m$
with real constant coefficients, that is, there exist positive
constants $C$, $c_2$, and $w$ such that, for any $t\in (0,\fz)$ and $x$, $y\in\rn$,
\begin{align}\label{eqn-mlGUB}
\lf|p_t(x,y)\r|\le \frac{C}{t^{n/(2m)}}\exp\lf\{wt-c_2\frac{|x-y|^{2m/(2m-1)}}{t^{1/(2m-1)}}\r\}
\end{align}
if $V\in L_\loc^1(\rn)$ is a Kato type  perturbation of $P(D)$. Recall that, if $V$ is in the higher order
Kato class $K_{2m}(\rn)$ [see \eqref{eqn-katoc} below for its definition],
then $V$ is a Kato type  perturbation of $P(D)$ (see \cite{DeDiYa14,ZhYa09}).
Further developments on the local estimates of the type \eqref{eqn-mlGUB}
can be fund in \cite{Bar17,BaBr18,FSWY20,HuWaZeDu18}.

As was pointed out in \cite{DeDiYa14},
it is still unknown whether or not the local positive constant $w$ in
\eqref{eqn-mlGUB} can be zero even when $V\ge 0$.
This is because the Kato perturbation \eqref{Katop}
can only produce estimates of the type \eqref{eqn-pqf1}
with constant $1+\lz^{2m}$, rather than $\lz^{2m}$ as in \eqref{eqn-pqf}.
Note that, if $w=0$ in \eqref{eqn-mlGUB},
then the local Gaussian upper bound \eqref{eqn-mlGUB} becomes
the global Gaussian upper bound \eqref{eqn-mmGUB}.
In the latter case, much better properties related to
$\cl$ can be obtained (see, for instance, \cite{BGK12,Dav90,Dav97,Gri09,GHL14,HP97}),
because the Gaussian exponential term still works even when $t\in [1,\fz)$.

Motivated by the aforementioned results, it is natural to ask the following question.

\medskip

\noindent{\bf Question.}  Under what general conditions on $V$,
can the local constant $w$ in \eqref{eqn-mlGUB} be zero?

\medskip

In this article, we give an affirmative answer to this question by
introducing a new general Schechter class on
$V$ (see Definition \ref{def-generalizedSC}  below for its definition),
motivated by the now called Schechter class introduced in
\cite{Sch86}. This new potential class, which
coincides with the aforementioned Kato class in some special
cases (see Proposition \ref{rmk-Scheclass} below),
enables us to make $w=0$ in \eqref{eqn-mlGUB} and,
therefore, obtain a global Gaussian upper bound for the heat kernel of the type
\eqref{eqn-mmGUB} for any $n$, $m\in\nn$. Moreover, the H\"older
regularity of the heat kernel is also established in the case $n\ge 2m$.
To be precise,  the main result of this article is as follows.

\begin{theorem}\label{thm-main1}
Let $m\in\nn$, $V$ be a measurable function on $\rn$, and $\cl:=P(D)+V$
the $2m$-order Schr\"odinger operator on $\rn$ as in \eqref{eqn-def-HOSO}
with $P(D)$ being the $2m$-order homogeneous
real constant coefficient elliptic operator as in \eqref{def-PD}. If
\eqref{ap1} and one of \eqref{ap2} through \eqref{ap5} hold true for any
$q\in (1,2]$ or $[2,\fz)$ and
$$\sup_{|\lz|\in(0,\fz)}M_{|\lz|}(V)<1$$ with $M_{|\lz|}(V)$
as in \eqref{HP}, then the operator $\cl$ possesses a heat kernel $p_t$ on
$(0,\fz)\times \rn\times \rn$ that satisfies the following estimates.
\begin{itemize}
\item[{\rm(a)}]
There exist positive constants
$C$ and $c_3$ such that, for any $t\in (0,\fz)$ and $x$, $y\in\rn$,
\begin{align}\label{eqn-mGUB}
\lf|p_t(x,y)\r|\le \frac{C}{t^{n/(2m)}} \exp\lf\{-c_3\frac{|x-y|^{2m/(2m-1)}}
{t^{1/(2m-1)}}\r\}.
\end{align}

\item[{\rm(b)}] If, in addition, $n\ge 2m$, then
there exist a $\gz\in(0,1)$ and positive constants $C$ and $c_4$ such that,
for any $t\in (0,\fz)$ and $x$, $y,\,h\in\rn$ satisfying $|h|<t^{1/2m}$,
\begin{align}\label{eqn-mHE}
\lf|p_t(x+h,y)-p_t(x,y)\r| \le
\frac{C}{t^{n/(2m)}}\exp\lf\{-c_4\frac{|x-y|^{2m/(2m-1)}}{t^{1/(2m-1)}}\r\}
\lf[\frac{|h|}{t^{1/(2m)}}\r]^\gz.
\end{align}
\end{itemize}
\end{theorem}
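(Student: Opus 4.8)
The plan is to follow the exponential-perturbation strategy pioneered by Barbatis–Davies and refined by Deng et al., but now exploiting the key gain afforded by the generalized Schechter class: the perturbation estimate will be producible with the \emph{scale-invariant} constant $C_{(\epsilon)}|\lz|^{2m}$ rather than $C_{(\epsilon)}(1+|\lz|^{2m})$, which is exactly what forces the local constant $w$ down to $0$.

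First I would set up the exponential perturbation. For $\lz\in\rr$ and a suitable real-valued function $\phi$ with bounded first derivatives (normalized so that $|\nabla\phi|\le 1$), introduce $\cl_{\lz,\phi}:=e^{\lz\phi}\cl e^{-\lz\phi}$, with associated semigroup $\{e^{-t\cl_{\lz,\phi}}\}_{t>0}$, and note that $\cl_{\lz,\phi}=P(D)_{\lz,\phi}+V$ where $P(D)_{\lz,\phi}=e^{\lz\phi}P(D)e^{-\lz\phi}$ is, by the Leibniz rule, a differential operator whose symbol is $P(\xi-i\lz\nabla\phi)$; expanding, it equals $P(D)$ plus lower-order terms whose coefficients are polynomials in $\lz$ of degree at most $2m$. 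The first technical step is therefore a \emph{free} perturbation estimate: on $L^q(\rn)$ (for the relevant $q\in(1,2]$ or $[2,\fz)$ dictated by the hypotheses \eqref{ap1}--\eqref{ap5}),
\begin{align*}
\lf\|\lf(P(D)_{\lz,\phi}-P(D)\r)f\r\|_{L^q(\rn)}
\le \epsilon\lf\|P(D)f\r\|_{L^q(\rn)}+C_{(\epsilon)}|\lz|^{2m}\|f\|_{L^q(\rn)},
\end{align*}
which is a standard consequence of the Mihlin-type multiplier bounds for the iterated Riesz transforms $D^\bz P(D)^{-|\bz|/(2m)}$ on $L^q(\rn)$ together with homogeneity in $\lz$; the point is that every intermediate-order term carries a genuine power of $|\lz|$, so no additive constant $1$ appears.

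Second, and this is where the new potential class does its work, I would prove the corresponding \emph{potential} perturbation estimate: the hypothesis $\sup_{|\lz|\in(0,\fz)}M_{|\lz|}(V)<1$, with $M_{|\lz|}(V)$ the generalized-Schechter modulus from \eqref{HP}, is designed precisely so that $V$ is $P(D)_{\lz,\phi}$-bounded on $L^q(\rn)$ with relative bound strictly less than $1$ \emph{uniformly in $\lz$}, and with the remainder again of the form $C_{(\epsilon)}|\lz|^{2m}\|f\|_{L^q(\rn)}$. Combining the two yields, for $f$ in the domain of $\cl$ on $L^q(\rn)$,
\begin{align*}
\lf\|\lf(\cl_{\lz,\phi}-\cl\r)f\r\|_{L^q(\rn)}
\le \epsilon\lf\|\cl f\r\|_{L^q(\rn)}+C_{(\epsilon)}|\lz|^{2m}\|f\|_{L^q(\rn)}.
\end{align*}
From here the route is by now classical: a standard form/operator-perturbation argument shows that $-\cl_{\lz,\phi}$ generates an analytic semigroup, and the relative-boundedness with remainder $C_{(\epsilon)}|\lz|^{2m}$ upgrades $L^q(\rn)$--$L^q(\rn)$ contractivity to $L^q(\rn)$--$L^\fz(\rn)$ ultracontractivity with the scale-invariant bound
\begin{align*}
\lf\|e^{-t\cl_{\lz,\phi}}g\r\|_{L^\fz(\rn)}\le C\,t^{-n/(2m)}\exp\lf\{C|\lz|^{2m}t\r\}\|g\|_{L^q(\rn)};
\end{align*}
when $q=2$ this is \eqref{eqn-ucs}-type reasoning via the Sobolev/Nash inequality, and when $q\ne2$ (the case $n\ge 2m$) one argues by duality $[L^1]^\ast=L^\fz$ and Riesz–Thorin/iteration exactly as in \cite{DeDiYa14}, the decisive difference being the absence of a $t$ (as opposed to $|\lz|^{2m}t$) in the exponent. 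Unwinding the perturbation, for the kernel $p_t^{\lz,\phi}$ of $e^{-t\cl_{\lz,\phi}}$ one gets $|p_t(x,y)|=|e^{\lz(\phi(y)-\phi(x))}p_t^{\lz,\phi}(x,y)|\le C t^{-n/(2m)}\exp\{C|\lz|^{2m}t+\lz(\phi(y)-\phi(x))\}$; choosing $\phi$ to approximate $x\mapsto|x-y|$ and then optimizing over $\lz>0$ (the minimum of $C|\lz|^{2m}t-\lz|x-y|$ is $\sim -|x-y|^{2m/(2m-1)}/t^{1/(2m-1)}$) produces exactly \eqref{eqn-mGUB} with $w=0$. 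For part (b), I would combine the on-diagonal bound just obtained with interior regularity for the elliptic operator $\cl_{\lz,\phi}$ — a De Giorgi–Nash–Moser / Sobolev-embedding estimate giving Hölder control $|p_t^{\lz,\phi}(x+h,\cdot)-p_t^{\lz,\phi}(x,\cdot)|\lesssim (|h|/t^{1/2m})^\gz$ of the kernel in one variable when $|h|<t^{1/2m}$, valid because $n\ge 2m$ puts $W^{m,2}$ into a Hölder space along suitable scales — and then transplant the Gaussian factor by the same $e^{\lz\phi}$ conjugation and optimization over $\lz$.

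The main obstacle I anticipate is the second step: extracting the scale-invariant remainder $C_{(\epsilon)}|\lz|^{2m}\|f\|_{L^q}$ from the defining condition $\sup_{|\lz|}M_{|\lz|}(V)<1$. One must show that the Schechter modulus controls $\|Vf\|_{L^q}$ against $\|\cl_{\lz,\phi}f\|_{L^q}$ \emph{with constants independent of $\lz$}, which requires carefully tracking how the resolvent/Bessel-potential bounds for $P(D)_{\lz,\phi}$ scale — in particular that the relevant Bessel-type kernel estimates degrade only through explicit powers of $|\lz|$ that are absorbed into the $|\lz|^{2m}$ term — rather than picking up an uncontrolled additive constant as the crude Kato bound \eqref{Katop} does. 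Handling the full range of admissible $q$ and the variants \eqref{ap2}--\eqref{ap5} of the hypothesis simultaneously, and checking that the constants in the ultracontractivity bootstrap remain uniform, will be the bulk of the remaining work; the Davies–Gaffney and local statements then follow as corollaries of the same perturbed-semigroup estimates.
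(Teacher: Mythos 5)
Your overall strategy --- conjugate by $e^{\lz\phi}$, prove a perturbation estimate whose remainder scales like $|\lz|^{2m}$ with no additive constant, deduce ultracontractivity of the perturbed semigroup, and optimize over $\lz$ --- is a genuinely different route from the paper's, which never establishes semigroup ultracontractivity. The paper instead proves weighted resolvent estimates $\|(\lz-\cl_\eta)^{-(l+1)}\|_{L^2(\rn)\to L^\fz(\rn)}\ls |\lz|^{\frac{n}{4m}-(l+1)}$ by iterating the Gagliardo--Nirenberg inequality through a finite ladder $2=q_0<q_1<\dots<q_{l+1}=\fz$, dualizes once to get $(\lz-\cl_\eta)^{-2(l+1)}\colon L^1(\rn)\to L^\fz(\rn)$, and converts this into heat kernel bounds via the contour-integral functional calculus applied to $(\lz-\cl)^{-2(l+1)}$, the Gaussian factor coming from the identity $(\lz-\cl_\eta)^{-1}=e^{-\eta x}(\lz-\cl)^{-1}e^{\eta x}$ and optimization over the contour radius $R$.

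Two steps of your plan have genuine gaps. First, to reach $L^\fz(\rn)$ for general $n$ you invoke ``duality $[L^1]^\ast=L^\fz$ and iteration exactly as in Deng et al.'', but that iteration is powered by the $L^1(\rn)$ relative boundedness of $V$ (the Kato-type estimate \eqref{Katop}), whereas the hypotheses of the theorem --- the $T$-operator bounds encoded in \eqref{ap2} through \eqref{ap5} --- only control $V(\dz^2-\Delta)^{-s/2}$ on $L^q(\rn)$ with $q>1$; nothing in $\sup_{|\lz|}M_{|\lz|}(V)<1$ supplies an $L^1$ endpoint, so this step fails as described. (The paper avoids $L^1$ entirely: duality is applied only to the already-smoothing iterated resolvent $(\lz-\cl_\eta)^{-(l+1)}\colon L^2(\rn)\to L^\fz(\rn)$.) Second, in part (b) you assert that $n\ge 2m$ ``puts $W^{m,2}$ into a H\"older space'', which is backwards: $W^{m,2}(\rn)\subset\mathcal{C}^\gz(\rn)$ requires $n<2m$. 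The embedding actually needed is $W^{2m,q}(\rn)\subset\mathcal{C}^{2m-n/q}(\rn)$ for $q\in(\frac{n}{2m},\frac{n}{2m-1})$, and the role of $n\ge 2m$ is precisely to guarantee that such a $q>1$ exists; moreover De Giorgi--Nash--Moser is not available for operators of order $2m>2$. A further, smaller imprecision: the hypothesis does not yield an $\epsilon$--$C_{(\epsilon)}|\lz|^{2m}$ relative bound for arbitrary $\epsilon$, but rather a fixed, $\lz$-uniform smallness $\|V(\lz^{2m}-P(D))^{-1}\|_{L^q(\rn)\to L^q(\rn)}<1$ obtained by factoring through the Bessel potential; it is this Neumann-series convergence for every $\lz$ that forces $w=0$.
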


Theorem \ref{thm-main1} and its local version
(see Corollary \ref{cor-heatkernel} below) are proved in Section \ref{s5.3}.
Recall that the method used in \cite{BaDa96}
can obtain a global Gaussian upper bound \eqref{eqn-mmGUB},
but has the dimension restriction $n<2m$.
The method used in \cite{DeDiYa14}
works in the general case $n\in\nn$, but can only obtain the local
Gaussian upper bound \eqref{eqn-mlGUB}.
Thus, Theorem \ref{thm-main1} makes the first effort
to obtain the global Gaussian upper bound for any dimension $n\in\nn$.

To prove Theorem \ref{thm-main1},
we develop a systematic treatment on the estimates
of the heat kernel of the higher order Schr\"odinger operator $\cl=P(D)+V$
from the condition of the potential $V$, via the spectral perturbation.
More precisely, this treatment is divided into the following three steps:
\begin{enumerate}
\item[(i)] From the generalized Schechter condition on $V$,
we first deduce a series of boundedness of the $T$-operator
$T_{s,\dz}$ from $L^p(\rn)$ to $L^q(\rn)$ for any given $p$, $q\in (1,\fz)$,
$\dz\in (0,\fz)$, and $s\in (0,2m]$, where
\begin{align}\label{eqn-MtoT}
T_{s,\dz}:=V \lf(\dz^2-\Delta\r)^{-s/2}
\end{align}
and $(\dz^2-\Delta)^{-s/2}$ is the \emph{Bessel potential} of order $s$
(see Propositions \ref{prop-bdT1}, \ref{prop-bdT2}, \ref{prop-bdT3},
and \ref{prop-bdT4} below).

\item[(ii)] Using the boundedness of the $T$-operator in \eqref{eqn-MtoT},
we establish the exponential perturbed resolvent estimate [see
\eqref{eqn-forin} below] that there
exists a positive constant $C$ such that,
for any $\lz\in \rho(\cl)$ (the resolvent set of $\cl$) and $f\in L^2(\rn)$,
\begin{align}\label{eqn-intre}
\lf\|\lf(\lz-\cl_\eta\r)^{-(l+1)}f\r\|_{L^\fz(\rn)}\le C |\lz|^{\frac{1}{2}\frac{n}{2m}
-(l+1)}\lf\|f\r\|_{L^2(\rn)}
\end{align}
uniformly for certain $l\in\nn$ and $\eta\in\cc^n$,
where $\cl_\eta$ denotes the exponential perturbation of $\cl$ [see \eqref{eqn-defcleta-1} below].
This exponential perturbed resolvent estimate replaces the ultracontractivity \eqref{eqn-ucs}
of the semigroup used in \cite{BaDa96,DeDiYa14}.

\item[(iii)] From the resolvent estimate \eqref{eqn-intre}, we finally deduce the Gaussian
estimates, \eqref{eqn-mGUB} and \eqref{eqn-mHE}, of $\cl$. An essential tool used in
this step is the following functional calculus identity, originally appeared in \cite{AuMcTc98}, that
\begin{align}\label{step3}
e^{-t\cl}=\frac{[2(l+1)-1]!}{2\pi i (-t)^{2(l+1)-1}}\dint_\Gamma e^{-t\lz} \lf(\lz-\cl\r)^{-2(l+1)}\,d\lz
\end{align}
with $l\in\nn$ and $\Gamma$ being a path in $\rho(\cl)$ (see Figure
\ref{gamma} below). The exponential terms in the Gaussian
estimates, \eqref{eqn-mGUB} and \eqref{eqn-mHE}, then come from the
exponential perturbed resolvent identity [see \eqref{eqn-resolventper} below].
\end{enumerate}

We point out that the parameters $\dz$ and $\lz$ in \eqref{eqn-MtoT}
and \eqref{eqn-intre} are connected by the relation
$\dz=|\lz|^{1/(2m)}$. This, in view of \eqref{step3},
indicates that we need all values of $\lz$ in the path $\Gamma$
and hence all $\dz\in (0,\fz)$ in the proof of Theorem
\ref{thm-main1}. That is why
all $\dz\in (0,\fz)$ are taken into consideration in the definition
of the generalized Schechter class [see \eqref{eqn-Mclass} below].
Recall that, in the case of the Kato class [see \eqref{eqn-katoc} below],
only the information of $\dz$ near $0$ is considered. Also,
since the functions in the generalized Schechter class
may have negative parts, Theorem \ref{thm-main1} is new even when $n<2m$.

The proof of Theorem \ref{thm-main1}(b) depends on the following
Sobolev embedding
$$W^{2m,q}(\rn)\subset \mathcal{C}^\gz(\rn)$$
with $\mathcal{C}^\gz(\rn)$ being the Lipschitz space of order
$\gz:=2m-n/q\in (0,1)$, when $q\in (1,\fz)$ and $(2m-1)q<n<2mq$
[see Lemma \ref{lem-GN2}(ii) below], where the latter condition
implies the dimension condition $n\ge 2m$.

Using an approach similar to that used in the proof of Theorem \ref{thm-main1}, we
are able to establish the following Davies--Gaffney estimates of the heat
semigroup generated by $-\cl$. Recall that,
for many Schr\"odinger operators, they may even not possess
a heat kernel, not to mention the (local) Gaussian upper bound
(see \cite{BeSe90,IsKaOu17}). In such a case, the Davies--Gaffney estimates
are good substitutes. The following Theorem \ref{thm-main2} and its local version (see Corollary
\ref{cor-localDG} below) are proved in Section \ref{s5.2}.

\begin{theorem}\label{thm-main2}
Let $m\in\nn$, $V$ be a measurable function on $\rn$, and
$\cl:=P(D)+V$ the $2m$-order Schr\"odinger operator on
$\rn$ as in \eqref{eqn-def-HOSO} with $P(D)$ being
the $2m$-order homogeneous real constant coefficient elliptic operator as in \eqref{def-PD}. If
\eqref{ap1} and one of \eqref{ap2} through \eqref{ap5} hold true with
$q=2$ and
$$\sup_{|\lz|\in(0,\fz)}M_{|\lz|}(V)<1$$ with $M_{|\lz|}(V)$
as in \eqref{HP}.
Assuming further that $\cl$ satisfies
\eqref{eqn-assumptionadd1} for any $\lz\in \rho(\cl)$, then there
exist positive constants $C$ and $c_5$ such that, for any
disjoint compact convex subsets $E$ and $F$, $t\in (0,\fz)$, and $f\in L^2(E)$ with
$\supp f:=\{x\in\rn:\ f(x)\ne 0\}\subset E$,
\begin{align}\label{eqn-DGE}
\lf\|e^{-t\cl}f\r\|_{L^2(F)}\le C
\exp\lf\{-c_5\frac{[d(E,F)]^{2m/(2m-1)}}{t^{1/(2m-1)}}\r\}\lf\|f\r\|_{L^2(E)},
\end{align}
here and thereafter, $d(E,F):=\inf_{x\in E,y\in F} |x-y|$
and $\|f\|_{L^2(E)}:=[\int_E |f(x)|^2\,dx]^{1/2}$.
\end{theorem}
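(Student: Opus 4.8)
The plan is to follow the three-step spectral perturbation scheme outlined in the introduction, but to stop at the resolvent level rather than pushing all the way to pointwise heat kernel bounds. First I would introduce, for $\eta\in\cc^n$, the exponentially perturbed operator $\cl_\eta$ obtained from $\cl$ by conjugation with $e^{\eta\cdot x}$ (the operator whose associated form is $Q_{\lz,\phi}$ of the introduction with $\phi$ linear), and record the elementary algebraic identity $e^{\eta\cdot x}e^{-t\cl}e^{-\eta\cdot x}=e^{-t\cl_\eta}$ on the relevant function spaces. The core analytic input is the exponential perturbed resolvent estimate \eqref{eqn-intre}: there is a constant $C$ so that $\|(\lz-\cl_\eta)^{-(l+1)}f\|_{L^\fz(\rn)}\le C|\lz|^{\frac12\frac n{2m}-(l+1)}\|f\|_{L^2(\rn)}$, valid uniformly for $\lz$ in the path $\Gamma\subset\rho(\cl)$ and for $\eta$ satisfying $|\eta|\le \kappa|\lz|^{1/(2m)}$ for a small fixed $\kappa$; this is exactly where the boundedness of the $T$-operators $T_{s,\dz}=V(\dz^2-\Delta)^{-s/2}$ from Propositions \ref{prop-bdT1} through \ref{prop-bdT4}, together with the hypothesis $\sup_{|\lz|>0}M_{|\lz|}(V)<1$, is consumed: the smallness $<1$ lets one absorb the potential term into the free higher-order resolvent via a Neumann series, uniformly in $\dz=|\lz|^{1/(2m)}$. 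I would state this resolvent bound as the workhorse lemma (it is the $q=2$ specialization of machinery already developed earlier in the paper) and then derive Davies--Gaffney from it.

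The Davies--Gaffney estimate itself I would get by the Agmon-type, or "Davies' trick", argument adapted to the higher-order setting. Fix disjoint compact convex $E,F$ and set $d:=d(E,F)$. Choose $\phi(x):=\rho\,\dist(x,E)$ (or any bounded Lipschitz function equal to $0$ on $E$, equal to $d$ on $F$, with $\|\nabla\phi\|_\fz\le\rho$), and for $f\in L^2(E)$ estimate
\begin{align*}
\lf\|e^{-t\cl}f\r\|_{L^2(F)}\le e^{-\rho d}\lf\|e^{\rho\phi}e^{-t\cl}f\r\|_{L^2(\rn)}
=e^{-\rho d}\lf\|e^{-t\cl_{\rho e}}\lf(e^{\rho\phi}f\r)\r\|_{L^2(\rn)}
=e^{-\rho d}\lf\|e^{-t\cl_{\rho e}}f\r\|_{L^2(\rn)},
\end{align*}
where in the last step $e^{\rho\phi}f=f$ on $E=\supp f$ and $e$ denotes the relevant direction so that $\cl_{\rho e}$ is the exponential perturbation with parameter $\eta=\rho e$, $|\eta|=\rho$. (Using $\dist(\cdot,E)$ rather than a linear function is the standard device to handle general convex $E,F$; the form perturbation estimates underlying \eqref{eqn-intre} depend only on $\|\nabla\phi\|_\fz=\rho$, not on linearity, which is why I would phrase the key lemma in terms of Lipschitz $\phi$.) It therefore suffices to bound the $L^2\to L^2$ norm of $e^{-t\cl_{\rho e}}$ by something like $Ce^{C t\rho^{2m}}$, uniformly for all $\rho\in(0,\fz)$, and then optimize $e^{-\rho d+Ct\rho^{2m}}$ over $\rho$: the minimizer is $\rho\sim (d/t)^{1/(2m-1)}$, producing exactly the exponent $-c_5 d^{2m/(2m-1)}/t^{1/(2m-1)}$ in \eqref{eqn-DGE}.

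To produce the bound $\|e^{-t\cl_{\rho e}}\|_{L^2\to L^2}\le Ce^{Ct\rho^{2m}}$ I would pass through the resolvent representation \eqref{step3}: writing $e^{-t\cl_\eta}$ as a contour integral of $(\lz-\cl_\eta)^{-2(l+1)}$ against $e^{-t\lz}$ over $\Gamma$, and splitting $\Gamma$ into the part with $|\lz|\le\rho^{2m}$ and the part with $|\lz|>\rho^{2m}$. On the first part one uses the uniform (in $\eta$) resolvent bound in $L^2$ coming from the Neumann series for $\cl_\eta$ valid once $|\eta|^{2m}\lesssim|\lz|$; on the second part $\rho^{2m}\lesssim|\lz|$ forces the smallness condition on $\eta$ to hold and one again has the $L^2$ resolvent bound, while $e^{-t\,\mathrm{Re}\,\lz}$ controls the integral. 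The assumption \eqref{eqn-assumptionadd1} on $\cl$ (nonemptiness/shape of $\rho(\cl)$ along $\Gamma$, i.e. that $\Gamma$ can be taken inside $\rho(\cl)$ uniformly) is what guarantees the contour is available; this is the hypothesis that in the heat-kernel theorem is instead a consequence of the stronger $q\ne2$ inputs. I would then assemble: $\|e^{-t\cl_{\rho e}}\|_{L^2\to L^2}\lesssim 1+\int_\Gamma |e^{-t\lz}|\,\||\lz|^{\cdots}\|\,|d\lz|\lesssim e^{Ct\rho^{2m}}$ after rescaling $\lz\mapsto\rho^{2m}\lz$ along $\Gamma$.

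The main obstacle, and the step I would spend the most care on, is establishing the uniform-in-$\eta$ $L^2$ resolvent bound for $\cl_\eta=\cl_{\rho e}$ on the dyadic piece $|\lz|\sim\rho^{2m}$ — i.e. closing the Neumann series $(\lz-\cl_\eta)^{-1}=(\lz-P(D)_\eta)^{-1}\sum_k[V(\lz-P(D)_\eta)^{-1}]^k$ precisely at the threshold where $|\eta|^{2m}$ is comparable to $|\lz|$. Here the perturbed free operator $P(D)_\eta$ has symbol $P(i\xi+\eta)$, whose real part can degenerate, so one must use ellipticity of $P$ together with the restriction $|\eta|\le\kappa|\lz|^{1/(2m)}$ with $\kappa$ small to keep $\lz-P(i\xi+\eta)$ away from zero and obtain $\|(\lz-P(D)_\eta)^{-1}\|_{L^2\to L^2}\lesssim|\lz|^{-1}$ and, more importantly, the boundedness of $T_{s,\dz}=V(\dz^2-\Delta)^{-s/2}$ with operator norm $<1$ after the identification $\dz=|\lz|^{1/(2m)}$; the condition $\sup_{|\lz|>0}M_{|\lz|}(V)<1$ is exactly calibrated so that this product of the small $T$-operator norm with the $O(1)$ norm of the remaining resolvent factors stays below $1$ uniformly in $\dz\in(0,\fz)$, which is what makes the series converge for every point of $\Gamma$ simultaneously. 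Once that uniform bound is in hand, the contour integration, the optimization over $\rho$, and the convexity/Lipschitz reduction at the start are all routine, and one obtains \eqref{eqn-DGE} with $c_5$ depending only on $m$, $n$, the ellipticity constants of $P$, and the gap $1-\sup_{|\lz|>0}M_{|\lz|}(V)$.
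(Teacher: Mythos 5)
Your overall strategy (exponential conjugation plus the contour representation of the semigroup through powers of the resolvent) is the same as the paper's, but two of your concrete steps do not go through as written.

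First, the reduction to a Lipschitz weight $\phi$ with only $\|\nabla\phi\|_\fz\le\rho$ controlled is not enough for a $2m$-order operator. The conjugated operator is $\cl_{\eta,\phi}=P(D+\eta D\phi)+V$ (Lemma \ref{lem-identityforcleta}), so the perturbation $\cl_{\eta,\phi}-\cl$ involves $D^\bz\phi$ for all $|\bz|\le 2m$; your claim that the perturbation estimates ``depend only on $\|\nabla\phi\|_\fz$, not on linearity'' is true for second-order divergence-form operators but false here. The distance function to a convex set is not $\mathcal{C}^{2m}$, so one cannot use it directly. This is exactly why the paper works with the class $\mathcal{E}_{2m}(\rn)$ of smooth $\phi$ with $\|D^\az\phi\|_{L^\fz(\rn)}\le1$ for all $|\az|\le 2m$, defines the intrinsic distance $d_{\mathcal{E}}$, and invokes Davies' comparison $d_{\mathcal{E}}(E,F)\sim d(E,F)$ for disjoint compact \emph{convex} sets (Lemma \ref{lem-ed}); the convexity hypothesis in the theorem is there precisely to make this substitute for your Lipschitz cutoff work.

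Second, and more seriously, your route to $\|e^{-t\cl_{\rho e}}\|_{L^2\to L^2}\le Ce^{Ct\rho^{2m}}$ is internally inconsistent. You split $\Gamma$ into $\{|\lz|\le\rho^{2m}\}$ and $\{|\lz|>\rho^{2m}\}$ and on the first piece invoke the resolvent bound for $\cl_\eta$ ``valid once $|\eta|^{2m}\ls|\lz|$'' --- but on that piece $|\lz|\le\rho^{2m}=|\eta|^{2m}$, so the smallness condition $|\eta|<\dz|\lz|^{1/(2m)}$ (with $\dz<1$) fails and the Neumann series in Proposition \ref{prop-perburbationresol2} does not close: the perturbation terms $|\eta|^{2m-k}\|\nabla^k g\|_{L^2(\rn)}$ cannot be absorbed, and indeed $\lz$ need not even lie in $\rho(\cl_\eta)$ there. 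The repair is to take the contour vertex $R\gtrsim(\rho/\dz)^{2m}$ so that \emph{no} part of $\Gamma$ violates the constraint, at the cost of the factor $e^{tR}$ from $\Gamma_0$; but once you do this and optimize you are choosing $R\sim[d(E,F)/t]^{2m/(2m-1)}$ and $\rho\sim R^{1/(2m)}$ jointly, which is precisely the paper's argument: there one never bounds the perturbed semigroup for a fixed $\rho$, but instead estimates $\|\mathbf{1}_E(\lz-\cl)^{-1}\mathbf{1}_F\|_{L^2\to L^2}\ls|\lz|^{-1}e^{-c\eta d(E,F)}$ with $\eta=\eta(\lz)\sim|\lz|^{1/(2m)}$ chosen \emph{pointwise along the contour}, so the admissibility constraint is automatic at every $\lz\in\Gamma$. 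A minor further point: in the $L^2$ setting only the first power of the resolvent is needed, not the $L^2\to L^\fz$ iterate $(\lz-\cl_\eta)^{-(l+1)}$ from \eqref{eqn-intre}, which is reserved for the pointwise kernel bounds of Theorem \ref{thm-main1}.
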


Now, let $\cl$ be the higher order Schr\"odinger operator satisfying the assumptions
of Theorems \ref{thm-main1} and \ref{thm-main2}. Applying these both theorems,
we immediately obtain the following conclusions:

\begin{itemize}
\item[(i)]  The spectrum $\sz_p(\cl)$ of $\cl$ in $L^p(\rn)$ is independent of $p$
for any given $p\in [1,\fz)$ (see \cite[Theorem 1.1]{Ku99}).

\item[(ii)] The operator $\cl$ has a bounded $H_\fz$-functional calculus on $L^p(\rn)$
for any given $p\in (1,\fz)$ (see \cite[Theorem 3.1]{DuRo96}).

\item[(iii)] When $n>2m$, the integral kernel $K_\lz$ of
the resolvent $(\lz-\cl)^{-1}$ satisfies that, for any $x$, $y\in\rn$ with $x\ne y$,
\begin{align*}
\lf|K_\lz(x,y)\r|\ls \frac{1}{|x-y|^{n-2m}}e^{-c|\lz|^{1/(2m)}|x-y|},
\end{align*}
where the implicit positive constant and the positive constant $c$
are independent of $x$,  $y$ and $\lz$ (see \cite[Theorem 2.2]{Hie96}).

\item[(iv)] For any given $p,\,q\in (1,\fz)$,
let $f\in L^p([0,\fz); L^q(\rn))$  with
\begin{align*}
&L^p\lf([0,\fz); L^q(\rn)\r)\\
&\hs:=\lf\{f:\rn\times [0,\fz) \to \rr:\ \lf\|f\r\|_{L^p([0,\fz); L^q(\rn))}:=\lf[\dint_{0}^\fz
\lf\|f(\cdot,t)\r\|_{L^q(\rn)}^p\,dt\r]^{1/p}<\fz\r\}.
\end{align*}
The following inhomogeneous initial value problem
\begin{align*}
\begin{cases}
\displaystyle\frac{\pat}{\pat t}u(x,t)=\cl u(x, t)+f(x,t),\ \ &(x,t)\in\rn\times (0,\fz),\\
\displaystyle u(x,0)=0, \ \ & x\in\rn
\end{cases}
\end{align*}
has a unique solution that is of maximal $L^p(\rn)$-$L^q(\rn)$ regularity
(see \cite[Theorem 3.1]{HP97}).
\end{itemize}

We also point out that, since the estimate of the
heat kernel is the
start point of many studies on the analysis of the Schr\"odinger operator, our main results pave the way
for further studies such as the Sobolev
inequalities  (in particular, see \cite{BoCoSi15,Ouha05} for the
Nash and the Gagliardo--Nirenberg inequalities),
the boundedness of some singular integral operators,
and the real-variable theory of function spaces associated with
$\cl$. We do not pursue these problems here, in order to limit
the length of this article.

The remainder of this article is organized as follows. In Section \ref{s2}, we provide
some basic facts on the higher order Schr\"odinger operator
$\cl:=P(D)+V$. We first review the definition of $\cl$ in Section
\ref{s2.1}; then, in Section \ref{s2.2}, we introduce the definition
of the generalized Schechter class concerning the potential $V$. Some basic
properties of the Schechter class are also presented in this
section. Section \ref{s3} is devoted to the boundedness of the
$T$-operator defined in \eqref{def-Tlz}. We obtain four kinds of
boundedness in this section. In Section \ref{s4}, we establish two
perturbation estimates for the resolvent related to $\cl$: the summation
perturbations in Section \ref{s4.1} and the exponential
perturbations in Section \ref{s4.2}. Finally, we prove our main
results, Theorems \ref{thm-main1} and \ref{thm-main2}, in Section
\ref{s5}.

We end this section by making some conventions on the notation. Let
$\nn:=\{1,2,\ldots\}$, $\zz_+:=\nn\cup\{0\}$, and $\zz:=\{0,\pm1,
\pm2,\dots\}$. For any $s\in\rr$, let $\lfloor s\rfloor$ be the
largest integer not greater than $s$.
For any set $E\subset \rn$, we use $\mathbf{1}_E$ to
denote its characteristic function. We use $C$ to denote a
\emph{positive constant} that is independent of main parameters
involved, whose value may differ from line to line. Constants with
subscripts, such as $C_1$ and $c_1$, do not change in different
occurrences. We also use $C_{(\az,\,\bz,\ldots)}$ to denote a
positive constant depending on the indicated parameters $\az$,
$\bz,\ldots$. If $f\le Cg$, we write $f\ls g$ and, if $f\ls g\ls f$,
we then write $f\sim g$. If $f\le Cg$ and $g=h$ or $g\le h$, we then
write $f\ls g\sim h$ or $f\ls g\ls h$, \emph{rather than} $f\ls g=h$
or $f\ls g\le h$. We use $\vec 0_n$ to denote the origin of $\rn$.

\section{Higher order Schr\"odinger operators\label{s2}}
In this section, we provide some basic facts on the higher order Schr\"odinger operator
$\cl$ and its potential. We begin with a review of the definition of  $\cl$.

\subsection{Preliminaries on Schr\"odinger operators \label{s2.1}}
Let $P(x)=\sum_{|\az|=2m}a_\az x^{\az}$ be a homogeneous polynomial of degree $2m$
on $x\in \rn$ with real
constant coefficients $\{a_\az\}_{|\az|=2m}$ that satisfy the {\it uniform ellipticity condition}:
there exists a positive constant $\lz\in (0,\fz)$ such that, for any $x\in\rn$,
\begin{align}\label{eqn-uec}
\dsum_{|\az|=2m}a_\az x^\az\ge \lz |x|^{2m}.
\end{align}
For any $f\in \mathcal{C}_{\mathrm c}^\fz(\rn)$, define the differential operator $P(D)$ of order $2m$ on $f$ by
\begin{align}\label{def-PD}
P(D)f:=\dsum_{|\az|=2m}(-1)^m a_\az D^\az f.
\end{align}
It is known that $P(D)$ can be extended to a nonnegative
self-adjoint operator in $L^2(\rn)$ with domain
$\mathrm{dom}\,(P(D))=W^{2m,2}(\rn)$ being the Sobolev space (see
\cite[p. 62, Corollary 2.2]{Sch86}). We call this self-adjoint
extension the {\it $2m$-order homogeneous elliptic operator with
real constant coefficients on $\rn$} and we still use the same
notation $P(D)$ to denote it.

Recall the following properties of $P(D)$ from \cite[Proposition 45]{AQ00},
\cite[p. 177, Problem III-6.16]{Kato95}, and \cite[p. 65, Corollary 3.4]{Sch86}.
In what follows, for any given linear normed spaces $\mathcal{X}$ and $\mathcal{Y}$
and any linear operator $T$ mapping $\mathcal{X}$ into $\mathcal{Y}$,
we use $\|T\|_{\mathcal{X}\to \mathcal{Y}}$ to denote its operator norm.

\begin{lemma}\label{lems2-1}
Let $P(D)$ be a $2m$-order homogeneous elliptic operator with real
constant coefficients on $\rn$ as in \eqref{def-PD}, and let
$p_t(x,y)$  defined on $(0,\fz)\times\rn\times \rn$  be the heat
kernel of the semigroup generated by $-P(D)$. The following
assertions hold true:
\begin{itemize}
\item[{\rm (i)}] The resolvent set of $P(D)$,
$\rho(P(D))=\cc\setminus [0,\fz)$ and, for any $\lz\in \rho(P(D))$,
\begin{align*}
\lf\|(\lz-P(D))^{-1}\r\|_{L^2(\rn)\to L^2(\rn)}\le\frac{1}{d(\lz,[0,\fz))},
\end{align*}
where $d(\lz,[0,\fz)):=\inf_{s\in [0,\fz)} |\lz-s|$.

\item[{\rm (ii)}] There exist positive constants $C$ and $c_6$ such that,
for any $l\in\{0,\ldots,m-1\}$, $t\in (0,\fz)$, and $x$, $y\in\rn$,
\begin{align*}
\lf|D_x^l p_t(x,y)\r|\le
\frac{C}{t^{(n+l)/(2m)}}\exp\lf\{-c_6\frac{|x-y|^{2m/(2m-1)}}{t^{1/(2m-1)}}\r\}.
\end{align*}

\item[{\rm (iii)}] For any given $\gz\in(0,1)$, there exist positive constants $C$ and $c_7$ such that,
for any $l\in\{0,\ldots,m-1\}$, $t\in (0,\fz)$, and $x$, $y,\,h\in\rn$,
\begin{align*}
\lf|D_x^{l} p_t(x+h,y)-D_x^{l} p_t(x,y)\r| \le
\frac{C}{t^{(n+l)/(2m)}}\exp\lf\{-c_7\frac{|x-y|^{2m/(2m-1)}}{t^{1/(2m-1)}}\r\}
\lf[\frac{|h|}{t^{1/(2m)}}\r]^\gz.
\end{align*}
\end{itemize}
\end{lemma}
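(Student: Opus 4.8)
The plan is to handle (i) by conjugating with the Fourier transform and using the spectral theorem, and to handle (ii) and (iii) by reducing, through translation invariance and parabolic scaling, to decay estimates for a single profile function obtained from $e^{-P(\xi)}$ via a complex contour shift.

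For (i): the self-adjoint extension $P(D)$ (recalled above from \cite{Sch86}) is unitarily equivalent, via the Fourier transform on $L^2(\rn)$, to multiplication by the real symbol $P(\xi)=\sum_{|\az|=2m}a_\az\xi^\az$. By \eqref{eqn-uec}, $0\le\lz|\xi|^{2m}\le P(\xi)$ for every $\xi\in\rn$; since $P$ is continuous, $P(\vec 0_n)=0$, $\rn$ is connected, and $P(\xi)\to\fz$ as $|\xi|\to\fz$, the range of $P$ is exactly $[0,\fz)$. Hence the spectrum of $P(D)$, being the (essential) range of $P$, equals $[0,\fz)$, so $\rho(P(D))=\cc\setminus[0,\fz)$, and the resolvent bound is the identity $\|(\lz-P(D))^{-1}\|_{L^2(\rn)\to L^2(\rn)}=[d(\lz,[0,\fz))]^{-1}$ valid for self-adjoint operators (equivalently, $\|(\lz-P(\cdot))^{-1}\|_{L^\fz(\rn)}=[\inf_{\xi\in\rn}|\lz-P(\xi)|]^{-1}$).

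For (ii) and (iii): by translation invariance the heat kernel is a convolution kernel, $p_t(x,y)=p_t(x-y)$ with $p_t(x)=(2\pi)^{-n}\int_\rn e^{ix\cdot\xi}e^{-tP(\xi)}\,d\xi$, and the homogeneity of $P$ of degree $2m$ yields the scaling $p_t(x)=t^{-n/(2m)}G(t^{-1/(2m)}x)$, where $G(z):=(2\pi)^{-n}\int_\rn e^{iz\cdot\xi}e^{-P(\xi)}\,d\xi$. Thus $D_x^lp_t(x,y)=t^{-(n+l)/(2m)}(D^lG)(t^{-1/(2m)}(x-y))$, and (ii), (iii) reduce respectively to $|D^lG(z)|\ls e^{-c|z|^{2m/(2m-1)}}$ for all $l\in\zz_+$ and to $|D^lG(z+w)-D^lG(z)|\ls|w|^\gz e^{-c|z|^{2m/(2m-1)}}$ for $|w|\le1$. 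I would prove the first bound by the standard contour-shift argument: $\xi\mapsto e^{-P(\xi)}$ extends to an entire function and, by \eqref{eqn-uec} together with Young's inequality applied to the mixed terms of $\Re P(\xi+i\tau)$, one has $\Re P(\xi+i\tau)\ge\frac{\lz}{2}|\xi|^{2m}-C|\tau|^{2m}$ for all $\xi,\tau\in\rn$, with $C$ independent of $\xi$ and $\tau$; deforming the integration contour to $\rn+i\tau$ (legitimate because of this decay) and differentiating under the integral sign gives
\begin{align*}
\lf|D^lG(z)\r|\ls\lf(1+|\tau|\r)^l\,e^{-z\cdot\tau+C|\tau|^{2m}},
\end{align*}
and the choice of $\tau$ parallel to $z$ with $|\tau|\sim|z|^{1/(2m-1)}$ optimizes the exponent to $-c|z|^{2m/(2m-1)}$, the polynomial prefactor being absorbed by slightly decreasing $c$. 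The increment estimate then follows from the bounds for $D^lG$ and $D^{l+1}G$ and the mean value theorem: if $|w|\le1$ and $\zeta$ lies on the segment joining $z$ and $z+w$, then $|\zeta|^{2m/(2m-1)}\ge c|z|^{2m/(2m-1)}-C$, so $|D^lG(z+w)-D^lG(z)|\le|w|\sup_\zeta|D^{l+1}G(\zeta)|\ls|w|\,e^{-c|z|^{2m/(2m-1)}}\le|w|^\gz e^{-c|z|^{2m/(2m-1)}}$ since $|w|\le1$ and $\gz\in(0,1)$; undoing the scaling produces (iii) in the range $|h|<t^{1/(2m)}$, which is the one used in the sequel (cf. Theorem \ref{thm-main1}(b)).

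The main obstacle is the complex-analytic step, namely establishing the lower bound $\Re P(\xi+i\tau)\ge\frac{\lz}{2}|\xi|^{2m}-C|\tau|^{2m}$ uniformly in $\tau$ --- i.e., checking that the (unavoidable) imaginary translation cannot swamp the ellipticity of the principal part --- and using it to justify both the contour deformation and the differentiation under the integral. Given this estimate and the optimization $|\tau|\sim|z|^{1/(2m-1)}$, the remaining steps (the two scaling reductions, the mean value step, the absorption of polynomial factors into the exponential) are routine bookkeeping.
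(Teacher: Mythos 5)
Your proof is correct, but it takes a genuinely different route from the paper: the paper does not prove this lemma at all, it simply recalls it from \cite[Proposition 45]{AQ00}, \cite[p.\ 177, Problem III-6.16]{Kato95} and \cite[p.\ 65, Corollary 3.4]{Sch86}, whereas you give a self-contained Fourier-analytic argument. Your approach exploits precisely what makes the constant-coefficient case special: $P(D)$ is a Fourier multiplier, so (i) is immediate from the spectral theorem for multiplication operators, and the heat kernel is an explicit oscillatory integral whose decay can be extracted by the classical contour shift $\xi\mapsto\xi+i\tau$ with $|\tau|\sim|z|^{1/(2m-1)}$. The key inequality $\Re P(\xi+i\tau)\ge\tfrac{\lz}{2}|\xi|^{2m}-C|\tau|^{2m}$ does follow from \eqref{eqn-uec} and Young's inequality applied to the cross terms (only even powers of $\tau$ survive in the real part, each accompanied by at least two fewer powers of $\xi$), and it simultaneously justifies the deformation and the differentiation under the integral; the rest is the scaling $p_t(x,y)=t^{-n/(2m)}G(t^{-1/(2m)}(x-y))$ and routine optimization. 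The cited references must instead work through Davies-type exponential perturbation or resolvent machinery because they treat variable coefficients; your argument would not survive that generalization, but for the lemma as needed here it is cleaner and fully elementary.

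One point deserves emphasis. Part (iii) as printed in the lemma is asserted for \emph{all} $h\in\rn$, and in that generality it is false: taking $l=0$ and $x+h=y$ with $|h|\gg t^{1/(2m)}$, the left-hand side is bounded below by $\tfrac12 t^{-n/(2m)}G(\vec 0_n)$ while the right-hand side is $Ct^{-n/(2m)}e^{-c_7R^{2m/(2m-1)}}R^{\gz}$ with $R=|h|/t^{1/(2m)}\to\fz$, which tends to zero relative to $t^{-n/(2m)}$. The correct statement (and the one appearing in the cited sources) carries a restriction such as $|h|\le t^{1/(2m)}$, which is exactly the range your mean-value argument covers and the only range used later (Theorem \ref{thm-main1}(b) and Lemma \ref{lems2-1}'s applications all assume $|h|<t^{1/(2m)}$). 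So your proof establishes the lemma in the form in which it is actually needed; you are right to flag the restriction rather than to attempt the unrestricted claim.
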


Now, let $V:\rn\to \rr$ be a measurable function on $\rn$.
It induces a {\it multiplication operator} $f\mapsto Vf$ in $L^2(\rn)$
with the domain
$$\mathrm{dom}\,(V):=\lf\{f\in L^2(\rn):\ Vf\in L^2(\rn)\r\}.$$
From \cite[p.72, Lemma 6.1]{Sch86},
it follows that $V$ is a closed symmetric operator in $L^2(\rn)$.
The operator $V$ is said to be {\it relatively $P(D)$-bounded}
if $\mathrm{dom}\,(P(D))\subset \mathrm{dom}\,(V)$ and there exist constants
$a$, $b\in (0,\fz)$ such that, for any ${g}\in \mathrm{dom}\,(P(D))\subset L^2(\rn)$,
\begin{align*}
\lf\|V{g}\r\|_{L^2(\rn)}\le a\lf\|P(D){g}\r\|_{L^2(\rn)}+b\lf\|{g}\r\|_{L^2(\rn)},
\end{align*}
where the infimum of all such $a$ is called the {\it relative bound of $V$ with respect to} $P(D)$.
Recall the following W\"ust  theorem from \cite[Theorem X. 14]{ReedSimon75-II}.

\begin{lemma}\label{lemma-wust}
Let $A$ be a self-adjoint operator and $B$ a symmetric operator in a Hilbert space $\mathcal{H}$.
Assume that $B$ is relatively $A$-bounded with relative bound $a\le 1$. Then the sum $A+B$
of the operators $A$ and $B$ is essential self-adjoint on $\mathrm{dom}\,(A)$.
\end{lemma}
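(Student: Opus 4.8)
The plan is to deduce this borderline statement (relative bound exactly $1$) from the Kato--Rellich theorem, which already covers relative bound strictly less than $1$, by interpolating through the one-parameter family $\{A+sB\}_{s\in[0,1)}$ and then handling the singular limit $s\uparrow1$ by a soft weak-compactness argument. Write $T:=A+B$ with $\mathrm{dom}\,(T):=\mathrm{dom}\,(A)$; since $\mathrm{dom}\,(A)\subset\mathrm{dom}\,(B)$ and both $A$ and $B$ are symmetric on this domain, $T$ is symmetric, and by the basic criterion for essential self-adjointness it suffices to show that $\mathrm{Ran}\,(T+i)$ and $\mathrm{Ran}\,(T-i)$ are both dense in $\mathcal H$; by symmetry I would treat only $\mathrm{Ran}\,(T+i)$.

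For the reduction I would first note that, for each $s\in[0,1)$, the operator $sB$ is $A$-bounded with relative bound $\le sa\le s<1$, so the Kato--Rellich theorem makes $A+sB$ self-adjoint on $\mathrm{dom}\,(A)$; in particular $(A+sB+i)^{-1}$ is everywhere defined and bounded, with operator norm at most $1$. Assuming now $\psi\in\mathcal H$ is orthogonal to $\mathrm{Ran}\,(T+i)$, I would set $\phi_s:=(A+sB+i)^{-1}\psi\in\mathrm{dom}\,(A)$, so that $\|\phi_s\|\le\|\psi\|$, and compute
\[
(T+i)\phi_s=(A+sB+i)\phi_s+(1-s)B\phi_s=\psi+(1-s)B\phi_s .
\]
Pairing with $\psi$ and using $\psi\perp\mathrm{Ran}\,(T+i)$ would give the key identity $\|\psi\|^2+(1-s)(B\phi_s,\psi)=0$ for every $s\in[0,1)$.

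The heart of the argument is then a bound on the ``error term'' $(1-s)B\phi_s$ that is uniform in $s$. From $A\phi_s=\psi-i\phi_s-sB\phi_s$ together with the relative-boundedness inequality $\|B\phi_s\|\le a\|A\phi_s\|+b\|\phi_s\|$ and $\|\phi_s\|\le\|\psi\|$, I would deduce $(1-as)\|B\phi_s\|\le(2a+b)\|\psi\|$, whence, since $a\le1$,
\[
(1-s)\|B\phi_s\|\le(2a+b)\|\psi\|\qquad\text{for all }s\in[0,1).
\]
Thus $\{(1-s)B\phi_s\}_{s\in[0,1)}$ is norm-bounded, so along some sequence $s_k\uparrow1$ it converges weakly, say $(1-s_k)B\phi_{s_k}\rightharpoonup\xi$. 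For any $\chi\in\mathrm{dom}\,(A)\subset\mathrm{dom}\,(B)$, the symmetry of $B$ gives $((1-s_k)B\phi_{s_k},\chi)=(1-s_k)(\phi_{s_k},B\chi)\to0$ because $\|\phi_{s_k}\|\le\|\psi\|$; since $\mathrm{dom}\,(A)$ is dense, this forces $\xi=0$. Letting $k\to\infty$ in the key identity then yields $\|\psi\|^2=-(\xi,\psi)=0$, so $\psi=0$ and $\mathrm{Ran}\,(T+i)$ is dense. The same argument with $-i$ in place of $i$ gives density of $\mathrm{Ran}\,(T-i)$, and the basic criterion completes the proof.

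I expect the main obstacle to be exactly the equality $a=1$. When $a<1$ the conclusion is immediate from Kato--Rellich (a convergent Neumann series for $I+B(A+i\mu)^{-1}$ at large $\mu$), but at $a=1$ that series need not converge and the resolvents $(A+sB+i)^{-1}$ genuinely degenerate as $s\uparrow1$: indeed $\|B\phi_s\|$ must blow up like $(1-s)^{-1}$ whenever $\psi\ne0$, so one cannot pass to the limit in norm. What rescues the argument is that this degeneration is \emph{of order exactly} $(1-s)^{-1}$, so that the rescaled error $(1-s)B\phi_s$ stays bounded; pairing it against the dense domain $\mathrm{dom}\,(A)$ and using the symmetry of $B$ shows that each of its weak limit points is $0$, which is precisely the information needed to exclude $\psi\ne0$.
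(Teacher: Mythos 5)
Your proof is correct: it is precisely the classical argument for W\"ust's theorem (regularize to $A+sB$ with $s<1$ via Kato--Rellich, derive the identity $\|\psi\|^2+(1-s)(B\phi_s,\psi)=0$, show $(1-s)B\phi_s$ is uniformly bounded, and kill its weak limit using the symmetry of $B$ against the dense domain). The paper does not prove this lemma itself but cites it as \cite[Theorem X.14]{ReedSimon75-II}, and your argument is essentially the one given there, so there is nothing to add.
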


By Lemma \ref{lemma-wust}, we know that, if $V$ is relatively $P(D)$-bounded with
relative bound $a\le 1$,
then the operator $P(D)+V$ is essentially self-adjoint on $W^{2m,2}(\rn)$.
Denote by
\begin{align}\label{eqn-def-HOSO}
\cl:=P(D)+V
\end{align}
the nonnegative self-adjoint extension of $P(D)+V$ in $L^2(\rn)$. We
call $\cl$ the {\it $2m$-order Schr\"odinger operator on $\rn$}
(see also Proposition \ref{prop-reb} below for an extension of
$\cl$ to any space $L^p(\rn)$ under some Schechter-type conditions).
Note that, if  $P(D)=-\Delta$ is the Laplace operator, then
$\cl:=-\Delta+V$ is the usual second order Schr\"odinger operator on $\rn$.

To study the operator $\cl$, various conditions on the potential $V$
were introduced in literatures. For instance, Kato first introduced
the Kato class $K_2(\rn)$ in \cite{Kato72},
which is useful in the study of many problems related to
the Schr\"odinger operator $\cl$ (see
\cite{DaHi98,Simon84,ZhYa09}). Recall that a real measurable function $V$ on
$\rn$ is said to be in the {\it Kato class} $K_\az(\rn)$, for any given $\az\in (0,\fz)$,
if
\begin{align}\label{eqn-katoc}
\lim_{\dz\to 0}\dsup_{x\in \rn}\dint_{|y-x|<\dz}|V(y)|w_{\az}(x-y)\,dy=0,
\end{align}
where, for any $x\in \rn\setminus \{\vec 0_n\}$,
\begin{align}\label{def-waz}
w_\az(x):=\begin{cases}
|x|^{\az-n} \ \ & \text{if}\ \az\in (0,n),\\
\log(|x|^{-1}) \ \ & \text{if}\ \az=n,\\
1 \ \ & \text{if}\ \az\in (n,\fz).
\end{cases}
\end{align}
It is known that $K_\az(\rn)$ is a closed subspace of a larger
Banach space $\wz K_\az(\rn)$, which is defined to be the set of  all $V\in
L_\loc^1(\rn)$ such that
\begin{align*}
\|V\|_{\wz K_\az (\rn)}:=\dsup_{x\in\rn} \dint_{|y-x|<1/2}
|V(y)|w_\az(x-y)\,dy<\fz.
\end{align*}

Shen considered a number of Schr\"odinger operators with potentials
respectively in the reverse H\"older class (\cite{Shen95}) and the Morrey space (\cite{Shen02}).
Recall that, for any given $p\in (1,\fz)$, a nonnegative measurable
function $V$ is said to belong to the {\it reverse H\"older class}
$RH_p(\rn)$ if there exists a positive constant $C$ such that,
\begin{align}\label{eqn-RHC}
\lf(\frac{1}{|B|}\dint_{B}[V(x)]^p\,dx\r)^{\frac{1}{p}}\le \frac{C}{|B|}\dint_{B}V(x)\,dx
\end{align}
for any ball $B$ of $\rn$. Also, for any given $p\in (1,\fz)$ and $\lz\in [0,n]$,
a measurable function $f$ is said to be in the {\it Morrey space} $L_{p,\lz}(\rn)$ if
\begin{align}\label{eqn-Morrey}
\|f\|_{L_{p,\lz}(\rn)}:=\dsup_{\substack{
x\in\rn\\
r\in (0,\fz)}}
\lf[r^{\lz-n}\dint_{B(x,r)}\lf|f(y)\r|^p\,dy\r]^{\frac{1}{p}}<\fz,
\end{align}
here and thereafter, for any $x\in \rn$ and $r\in (0,\fz)$, $B(x,r):=\{y\in \rn:\
|y-x|<r\}$; see also \cite{Sch86,Simon84,Stu56} for many other kinds of
potential classes.

\subsection{Generalized Schechter classes \label{s2.2}}

In this subsection, we introduce the generalized Schechter
class which has close relations with the aforementioned potential classes.
In what follows, for any given $r\in (1,\fz)$, we use $L_\loc^r(\rn)$ to denote
the set of all measurable functions $f$ such that $|f|^r\in L_\loc^1(\rn)$.

\begin{definition}\label{def-generalizedSC}
Suppose $V$ is a measurable function on $\rn$.
For any given $\az\in (0,\fz)$, $r\in [1,\fz)$, $t\in [1,\fz]$, and $\dz\in (0,\fz)$,
let
\begin{align}\label{def-Mazqrlz}
M_{\az,r,t,\dz}(V):=\lf\|\lf[\dint_{|y-\cdot|<\dz}\lf|V(y)\r|^r
w_\az({\cdot-y})\,dy\r]^{\frac{1}{r}}\r\|_{L^t(\rn)}
\end{align}
with $w_\az$ as in \eqref{def-waz}.
Assume that ${S}\in\rr$ is a real number.
The {\it generalized Schechter classes} $M_{\az,r,t,{S}}(\rn)$
and $\wz M_{\az,r,t,{S}}(\rn)$ are defined, respectively, by setting
\begin{align}\label{eqn-Mclass}
M_{\az,r,t,{S}}(\rn):=\lf\{V\in L_\loc^r(\rn):\
\|V\|_{M_{\az,r,t,{S}}(\rn)}:=\dsup_{\dz\in (0,\fz)} \dz^{{{S}}}M_{\az,r,t,\dz}(V)<\fz\r\}
\end{align}
and
\begin{align}\label{eqn-wzMclass}
\wz M_{\az,r,t,{S}}(\rn):=\lf\{V\in L_\loc^r(\rn):\
\lim_{\dz\to 0} \dz^{{{{S}}}}M_{\az,r,t,\dz}(V)=0\r\}.
\end{align}
\end{definition}

\begin{remark}\label{rem-example}
The spaces $M_{\az,r,t,{S}}(V)$ and  $\wz M_{\az,r,t,{S}}(\rn)$ are motivated by
the definitions of the now called Schecheter class
in \cite[Chapter 6.4]{Sch86}, where Schechter originally introduced
the following {\it Schechter class}
\begin{align*}
M_{\az,r,t}(\rn):=\lf\{V\in L_\loc^r(\rn):\ M_{\az,r,t,0}(V)<\fz\r\}.
\end{align*}
The parameter ${S}$ in \eqref{eqn-Mclass} and \eqref{eqn-wzMclass} comes
from the scaling of $\dz\in (0,\fz)$.
\end{remark}

The following proposition establishes the relations between the generalized Schechter class and some
other known potential classes.

\begin{proposition}\label{rmk-Scheclass}
Let $\az\in (0,\fz)$, $r\in [1,\fz)$, $t\in [1,\fz]$, and $S\in\rr$ and
let $V$ be a measurable function on $\rn$.
\begin{itemize}
\item [{\rm (i)}]  Then $\wz M_{\az,1,\fz,0}(\rn)=K_\az(\rn)$ with $K_\az(\rn)$ being the
Kato class as in \eqref{eqn-katoc}.

\item [{\rm (ii)}] Let $\az\in (0,n)$, $p\in (nr/\az,\fz)$, and  $0\le V\in RH_p(\rn)$
be as in \eqref{eqn-RHC}.
For any given $\dz\in (0,\fz)$, let $ m\,(V,\dz):=\sup_{x\in\rn}\int_{B(x,\dz)}
V(y)\,dy$. If $  \lim_{\dz\to 0}\dz^{({S}+\az)/r-n}m\,(V,\dz)=0$, then
$$V\in \wz M_{\az,r,\fz,{S}}(\rn).$$
In particular, the above assertion holds true when
$\az\in (0,n)$, ${S}+\az-nr\in[0,\fz)$, and $V\in RH_p(\rn)\cap
L_{\mathrm{unif}}^1(\rn)$ with $p\in (nr/\az,\fz)$, where
$$L_{\mathrm{unif}}^1(\rn):=\lf\{f\in L_\loc^1(\rn):\
\|f\|_{L_{\mathrm{unif}}^1(\rn)}:=\sup_{x\in\rn}\int_{|y-x|<1}|f(y)|\,dy<\fz\r\}.$$

\item [{\rm (iii)}] Let $\az\ne n$, $Sr+\az\in  [0,n]$, and $L_{r,\lz}(\rn)$ be the Morrey space
as in \eqref{eqn-Morrey} with $\lz={S} r+\min\{\az,n\}$ and
$S\in (-\fz,0)$ when $\az\in (0,n)$. Then
$$L_{r,\lz}(\rn)=M_{\az,r,\fz,{S}}(\rn).$$
\end{itemize}
\end{proposition}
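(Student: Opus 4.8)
The plan is to treat the three assertions separately and in increasing order of difficulty: part (i) is a direct definition chase, part (iii) reduces to summing a geometric series, and part (ii) carries the analytic weight. For part (i), I would unwind \eqref{def-Mazqrlz} with $r=1$, $t=\fz$, and $S=0$: then $M_{\az,1,\fz,\dz}(V)=\sup_{x\in\rn}\int_{|y-x|<\dz}|V(y)|w_\az(x-y)\,dy$, so the condition $\lim_{\dz\to0}M_{\az,1,\fz,\dz}(V)=0$ defining $\wz M_{\az,1,\fz,0}(\rn)$ in \eqref{eqn-wzMclass} is exactly \eqref{eqn-katoc}. The only thing worth a remark is that the a priori assumption $V\in L_\loc^1(\rn)$ is automatic here: since $w_\az$ is bounded below by a positive constant on each fixed ball (for $\az\in(0,n)$ one has $w_\az(z)=|z|^{\az-n}\ge\dz^{\az-n}$ on $|z|<\dz$, and for $\az\ge n$ one has $w_\az\ge c>0$ on $|z|<1/2$), finiteness of the Kato integral for a single $\dz$ already forces local integrability of $V$; hence the two classes coincide.

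For part (ii), the crucial observation is that $p>nr/\az$ is precisely the integrability threshold for the Riesz kernel $w_\az(z)=|z|^{\az-n}$ (recall $\az\in(0,n)$) raised to the H\"older-conjugate power $(p/r)'$. I would fix $\dz\in(0,\fz)$ and $x\in\rn$ and apply H\"older's inequality with exponents $p/r$ and $(p/r)'$ to $I_\dz(x):=\int_{|y-x|<\dz}|V(y)|^rw_\az(x-y)\,dy$, splitting off $(\int_{B(x,\dz)}|V|^p)^{r/p}$ times the $L^{(p/r)'}$-norm of $w_\az$ over $B(x,\dz)$. This norm is finite exactly when $p>nr/\az$, and by scaling it equals a constant multiple of $\dz^{\az-nr/p}$. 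For the remaining factor I would invoke the reverse H\"older inequality \eqref{eqn-RHC} on $B(x,\dz)$ to bound the $L^p$-average of $V$ by a constant times its $L^1$-average, which is $\ls\dz^{-n}m(V,\dz)$; raising to the appropriate power and multiplying the two factors, the powers $\dz^{\pm nr/p}$ cancel and one is left with
\begin{align*}
M_{\az,r,\fz,\dz}(V)=\sup_{x\in\rn}[I_\dz(x)]^{1/r}\ls\dz^{\az/r-n}\,m(V,\dz).
\end{align*}
Multiplying by $\dz^S$ and letting $\dz\to0$, the hypothesis on $m(V,\dz)$ (after a harmless comparison of the two powers of $\dz$ that appear, legitimate since $S$ is nonnegative in the regime considered) gives $\dz^SM_{\az,r,\fz,\dz}(V)\to0$, i.e., $V\in\wz M_{\az,r,\fz,S}(\rn)$. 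For the ``in particular'' clause, $V\in L_{\mathrm{unif}}^1(\rn)$ keeps $m(V,\dz)$ bounded for $\dz\in(0,1]$, so the displayed estimate yields $\dz^SM_{\az,r,\fz,\dz}(V)\ls\dz^{S+\az/r-n}$; the condition $S+\az-nr\in[0,\fz)$ makes this exponent nonnegative, and in the single borderline case where it vanishes I would use that $V\in RH_p(\rn)$ is an $A_\fz$-weight, hence reverse-doubling, so that $m(V,\dz)\ls\dz^{n\sz}\|V\|_{L_{\mathrm{unif}}^1(\rn)}$ for some $\sz\in(0,\fz)$ and the limit persists.

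For part (iii), I would distinguish $\az>n$ from $\az\in(0,n)$. If $\az>n$ then $w_\az\equiv1$, so $M_{\az,r,\fz,\dz}(V)=\sup_{x\in\rn}(\int_{B(x,\dz)}|V|^r)^{1/r}$ and, moving $\dz^S$ inside the $r$-th root, $\sup_{\dz\in(0,\fz)}\dz^SM_{\az,r,\fz,\dz}(V)$ is literally the Morrey quasi-norm $\|V\|_{L_{r,\lz}(\rn)}$ of \eqref{eqn-Morrey} with $\lz-n=Sr$, i.e., $\lz=Sr+\min\{\az,n\}$; so the two spaces coincide with equal norms. If $\az\in(0,n)$ (so $S<0$ and $\lz=Sr+\az$), I would prove the two inclusions. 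For $L_{r,\lz}(\rn)\subset M_{\az,r,\fz,S}(\rn)$, split $B(x,\dz)$ into the dyadic annuli $A_j:=\{2^{-j-1}\dz\le|x-y|<2^{-j}\dz\}$, bound $w_\az$ on $A_j$ by $(2^{-j-1}\dz)^{\az-n}$ and $\int_{A_j}|V|^r\le\int_{B(x,2^{-j}\dz)}|V|^r\le\|V\|_{L_{r,\lz}(\rn)}^r(2^{-j}\dz)^{n-\lz}$, and sum the resulting geometric series, which converges because $\az-\lz=-Sr>0$; this gives $M_{\az,r,\fz,\dz}(V)\ls\|V\|_{L_{r,\lz}(\rn)}\dz^{-S}$, i.e., $\|V\|_{M_{\az,r,\fz,S}(\rn)}\ls\|V\|_{L_{r,\lz}(\rn)}$. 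For the reverse inclusion, use the pointwise lower bound $w_\az(z)\ge\dz^{\az-n}$ on $|z|<\dz$ to obtain $\int_{B(x,\dz)}|V|^r\le\dz^{n-\az}[M_{\az,r,\fz,\dz}(V)]^r\le\dz^{n-\az-Sr}\|V\|_{M_{\az,r,\fz,S}(\rn)}^r=\dz^{n-\lz}\|V\|_{M_{\az,r,\fz,S}(\rn)}^r$, which is exactly $\|V\|_{L_{r,\lz}(\rn)}\le\|V\|_{M_{\az,r,\fz,S}(\rn)}$.

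The main obstacle is the scaling bookkeeping in part (ii): one must confirm that $p>nr/\az$ is the sharp exponent for local integrability of $|z|^{(\az-n)(p/r)'}$, track how the reverse H\"older inequality turns $L^p$-averages of $V$ into the quantity $m(V,\dz)$, and verify that the powers of $\dz$ produced by the kernel and by the reverse H\"older bound cancel precisely to $\dz^{\az/r-n}m(V,\dz)$. By contrast, the definition chase in (i), the geometric-series estimate in (iii), and the reverse-doubling remark for the borderline exponent in (ii) are all routine once this computation is in place.
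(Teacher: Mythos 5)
Your proposal follows essentially the same route as the paper's proof: (i) is the same definition chase, (ii) is the same H\"older-plus-reverse-H\"older computation with the scaling $\dz^{\az-nr/p}$ of the Riesz kernel in $L^{(p/r)'}$, and (iii) combines the trivial identification for $\az>n$ with an annuli decomposition for $\az\in(0,n)$ that is just the dyadic version of the paper's layer-cake integral $\int_0^\fz s^{\az-n}\mu_x(B(x,s))\,\frac{ds}{s}$, together with the same lower bound $w_\az\ge\dz^{\az-n}$ on $B(x,\dz)$ for the reverse inclusion. Your two added remarks, the automatic local integrability in (i) and the reverse-doubling argument for the borderline exponent $S+\az-nr=0$ in the ``in particular'' clause of (ii), fill in details the paper leaves implicit, and your parenthetical comparing $\dz^{S+\az/r-n}$ with $\dz^{(S+\az)/r-n}$ correctly flags a discrepancy that is silently present in the paper's own display (where the $1/r$-th power in the definition of $M_{\az,r,\fz,\dz}$ is dropped); just note that this comparison genuinely needs $S\ge 0$ (or $r=1$), which holds in the ``in particular'' regime but is not imposed by the general hypothesis of (ii), so for $S<0$ and $r>1$ both your argument and the paper's prove the implication only with the hypothesis read as $\lim_{\dz\to0}\dz^{S+\az/r-n}m(V,\dz)=0$.
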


\begin{proof}
Note that (i) is an easy consequence of \eqref{eqn-katoc} and \eqref{eqn-wzMclass}.

To prove (ii), by assumptions $\az\in (0,n)$, $p\in (nr/\az,\fz)$,
and $0\le V\in RH_p(\rn)$, we obtain
$(\az-n)(p/r)'+n>0$ and
\begin{align*}
\lim_{\dz\to 0}\dz^{{S}}M_{\az,r,\fz,\dz}(V)&=\lim_{\dz\to 0}\dz^{{S}}
\dsup_{x\in\rn} \lf\{\dint_{|y-x|<\dz}[V(y)]^r|x-y|^{\az-n}\,dy\r\}\\
&\ls \lim_{\dz\to 0}\dz^{{S}+nr/p}\dsup_{x\in\rn} \lf\{ \frac{1}{\dz^n}\dint_{|y-x|<\dz}
[V(y)]^p\,dy\r\}^{\frac{r}{p}}\lf\{\dint_{|y-x|<\dz} |x-y|^{(\az-n)(p/r)'}\,dy\r\}^{\frac{1}{(p/r)'}}\\
&\sim\lim_{\dz\to 0}\dz^{{S}+\az-nr}[m(V,\dz)]^r,
\end{align*}
which turns to $0$ when $  \lim_{\dz\to 0}\dz^{({S}+\az)/r-n}m(V,\dz)=0$.
This, combined with \eqref{eqn-wzMclass}, shows that $V\in \wz
M_{\az,r,\fz,{S}}(\rn)$ and hence  (ii) holds true.

To prove (iii), we consider two cases on the size of $\az$.  If $\az\in (n,\fz)$, then ${{{S}}r\in
(-n,0)}$. Thus, by \eqref{eqn-Morrey} through \eqref{eqn-Mclass}, and
\eqref{def-waz}, it is easy to see that
$L_{r,{{S}}r+n}(\rn)=M_{\az,r,\fz,{S}}(\rn)$.

If $\az\in (0,n)$, for any $x\in\rn$ being fixed,
let $d\mu_x(y):=\mathbf{1}_{B(x,\dz)}(y)|V(y)|^r\,dy$
be a nonnegative measure on $\rn$. Then we obtain
\begin{align*}
{N}_\dz(x):=&\,\dz^{{S}}\lf[\dint_{|y-x|<\dz} |V(y)|^r |x-y|^{\az-n}\,dy\r]^{\frac{1}{r}}
=\dz^{{S}}\lf[\dint_{\rn} |x-y|^{\az-n}\,d\mu_x(y)\r]^{\frac{1}{r}}\\
=&\,(n-\az)^{1/r}\dz^{{S}}\lf[\dint_{0}^\fz s^{n-\az}
\mu_x\lf(\lf\{y:\ |y-x|^{-1}>s\r\}\r)\,\frac{ds}{s}\r]^{\frac{1}{r}}\\
\sim&\,\dz^{{S}}\lf[\dint_{0}^\fz s^{\az-n}
\mu_x\lf(B(x,s)\r)\,\frac{ds}{s}\r]^{\frac{1}{r}}\\
\sim&\,\dz^{{S}}\lf[\dint_{0}^\fz s^{\az-n}
\lf\{\dint_{B(x,s)\cap B(x,\dz)}|V(y)|^r\,dy\r\}
\,\frac{ds}{s}\r]^{\frac{1}{r}}\\
\sim&\,\dz^{{S}}\lf[\dint_{0}^\dz s^{\az-\lz}\lf\{s^{\lz-n}\dint_{B(x,s)}|V(y)|^r\,dy\r\}\,\frac{ds}{s}
+\dz^{n-\lz}\dint_{\dz}^\fz s^{\az-n}\lf\{\dz^{\lz-n}\dint_{B(x,\dz)}|V(y)|^r\,dy\r\}\,\frac{ds}{s}\r]^{\frac{1}{r}}\\
\ls&\,\dz^{{S}+(\az-\lz)/r}\|f\|_{L_{r,\lz}(\rn)},
\end{align*}
where we used \eqref{eqn-Morrey} and the assumption $\lz=Sr+\az<\az$
in the last inequality. This, together
with \eqref{eqn-Mclass} and the assumption $Sr+\az-\lz=0$ when $\az\in (0,n)$, shows that
$$M_{\az,r,\fz,{S}}(V)=\dsup_{\dz\in (0,\fz)}\dsup_{x\in\rn} N_\dz(x)\ls \|f\|_{L_{r,\lz}(\rn)}.$$

On the other hand, by \eqref{eqn-Morrey} and the assumptions that
$\az\in (0,n)$ and ${S} r+\az-\lz=0$ again, we have
\begin{align*}
\|f\|_{L_{r,\lz}(\rn)}&=\dsup_{\dz\in (0,\fz)}\dsup_{x\in\rn}\lf[\dz^{\lz-n}
\dint_{B(x,\dz)}\lf|f(y)\r|^r\,dy\r]^{\frac{1}{r}}\\
&\ls \dsup_{\dz\in (0,\fz)}\dsup_{x\in\rn}\lf[\dz^{\lz-\az}
\dint_{B(x,\dz)}\lf|f(y)\r|^r|x-y|^{\az-n}\,dy\r]^{\frac{1}{r}}\\
&\ls \dsup_{{\dz>0}} \dz^{S}\dsup_{x\in\rn}\lf[\dint_{B(x,\dz)}\lf|f(y)\r|^r|x-y|^{\az-n}\,dy\r]^{\frac{1}{r}},
\end{align*}
which, combined with \eqref{eqn-Mclass}, indicates $\|f\|_{L_{r,\lz}(\rn)}\ls
M_{\az,r,\fz,{S}}(V)$. This shows (iii) and hence finishes
the proof of Proposition \ref{rmk-Scheclass}.
\end{proof}

\begin{remark}\label{ex-GSC}
For any given $\az\in (0,n)$, $r\in [1,\fz)$,
$a\in (-{\az}/{r},\fz)$, $\dz\in (0,\fz)$, and, for any $x\in\rn\setminus \{\vec 0_n\}$,
let $V(x)=\pm|x|^a$ and
\begin{align}\label{eqn-ndz}
N_{\az,r,\dz}(x):=\lf[\dint_{|y-x|<\dz}|V(y)|^r |x-y|^{\az-n}\,dy\r]^{\frac{1}{r}}.
\end{align}
By an elementary calculation, we find that there exists a positive constant $C$, independent
of $\dz$, such that, for any $x\in\rn$,
\begin{align*}
N_{\az,r,\dz}(x)\le C
\begin{cases}
\dz^{\az/r}|x|^a\ \ & \text{if}\ |x|\ge 2\dz,\\
\dz^{\az/r+a}\ \ & \text{if}\ |x|< 2\dz.
\end{cases}
\end{align*}
If $a$ further satisfies $a\in(-\fz,-{n}/{t})$, then
\begin{align*}
M_{\az,r,t,\dz}(V)=\|N_{\az,r,\dz}\|_{L^t(\rn)}\ls \dz^{\az/r+a+n/t}
\end{align*}
with the implicit positive constant independent of $\dz$.
Thus, we conclude that, for any given
$\az\in (0,n)$, $r\in [1,\fz)$, $t\in [1,\fz]$, $S\in\rr$, and
$a\in (-{\az}/{r},-{n}/{t})$,
\begin{align*}
\pm|x|^a\in
\begin{cases}
\displaystyle M_{\az,r,t,S}(\rn) \ \ &\text{if}\   S+\displaystyle\frac{\az}{r}+a+\frac{n}{t}=0,\\
\displaystyle \wz M_{\az,r,t,S}(\rn) \ \ &\text{if}\  S+\displaystyle\frac{\az}{r}+a+\frac{n}{t}>0.
\end{cases}
\end{align*}
In particular, by applying Proposition \ref{rmk-Scheclass}(i),
we know that $\pm|x|^a$ is in the Kato class $K_\az(\rn)$ as in \eqref{eqn-katoc}
if $a\in (-\az,0)$.

Similarly, for any given $a\in (-\fz,-n/t)$ and any $x\in\rn$,
let $V(x)=(1+|x|)^a$ and
$N_{\az,r,\dz}(x)$ be as in \eqref{eqn-ndz}.
By an elementary calculation, we have
\begin{align*}
N_{\az,r,\dz}(x)\ls
\begin{cases}
\dz^{\az/r}|x|^a\ \ & \text{if}\ |x|\ge 2\dz,\\
\dz^{\az/r}\ \ & \text{if}\ |x|< 2\dz
\end{cases}
\end{align*}
with the implicit positive constant independent of $\dz$ and $x$.
This implies that
$$\pm(1+|x|)^a \in\wz M_{\az,r,t,S}(\rn)$$
if $S+\frac{\az}{r}+a+\frac{n}{t}>0$. In particular,
$\pm(1+|x|)^a$ is in the Kato class $K_\az(\rn)$ as in \eqref{eqn-katoc}
when $a\in (-\az,0)$.
\end{remark}

We end this section by giving some embedding properties of generalized Schechter classes.
Another embedding property, based on the properties of the
Bessel potential, is established in Proposition \ref{prop-EBP2} below.

\begin{proposition}\label{prop-EBP}
Let $\az\in (0,n)$, $r\in [1,\fz)$, $t\in [1,\fz]$, and $S\in\rr$.
\begin{itemize}
\item[{\rm (i)}] Then $M_{\az,r,\fz,{S}}(\rn)\subset L_{\mathrm{unif}}^r(\rn)$, where
\begin{align}\label{eqn-defLu}
L_{\mathrm{unif}}^r(\rn):=\lf\{f\in L_\loc^r(\rn):\ \|f\|_{L_{\mathrm{unif}}^r(\rn)}:=\dsup_{x\in\rn}
\lf[\dint_{|y-x|<1} |f(y)|^r\,dy\r]^{1/r}<\fz\r\}.
\end{align}

\item [{\rm(ii)}] For any $\az_1\in (0,\az]$,
\begin{align*}
M_{\az-\az_1,r,t,S+\az_1/r}(\rn)\subset M_{\az,r,t,S}(\rn).
\end{align*}

\item [{\rm(iii)}] For any $a\in (1,\fz)$ and $\tz\in (0,1)$ satisfying
$  1-\frac{n}{(n-\az)a'}<\tz<\frac{n}{(n-\az)a}$ with $  a':=\frac{a}{a-1}$,
\begin{align*}
M_{\az_1,ra,t,S+\wz S}(\rn)\subset M_{\az,r,t,S}(\rn),
\end{align*}
where $\az_1:=n+(\az-n)\tz a$ and
$  \wz{S}:=\frac{(\az-n)(1-\tz)a'+n}{a'r}$.
\end{itemize}
\end{proposition}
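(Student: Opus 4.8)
The three parts are increasingly technical, so I would handle them in order, each time reducing the claim to an elementary pointwise or integral estimate for the weight $w_\az$ together with H\"older's inequality on the ball $B(\cdot,\dz)$.

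\emph{Part (i).} The plan is to estimate $\|f\|_{L_{\mathrm{unif}}^r(\rn)}$ by a single value of $\dz$ in the defining supremum of $M_{\az,r,\fz,S}(\rn)$. Fix $x\in\rn$; since $\az\in(0,n)$ we have $w_\az(x-y)=|x-y|^{\az-n}\ge C$ for all $y$ with $|y-x|<1$ (say $\ge 1$ if $\az-n<0$, which it is). Hence, choosing $\dz=1$,
\begin{align*}
\dint_{|y-x|<1}|f(y)|^r\,dy\le \dint_{|y-x|<1}|f(y)|^r|x-y|^{\az-n}\,dy
\le \lf[M_{\az,r,\fz,1}(f)\r]^r\le \|f\|_{M_{\az,r,\fz,S}(\rn)}^r,
\end{align*}
after multiplying by $\dz^{S}=1$. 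Taking the supremum over $x\in\rn$ and the $r$-th root gives (i). The only subtlety is reconciling the normalization $\dz^{S}$ with the choice $\dz=1$, which is trivial.

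\emph{Part (ii).} Here I would exploit that replacing $\az$ by the smaller exponent $\az-\az_1$ makes the weight \emph{larger} on the scale $\dz$, at the cost of a power of $\dz$. Concretely, on $\{|y-\cdot|<\dz\}$ one has $w_{\az-\az_1}(\cdot-y)=|\cdot-y|^{\az-\az_1-n}=|\cdot-y|^{-\az_1}|\cdot-y|^{\az-n}\ge \dz^{-\az_1}w_\az(\cdot-y)$ (valid since $\az_1\ge 0$ and $\az-n<0\le \az-\az_1-n$ need not hold, but the factor $|\cdot-y|^{-\az_1}\ge \dz^{-\az_1}$ on the ball is all that is needed). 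Therefore $M_{\az,r,t,\dz}(V)\le \dz^{\az_1/r}M_{\az-\az_1,r,t,\dz}(V)$ pointwise before taking the $L^t$-norm, whence
\begin{align*}
\dz^{S}M_{\az,r,t,\dz}(V)\le \dz^{S+\az_1/r}M_{\az-\az_1,r,t,\dz}(V),
\end{align*}
and taking $\sup_{\dz\in(0,\fz)}$ yields $\|V\|_{M_{\az,r,t,S}(\rn)}\le \|V\|_{M_{\az-\az_1,r,t,S+\az_1/r}(\rn)}$, which is exactly the claimed inclusion. One must check that when $\az-\az_1\le 0$ the convention in \eqref{def-waz} is not triggered, i.e. $\az-\az_1>0$; the hypothesis $\az_1\in(0,\az]$ gives $\az-\az_1\in[0,\az)$, and the boundary case $\az_1=\az$ has to be treated by noting $w_0$ is not defined — so I would either assume $\az_1<\az$ or interpret the endpoint via $L_{\mathrm{unif}}^r$ using part (i); I expect the paper intends $\az_1\in(0,\az)$ with the endpoint following from (i).

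\emph{Part (iii).} This is the real obstacle: it is a genuine interpolation-type statement trading integrability $r\mapsto ra$ for the exponent $\az\mapsto\az_1=n+(\az-n)\tz a$ and a shift of scaling by $\wz S=\frac{(\az-n)(1-\tz)a'+n}{a'r}$. The plan is to apply H\"older's inequality with exponents $a$ and $a'$ to the integral $\int_{|y-x|<\dz}|V(y)|^r w_\az(x-y)\,dy$, splitting the integrand as $|V(y)|^r\cdot 1$ in a way that distributes the weight. Writing $w_\az(x-y)=|x-y|^{\az-n}=|x-y|^{(\az-n)\tz}\cdot|x-y|^{(\az-n)(1-\tz)}$ and applying H\"older,
\begin{align*}
\dint_{|y-x|<\dz}|V(y)|^r|x-y|^{\az-n}\,dy
\le\lf[\dint_{|y-x|<\dz}|V(y)|^{ra}|x-y|^{(\az-n)\tz a}\,dy\r]^{1/a}
\lf[\dint_{|y-x|<\dz}|x-y|^{(\az-n)(1-\tz)a'}\,dy\r]^{1/a'}.
\end{align*}
The first bracket is $[M_{\az_1,ra,\ast,\dz}$-integrand$]^{1/a}$ with $\az_1=n+(\az-n)\tz a$, provided $\az_1>0$, which is the \emph{upper} constraint $\tz<\frac{n}{(n-\az)a}$ (this forces $(\az-n)\tz a>-n$). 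The second bracket converges iff $(\az-n)(1-\tz)a'>-n$, i.e. $\tz>1-\frac{n}{(n-\az)a'}$ — the \emph{lower} constraint — and then equals $C\dz^{[(\az-n)(1-\tz)a'+n]/a'}$ by direct computation in polar coordinates. Raising to the power $1/r$, taking $L^t$-norms in $x$ (the weight $w$ only enters inside, so the $L^t_x$ structure is preserved after the $y$-integral), and bookkeeping the $\dz$-powers: the second factor contributes $\dz^{[(\az-n)(1-\tz)a'+n]/(a'r)}=\dz^{\wz S}$, so that
\begin{align*}
\dz^{S}M_{\az,r,t,\dz}(V)\ls \dz^{S+\wz S}M_{\az_1,ra,t,\dz}(V),
\end{align*}
and taking the supremum over $\dz$ gives the inclusion. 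The delicate points are: (1) verifying that the two inequalities on $\tz$ are precisely what make both $\az_1>0$ and the auxiliary integral finite; (2) making sure the $L^t(\rn)$-norm survives the H\"older split — this works because the second bracket is a \emph{constant} in $x$ (it depends only on $\dz$), so one factors it out before taking $\|\cdot\|_{L^t}$; (3) confirming the algebra $\az_1=n+(\az-n)\tz a$ and $\wz S=\frac{(\az-n)(1-\tz)a'+n}{a'r}$ matches the exponents produced. I expect step~(iii) to consume most of the write-up, while (i) and (ii) are a few lines each.
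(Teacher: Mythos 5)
Your proposal is correct and follows essentially the same route as the paper's proof: part (i) by taking $\dz=1$ and using $w_\az\ge 1$ on the unit ball, part (ii) by the pointwise comparison $w_\az(\cdot-y)\le\dz^{\az_1}w_{\az-\az_1}(\cdot-y)$ on $\{|y-\cdot|<\dz\}$, and part (iii) by splitting $|x-y|^{\az-n}$ into the $\tz$ and $1-\tz$ powers and applying H\"older with exponents $a,a'$, the two constraints on $\tz$ guaranteeing exactly that $\az_1>0$ and that the auxiliary integral converges to a constant multiple of $\dz^{\wz S r}$. Your observations about factoring the $x$-independent bracket out of the $L^t$-norm and about the endpoint $\az_1=\az$ in (ii) are consistent with what the paper does implicitly.
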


\begin{proof}
Note that (i) is an easy consequence of \eqref{eqn-defLu} and
\eqref{eqn-Mclass} by taking $\dz=1$.

To prove (ii), let $V$ be a measurable function on $\rn$.
It follows from \eqref{def-Mazqrlz} and \eqref{def-waz} that
\begin{align*}
\dz^{S}M_{\az,r,t,\dz}(V)&=\dz^S \lf\|\lf[\dint_{|y-\cdot|<\dz}
\lf|V(y)\r|^r|y-\cdot|^{\az-n}\,dy\r]^{1/r}\r\|_{L^t(\rn)}\\
&\le\dz^{S+\az_1/r} \lf\|\lf[\dint_{|y-\cdot|<\dz}
\lf|V(y)\r|^r|y-\cdot|^{\az-\az_1-n}\,dy\r]^{1/r}\r\|_{L^t(\rn)}
=\dz^{S+\az_1/r}M_{\az-\az_1,r,t,\dz}(V),
\end{align*}
which, together with \eqref{eqn-Mclass}, shows (ii).

To prove (iii), by \eqref{def-Mazqrlz}, we write
\begin{align*}
\dz^{S}M_{\az,r,t,\dz}(V)&=\dz^S \lf\| \lf[\dint_{|y-\cdot|<\dz}
\lf|V(y)\r|^r|y-\cdot|^{\az-n}\,dy\r]^{1/r} \r\|_{L^t(\rn)}\\
&\le \dz^S \lf\| \lf[\dint_{|y-\cdot|<\dz}
\lf|V(y)\r|^{ra}|y-\cdot|^{(\az-n)\tz a}\,dy\r]^{1/(ar)}\r.\\
&\lf.\hs\hs\hs\times\lf[\dint_{|y-\cdot|<\dz}
|y-\cdot|^{(\az-n)(1-\tz)a'}\,dy\r]^{1/(a'r)} \r\|_{L^t(\rn)}.
\end{align*}
Since $  1-\frac{n}{(n-\az)a'}<\tz<\frac{n}{(n-\az)a}$, we have
$n+(\az-n)(1-\tz)a'>0$ and $n+(\az-n)\tz a\in (0,n)$, which further
implies that
\begin{align*}
\dz^{S}M_{\az,r,t,\dz}(V)&\ls \dz^{S+\frac{(\az-n)(1-\tz)a'+n}
{a'r}}\lf\| \lf[\dint_{|y-\cdot|<\dz}
\lf|V(y)\r|^{ra}|y-\cdot|^{(\az-n)\tz a}\,dy\r]^{1/(ar)} \r\|_{L^t(\rn)}\\
&\sim \dz^{S+ \wz{S}}M_{\az_1,ra,t,\dz}(V),
\end{align*}
where we used the definitions of $\az_1$ and $\wz S$ in the last
equality. This, combined with \eqref{eqn-Mclass}, shows (iii)
and hence finishes the proof of Proposition
\ref{prop-EBP}.
\end{proof}

\section{Boundedness of $T$-operators\label{s3}}

Let $s,\,\dz\in (0,\fz)$ and $V$ be a measurable function on $\rn$.
The {\it $T$-operator} $T_{s,\dz}$ is defined by setting,
for any $f\in \mathcal{S}(\rn)$ (the set of all Schwartz functions)
and $x\in \rn$,
\begin{align}\label{def-Tlz}
T_{s,\dz}(f)(x):=V (G_{s,\dz}\ast f)(x),
\end{align}
where
\begin{align}\label{def-Besselp}
G_{s,\dz}(x):=\mathcal{F}^{-1} \lf[(2\pi)^{-n/2} \lf(\dz^2+|\xi|^2\r)^{-s/2}\r](x)
\end{align}
denotes the {\it $s$-order Bessel potential function} and $\mathcal{F}^{-1}$ the inverse
Fourier transform. If $\dz=1$, we remove the subscript $\dz$
and write $G_s(x)$ simply. Recall that, for any $f\in \mathcal{S}(\rn)$ and
$\xi$, $x\in\rn$,
$\mathcal{F}(f)(\xi):=\frac{1}{(2\pi)^{n/2}}\int_\rn e^{-ix\cdot \xi} f(x)\,dx$ and
$\mathcal{F}^{-1}(f)(x):=\mathcal{F}(f)(-x)$.
Moreover,
\begin{align}\label{def-Besselp2}
G_{s,\dz}\ast f=(\dz^2-\Delta)^{-s/2}f.
\end{align}
The main purpose of this section is to study the boundedness of the
$T$-operator as in \eqref{def-Tlz},
which plays an important role in the perturbation
estimates for the resolvent of the higher order Schr\"odinger operator
$\cl$. We obtain four kinds of boundedness of the $T$-operator
by following the ideas used in \cite[Chapter 6]{Sch86}.

\subsection{The first and the second boundedness \label{s3.1}}

Recall the following basic properties of the Bessel potential function $G_{s,\dz}$
from \cite{Ste70,Sch86}.

\begin{lemma}\label{lem-s3-Besselp}
Let $s$, $\dz\in (0,\fz)$ and $G_{s,\dz}$ be as in \eqref{def-Besselp}.
The following assertions hold true:
\begin{enumerate}
\item[{\rm (i)}] $G_{s,\dz}(x)=\dz^{n-s}G_s(\dz x)$ for any $x\in \rn$.

\item[{\rm(ii)}] $G_s(x)\le C_{(s)} w_s(x)$ for any $x\in \rn$, where $w_s$ is as in \eqref{def-waz}
and the positive constant $C_{(s)}$ depends only on $s$ and $n$.
The converse inequality also holds true when $x\in \rn$ satisfies $|x|<1$.

\item[{\rm (iii)}] For any given $a\in (0,1)$ and $b\in (0,\fz)$, there exists a positive
constant $C_{(s,a,b)}$,
depending on $s$, $a$, $b$, and $n$, such that, for any $|x|>b$,
\begin{align*}
G_s(x)\le C_{(s,a,b)}e^{-a|x|}.
\end{align*}

\item[{\rm (iv)}] For any given $s\ne n<sq'$ and any $x\in\rn$,
\begin{align*}
[G_s(x)]^q\le C_{(s,q)} G_{q(s-n)+n}(x),
\end{align*}
where the positive constant $C_{(s,q)}$ depends only on $s$, $q$, and $n$.
\end{enumerate}
\end{lemma}

The following lemma is useful to the first boundedness of the $T$-operator.

\begin{lemma}\label{lem-s3-Gslz}
Let $p\in (0,\fz), q\in [1,\,\fz)$, $t\in [1,\,\fz]$, $s\in (0,\fz)$, and $\az\in (0,n)$.
Then there exists a positive constant $C$ such that, for any $\dz\in (0,\fz)$ and
  measurable function $V$ on $\rn$,
\begin{align*}
\lf\|\lf\{\dint_{|y-\cdot|\ge 1/\dz}
\lf|V(y)\r|^q [G_{s,\dz}(\cdot-y)]^p\,dy\r\}^{\frac{1}{q}}\r\|_{L^t(\rn)}
\le C\dz^{[(n-s)p+\az-n]/q} M_{\az,q,t,1/\dz}(V).
\end{align*}
\end{lemma}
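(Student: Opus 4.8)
The idea is to reduce the claimed inequality, which involves the scaled Bessel function $G_{s,\dz}$ on the region $\{|y-\cdot|\ge 1/\dz\}$, to a statement about the unscaled quantity $M_{\az,q,t,1/\dz}(V)$ via the scaling identity of Lemma \ref{lem-s3-Besselp}(i). First I would use $G_{s,\dz}(x)=\dz^{n-s}G_s(\dz x)$ together with the pointwise bound $G_s(x)\le C_{(s)}w_s(x)$ from Lemma \ref{lem-s3-Besselp}(ii). On the set $\{|x-y|\ge 1/\dz\}$ we have $|\dz(x-y)|\ge 1$, so $w_s(\dz(x-y))$ is either $|\dz(x-y)|^{s-n}$ (if $s<n$), $\log(|\dz(x-y)|^{-1})\le 0$ (if $s=n$, a case we may discard since the left-hand side is then zero, or bounded trivially), or $1$ (if $s>n$). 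The point is that for $|z|\ge 1$ one always has $w_s(z)\le C\,w_s(z)\cdot$ (anything), but more usefully $[G_s(z)]^p\lesssim |z|^{(s-n)p}\wedge 1$; to convert the exponent $(s-n)p$ into the exponent $\az-n$ appearing in $w_\az$, observe that on $\{|z|\ge 1\}$ we need $|z|^{(s-n)p}\lesssim |z|^{\az-n}$, i.e.\ the plan is to simply compare powers of $|z|$ after pulling out the $\dz$-factors.

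More precisely, the plan is: write $z:=\dz(x-y)$, so that
\begin{align*}
\lf|V(y)\r|^q [G_{s,\dz}(x-y)]^p
=\dz^{(n-s)p}\lf|V(y)\r|^q [G_s(\dz(x-y))]^p
\lesssim \dz^{(n-s)p}\lf|V(y)\r|^q |\dz(x-y)|^{\az-n}
\end{align*}
on $\{|x-y|\ge 1/\dz\}$, using $[G_s(z)]^p\lesssim |z|^{\az-n}$ for $|z|\ge 1$ (valid once we check the exponent comparison, which is where the constraints are used). This gives
\begin{align*}
\lf|V(y)\r|^q [G_{s,\dz}(x-y)]^p
\lesssim \dz^{(n-s)p+\az-n}\lf|V(y)\r|^q |x-y|^{\az-n}
=\dz^{(n-s)p+\az-n}\lf|V(y)\r|^q w_\az(x-y),
\end{align*}
since $\az\in(0,n)$. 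Then I would integrate over $\{|y-x|\ge 1/\dz\}$, take the $q$-th root, pull out the factor $\dz^{[(n-s)p+\az-n]/q}$, bound the remaining integral by the one over all of $\rn$ restricted appropriately (actually the integral over $\{|y-x|\ge 1/\dz\}$ already matches the definition once we note $M_{\az,q,t,1/\dz}(V)$ is built from the integral over $\{|y-\cdot|<1/\dz\}$—so here I must be careful: the region in the lemma is the complement). This is the subtle point: the definition \eqref{def-Mazqrlz} of $M_{\az,q,t,\dz'}(V)$ integrates over $\{|y-\cdot|<\dz'\}$ with $\dz'=1/\dz$, whereas the lemma integrates over $\{|y-\cdot|\ge 1/\dz\}$. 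Hence the comparison $[G_s(z)]^p\lesssim |z|^{\az-n}$ must hold for $|z|\ge 1$; since $|z|^{s-n}$ is what $G_s$ looks like near infinity only when $s<n$, and otherwise $G_s$ decays exponentially (Lemma \ref{lem-s3-Besselp}(iii)), in all cases $[G_s(z)]^p\lesssim |z|^{\az-n}$ for $|z|\ge1$ as long as $\az<n$: if $s<n$ we need $(s-n)p\le \az-n$; if this fails we instead just use $|z|^{(s-n)p}\le 1\le$ hmm. I would therefore split into the genuinely decaying tail and use that for $|z|\ge 1$, $G_s(z)\lesssim e^{-|z|/2}\lesssim |z|^{(\az-n)/p}$ when $\az<n$, which always holds because $e^{-|z|/2}$ decays faster than any negative power; combined with the polynomial case $s<n$ via $|z|^{s-n}\lesssim 1$ and then absorbing into the exponential tail bound of Lemma \ref{lem-s3-Besselp}(iii) applied with $b=1$. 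So the unified estimate is $[G_s(z)]^p\lesssim |z|^{\az-n}$ for all $|z|\ge1$, valid for every $s\in(0,\fz)$ and $\az\in(0,n)$.

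With that pointwise bound in hand, the rest is bookkeeping: after substituting and factoring out $\dz^{[(n-s)p+\az-n]/q}$, the inner integral becomes $\int_{|y-x|\ge 1/\dz}|V(y)|^q|x-y|^{\az-n}\,dy$, which I would dominate by $\int_{|y-x|<1/\dz}|V(y)|^q w_\az(x-y)\,dy$ up to a constant—wait, that is backwards too. The correct route: one expects $M_{\az,q,t,1/\dz}(V)$ to control integrals over \emph{small} balls; to handle the \emph{far} region $\{|y-x|\ge1/\dz\}$ one uses the exponential decay $G_s(z)\lesssim e^{-a|z|}$ from Lemma \ref{lem-s3-Besselp}(iii) and a standard annular decomposition $\{|y-x|\ge 1/\dz\}=\bigcup_{k\ge 0}\{2^k/\dz\le |y-x|<2^{k+1}/\dz\}$, on each annulus bounding $G_{s,\dz}(x-y)\lesssim \dz^{n-s}e^{-a2^k}$ times a factor, and on each annulus estimating $\int |V(y)|^q\,dy$ by covering it with $O(2^{kn})$ balls of radius $1/\dz$, each contributing at most (a constant times) $\dz^{\,n-\az}M_{\az,q,t,1/\dz}(V)$-type mass after inserting the weight $w_\az$. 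The geometric factor $e^{-a2^k}2^{kn}$ sums to a constant. Collecting powers of $\dz$ yields exactly $\dz^{[(n-s)p+\az-n]/q}$, and taking $L^t$ norms throughout (Minkowski's integral inequality across the countably many annuli) finishes the argument. \textbf{The main obstacle} I anticipate is precisely this matching of regions and weights—the definition of $M_{\az,q,t,\dz}$ uses the truncated weight $w_\az\mathbf 1_{\{|\cdot|<\dz\}}$ near the origin, whereas the lemma concerns the complementary far region, so the proof must exploit the exponential decay of the Bessel kernel (not merely its polynomial size) to beat the volume growth of the annuli, and one must track the $\dz$-homogeneity carefully through the scaling $G_{s,\dz}(x)=\dz^{n-s}G_s(\dz x)$ and through the $L^t(\rn)$ norm.
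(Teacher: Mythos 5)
Your final argument is essentially the paper's proof: the paper likewise decomposes $\{|y-x|\ge 1/\dz\}$ into annuli of width $1/\dz$, covers each annulus by $O(k^n)$ balls of radius $1/\dz$, uses $[G_{s,\dz}(x-y)]^p\ls\dz^{(n-s)p}e^{-akp}$ from Lemma \ref{lem-s3-Besselp}(i) and (iii) to beat the $O(k^n)$ growth, and inserts the weight $|\dz(x-y)|^{\az-n}\ge 1$ on balls of radius $1/\dz$ to recover $M_{\az,q,t,1/\dz}(V)$ after translating in the $L^t$ norm. You correctly identified and discarded the purely pointwise first attempt, and aside from a harmless sign slip in the per-ball factor (it should be $\dz^{\az-n}$, not $\dz^{n-\az}$, though your final exponent is right), the proposal is sound.
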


\begin{proof}
For any given $\dz\in (0,\fz)$ and $k\in\nn$,
let $S_{k,\dz}:=\{x\in\rn:\ k/\dz\le |x|< (k+1)/\dz\}$
be the annulus with center at the origin,
$Q_k$ the inscribed cube of $B(\vec 0_n,k/\dz)$,
and $Q_{k+1}$ the externally tangent cube
of $B(\vec 0_n,(k+1)/\dz)$. Let $Q_0$ be the inscribed cube of
$B(\vec 0_n,1/\dz)$ (see Figure \ref{covering} as below).
\begin{figure}[ht]
    \centering
    \includegraphics[width=0.6\textwidth]{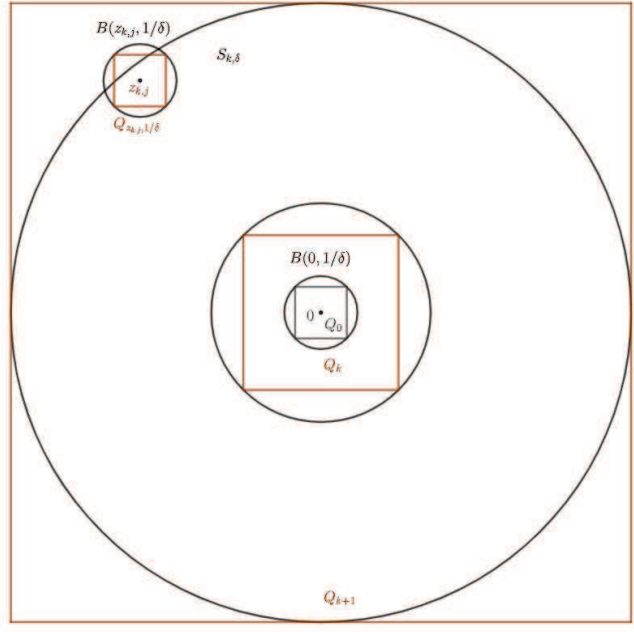}
    \caption{The covering of the annulus $S_{k,\dz}$}
    \label{covering}
\end{figure}

Since the volume
$ |Q_{k+1}\setminus Q_k|=(\frac{2}{\dz})^n[(k+1)^n-(\frac{k}{\sqrt n})^n]$ and
$  |Q_0|=(\frac{2}{\dz})^n(\frac{1}{\sqrt n})^n$, we know that
$S_{k,\dz}$ can be covered by $N(k)\le c k^{n}$ balls
$\{B(z_{k,j},1/\dz)\}_{j=1}^{N(k)}$  which consist of translations of
$B(\vec 0_n,1/\dz)$ with the implicit constant $c$ depending only on $n$,
that is,
\begin{align}\label{eqn-covering}
S_{k,\dz}\subset \bigcup_{j=1}^{N(k)}B(z_{k,j},1/\dz).
\end{align}
Note that, if $y-x\in B(z_{k,j},1/\dz)$, we then have $y\in B(z_{k,j}+x,1/\dz)$.

Now, we write by the assumption $q\in [1,\fz)$ that
\begin{align*}
&\lf\|\lf\{\dint_{|y-\cdot|\ge 1/\dz}|V(y)|^q
[G_{s,\dz}(\cdot-y)]^p\,dy\r\}^{\frac{1}{q}}\r\|_{L^t(\rn)}\\
&\hs\le\dsum_{k=1}^\infty \lf\|\lf\{\dint_{k/\dz\le|\cdot-y|<(k+1)/\dz}
|V(y)|^q[G_{s,\dz}(\cdot-y)]^p\,dy\r\}^{\frac{1}{q}}\r\|_{L^t(\rn)}\\
&\hs=:\dsum_{k=1}^\infty\mathrm{I}_k.
\end{align*}

Since $\dz| x-y|\ge k\ge 1$, applying (i) and (iii) of Lemma \ref{lem-s3-Besselp},
we have
\begin{align*}
[G_{s,\dz}(x-y)]^p = \dz^{(n-s)p} [G_s(\dz(x-y))]^p\ls \dz^{(n-s)p} e^{-akp}.
\end{align*}
This, together with \eqref{eqn-covering}, \eqref{def-Mazqrlz}, and the assumption $\az\in (0,n)$,
implies that
\begin{align*}
\dsum_{k=1}^\infty\mathrm{I}_k&
\ls \dsum_{k=1}^\infty\dz^{(n-s)p/q}e^{-akp/q}\lf\|\lf[\dsum_{j=1}^{N(k)} \dint_{B(z_{k,j}+\cdot,1/\dz)}|V(y)|^q\,dy\r]^{1/q}\r\|_{L^t(\rn)}\\
&\ls\dsum_{k=1}^\infty\dz^{(n-s)p/q} e^{-akp/q}\dsum_{j=1}^{N(k)}\lf\|\lf[ \dint_{B(z_{k,j}+\cdot,1/\dz)}|V(y)|^q\,dy\r]^{1/q}\r\|_{L^t(\rn)}\\
&\ls \dsum_{k=1}^\infty\dz^{(n-s)p/q} e^{-akp/q}N(k) \lf\|\lf[ \dint_{|y-\cdot|<1/\dz}|V(y)|^q|\dz(\cdot-y)|^{\az-n}\,dy\r]^{1/q}\r\|_{L^t(\rn)}\\
&\sim \dz^{(n-s)p/q+(\az-n)/q} M_{\az,q,t,1/\dz}(V),
\end{align*}
which completes the proof of Lemma \ref{lem-s3-Gslz}.
\end{proof}

The following proposition establishes the first boundedness of the  $T$-operator from $L^p(\rn)$
to $L^q(\rn)$.

\begin{proposition}\label{prop-bdT1}
Let $p\in [1,\,\fz]$, $s\in (0,n)$, $q\in [1,\,n/(n-s))$, $\az\in (0,(s-n)q+n]$, and
$\dz\in (0,\fz)$. Assume that $V$ is a measurable function on $\rn$ satisfying
$M_{\az,q,p',1/\dz}(V)<\fz$.  Then the operator $T_{s,\dz}$ can be extended to a bounded
linear operator
from $L^p(\rn)$ to $L^q(\rn)$. Moreover, there exists a positive constant $C$, independent of $\dz$,
such that, for any $f\in L^p(\rn)$,
\begin{align}\label{BC1}
\lf\|T_{s,\dz}(f)\r\|_{L^q(\rn)}\le C \dz^{[(n-s)q+\az-n]/q} M_{\az,q,p',1/\dz}(V)\|f\|_{L^p(\rn)}.
\end{align}
\end{proposition}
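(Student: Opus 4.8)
The plan is to reduce the full kernel $G_{s,\dz}$ to the two regimes $|x-y|<1/\dz$ and $|x-y|\ge 1/\dz$, controlled respectively by Lemma \ref{lem-s3-Besselp}(ii) (the near-diagonal estimate $G_s\ls w_s$) and by Lemma \ref{lem-s3-Gslz} (the far estimate, already packaged with the exponential decay from Lemma \ref{lem-s3-Besselp}(iii)). By a standard density argument it suffices to prove \eqref{BC1} for $f\in\mathcal{S}(\rn)$, and by the scaling identity $G_{s,\dz}(x)=\dz^{n-s}G_s(\dz x)$ of Lemma \ref{lem-s3-Besselp}(i) everything can be written in terms of $G_s$ at scale $\dz$; this is also where the powers of $\dz$ in \eqref{BC1} will be bookkept.

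First I would split
\begin{align*}
\lf|T_{s,\dz}(f)(x)\r|&\le |V(x)|\dint_{|x-y|<1/\dz}G_{s,\dz}(x-y)|f(y)|\,dy
+|V(x)|\dint_{|x-y|\ge1/\dz}G_{s,\dz}(x-y)|f(y)|\,dy\\
&=:A(x)+B(x),
\end{align*}
and estimate $\|A\|_{L^q(\rn)}$ and $\|B\|_{L^q(\rn)}$ separately. For the near term, I would use H\"older with exponents $p$ and $p'$ in the $y$-integral (here is where the assumption involving $p'$ enters), bounding
\begin{align*}
A(x)\le |V(x)|\lf[\dint_{|x-y|<1/\dz}[G_{s,\dz}(x-y)]^{p'}\,dy\r]^{1/p'}\|f\|_{L^p(\rn)};
\end{align*}
then Lemma \ref{lem-s3-Besselp}(ii) gives $G_{s,\dz}(x-y)=\dz^{n-s}G_s(\dz(x-y))\ls\dz^{n-s}|\dz(x-y)|^{s-n}=\dz^{s-n}\cdot\dz^{n-s}\cdots$ — more precisely, $G_{s,\dz}(x-y)\ls\dz^{-s}|x-y|^{s-n}$ when $s<n$ — so raising to power $p'$, integrating over the ball of radius $1/\dz$ and comparing with $w_\az$ via the condition $\az\le(s-n)q+n$ (equivalently $\az-n\le(s-n)q$), I would produce a bound of the form $|V(x)|\cdot(\text{weighted ball integral})^{1/p'}$ whose $L^q$ norm in $x$ is exactly $\dz^{[(n-s)q+\az-n]/q}M_{\az,q,p',1/\dz}(V)\|f\|_{L^p(\rn)}$. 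Care is needed here: when $p=1$ one has $p'=\fz$ and the H\"older step degenerates to an $L^\fz$ bound on $f$ times an $L^1_{loc}$ bound on $G_{s,\dz}$; when $p=\fz$, $p'=1$, and the integrability of $[G_{s,\dz}]$ near the diagonal (needing $s>0$, which holds) is what is used, with the $L^t=L^\fz$ supremum over $x$ in $M_{\az,q,\fz,1/\dz}$; the edge cases should be handled by the same computation with the usual modifications.

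For the far term $B$, I would again apply H\"older in $y$ with exponents $p,p'$ over the region $|x-y|\ge1/\dz$ and then recognize that $\|B\|_{L^q(\rn)}$ is dominated (via H\"older once more, or directly, depending on whether one integrates the $G^{p'}$ inside) by a quantity of the shape $\|\{\int_{|x-y|\ge1/\dz}|V(y)|^{?}[G_{s,\dz}(x-y)]^{?}\,dy\}^{1/?}\|_{L^t}$ to which Lemma \ref{lem-s3-Gslz} applies with the roles $q\rightsquigarrow q$, $t\rightsquigarrow p'$, $p\rightsquigarrow p'$, giving the bound $\dz^{[(n-s)p'+\az-n]/q}M_{\az,q,p',1/\dz}(V)$ times $\|f\|_{L^p}$ — and one checks this matches or is dominated by the claimed exponent $[(n-s)q+\az-n]/q$ using $q<n/(n-s)$ together with the exponential decay absorbing any discrepancy in $\dz$-powers over the dyadic annuli (this is already built into Lemma \ref{lem-s3-Gslz}). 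The main obstacle I anticipate is lining up the H\"older exponents correctly so that the single weight $M_{\az,q,p',1/\dz}(V)$ (with that particular triple $\az,q,p'$) governs both pieces simultaneously, and verifying that the constraint $\az\in(0,(s-n)q+n]$ is exactly what makes the near-diagonal weighted integral $\int_{|x-y|<1/\dz}|x-y|^{(s-n)p'}\,dy$-type expression comparable to $\int w_\az$ after the scaling — i.e., the arithmetic tying $(n-s)p'$, $\az-n$, and the dimension together. Once both pieces carry the common factor, adding them and invoking density of $\mathcal{S}(\rn)$ in $L^p(\rn)$ (for $p<\fz$; for $p=\fz$ one argues by duality or directly on the integral) completes the proof of \eqref{BC1} and the asserted bounded extension.
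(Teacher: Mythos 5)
There is a genuine gap in your treatment of the near-diagonal term, and it propagates to the far term as well: you apply H\"older in $y$ \emph{before} taking the $L^q$ norm in $x$, which decouples $V$ from the kernel and makes it impossible to produce the Schechter norm $M_{\az,q,p',1/\dz}(V)$. Concretely, after your step
\begin{align*}
A(x)\le |V(x)|\lf[\dint_{|x-y|<1/\dz}[G_{s,\dz}(x-y)]^{p'}\,dy\r]^{1/p'}\|f\|_{L^p(\rn)},
\end{align*}
the bracketed quantity is a constant independent of $x$ (it is a convolution integral over a ball centered at $x$), so $\|A\|_{L^q(\rn)}$ is controlled by that constant times $\|V\|_{L^q(\rn)}\|f\|_{L^p(\rn)}$. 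But $V\in L^q(\rn)$ is not among the hypotheses — only the localized, weighted condition $M_{\az,q,p',1/\dz}(V)<\fz$ is assumed — and in any case the resulting quantity is structurally not $M_{\az,q,p',1/\dz}(V)$, in which $|V|^q$ sits \emph{inside} the integral against $w_\az$ and the \emph{outer} norm is $L^{p'}$, not $L^q$. (Your H\"older step also needs $[G_{s,\dz}]^{p'}$ integrable near the origin, i.e.\ $p'<n/(n-s)$, which is not assumed; the hypothesis $q<n/(n-s)$ constrains $q$, not $p'$.) The same mismatch defeats your far term: after H\"older, the integrand over $|x-y|\ge 1/\dz$ contains only the kernel and not $|V(y)|^q$, so Lemma \ref{lem-s3-Gslz} does not apply to it in the form you invoke.

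The missing idea is to reverse the order of the two inequalities: first apply the \emph{Minkowski integral inequality} to pull the $L^q(dx)$ norm inside the $y$-integral,
\begin{align*}
\lf\|T_{s,\dz}(f)\r\|_{L^q(\rn)}\le \dint_{\rn}\lf[\dint_{\rn}|V(x)|^q\{G_{s,\dz}(x-y)\}^q\,dx\r]^{1/q}|f(y)|\,dy,
\end{align*}
so that $|V|^q$ and the kernel to the power $q$ (not $p'$) are coupled for each fixed $y$, and only \emph{then} apply H\"older with exponents $(p,p')$ in $y$. The outer $L^{p'}(dy)$ norm of the bracketed quantity is exactly what matches $M_{\az,q,p',1/\dz}(V)$ after splitting into $|x-y|<1/\dz$ (where $[G_{s,\dz}]^q\ls\dz^{(n-s)q+\az-n}w_\az$, using $\az\le(s-n)q+n$) and $|x-y|\ge 1/\dz$ (where Lemma \ref{lem-s3-Gslz} applies with kernel power $q$ and $t=p'$). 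Your identification of the two regimes and of the role of the constraint $\az\le(s-n)q+n$ is correct, but without the Minkowski step first the argument does not close.
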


\begin{proof}
For any $f\in \mathcal{S}(\rn)$, by \eqref{def-Tlz} and the Minkowski inequality, we have
\begin{align*}
\lf\|T_{s,\dz}(f)\r\|_{L^q(\rn)}&=\lf\{\dint_{\rn} \lf|\dint_{\rn}
V(x)G_{s,\dz}(x-y)f(y)\,dy\r|^q\,dx\r\}^{1/q}\\
&\le \dint_{\rn} \lf[\dint_{\rn} |V(x)|^q \{G_{s,\dz}(x-y)\}^q\,dx\r]^{1/q}|f(y)|\,dy\\
&\le \lf\{\dint_{\rn} \lf[\dint_{\rn} |V(x)|^q
\{G_{s,\dz}(x-y)\}^q\,dx\r]^{p'/q}\,dy\r\}^{1/p'}\|f\|_{L^p(\rn)}\\
&=:C_{s,q,p',\dz}(V)\|f\|_{L^p(\rn)}.
\end{align*}
To estimate $C_{s,q,p',\dz}(V)$, we write
\begin{align*}
C_{s,q,p',\dz}(V)&\ls\lf\{\dint_{\rn} \lf[\dint_{|x-y|<1/\dz}
|V(x)|^q \{G_{s,\dz}(x-y)\}^q\,dx\r]^{p'/q}dy\r\}^{1/p'}\\
&\qquad+\lf\{\dint_{\rn} \lf[\dint_{|x-y|\ge 1/\dz}
\ldots dx\r]^{p'/q}dy\r\}^{1/p'}\\
&=:\mathrm{I}_1+\mathrm{I}_2.
\end{align*}
For $\mathrm{I}_2$, using Lemma \ref{lem-s3-Gslz}, we obtain
\begin{align*}
\mathrm{I}_2\ls \dz^{[(n-s)q+\az-n]/q} M_{\az,q,p',1/\dz}(V).
\end{align*}
To bound $\mathrm{I}_1$, for any $x$,   $y\in \rn$ satisfying $|x-y|<1/\dz$,
by (i) and (ii) of Lemma \ref{lem-s3-Besselp} and the assumptions that
$s\in (0,n)$, $q\in [1,\,n/(n-s))$, and $\az\in (0,(s-n)q+n]$,
we find that
\begin{align*}
\lf|G_{s,\dz}(x-y)\r|^q\le \dz^{(n-s)q}[w_s(\dz(x-y))]^q\le \dz^{(n-s)q+\az-n}w_\az(x-y),
\end{align*}
where the last inequality follows from \eqref{def-waz},
$(s-n)q\ge \az-n$, and $\dz|x-y|<1$. Thus, we have
\begin{align*}
\mathrm{I}_1\ls \dz^{(n-s)+(\az-n)/q}\lf\|\dint_{|x-\cdot|<1/\dz}
|V(x)|^qw_\az(x-\cdot)\,dx\r\|_{L^{p'}(\rn)}\sim \dz^{[(n-s)q+\az-n]/q} M_{\az,q,p',1/\dz}(V).
\end{align*}
Altogether, we conclude that $C_{s,q,p',\dz}(V)\ls \dz^{[(n-s)q+\az-n]/q} M_{\az,q,p',1/\dz}(V)$,
which completes the proof of Proposition \ref{prop-bdT1}.
\end{proof}

To show the second boundedness of the $T$\!-operator, we need the following
Gagliardo--Nirenberg inequality from \cite[Corollary 2.4]{HMOW11}.

\begin{lemma}\label{lem-GNI}
Let $p, \,p_0,\,p_1\in (1,\fz)$, $s,\,s_1\in (0,\fz)$, and $\tz\in [0,1]$. Then
there exists a positive constant $C$ such that,
for any $f\in \mathcal{S}(\rn)$,
\begin{align*}
\lf\|\Delta^{s/2}f\r\|_{L^p(\rn)}\le C \|f\|_{L^{p_0}(\rn)}^{1-\tz}\lf\|\Delta^{s_1/2} f\r\|_{L^{p_1}(\rn)}^{\tz}
\end{align*}
if and only if $  \frac{n}{p}-s=(1-\tz)\frac{n}{p_0}+\tz (\frac{n}{p_1}-s_1)$
and $s\le \tz s_1$.
\end{lemma}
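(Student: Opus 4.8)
The plan is to prove the two implications separately, handling sufficiency by a Littlewood--Paley decomposition localized at a single frequency that is afterwards optimized. Throughout, $\Delta^{s/2}$ is read as the Fourier multiplier operator with symbol $|\xi|^s$.

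\emph{Necessity.} Feeding the dilates $f(\lz\,\cdot)$, $\lz\in(0,\fz)$, into the assumed inequality multiplies $\|\Delta^{s/2}f\|_{L^p(\rn)}$, $\|f\|_{L^{p_0}(\rn)}$, and $\|\Delta^{s_1/2}f\|_{L^{p_1}(\rn)}$ by $\lz^{s-n/p}$, $\lz^{-n/p_0}$, and $\lz^{s_1-n/p_1}$, respectively; letting $\lz\to0$ and $\lz\to\fz$ then forces the homogeneity identity $\frac np-s=(1-\tz)\frac n{p_0}+\tz(\frac n{p_1}-s_1)$. The constraint $s\le\tz s_1$ is finer and, as in the sharpness discussion for the classical Gagliardo--Nirenberg inequalities, I would extract it by testing on lacunary superpositions $f_M:=\sum_{k=0}^M2^{-k\sz}\vz_k$, where $\widehat{\vz_k}$ is a fixed bump profile dilated to the frequency annulus $\{|\xi|\sim2^k\}$ and the $\vz_k$ have widely separated spatial centers, so that the three norms of $f_M$ are comparable to the corresponding $\ell^p$-, $\ell^{p_0}$-, $\ell^{p_1}$-type sums of the pieces; choosing $\sz$ near the critical value $s-n/p$ and letting $M\to\fz$ produces a divergence unless $s\le\tz s_1$.

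\emph{Sufficiency.} Set $a:=s+\frac n{p_0}-\frac np$ and $b:=s_1-s+\frac np-\frac n{p_1}$, so that $a+b=s_1+\frac n{p_0}-\frac n{p_1}$ and, by the homogeneity identity, $a=\tz(a+b)$ and $b=(1-\tz)(a+b)$; this algebraic fact is what will make the final exponents come out exactly as $1-\tz$ and $\tz$. Let $P_{\le N}$ and $P_{>N}$ be the smooth frequency projections onto $\{|\xi|\ls2^N\}$ and $\{|\xi|\gs2^N\}$, $N\in\zz$. The heart of the matter is the pair of one-parameter bounds
\begin{align*}
\lf\|\Delta^{s/2}P_{\le N}f\r\|_{L^p(\rn)}\ls2^{Na}\lf\|f\r\|_{L^{p_0}(\rn)},\qquad
\lf\|\Delta^{s/2}P_{>N}f\r\|_{L^p(\rn)}\ls2^{-Nb}\lf\|\Delta^{s_1/2}f\r\|_{L^{p_1}(\rn)}.
\end{align*}
For the first, write $|\xi|^s\widehat\psi(2^{-N}\xi)=2^{Ns}m(2^{-N}\xi)$ with $m(\eta):=|\eta|^s\widehat\psi(\eta)$ compactly supported and continuous, whose inverse Fourier transform $\bfai:=\mathcal{F}^{-1}m$ lies in $L^1(\rn)$; then $\Delta^{s/2}P_{\le N}f=2^{Ns}(\bfai_N\ast f)$ with $\bfai_N(x):=2^{Nn}\bfai(2^Nx)$, and Young's inequality, using $\|\bfai_N\|_{L^r(\rn)}=2^{Nn(1-1/r)}\|\bfai\|_{L^r(\rn)}$ with $\frac1r=1-\frac1{p_0}+\frac1p$, gives the claim. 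For the second, factor $\Delta^{s/2}P_{>N}=[\Delta^{(s-s_1)/2}P_{>N}]\,\Delta^{s_1/2}$ and note that, since $s\le\tz s_1\le s_1$, the symbol $|\xi|^{s-s_1}\widetilde\psi(2^{-N}\xi)$ equals $2^{N(s-s_1)}m'(2^{-N}\xi)$ with $m'$ a negative-order symbol having no singularity at the origin, whose kernel $\bfai':=\mathcal{F}^{-1}m'\in L^1(\rn)$; a second application of Young then yields the second bound. Combining the two estimates and choosing the integer $N$ with $2^N\sim(\|\Delta^{s_1/2}f\|_{L^{p_1}(\rn)}/\|f\|_{L^{p_0}(\rn)})^{1/(a+b)}$ gives
\begin{align*}
\lf\|\Delta^{s/2}f\r\|_{L^p(\rn)}\ls2^{Na}\lf\|f\r\|_{L^{p_0}(\rn)}+2^{-Nb}\lf\|\Delta^{s_1/2}f\r\|_{L^{p_1}(\rn)}
\ls\lf\|f\r\|_{L^{p_0}(\rn)}^{b/(a+b)}\lf\|\Delta^{s_1/2}f\r\|_{L^{p_1}(\rn)}^{a/(a+b)},
\end{align*}
which is the asserted inequality because $\frac b{a+b}=1-\tz$ and $\frac a{a+b}=\tz$.

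\emph{Main obstacle.} The two Young-type estimates above are only valid when the convolution kernels sit in the required $L^r$ with $r\ge1$, which forces the target exponent $p$ to dominate $p_0$ and $p_1$ and forces $a,b>0$; the hypotheses $\tz\in[0,1]$ and $s\le\tz s_1$ do \emph{not} by themselves guarantee this. The genuine work is therefore to run the same low/high split inside the $L^p(\rn)$-bounded Littlewood--Paley square function (available precisely because $1<p,p_0,p_1<\fz$) together with a vector-valued ($\ell^2$) interpolation, making the argument insensitive to the ordering of $p_0$, $p$, $p_1$ and to the signs of $a,b$; the degenerate configuration $a=0$ or $b=0$ — in particular the endpoint $\tz=1$, where the bound reduces to the homogeneous Sobolev embedding $\dot W^{s_1,p_1}(\rn)\hookrightarrow\dot W^{s,p}(\rn)$ — must be treated on its own.
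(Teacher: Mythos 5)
The paper does not prove this lemma: it is quoted verbatim from \cite[Corollary 2.4]{HMOW11}, so there is no internal argument to compare yours against. Judged on its own terms, your proposal is the standard low/high-frequency outline for fractional Gagliardo--Nirenberg inequalities, but as written it does not prove the stated equivalence.

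The decisive gap is the one you flag yourself at the end, and it is not a technical afterthought. Your two Young-type estimates require the kernels $\mathcal{F}^{-1}(|\cdot|^{s}\widehat\psi)$ and $\mathcal{F}^{-1}(|\cdot|^{s-s_1}\widetilde\psi)$, dilated to scale $2^N$, to lie in $L^r(\rn)$ with $\frac1r=1+\frac1p-\frac1{p_0}$ and $\frac1r=1+\frac1p-\frac1{p_1}$ respectively; this forces $p\ge p_0$ and $p\ge p_1$, and for the second kernel the singularity $|x|^{s_1-s-n}$ at the origin imposes exactly the extra condition $b>0$. Moreover the final optimization $\min_{N}\,(2^{Na}X+2^{-Nb}Y)\sim X^{b/(a+b)}Y^{a/(a+b)}$ is only valid when $a,b>0$, and the hypotheses do not guarantee this: for instance with $n=1$, $s_1=0.1$, $p_0=10$, $p_1=1.1$, $\tz=1/2$, $s=\tz s_1$, the homogeneity identity is satisfied by an admissible $p\approx1.98$, yet $a+b=s_1+n/p_0-n/p_1<0$, so both of your one-parameter bounds and the optimization fail. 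The configuration you defer to ``Littlewood--Paley plus vector-valued interpolation'' is therefore the actual content of the sufficiency direction, and you do not carry it out; the degenerate cases $a=0$, $b=0$, and $\tz=1$ are likewise only named. In the necessity direction, the scaling argument for the homogeneity identity is fine, but the lacunary construction for $s\le\tz s_1$ is only gestured at, with no verification that the three norms of $f_M$ behave as claimed. Since this article only invokes the lemma in the diagonal case $p=p_0=p_1$ with $0<s<s_1$ (where $r=1$ and $a,b>0$, so your Young argument does close), you could either restrict the statement to that case and complete your computation, or simply cite \cite[Corollary 2.4]{HMOW11} as the authors do.
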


The following result gives the $L^p(\rn)$-boundedness of the resolvent of the Laplace
operator $\Delta$.

\begin{lemma}\label{lem-resolvent-Laplace}
Let $p\in (1,\fz)$ and $\dz,\,\az\in (0,\fz)$. Then there exists a positive constant
$C$, independent of $\dz$, such that, for any $f\in L^p(\rn)$,
\begin{itemize}
\item [{\rm(i)}] $ \|(\dz^2-\Delta)^{-\az}f\|_{L^p(\rn)}\le  \frac{C}{\dz^{2\az}}\|f\|_{L^p(\rn)}$,

\item [{\rm (ii)}] $\|\Delta^{\az}(\dz^2-\Delta)^{-\az}f\|_{L^p(\rn)}\le C \|f\|_{L^p(\rn)}$.
\end{itemize}
\end{lemma}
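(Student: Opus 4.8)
The plan is to realize both operators as Fourier multipliers, reduce the general $\dz\in(0,\fz)$ to the case $\dz=1$ by a dilation argument, and then deduce (i) from Young's inequality and (ii) from the H\"ormander--Mikhlin multiplier theorem.

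First I would record the dilation principle: for a bounded function $m$ on $\rn$, the multiplier operator $T_mf:=\cf^{-1}(m\,\cf f)$ satisfies $T_{m(\lz\,\cdot)}=D_\lz^{-1}T_mD_\lz$ for every $\lz\in(0,\fz)$, where $(D_\lz f)(x):=f(\lz x)$; since $\|D_\lz f\|_{L^p(\rn)}=\lz^{-n/p}\|f\|_{L^p(\rn)}$, this forces
\[
\lf\|T_{m(\lz\,\cdot)}\r\|_{L^p(\rn)\to L^p(\rn)}=\lf\|T_m\r\|_{L^p(\rn)\to L^p(\rn)}.
\]
By \eqref{def-Besselp2}, the operators $(\dz^2-\Delta)^{-\az}$ and $\Delta^\az(\dz^2-\Delta)^{-\az}$ are, respectively, the multiplier operators with symbols
\[
m_\dz(\xi):=\lf(\dz^2+|\xi|^2\r)^{-\az}=\dz^{-2\az}m_1(\xi/\dz),\qquad
n_\dz(\xi):=|\xi|^{2\az}\lf(\dz^2+|\xi|^2\r)^{-\az}=n_1(\xi/\dz),
\]
where $m_1(\xi):=(1+|\xi|^2)^{-\az}$ and $n_1(\xi):=|\xi|^{2\az}(1+|\xi|^2)^{-\az}$. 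Hence the dilation principle (with $\lz=1/\dz$) gives $\|(\dz^2-\Delta)^{-\az}\|_{L^p(\rn)\to L^p(\rn)}=\dz^{-2\az}\|(1-\Delta)^{-\az}\|_{L^p(\rn)\to L^p(\rn)}$ and $\|\Delta^\az(\dz^2-\Delta)^{-\az}\|_{L^p(\rn)\to L^p(\rn)}=\|\Delta^\az(1-\Delta)^{-\az}\|_{L^p(\rn)\to L^p(\rn)}$, so it suffices to treat the case $\dz=1$ and then track the constant $\dz^{-2\az}$ in (i).

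For (i) with $\dz=1$, by \eqref{def-Besselp2} we have $(1-\Delta)^{-\az}f=G_{2\az}\ast f$, and Lemma \ref{lem-s3-Besselp}(ii) and (iii) imply that $G_{2\az}\in L^1(\rn)$; Young's inequality then yields $\|(1-\Delta)^{-\az}f\|_{L^p(\rn)}\le\|G_{2\az}\|_{L^1(\rn)}\|f\|_{L^p(\rn)}$ for every $p\in[1,\fz]$, first for $f\in\cs(\rn)$ and then, by density, for all $f\in L^p(\rn)$. For (ii) with $\dz=1$, the symbol $n_1(\xi)=(|\xi|^2/(1+|\xi|^2))^\az$ is bounded and lies in $\mathcal C^\fz(\rn\setminus\{\vec 0_n\})$; writing $n_1=|\xi|^{2\az}(1+|\xi|^2)^{-\az}$ and combining the Leibniz rule with the homogeneity bounds $|\partial^\gz|\xi|^{2\az}|\ls|\xi|^{2\az-|\gz|}$ and $|\partial^\gz(1+|\xi|^2)^{-\az}|\ls(1+|\xi|)^{-2\az-|\gz|}$, one checks that $|\partial^\gz n_1(\xi)|\ls|\xi|^{-|\gz|}$ for all $\gz\in\zz_+^n$ with $|\gz|\le\lfloor n/2\rfloor+1$ (the bound is immediate for $|\xi|\ge1$ and, for $|\xi|<1$, reduces to $|\xi|^{2\az+|\gz_2|}\ls1$ with $\gz_2\le\gz$). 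Thus $n_1$ satisfies the H\"ormander--Mikhlin condition, so $\Delta^\az(1-\Delta)^{-\az}=T_{n_1}$ is bounded on $L^p(\rn)$ for every $p\in(1,\fz)$ (see, for instance, \cite{Ste70}), which gives (ii).

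The only genuinely technical point is the verification of the H\"ormander--Mikhlin condition for $n_1$ near the origin, where $|\xi|^{2\az}$ is merely H\"older rather than smooth; this is also the step that forces the restriction $p\in(1,\fz)$ in (ii), whereas (i) holds on the full range $p\in[1,\fz]$. Everything else is bookkeeping via scaling and density.
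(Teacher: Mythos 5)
Your proof is correct. The reduction to $\dz=1$ via the dilation invariance of the $L^p$ multiplier norm is exactly the paper's mechanism, and your treatment of (ii) — verifying the Mihlin condition $|\partial^\gz n_1(\xi)|\ls|\xi|^{-|\gz|}$ for $n_1(\xi)=|\xi|^{2\az}(1+|\xi|^2)^{-\az}$ and invoking the H\"ormander--Mihlin theorem — is the same route the paper takes, except that you carry out the derivative estimates explicitly (correctly noting that the condition is only needed away from the origin, where $|\xi|^{2\az}$ is smooth and homogeneous) while the paper leaves this check implicit. The one genuine divergence is in (i): the paper again applies H\"ormander--Mihlin to $m_1(\xi)=(1+|\xi|^2)^{-\az}$, whereas you observe that $(1-\Delta)^{-\az}f=G_{2\az}\ast f$ with $G_{2\az}\in L^1(\rn)$ by Lemma \ref{lem-s3-Besselp}(ii) and (iii), and conclude by Young's inequality. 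Your route for (i) is more elementary (no multiplier theorem needed) and yields the estimate on the full range $p\in[1,\fz]$ including the endpoints, which the Mihlin argument does not; the paper's route is shorter to state because it reuses the same tool for both parts. Either way the stated lemma, which only claims $p\in(1,\fz)$, follows.
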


\begin{proof}
Let $  m(\xi):=\frac{1}{(1+|\xi|^2)^{\az}}$ for any  $\xi\in\rn$ and $T_m$ be
the Fourier multiplier associated with $m$. By the H\"ormander--Mihlin
multiplier theorem (see \cite[Theorem 5.2.7]{Graf14}), we
conclude that $T_m$ is bounded on $L^p(\rn)$, which, combined with
the dilation invariant property of the $L^p(\rn)$ multiplier, shows that
(i) holds true. Using an argument similar to that used in the proof of (i) and
letting $  \wz m(\xi)=\frac{|\xi|^{2\az}}{(1+|\xi|^2)^{\az}}$ for any
$\xi\in\rn$, we know
that (ii) also holds true. This finishes the proof of
Lemma \ref{lem-resolvent-Laplace}.
\end{proof}

Based on Lemmas \ref{lem-GNI} and \ref{lem-resolvent-Laplace}, we
now establish the second boundedness of the $T$-operator as follows.

\begin{proposition}\label{prop-bdT2}
Let $p,\,q\in (1,\fz)$, $t$, $\sz\in [1,\fz]$, and $s\in(0,n)$ satisfy
\begin{align*}
\frac{1}{q}=\frac{1}{t}+\frac{1}{\sz} \ \ \text{and}\ \ \frac{1}{\sz}\le
\frac{1}{p}\le\frac{s}{n}+\frac{1}{\sz}.
\end{align*}
Assume that $V\in L^t(\rn)$. Then $T$ can be extended to a bounded linear
operator from $L^p(\rn)$ to $L^q(\rn)$.
Moreover, there exists a positive constant $C$
such that, for any $\dz\in (0,\fz)$ and $f\in L^p(\rn)$,
\begin{align}\label{BC2}
\lf\|T_{s,\dz}(f)\r\|_{L^q(\rn)}\le C \dz^{-s(1-\tz)}\|V\|_{L^t(\rn)}\|f\|_{L^p(\rn)}
\end{align}
with $  \tz:=\frac{n}{s}\Big(\frac{1}{p}-\frac{1}{\sz}\Big)\in [0,1]$.
\end{proposition}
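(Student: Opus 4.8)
The plan is to factor the operator $T_{s,\dz}=V(\dz^2-\Delta)^{-s/2}$ as a composition of three maps:
$$
L^p(\rn)\xrightarrow{(\dz^2-\Delta)^{-s/2}} W \xrightarrow{\text{Gagliardo--Nirenberg}} L^\sz(\rn)\xrightarrow{\;V\cdot\;} L^q(\rn),
$$
where the last step is simply H\"older's inequality with exponents $\frac1q=\frac1t+\frac1\sz$ and $V\in L^t(\rn)$. Thus the heart of the matter is to prove the intermediate bound
\begin{align*}
\lf\|(\dz^2-\Delta)^{-s/2}f\r\|_{L^\sz(\rn)}\le C\dz^{-s(1-\tz)}\|f\|_{L^p(\rn)}
\end{align*}
with $\tz=\frac{n}{s}(\frac1p-\frac1\sz)$, after which \eqref{BC2} follows by composing the three estimates.

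First I would write $g:=(\dz^2-\Delta)^{-s/2}f$, so that $f=(\dz^2-\Delta)^{s/2}g$, and apply the Gagliardo--Nirenberg inequality of Lemma \ref{lem-GNI} to $g$ itself (taking the ``$\Delta^{s/2}$'' on the left to be of order $0$, i.e.\ use the inequality in the form that controls $\|g\|_{L^\sz}$; more precisely, I would apply Lemma \ref{lem-GNI} with the roles chosen so that the homogeneity reads $\frac{n}{\sz}=(1-\tz)\frac{n}{p}+\tz(\frac{n}{p}-s)$, equivalently $\tz=\frac{n}{s}(\frac1p-\frac1\sz)$, which is exactly the stated value and lies in $[0,1]$ precisely because of the hypothesis $\frac1\sz\le\frac1p\le\frac{s}{n}+\frac1\sz$). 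This yields
\begin{align*}
\|g\|_{L^\sz(\rn)}\le C\|g\|_{L^p(\rn)}^{1-\tz}\lf\|\Delta^{s/2}g\r\|_{L^p(\rn)}^{\tz}.
\end{align*}
Next I would control the two factors on the right by Lemma \ref{lem-resolvent-Laplace}: part (i) gives $\|g\|_{L^p(\rn)}=\|(\dz^2-\Delta)^{-s/2}f\|_{L^p(\rn)}\le C\dz^{-s}\|f\|_{L^p(\rn)}$, while part (ii) gives $\|\Delta^{s/2}g\|_{L^p(\rn)}=\|\Delta^{s/2}(\dz^2-\Delta)^{-s/2}f\|_{L^p(\rn)}\le C\|f\|_{L^p(\rn)}$; both constants are independent of $\dz$. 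Multiplying, the powers of $\dz$ combine to $\dz^{-s(1-\tz)}$ and the $\|f\|_{L^p}$ factors combine to the first power, which is exactly the intermediate estimate. Then H\"older, $\|Vg\|_{L^q(\rn)}\le\|V\|_{L^t(\rn)}\|g\|_{L^\sz(\rn)}$, finishes the bound for $f\in\mathcal{S}(\rn)$, and a density argument extends $T_{s,\dz}$ to a bounded operator on all of $L^p(\rn)$.

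The step I expect to require the most care is matching the parameters in Lemma \ref{lem-GNI}: its ``if and only if'' conditions are $\frac{n}{\text{(target exp.)}}-s_{\text{target}}=(1-\tz)\frac{n}{p_0}+\tz(\frac{n}{p_1}-s_1)$ together with $s_{\text{target}}\le\tz s_1$, so I must choose $p_0=p_1=p$, $s_1=s$, target order $s_{\text{target}}=0$, and then verify that the scaling identity forces $\tz=\frac{n}{s}(\frac1p-\frac1\sz)$ and that the constraint $0\le\tz s_1$ together with $\tz\le1$ is equivalent to the hypothesis $\frac1\sz\le\frac1p\le\frac{s}{n}+\frac1\sz$ (the left inequality gives $\tz\ge0$, the right gives $\tz\le1$). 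I would also note that the endpoint cases $\tz=0$ (where $g=f$ in $L^p=L^\sz$ up to the Bessel factor, handled directly by Lemma \ref{lem-resolvent-Laplace}(i)) and $\tz=1$ (handled by Lemma \ref{lem-resolvent-Laplace}(ii) alone, with $\dz^{-s(1-\tz)}=\dz^0=1$) are consistent with the formula, so no separate treatment is needed beyond observing that Lemma \ref{lem-GNI} already includes $\tz\in\{0,1\}$. Everything else is routine: the Minkowski/H\"older manipulations and the density extension mimic the proof of Proposition \ref{prop-bdT1}.
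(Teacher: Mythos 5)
Your proposal is correct and follows essentially the same route as the paper: H\"older with $\frac1q=\frac1t+\frac1\sz$, the Gagliardo--Nirenberg inequality of Lemma \ref{lem-GNI} to interpolate $\|(\dz^2-\Delta)^{-s/2}f\|_{L^\sz}$ between $\|(\dz^2-\Delta)^{-s/2}f\|_{L^p}$ and $\|\Delta^{s/2}(\dz^2-\Delta)^{-s/2}f\|_{L^p}$, and Lemma \ref{lem-resolvent-Laplace}(i)--(ii) for the two factors. Your explicit verification of the parameter matching in Lemma \ref{lem-GNI} is a welcome addition that the paper leaves implicit.
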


\begin{proof}
By $\frac{1}{q}=\frac{1}{t}+\frac{1}{\sz}$,
\eqref{def-Tlz}, \eqref{def-Besselp2}, Lemmas \ref{lem-GNI} and \ref{lem-resolvent-Laplace},
we conclude that, for any $f\in \mathcal{S}(\rn)$,
\begin{align*}
\lf\|T_{s,\dz} (f)\r\|_{L^q(\rn)}&\le \|V\|_{L^t(\rn)}\lf\|\lf(\dz^2-\Delta\r)^{-s/2}f\r\|_{L^\sz(\rn)}\\
&\ls \|V\|_{L^t(\rn)} \lf\|\lf(\dz^2-\Delta\r)^{-s/2}f\r\|_{L^p(\rn)}^{1-\tz}\lf\|\Delta^{s/2}
\lf(\dz^2-\Delta\r)^{-s/2}f\r\|_{L^p(\rn)}^{\tz}\\
&\ls \|V\|_{L^t(\rn)} \dz^{-s(1-\tz)}\|f\|_{L^p(\rn)},
\end{align*}
which completes the proof of Proposition \ref{prop-bdT2}.
\end{proof}

We end this subsection by giving another embedding property of the
generalized Schechter class, as a byproduct of Lemma \ref{lem-s3-Gslz}.

\begin{proposition}\label{prop-EBP2}
Let $\az\in (0,n)$, $r\in [1,\fz)$, $t\in [1,\fz)$, and $S\in\rr$.
If $  1\le r\le t<\tau<\fz$ and $ \frac{\az}{nr}+\frac{1}{t}\le
\frac{\bz}{nr}+\frac{1}{\tau}$, then
\begin{align*}
M_{\az,r,t,S+S_1}(\rn)\subset M_{\bz,r,\tau,S}(\rn)
\end{align*}
with $ S_1:=\frac{1}{r}[nr(\frac{1}{\tau}-\frac{1}{t})+\bz-\az]$.
\end{proposition}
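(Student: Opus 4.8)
The plan is to reduce the desired embedding to the estimate already obtained in Lemma \ref{lem-s3-Gslz} together with a pointwise comparison of the Bessel-type kernels $w_\az$ and $G_{s,\dz}$ from Lemma \ref{lem-s3-Besselp}. Fix a measurable function $V$ with $\|V\|_{M_{\az,r,t,S+S_1}(\rn)}<\fz$; the goal is to bound $\dz^{S}M_{\bz,r,\tau,\dz}(V)$ uniformly in $\dz\in(0,\fz)$. First I would rewrite $M_{\bz,r,\tau,\dz}(V)$ using \eqref{def-Mazqrlz} and observe, via Lemma \ref{lem-s3-Besselp}(ii), that on the ball $\{|y-\cdot|<\dz\}$ one has $w_\bz(\cdot-y)\ls \dz^{n-s}[G_{s,\dz}(\cdot-y)]$ for a suitable choice of the order $s$ (to be pinned down by the scaling relation $G_{s,\dz}(x)=\dz^{n-s}G_s(\dz x)$ of part (i) and the scale-invariant behaviour of $w_\bz$). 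That is, I want to identify the exponent $p=1$ in Lemma \ref{lem-s3-Gslz}, or more precisely match the power $[(n-s)p+\az-n]/q$ appearing there against the target power, so that Lemma \ref{lem-s3-Gslz} itself produces the bound $\dz^{(\text{exponent})}M_{\az,r,t,1/\dz}(V)$; then a change of the dilation parameter $\dz\mapsto 1/\dz$ converts $M_{\az,r,t,1/\dz}(V)$ back into something controlled by $\|V\|_{M_{\az,r,t,S+S_1}(\rn)}$.

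More concretely, the key steps in order are: (1) split $\int_{|y-\cdot|<\dz}|V(y)|^r w_\bz(\cdot-y)\,dy$ is already the ``near'' part, so no splitting is needed here — instead I compare $w_\bz$ on this ball directly to a power of $\dz$ times $w_\az$, using that $\bz-\az$ is controlled by the hypothesis $\frac{\az}{nr}+\frac1t\le\frac{\bz}{nr}+\frac1\tau$ rewritten as $\bz-\az\ge nr(\frac1t-\frac1\tau)\ge 0$ (here $t<\tau$ is used), so $w_\bz(x)=|x|^{\bz-n}=|x|^{\bz-\az}|x|^{\az-n}\le \dz^{\bz-\az}w_\az(x)$ for $|x|<\dz$; (2) apply Hölder's inequality in the outer $L^\tau$-to-$L^t$ passage — this is where $r\le t<\tau$ enters, allowing one to write $\|\cdot\|_{L^\tau}\le \|\mathbf 1_{B(\cdot,\dz)}\|_{L^{\text{something}}}\,\|\cdot\|_{L^t}$ after noticing that the inner integral is supported, as a function of the centre, in a $\dz$-fattening; more precisely I would use that passing from $M_{\bz,r,t,\dz}$ to $M_{\bz,r,\tau,\dz}$ on a function with suitable local integrability costs a factor $\dz^{n(\frac1\tau-\frac1t)/r}$ times a geometric covering constant; (3) collect the powers of $\dz$: from step (1) one gains $\dz^{(\bz-\az)/r}$, from step (2) one gains $\dz^{n(\frac1\tau-\frac1t)/r}$, and their product is exactly $\dz^{S_1}$ with $S_1=\frac1r[nr(\frac1\tau-\frac1t)+\bz-\az]$; (4) conclude $\dz^{S}M_{\bz,r,\tau,\dz}(V)\ls \dz^{S+S_1}M_{\az,r,t,\dz}(V)\le \|V\|_{M_{\az,r,t,S+S_1}(\rn)}$, take the supremum over $\dz$, and invoke \eqref{eqn-Mclass}.

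I expect the main obstacle to be step (2): the clean passage from an $L^t$-norm to an $L^\tau$-norm of the quantity $N_\dz(\cdot):=[\int_{|y-\cdot|<\dz}|V(y)|^rw_\bz(\cdot-y)\,dy]^{1/r}$ is not literally Hölder on a fixed measure space, because the inner integral depends on the outer variable. The device to handle this is the same annulus/cube covering used in the proof of Lemma \ref{lem-s3-Gslz}: tile $\rn$ by $\dz$-cubes, note that $N_\dz$ restricted to each cube is comparable to an average of $|V|^r$ over a bounded number of neighbouring cubes, and then use that on a single $\dz$-cube $Q$ one has $\|g\|_{L^\tau(Q)}\le |Q|^{\frac1\tau-\frac1t}\|g\|_{L^t(Q)}$ by Hölder, which gives the factor $\dz^{n(\frac1\tau-\frac1t)}$; summing over the (locally finite, bounded-overlap) family of cubes and using $\tau>t$ so that $\ell^t\hookrightarrow\ell^\tau$ on the sequence of cube-contributions closes the estimate. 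Once this covering bookkeeping is set up, the rest is the routine power-counting described above. An alternative, which I would mention as a remark, is to deduce the statement directly from Lemma \ref{lem-s3-Gslz} by choosing the kernel exponent there so that $[(n-s)p+\az-n]/q$ equals $-S_1$ and matching $t$ with the exponent $t$ in that lemma — this makes the covering argument invisible since it is already packaged inside Lemma \ref{lem-s3-Gslz}.
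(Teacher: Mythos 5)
Your step (2) is where the argument breaks, and it is not a bookkeeping issue that the cube covering can repair: the inequality you need there is false. H\"older on a cube $Q$ gives $\|g\|_{L^t(Q)}\le|Q|^{1/t-1/\tau}\|g\|_{L^\tau(Q)}$, i.e.\ it runs in the opposite direction to the one you invoke; the reverse bound $\|g\|_{L^\tau(Q)}\le|Q|^{1/\tau-1/t}\|g\|_{L^t(Q)}$ fails for functions concentrated on a small subset of $Q$, and $N_\dz$ is exactly of that type: because the kernel $|x-y|^{\az-n}$ is singular, $N_\dz$ is \emph{not} comparable on each $\dz$-cube to an average of $|V|^r$ over neighbouring cubes. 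Concretely, take $r=1$, $t=1$, $\tau=2$, $\az\in(0,n/2)$, $\bz=\az+n/2$ (so the hypothesis holds with equality and $S_1=0$), and $|V|=\mathbf 1_{B(\vec 0_n,\epsilon)}$ with $\epsilon\ll\dz$. Writing $N^{(\az)}_\dz(x):=\int_{|y-x|<\dz}|V(y)|\,|x-y|^{\az-n}\,dy$, one computes $\|N^{(\az)}_\dz\|_{L^1(\rn)}\sim\epsilon^{n}\dz^{\az}$ by Fubini, while $N^{(\az)}_\dz(x)\gs\epsilon^{\az}$ for $|x|\le\epsilon/2$ gives $\|N^{(\az)}_\dz\|_{L^2(\rn)}\gs\epsilon^{\az+n/2}$; the ratio of the left- to the right-hand side of your step-(2) inequality is then $\gs(\dz/\epsilon)^{n/2-\az}\to\fz$ as $\epsilon\to0$. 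The scale-by-scale estimate $M_{\bz,r,\tau,\dz}(V)\ls\dz^{S_1}M_{\az,r,t,\dz}(V)$ is nevertheless true (it is what the paper proves, and it holds in this example), but your factorization destroys it: step (1), the pointwise bound $w_\bz\le\dz^{\bz-\az}w_\az$ on the ball, discards precisely the extra smoothing of order $\bz-\az$ that tames the local singularity of $N_\dz$ and makes its $L^\tau$-norm controllable by its $L^t$-norm.

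The missing idea is that the integrability gain from $t$ to $\tau$ must come from convolution, not from a pointwise kernel comparison. Setting $s:=\bz-\az>0$, the paper uses the semigroup identity $G_{\bz,\dz}=G_{s,\dz}\ast G_{\az,\dz}$ together with $w_\bz\ls G_{\bz,\dz}$ on the relevant ball, so that $[M_{\bz,r,\tau,\cdot}(V)]^{r}$ is dominated by $\|(\dz^2-\Delta)^{-s/2}(G_{\az,\dz}\ast|V|^r)\|_{L^{\tau/r}(\rn)}$; the heat-kernel bound for $e^{t\Delta}$ then gives the $L^{t/r}(\rn)\to L^{\tau/r}(\rn)$ boundedness of the Bessel potential of order $s$, whose operator norm is exactly what produces the factor $\dz^{S_1 r}$, and the hypothesis $\frac{\az}{nr}+\frac1t\le\frac{\bz}{nr}+\frac1\tau$ is precisely the Sobolev condition $\frac rt-\frac r\tau\le\frac sn$ needed for that boundedness. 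Lemma \ref{lem-s3-Gslz} enters only afterwards, to control the far part of $G_{\az,\dz}\ast|V|^r$ in $L^{t/r}(\rn)$ at the \emph{same} exponent --- which also shows why your proposed ``alternative'' cannot work: that lemma keeps the same Lebesgue exponent $t$ on both sides and covers only the region $|y-\cdot|\ge1/\dz$, so no choice of its parameters yields the passage from $t$ to $\tau$. (A further minor slip: in your step (3) the factor from step (2) would have to be $\dz^{n(1/\tau-1/t)}$, not $\dz^{n(1/\tau-1/t)/r}$, for the exponents to sum to $S_1$.)
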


\begin{proof}
Let $s:=\bz-\az$. Since  $t<\tau$ and $  \frac{\az}{nr}+\frac{1}{t}\le
\frac{\bz}{nr}+\frac{1}{\tau}$, we know that $s>0$ and
$  \frac{r}{t}-\frac{r}{\tau}\le \frac{s}{n}$. Thus, let
$  p=\frac{t}{r}$ and $  \sz=\frac{\tau}{\tau-r}$. It is easy to see
$  \sz'=\frac{\tau}{r}>1$ and
\begin{align}\label{eqn-pp1}
\frac{1}{p}+\frac{1}{\sz}\le 1+\frac{s}{n}.
\end{align}
Now, using the Gaussian upper bound for the heat kernel of the Laplacian
$\Delta$,
we obtain $$\lf\|e^{t\Delta}\r\|_{L^p(\rn)\to L^{\sz'}(\rn)}\ls t^{-\frac{n}{2}(\frac{1}{p}-\frac{1}{\sz'})}.$$
This, together with \eqref{eqn-pp1} and the formula
\begin{align*}
\lf(\dz^2-\Delta\r)^{-s/2}=\frac{1}{\Gamma(s/2)}
\dint_0^\fz t^{s/2-1}e^{-\dz^2 t}e^{t\Delta}\,dt,
\end{align*}
shows that $(\dz^2-\Delta)^{-s/2}$ is bounded from $L^p(\rn)$
to $L^{\sz'}(\rn)$ with
\begin{align}\label{eqn-lplsz}
\lf\|\lf(\dz^2-\Delta\r)^{-s/2}\r\|_{L^p(\rn)\to L^{\sz'}(\rn)}\ls \dz^{-[n(\frac{1}{\sz'}-\frac{1}{p})+s]}
\sim\dz^{-S_1 r}.
\end{align}
Moreover, let $G_{s,\dz}$ be the Bessel potential function as in \eqref{def-Besselp2}.
For any $|x-y|<1/\dz$, by (i) and (ii) of Lemma \ref{lem-s3-Besselp} and \eqref{def-waz}, it is easy
to see
\begin{align*}
|x-y|^{\bz-n}=w_\bz(\dz(x-y))\dz^{n-\bz}\ls G_\bz(\dz(x-y))\dz^{n-\bz}
\sim G_{\bz,\dz}(x-y).
\end{align*}
Combining the above arguments with \eqref{eqn-lplsz}, we obtain
\begin{align}\label{eqn-tem1}
\dz^{-Q}M_{\bz,r,\tau,1/\dz}(V)&=\dz^{-Q} \lf\|\lf[\dint_{|x-\cdot|<1/\dz}
\lf|V(x)\r|^r|x-\cdot|^{\bz-n}\,dx\r]^{1/r}\r\|_{L^{\tau}(\rn)}\\ \notag
&\ls \dz^{-Q}\lf\|G_{\bz,\dz}\ast V^r\r\|_{L^{\tau/r}(\rn)}^{1/r}
\ls \dz^{-Q}\lf\|G_{s,\dz}\ast\lf(G_{\az,\dz}\ast V^r\r)\r\|_{L^{\tau/r}(\rn)}^{1/r}\\ \notag
&\ls \dz^{-Q}\lf\|(\dz^2-\Delta)^{-s/2}\lf(G_{\az,\dz}\ast V^r\r)\r\|_{L^{\sz'}(\rn)}^{1/r}\\  \notag
&\ls \dz^{-Q-S_1}\lf\|G_{\az,\dz}\ast V^r\r\|_{L^{p}(\rn)}^{1/r}.
\end{align}
Now, using the assumption $  p=\frac{t}{r}$, we write
\begin{align}\label{eqn-tem2}
\lf\|G_{\az,\dz}\ast V^r\r\|_{L^{p}(\rn)}^{1/r}
&=\lf\{\dint_\rn\lf[\dint_{\rn}
\lf|V(y)\r|^r|y-x|^{\az-n}\,dy\r]^{t/r}dx\r\}^{1/t}\\ \notag &\ls
\lf\{\dint_\rn\lf[\dint_{|y-x|<1/\dz}
\lf|V(y)\r|^r|y-x|^{\az-n}\,dy\r]^{t/r}dx\r\}^{1/t}\\ \notag
&\qquad+\lf\{\dint_\rn\lf[\dint_{|y-x|\ge
1/\dz} \cdots dy\r]^{t/r}dx\r\}^{1/t}\\ \notag
&=:\mathrm{I}_1+\mathrm{I}_2.
\end{align}
By \eqref{def-Mazqrlz}, it is easy to see that $\mathrm{I}_1\sim
M_{\az,r,t,1/\dz}(V)$. For $\mathrm{I}_2$, using Lemma
\ref{lem-s3-Gslz} with $p=1$ and $s=\az$ therein, we also obtain
$\mathrm{I}_2\sim M_{\az,r,t,1/\dz}(V)$. This, combined with
\eqref{eqn-tem1} and \eqref{eqn-tem2}, implies that
\begin{align*}
\dz^{-Q}M_{\bz,r,\tau,1/\dz}(V)\ls \dz^{-Q-S_1}M_{\az,r,t,1/\dz}(V),
\end{align*}
which, together with \eqref{eqn-Mclass}, then completes the proof of
Proposition \ref{prop-EBP2}.
\end{proof}

\subsection{The third and the fourth boundedness \label{s3.2}}

In this subsection, we consider the third and the fourth
boundedness of the $T$-operator. First, we need some additional notation.
To be precise, for any given $s\in (0,n/2)$, $r\in [1,\fz)$, $t\in [1,\fz]$,
and $\dz\in (0,\fz)$, let
\begin{align}\label{eqn-wzC}
\wz C_{s,r,t,\dz}(V):=\lf\| \lf\{\dint_{\rn} \lf|V(x)\r|^r [G_{2s,\dz}(x-\cdot)]^{r/2}\,dx\r\}^{1/r}\r\|_{L^t(\rn)}
\end{align}
with $G_{2s,\dz}$ as in \eqref{def-Besselp}.

\begin{lemma}\label{lem-estimatewzC}
Let $t\in [1,\fz)$,
$\dz\in (0,\fz)$, $s\in (0,n/2)$, $r\in [1,\frac{2n}{n-2s})$, and $\az\in (0,n+(2s-n)r/2]$.
Assume that $V$ is a   measurable function on $\rn$. Then
\begin{align*}
\wz C_{s,r,t,\dz}(V)\ls \dz^{[(n-2s)r/2+(\az-n)]/r}M_{\az,r,t,1/\dz}(V)
\end{align*}
with the implicit positive constant independent of $\dz$ and $V$.
\end{lemma}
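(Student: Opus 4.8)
The plan is to mimic closely the proof of Lemma \ref{lem-s3-Gslz}, since $\wz C_{s,r,t,\dz}(V)$ has the same structure as the left-hand side there with $p=r/2$ and the Bessel exponent $2s$ in place of $s$, except that here the integration is over all of $\rn$ rather than only over $\{|y-\cdot|\ge 1/\dz\}$. Accordingly, I would first split the inner integral in \eqref{eqn-wzC} into the near part $\{|x-\cdot|<1/\dz\}$ and the far part $\{|x-\cdot|\ge 1/\dz\}$, and use $(A+B)^{1/r}\le A^{1/r}+B^{1/r}$ together with the triangle inequality in $L^t(\rn)$ to write
\begin{align*}
\wz C_{s,r,t,\dz}(V)\ls \mathrm{I}_1+\mathrm{I}_2,
\end{align*}
where $\mathrm{I}_1$ and $\mathrm{I}_2$ are the $L^t(\rn)$ norms of the near and far contributions, respectively.

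For the far part $\mathrm{I}_2$, I would apply Lemma \ref{lem-s3-Gslz} directly with $p:=r/2$, $q:=r$, and the Bessel order $2s$ in place of $s$: this is legitimate because the hypotheses of that lemma only require $q\in[1,\fz)$, $t\in[1,\fz]$, the Bessel order in $(0,\fz)$, and $\az\in(0,n)$, all of which follow from our assumptions $r\in[1,\frac{2n}{n-2s})$, $s\in(0,n/2)$, $t\in[1,\fz)$, and $\az\in(0,n+(2s-n)r/2]$ (note $(2s-n)r/2\le 0$, so $\az\le n$, and in fact $\az<n$ when $r>0$ and $2s<n$). This yields
\begin{align*}
\mathrm{I}_2\ls \dz^{[(n-2s)(r/2)+\az-n]/r}M_{\az,r,t,1/\dz}(V),
\end{align*}
which is exactly the claimed bound.

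For the near part $\mathrm{I}_1$, the key pointwise estimate is that, for $|x-y|<1/\dz$, by Lemma \ref{lem-s3-Besselp}(i) and (ii),
\begin{align*}
[G_{2s,\dz}(x-y)]^{r/2}=\dz^{(n-2s)r/2}[G_{2s}(\dz(x-y))]^{r/2}\ls \dz^{(n-2s)r/2}[w_{2s}(\dz(x-y))]^{r/2},
\end{align*}
and then $[w_{2s}(\dz(x-y))]^{r/2}\ls \dz^{n-\az}w_\az(x-y)$ for $\dz|x-y|<1$, using \eqref{def-waz} and the inequality $(2s-n)(r/2)\ge \az-n$ (equivalently $\az\le n+(2s-n)r/2$, which is our hypothesis, guaranteeing the exponent of $|x-y|$ in $[w_{2s}(\dz(x-y))]^{r/2}$ is at least $\az-n$, i.e.\ less singular). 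Plugging this in and invoking \eqref{def-Mazqrlz} gives
\begin{align*}
\mathrm{I}_1\ls \dz^{(n-2s)r/2\cdot\frac1r+(n-\az)/r}\,\dz^{-(n-\az)/r}\,\dz^{(\az-n)/r}M_{\az,r,t,1/\dz}(V)\sim \dz^{[(n-2s)(r/2)+\az-n]/r}M_{\az,r,t,1/\dz}(V),
\end{align*}
the same bound as for $\mathrm{I}_2$. Combining the two estimates finishes the proof.

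The only mild subtlety—hence the step I would be most careful about—is verifying the exponent bookkeeping in the near-part estimate (the interplay between the homogeneity factor $\dz^{n-2s}$ from Lemma \ref{lem-s3-Besselp}(i), the Kato-weight exponent comparison $(2s-n)r/2\ge\az-n$, and the scaling in $w_\az$), since an off-by-one slip in the power of $\dz$ would break the claimed sharp exponent $[(n-2s)r/2+\az-n]/r$; everything else is a routine reduction to Lemmas \ref{lem-s3-Besselp} and \ref{lem-s3-Gslz}.
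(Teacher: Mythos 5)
Your proof follows the paper's argument exactly: the same near/far splitting of the inner integral, the same application of Lemma \ref{lem-s3-Gslz} with $q=r$, $p=r/2$, and Bessel order $2s$ for the far part, and the same pointwise comparison via Lemma \ref{lem-s3-Besselp}(i) and (ii) together with $\az\le n+(2s-n)r/2$ for the near part. One small slip to fix: for $\dz|x-y|<1$ the correct pointwise bound is $[w_{2s}(\dz(x-y))]^{r/2}\le|\dz(x-y)|^{\az-n}=\dz^{\az-n}w_\az(x-y)$, i.e.\ the factor is $\dz^{\az-n}$ rather than $\dz^{n-\az}$; with this correction the exponents combine directly to $\dz^{(n-2s)r/2+\az-n}$, and hence to the claimed $\dz^{[(n-2s)r/2+\az-n]/r}$ after taking the $1/r$ power, without the mutually cancelling $\dz^{\pm(n-\az)/r}$ factors appearing in your last display.
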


\begin{proof}
The proof of this lemma is similar to that of
\eqref{eqn-tem2}. By \eqref{eqn-wzC}, we first write
\begin{align*}
\wz C_{s,r,t,\dz}(V)&\ls \lf\|\lf[\dint_{|x-\cdot|<1/\dz}\lf|V(x)\r|^r
\{G_{2s,\dz}(x-\cdot)\}^{r/2}\,dx\r]^{1/r}\r\|_{L^t(\rn)} +\lf\|\lf[\dint_{|x-\cdot|\ge 1/\dz}  \cdots
dx\r]^{1/r}\r\|_{L^t(\rn)}\\
&=:\mathrm{I}_1+\mathrm{I}_2.
\end{align*}
For $\mathrm{I}_2$,   applying Lemma \ref{lem-s3-Gslz} with $q=r$,
$p=r/2$, and $s=2s$ therein, we have
\begin{align*}
\mathrm{I}_2\sim\lf\|\lf\{\dint_{|x-\cdot|\ge 1/\dz} \lf|V(x)\r|^r [G_{2s,\dz}(x-\cdot)]^{r/2}\,dx\r\}^{1/r}\r\|_{L^t(\rn)}
\ls \dz^{[(n-2s)r/2+(\az-n)]/r}M_{\az,r,t,1/\dz}(V).
\end{align*}
For $\mathrm{I}_1$, since $\dz|x-y|<1$, we deduce from (i) and (ii) of Lemma \ref{lem-s3-Besselp},
\eqref{def-waz}, and the assumption $2s<n$, that
\begin{align*}
\lf[G_{2s,\dz}(x-y)\r]^{r/2}\ls \dz^{(n-2s)r/2}[w_{2s}(\dz(x-y))]^{r/2}
\sim  |x-y|^{(2s-n)r/2}.
\end{align*}
This, combined with the assumption $\az\le n+(2s-n)r/2$, shows that
\begin{align*}
\lf[G_{2s,\dz}(x-y)\r]^{r/2}\ls \dz^{(n-2s)r/2} |\dz(x-y)|^{\az-n}
\sim \dz^{(n-2s)r/2+(\az-n)}w_\az(x-y),
\end{align*}
which then implies that
\begin{align*}
\mathrm{I}_1&\ls \dz^{[(n-2s)r/2+\az-n]/r}
\lf\|\lf[\dint_{|x-y|< 1/\dz} \lf|V(x)\r|^r w_\az(x-y)\,dx\r]^{1/r}\r\|_{L^t(\rn)}\\
&\sim \dz^{[(n-2s)r/2+\az-n]/r} M_{\az,r,t,1/\dz}(V).
\end{align*}
Altogether the estimates for $\mathrm{I}_1$ and $\mathrm{I}_2$, we conclude that
\begin{align*}
\wz C_{s,r,t,\dz}(V)\ls \dz^{[(n-2s)r/2+\az-n]/r} M_{\az,r,t,1/\dz}(V),
\end{align*}
which completes the proof of Lemma \ref{lem-estimatewzC}.
\end{proof}

Based on Lemma \ref{lem-estimatewzC}, we now state the third boundedness of the $T$-operator.

\begin{proposition}\label{prop-bdT3}
Let $s\in (0,n/2)$, $q\in [2,\fz)$,
$t\in [q,\fz]$, and $r\in [q,\frac{2n}{n-2s})$ satisfy
\begin{align*}
\frac{1}{t}+\frac{1}{r}=\frac{1}{q}.
\end{align*}
Assume that $V$ is a   measurable function on $\rn$ satisfying
$M_{\az,r,t,1/\dz}(V)<\fz$ for some  $\dz\in (0,\fz)$
and  $\az\in (0,n+(2s-n)r/2]$.
Then $T_{s,\dz}$ can be extended to
a bounded linear operator from $L^2(\rn)$ to $L^q(\rn)$. Moreover, there exists a positive
constant $C$, independent of $\dz$, such that, for any $f\in L^2(\rn)$,
\begin{align}\label{BC3}
\lf\|T_{s,\dz}(f)\r\|_{L^q(\rn)}\le C\dz^{[(n-2s)r/2+\az-n]/r} M_{\az,r,t,1/\dz}(V)\|f\|_{L^2(\rn)}.
\end{align}
\end{proposition}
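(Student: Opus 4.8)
The plan is to first prove the operator-norm bound
$\|T_{s,\dz}\|_{L^2(\rn)\to L^q(\rn)}\ls\wz C_{s,r,t,\dz}(V)$ with $\wz C_{s,r,t,\dz}(V)$ as in \eqref{eqn-wzC}, and then to invoke Lemma \ref{lem-estimatewzC} to replace $\wz C_{s,r,t,\dz}(V)$ by $\dz^{[(n-2s)r/2+\az-n]/r}M_{\az,r,t,1/\dz}(V)$. Since $M_{\az,r,t,1/\dz}(V)<\fz$, a routine density argument then produces the asserted bounded extension of $T_{s,\dz}$ from $\mathcal{S}(\rn)$ to $L^2(\rn)$ and the estimate \eqref{BC3}.

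For the norm bound I would argue by duality. For $f\in \mathcal{S}(\rn)$ and $g\in\mathcal{C}_{\mathrm c}^\fz(\rn)$, Fubini's theorem (legitimate since $V\in L_\loc^r(\rn)\subset L_\loc^1(\rn)$, $g$ has compact support, and $G_{s,\dz}\ast f\in L^\fz(\rn)$ because $G_{s,\dz}\in L^1(\rn)$) shows that $T_{s,\dz}^\ast g=G_{s,\dz}\ast(Vg)$; moreover, since $(\dz^2-\Delta)^{-s/2}(\dz^2-\Delta)^{-s/2}=(\dz^2-\Delta)^{-s}$ has convolution kernel $G_{2s,\dz}$ by \eqref{def-Besselp2}, one has, with $\Phi(x):=|V(x)|\int_\rn G_{2s,\dz}(x-y)|V(y)g(y)|\,dy$,
\begin{align*}
\lf\|T_{s,\dz}^\ast g\r\|_{L^2(\rn)}^2=\lf\laz G_{2s,\dz}\ast(Vg),\,Vg\r\raz\le\dint_\rn\Phi(x)\lf|g(x)\r|\,dx\le\lf\|\Phi\r\|_{L^q(\rn)}\lf\|g\r\|_{L^{q'}(\rn)}.
\end{align*}
Thus, since $\|T_{s,\dz}\|_{L^2(\rn)\to L^q(\rn)}=\|T_{s,\dz}^\ast\|_{L^{q'}(\rn)\to L^2(\rn)}$, it suffices to show that $\|\Phi\|_{L^q(\rn)}\ls\wz C_{s,r,t,\dz}(V)^2\|g\|_{L^{q'}(\rn)}$.

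For this key estimate I would split $G_{2s,\dz}(x-y)=[G_{2s,\dz}(x-y)]^{1/2}[G_{2s,\dz}(x-y)]^{1/2}$ and apply the H\"older inequality with exponents $r$ and $r'$ to the inner $y$-integral, which gives $\Phi(x)\le|V(x)|\,W(x)\,\Psi(x)$, where
\begin{align*}
W(x):=\lf[\dint_\rn\lf|V(y)\r|^r[G_{2s,\dz}(x-y)]^{r/2}\,dy\r]^{1/r},\qquad
\Psi(x):=\lf[\dint_\rn\lf|g(y)\r|^{r'}[G_{2s,\dz}(x-y)]^{r'/2}\,dy\r]^{1/r'},
\end{align*}
and $\|W\|_{L^t(\rn)}=\wz C_{s,r,t,\dz}(V)$ by \eqref{eqn-wzC}. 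Since $\frac1q=\frac1t+\frac1r$, the H\"older inequality yields $\|\Phi\|_{L^q(\rn)}\le\|W\|_{L^t(\rn)}\,\||V|\Psi\|_{L^r(\rn)}$, so it remains to bound $\||V|\Psi\|_{L^r(\rn)}$. Because $r\ge q\ge2$, one has $r/r'\ge1$, so applying Minkowski's integral inequality to $\int_\rn|V(x)|^r\Psi(x)^r\,dx$ (viewing the $y$-integral as a vector-valued integral into $L^{r/r'}(\rn,|V(x)|^r\,dx)$) and using the identity $\frac{r'}2\cdot\frac r{r'}=\frac r2$ gives $\||V|\Psi\|_{L^r(\rn)}\le\||g|W\|_{L^{r'}(\rn)}$; and since $\frac1{r'}=\frac1{q'}+\frac1t$ (which is equivalent to $\frac1q=\frac1t+\frac1r$) with $q'\ge r'$ (because $q\le r$), one more H\"older inequality gives $\||g|W\|_{L^{r'}(\rn)}\le\|g\|_{L^{q'}(\rn)}\|W\|_{L^t(\rn)}=\wz C_{s,r,t,\dz}(V)\|g\|_{L^{q'}(\rn)}$. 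Chaining these three bounds proves $\|\Phi\|_{L^q(\rn)}\ls\wz C_{s,r,t,\dz}(V)^2\|g\|_{L^{q'}(\rn)}$, hence $\|T_{s,\dz}\|_{L^2(\rn)\to L^q(\rn)}\ls\wz C_{s,r,t,\dz}(V)$. Finally, Lemma \ref{lem-estimatewzC} (whose hypotheses $s\in(0,n/2)$, $r\in[1,\frac{2n}{n-2s})$, and $\az\in(0,n+(2s-n)r/2]$ are all included among the present assumptions) converts $\wz C_{s,r,t,\dz}(V)$ into $\dz^{[(n-2s)r/2+\az-n]/r}M_{\az,r,t,1/\dz}(V)$, giving \eqref{BC3} after the density argument; the borderline case $t=\fz$ (which forces $r=q$) is handled identically, the above H\"older/Minkowski chain and the corresponding version of Lemma \ref{lem-estimatewzC} requiring only the obvious $L^\fz$-norm modifications.

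The step I expect to be the main obstacle is the exponent bookkeeping in the estimate of $\Phi$: the Bessel kernel $G_{2s,\dz}$ must be split so that precisely the power $r/2$ appears after the first H\"older inequality (matching \eqref{eqn-wzC}, rather than the more naive power $1$ or $q$ produced by a direct Minkowski or Cauchy--Schwarz argument), and the two further H\"older inequalities together with Minkowski's integral inequality must then be arranged so that all the exponents close up under the single scaling relation $\frac1q=\frac1t+\frac1r$ combined with $2\le q\le r$. Everything else---the $T^\ast$-duality reduction, the Fubini justification, and the invocation of Lemma \ref{lem-estimatewzC}---is routine.
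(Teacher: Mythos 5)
Your proposal is correct. The second half of your argument (invoking Lemma \ref{lem-estimatewzC} to convert $\wz C_{s,r,t,\dz}(V)$ into $\dz^{[(n-2s)r/2+\az-n]/r}M_{\az,r,t,1/\dz}(V)$) is exactly what the paper does. The difference lies in the first half: the paper treats the operator-norm bound $\|T_{s,\dz}\|_{L^2(\rn)\to L^q(\rn)}\ls\wz C_{s,r,t,\dz}(V)$ as a black box, citing Schechter [Sch86, p.~129, Theorem 6.3], whereas you supply a complete self-contained proof of it via the $TT^\ast$ reduction $\|T_{s,\dz}^\ast g\|_{L^2(\rn)}^2=\langle G_{2s,\dz}\ast(Vg),Vg\rangle$ followed by the H\"older--Minkowski--H\"older chain. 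I checked your exponent bookkeeping: the split of $G_{2s,\dz}$ into two half-powers, the H\"older step with exponents $r,r'$ producing $W$ and $\Psi$, the Minkowski step using $r/r'\ge1$ (valid since $r\ge q\ge2$) together with $(r'/2)(r/r')=r/2$ to recover $W$, and the final H\"older step with $\frac1{r'}=\frac1{q'}+\frac1t$ (equivalent to $\frac1q=\frac1t+\frac1r$) all close up correctly, yielding $\|\Phi\|_{L^q(\rn)}\le\|W\|_{L^t(\rn)}^2\|g\|_{L^{q'}(\rn)}$ as claimed. Your route is essentially a reconstruction of Schechter's proof, so the mathematical content is the same, but your version has the advantage of being verifiable without consulting the reference; the paper's version is shorter at the cost of opacity. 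Your remark about $t=\fz$ forcing $r=q$ and requiring only trivial modifications applies equally to the paper's own use of Lemma \ref{lem-estimatewzC}, which is stated there only for $t\in[1,\fz)$.
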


\begin{proof}
Let  $\wz C_{s,r,t,\dz}(V)$ be as in \eqref{eqn-wzC}. From \cite[p. 129, Theorem 6.3]{Sch86},
it follows that
$$\lf\|T_{s,\dz}(f)\r\|_{L^q(\rn)}\ls\wz C_{s,r,t,\dz}(V)\|f\|_{L^2(\rn)},$$
which, together with Lemma \ref{lem-estimatewzC}, then completes the proof
of Proposition \ref{prop-bdT3}.
\end{proof}

The following proposition gives the fourth boundedness of the $T$-operator.

\begin{proposition}\label{prop-bdT4}
Let $s\in (0,n)$, $\dz\in (0,\fz)$, and $T_{s,\dz}$ be as in
\eqref{def-Tlz}.
\begin{itemize}
\item[{\rm (i)}]
For any given $p$, $q\in [1,\fz)$, let $r\in [q,\fz)$, $t\in [1,\fz)$, $\az\in (0,(1-\tz_1)r(s-n)+n]$,
and $\wz\az\in (0,\tz_1r'(s-n)+n]$ for some
$  \tz_1\in (0,\min\{1,\frac{n}{r'(n-s)}\})$.
If
\begin{align}\label{eqn-parameters-1}
\frac{1}{q}<\frac{1}{p}+\frac{1}{t}<\min\lf\{1,1+\tz_1\lf(\frac{s}{n}-1\r)
+\frac{1}{q}-\frac{1}{r}\r\}
\end{align}
and  $M_{\az,r,t,1/\dz}(V)<\fz$,
then $T_{s,\dz}$ can be extended to
a bounded linear operator from $L^p(\rn)$ to $L^q(\rn)$. Moreover, there exists a positive
constant $C$, independent of $\dz$ and $V$, such that, for any $f\in L^p(\rn)$,
\begin{align*}
\lf\|T_{s,\dz}(f)\r\|_{L^q(\rn)}\le C \dz^{(n-s)+(\az-n)/r+{(\wz\az-n)}/{r'}} M_{\az,r,t,1/\dz}(V)\|f\|_{L^p(\rn)}.
\end{align*}

\item[{\rm (ii)}] Let $r\in (1,2]$ and $\az\in (0,n)$ satisfy
\begin{align*}
\az-n\le r(s-n)+\frac{nr}{r'} \ \ \text{and} \ \ 2n>r'(n-s).
\end{align*}
Then $T_{s,\dz}$ can be
extended to a bounded linear operator on $L^r(\rn)$. Moreover, there exists a positive
constant $C$, independent of $\dz$ and $V$, such that, for any $f\in L^r(\rn)$,
\begin{align}\label{BC4}
\lf\|T_{s,\dz}(f)\r\|_{L^r(\rn)}\le C \dz^{\az/r-s} M_{\az,r,\fz,1/\dz}(V)\|f\|_{L^r(\rn)}.
\end{align}
\end{itemize}
\end{proposition}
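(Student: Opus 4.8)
The plan is to reduce, in each part, the asserted boundedness of $T_{s,\dz}$ to an estimate of a Schechter-type kernel functional of $V$ and then to dominate that functional by $\dz^{E}M_{\az,r,t,1/\dz}(V)$, with $E$ the exponent in the statement, following the scheme of \cite[Chapter~6]{Sch86} already exploited for Propositions \ref{prop-bdT1} and \ref{prop-bdT3}. For (i), writing $T_{s,\dz}(f)(x)=V(x)(G_{s,\dz}\ast f)(x)$ and splitting the convolution into the near region $\{|x-y|<1/\dz\}$ and the far region $\{|x-y|\ge 1/\dz\}$, I would treat the far region exactly as in Lemma \ref{lem-s3-Gslz} (exponential decay of $G_{s,\dz}$ from (i) and (iii) of Lemma \ref{lem-s3-Besselp} together with an annular covering), which already yields a term bounded by $C\dz^{(n-s)+(\az-n)/r+(\wz\az-n)/r'}M_{\az,r,t,1/\dz}(V)\|f\|_{L^p(\rn)}$.

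On the near region the core device is to factor the Bessel kernel, $G_{s,\dz}=G_{s,\dz}^{\,\tz_1}\cdot G_{s,\dz}^{\,1-\tz_1}$, to estimate the two factors by single rescaled Bessel potentials via Lemma \ref{lem-s3-Besselp}(iv), namely $G_{s,\dz}^{\,a}\ls\dz^{(n-s)a}G_{a(s-n)+n}(\dz\cdot)$ whenever $a(s-n)+n>0$, and then to dominate these by the power weights $w_{\wz\az}$ and $w_{\az}$ using (i) and (ii) of Lemma \ref{lem-s3-Besselp} together with $G_{\bz}\ls G_{\az}$ for $\bz\ge\az$. Applying H\"older in $y$ with the conjugate pair $(r',r)$ then separates a piece in which $|V|^r$ is averaged against $w_\az$ over a ball of radius $1/\dz$ from a piece that is a convolution of $|f|^r$ against such a truncated weight; the former is controlled by $M_{\az,r,t,1/\dz}(V)$ after the $L^t(\rn)$-norm in $x$, and the latter by Young's inequality, whose admissibility is exactly the range condition \eqref{eqn-parameters-1}. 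The prescribed ranges $\tz_1\in(0,\min\{1,\frac{n}{r'(n-s)}\})$, $\az\in(0,(1-\tz_1)r(s-n)+n]$, $\wz\az\in(0,\tz_1 r'(s-n)+n]$ are precisely the convergence thresholds for the Bessel integrals attached to $G_{s}^{\,\tz_1 r'}$ and $G_{s}^{\,(1-\tz_1)r}$ and for the weight comparisons, and tracking the powers of $\dz$ coming from $G_{s,\dz}(\cdot)=\dz^{n-s}G_s(\dz\cdot)$ produces the exponent $(n-s)+(\az-n)/r+(\wz\az-n)/r'$.

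Part (ii) is the diagonal case $p=q=r$ and $t=\fz$, and I would run the same argument, simplified by the fact that $M_{\az,r,\fz,1/\dz}(V)$ is merely a supremum over centers, so that after the H\"older and Young steps the $x$-integration collapses directly to $\sup_{z\in\rn}\int_{|x-z|<1/\dz}|V(x)|^r w_\az(x-z)\,dx=[M_{\az,r,\fz,1/\dz}(V)]^r$; the two side conditions $\az-n\le r(s-n)+\frac{nr}{r'}$ and $2n>r'(n-s)$ are exactly the threshold conditions guaranteeing, respectively, the near-region comparison $G_{s,\dz}\ls\dz^{\,\cdot}w_\az$ on $\{|x-y|<1/\dz\}$ and the convergence $\int_\rn[G_s(z)]^{r'/2}\,dz<\fz$ of the relevant half-power of $G_{s,\dz}$, and collecting the powers of $\dz$ then yields $\|T_{s,\dz}(f)\|_{L^r(\rn)}\ls\dz^{\az/r-s}M_{\az,r,\fz,1/\dz}(V)\|f\|_{L^r(\rn)}$. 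Alternatively, (ii) can be read off as the limiting case of the computation in (i).

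The hard part is the bookkeeping: one must choose the H\"older exponents and the splitting exponents for $G_{s,\dz}$ so that, simultaneously, every auxiliary Bessel integral converges (this is the source of the precise ranges of $\tz_1$, $\az$, $\wz\az$ and of the side conditions in (ii)), the $f$-side convolution lands in the Young range \eqref{eqn-parameters-1}, and all the accumulated powers of $\dz$ collapse to the stated value; a secondary technical point is to fit the mixed ``local-$L^r$/global-$L^t$'' norm $M_{\az,r,t,1/\dz}(V)$ into the abstract $L^p(\rn)\to L^q(\rn)$ kernel-mapping framework, much as $\wz C_{s,r,t,\dz}(V)$ was handled in Proposition \ref{prop-bdT3}.
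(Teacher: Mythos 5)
Your overall flavor (near/far splitting, the factorization $G_{s,\dz}=G_{s,\dz}^{\tz_1}\cdot G_{s,\dz}^{1-\tz_1}$, H\"older with exponents $(r,r')$, and Lemma \ref{lem-s3-Besselp}) matches the paper, but the step you defer as ``bookkeeping'' is in fact the core of the argument, and the substitutes you propose for it do not work as described. First, a direct H\"older in $y$ applied to $V(x)\int G_{s,\dz}(x-y)f(y)\,dy$ cannot produce ``a piece in which $|V|^r$ is averaged against $w_\az$'': $V$ is evaluated at $x$, not at the integration variable, so to bring $|V(x)|^r$ under an integral against $w_\az(x-y)$ one must either dualize against $g\in L^{q'}(\rn)$ and apply H\"older on the product space $\rn\times\rn$ (which is what the paper does via $W_{s,\dz}$ in \eqref{eqn-def-Wslz}) or use Minkowski as in Proposition \ref{prop-bdT1}, which leads to the functional $M_{\az,q,p',1/\dz}$ rather than $M_{\az,r,t,1/\dz}$ with independent indices $r\ge q$ and $t$. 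The paper's reconciliation of the three norms $\|f\|_{L^p}$, $\|g\|_{L^{q'}}$, and $M_{\az,r,t,1/\dz}(V)$ requires the auxiliary function $h_{s,\dz}$ of \eqref{eqn-def-hlz}, the splitting $|f|=|f|^{\tz_2}|f|^{1-\tz_2}$, the transfer of the weights $h_{s,\dz}^{\pm\gz}$ between the two H\"older factors, and the system of exponents \eqref{eqn-parameters}; none of this appears in your sketch, and without it the $V$-piece and the $f$-piece cannot both land in the correct spaces. Second, the analytic input on the $g$-side is the Hardy--Littlewood--Sobolev inequality for the Riesz potential of order $\wz\az$ at the \emph{critical} scaling $\frac{1}{\rho_1}+\frac{1}{\rho_2}=\frac{\wz\az}{n}+1$ (Lemma \ref{lem-EstW}); Young's inequality for the kernel $|x|^{\wz\az-n}$ fails precisely at this index, so your claim that admissibility of Young ``is exactly'' \eqref{eqn-parameters-1} is not justified. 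Relatedly, Lemma \ref{lem-s3-Gslz} bounds a functional of $V$ alone, so it cannot by itself dispose of the far region of the \emph{operator} as you assert.

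Your treatment of (ii) is also off in one respect: (ii) is not a limiting case of (i), since $p=q$ and $t=\fz$ violate the strict inequality $\frac1q<\frac1p+\frac1t$ in \eqref{eqn-parameters-1}. The paper proves (ii) separately, via $\wz W_{s,\dz}$, the Minkowski integral inequality, and the Bessel semigroup identity $[G_{s,\dz}]^{(1-\tz_1)r'}\ast[G_{s,\dz}]^{\tz_1 r'}\ls G_{r'(s-n)+2n,\dz}$ obtained from Lemma \ref{lem-s3-Besselp}(iv); the hypothesis $2n>r'(n-s)$ is what guarantees a $\tz_1\in(0,1)$ making both exponents $\tz_1r'(s-n)+n$ and $(1-\tz_1)r'(s-n)+n$ positive, not the convergence of $\int[G_s]^{r'/2}$. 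To repair the proposal you would need to supply the duality pairing and the $h_{s,\dz}$-based weight redistribution explicitly, and replace Young by Hardy--Littlewood--Sobolev.
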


To prove Proposition \ref{prop-bdT4}, we first need to study two
terms $h_{s,\delta}$ and $W_{s,\delta}$. To be precise, for any given
$s\in (0,n)$, $\dz\in (0,\fz)$, $r\in [1,\fz)$, and ${\tz_1}\in [0,1]$,
and any $y\in\rn$, let
\begin{align}\label{eqn-def-hlz}
h_{s,\dz}(y):=\lf\{\dint_\rn \lf|V(x)\r|^r [G_{s,\dz}(x-y)]^{(1-{\tz_1})r}\,dx\r\}^{1/r}.
\end{align}
Similarly to the proof of Lemma \ref{lem-estimatewzC}, we know that,
for any given $t\in [1,\fz]$, and $\az\in (0,n)$ satisfying $\az-n\le (1-{\tz_1})r(s-n)$,
\begin{align}\label{eqn-est-hslz}
\|h_{s,\dz}\|_{L^t(\rn)}\ls \dz^{(n-s)(1-{\tz_1})+(\az-n)/r}M_{\az,r,t,1/\dz}(V),
\end{align}
where the implicit positive constant is independent of $\dz$ and $V$.

On the other hand, since $1\le q\le r<\fz$ and
$  \frac{1}{q}<\frac{1}{p}+\frac{1}{t}<1$, it follows that
\begin{align*}
0<\frac{1}{p}+\frac{1}{t}-\frac{1}{q}\le \frac{1}{p}+\frac{1}{t}-\frac{1}{r}< 1-\frac{1}{r}=\frac{1}{r'}.
\end{align*}
Moreover, let
\begin{align}\label{eqn-parameters}
\begin{cases}
\dfrac{1}{\rho_1}:=\dfrac{r'}{q'},\\
\dfrac{1}{\rho_2}:=r'\lf(\dfrac{1}{p}+\dfrac{1}{t}-\dfrac{1}{r}\r),\\
\nu:=t\lf(\dfrac{1}{p}+\dfrac{1}{t}\r),\\
\tz_2:=\dfrac{p}{r\nu'},\\
\gz:=1-\dfrac{t}{r\nu}.
\end{cases}
\end{align}
Clearly,  ${\rho_1}\in [1,\fz)$ and ${\rho_2},\,\nu\in (1,\fz)$. Since
$  \nu'=1+\frac{p}{t}$, we conclude that  $ \tz_2,\,\gz\in (0,1)$.

Now, let $h_{s,\dz}$ be as in \eqref{eqn-def-hlz}. For any $f$, $g\in \mathcal{S}(\rn)$, define
\begin{align}\label{eqn-def-Wslz}
W_{s,\dz}:=\lf\{\iint_{\rn\times \rn} [G_{s,\dz}(x-y)]^{\tz_1 r'} |g(x)|^{r'} |f(y)|^{(1-\tz_2)r'}
[h_{s,\dz}(y)]^{\gz r'}\,dx\,dy\r\}^{1/r'}.
\end{align}
The following lemma provides an estimate for $W_{s,\dz}$.

\begin{lemma}\label{lem-EstW}
Let $s$, $r$, $\rho_1$, $\rho_2$, $\tz_1$, $\tz_2$, $\wz\az$, and $\gz$ be as in
\eqref{eqn-parameters-1} and \eqref{eqn-parameters}.
If
\begin{align*}
\frac{1}{\rho_1}+\frac{1}{\rho_2}=\frac{\wz\az}{n}+1,
\end{align*}
then
\begin{align*}
W_{s,\dz}\ls \dz^{(n-s)\tz_1 +(\wz\az-n)/r'}\lf\||g|^{r'}\r\|^{1/r'}_{L^{\rho_1}(\rn)}
\lf\|\lf|f\r|^{(1-\tz_2)r'}h_{s,\dz}^{\gz r'}\r\|^{1/r'}_{L^{\rho_2}(\rn)}
\end{align*}
with the implicit positive constant independent of $f$, $g$, and $\dz$.
\end{lemma}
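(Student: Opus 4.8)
The plan is to view $W_{s,\dz}^{r'}$ (recall that $W_{s,\dz}$ is the $1/r'$-power in \eqref{eqn-def-Wslz}) as a bilinear convolution form and to estimate it by the Young convolution inequality combined with the H\"older inequality; the identity $\frac1{\rho_1}+\frac1{\rho_2}=\frac{\wz\az}{n}+1$ will be exactly what pins down the Lebesgue exponent dual (in the sense of Young) to that of the Bessel kernel. Concretely, I would set $K_\dz(z):=[G_{s,\dz}(z)]^{\tz_1r'}$, $\wz K_\dz(z):=K_\dz(-z)$, $\Phi:=|g|^{r'}$, and $\Psi:=|f|^{(1-\tz_2)r'}h_{s,\dz}^{\gz r'}$ for $z\in\rn$ (with $h_{s,\dz}$ as in \eqref{eqn-def-hlz}), so that, by \eqref{eqn-def-Wslz},
\begin{align*}
W_{s,\dz}^{r'}=\iint_{\rn\times\rn}K_\dz(x-y)\,\Phi(x)\,\Psi(y)\,dx\,dy
=\dint_\rn\lf(\wz K_\dz\ast\Phi\r)(y)\,\Psi(y)\,dy.
\end{align*}
Applying the H\"older inequality with exponents $\rho_2$ and $\rho_2'$, and then the Young inequality $\|\wz K_\dz\ast\Phi\|_{L^{\rho_2'}(\rn)}\le\|K_\dz\|_{L^\rho(\rn)}\|\Phi\|_{L^{\rho_1}(\rn)}$---valid whenever $\frac1\rho+\frac1{\rho_1}+\frac1{\rho_2}=2$ with all three indices in $[1,\fz]$---yields
\begin{align*}
W_{s,\dz}^{r'}\le\|K_\dz\|_{L^\rho(\rn)}\,\lf\||g|^{r'}\r\|_{L^{\rho_1}(\rn)}\,\lf\||f|^{(1-\tz_2)r'}h_{s,\dz}^{\gz r'}\r\|_{L^{\rho_2}(\rn)}.
\end{align*}
Here $\rho_1,\,\rho_2\in[1,\fz)$ by the constraints collected in \eqref{eqn-parameters-1} and \eqref{eqn-parameters}, and the hypothesis $\frac1{\rho_1}+\frac1{\rho_2}=\frac{\wz\az}{n}+1$ forces $\frac1\rho=1-\frac{\wz\az}{n}$, that is, $\rho=\frac{n}{n-\wz\az}\in(1,\fz)$, since $\wz\az\in(0,\tz_1r'(s-n)+n]\subset(0,n)$.

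It then remains to bound $\|K_\dz\|_{L^\rho(\rn)}$ and to extract the correct power of $\dz$. By Lemma \ref{lem-s3-Besselp}(i) one has $G_{s,\dz}(z)=\dz^{n-s}G_s(\dz z)$, so the substitution $u=\dz z$ gives
\begin{align*}
\|K_\dz\|_{L^\rho(\rn)}=\dz^{(n-s)\tz_1r'}\lf[\dint_\rn[G_s(\dz z)]^{\tz_1r'\rho}\,dz\r]^{1/\rho}
=\dz^{(n-s)\tz_1r'-n/\rho}\lf\|[G_s]^{\tz_1r'}\r\|_{L^\rho(\rn)};
\end{align*}
since $n/\rho=n-\wz\az$, the exponent of $\dz$ equals $(n-s)\tz_1r'+\wz\az-n=r'[(n-s)\tz_1+(\wz\az-n)/r']$, which---after taking $1/r'$-th powers of the displayed estimate for $W_{s,\dz}^{r'}$---is precisely the power in the assertion. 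Finally, $\|[G_s]^{\tz_1r'}\|_{L^\rho(\rn)}<\fz$: over $\{|u|\ge1\}$ this follows from the exponential decay in Lemma \ref{lem-s3-Besselp}(iii), and over $\{|u|<1\}$ from $G_s(u)\ls|u|^{s-n}$ in Lemma \ref{lem-s3-Besselp}(ii) together with the integrability of $|u|^{(s-n)\tz_1r'\rho}$ near the origin, which is guaranteed by $\wz\az\le\tz_1r'(s-n)+n$ and $\tz_1<\frac{n}{r'(n-s)}$ from \eqref{eqn-parameters-1} (alternatively one may invoke Lemma \ref{lem-s3-Besselp}(iv) to replace $[G_s]^{\tz_1r'}$ by the single Bessel potential $G_{\tz_1r'(s-n)+n}$ and then check its $L^\rho(\rn)$-membership). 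Combining the two displays and raising to the power $1/r'$ gives the claim, with implicit constant $\|[G_s]^{\tz_1r'}\|_{L^\rho(\rn)}^{1/r'}$, which is independent of $f$, $g$, and $\dz$.

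The only genuinely delicate point is the exponent bookkeeping in the second paragraph: one must check that the Young exponent $\rho=\frac{n}{n-\wz\az}$ forced by the hypothesis is simultaneously the one for which $[G_s]^{\tz_1r'}\in L^\rho(\rn)$ and the one that produces exactly the power $(n-s)\tz_1+(\wz\az-n)/r'$. This is where the full strength of \eqref{eqn-parameters-1} and \eqref{eqn-parameters} enters; the borderline integrability at the origin corresponding to the endpoint $\wz\az=\tz_1r'(s-n)+n$ is then handled by a routine limiting argument (or by slightly shrinking $\tz_1$).
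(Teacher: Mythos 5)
Your argument — rewrite $W_{s,\dz}^{r'}$ as $\int(\wz K_\dz\ast\Phi)\Psi$, apply H\"older with $(\rho_2,\rho_2')$ and then Young's inequality with $\tfrac1\rho=1-\tfrac{\wz\az}{n}$, and get the $\dz$-power by pure scaling of $G_{s,\dz}$ — is a genuinely different route from the paper's, which instead splits the double integral into $|x-y|<1/\dz$ and $|x-y|\ge 1/\dz$, dominates $[G_{s,\dz}(x-y)]^{\tz_1 r'}$ pointwise by $\dz^{(n-s)\tz_1 r'+\wz\az-n}|x-y|^{\wz\az-n}$ on each piece, and then invokes the boundedness of the Riesz potential of order $\wz\az$ from $L^{\rho_1}(\rn)$ to $L^{\rho_2'}(\rn)$ (Hardy--Littlewood--Sobolev). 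Your exponent bookkeeping is correct and the scaling computation giving $(n-s)\tz_1+(\wz\az-n)/r'$ is clean; your version also avoids the near/far decomposition entirely and, unlike HLS, remains valid when $\rho_1=1$.

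There is, however, a genuine gap at the endpoint $\wz\az=\tz_1 r'(s-n)+n$, which is explicitly allowed by \eqref{eqn-parameters-1} ($\wz\az\in(0,\tz_1r'(s-n)+n]$) and is the case actually invoked in the proof of Proposition \ref{prop-bdT4}(i). There one has $(n-s)\tz_1 r'\rho=n$ with $\rho=\tfrac{n}{n-\wz\az}$, so $[G_s(u)]^{\tz_1 r'\rho}\sim|u|^{-n}$ near the origin by Lemma \ref{lem-s3-Besselp}(ii), and $\|[G_s]^{\tz_1 r'}\|_{L^\rho(\rn)}=\fz$: strong-type Young simply does not close. Neither of your proposed repairs works as stated: shrinking $\tz_1$ changes the kernel $[G_{s,\dz}]^{\tz_1 r'}$ in the very definition \eqref{eqn-def-Wslz} of $W_{s,\dz}$ (and $\tz_1$ also reappears in the companion factor $[G_{s,\dz}]^{(1-\tz_1)r}$ of \eqref{eqn-def-hlz}), and there is no limiting argument to run because the relevant norm is genuinely infinite rather than merely large, while the target exponents $\rho_1,\rho_2$ are pinned by \eqref{eqn-parameters}. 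The correct fix is to replace strong-type Young by its weak-type version, i.e., to use $|x|^{\wz\az-n}\in L^{\rho,\fz}(\rn)$ together with $[G_s(u)]^{\tz_1 r'}\ls|u|^{(s-n)\tz_1 r'}\le|u|^{\wz\az-n}$ for $|u|\le1$ and the exponential decay for $|u|\ge1$ — which is precisely the paper's HLS route (and which in turn requires $\rho_1>1$ and $\rho_2>1$). Away from the endpoint your proof is complete as written.
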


\begin{proof}
By \eqref{eqn-def-Wslz}, write
\begin{align}\label{eqn-dcW}
W_{s,\dz}^{r'}&=\dint_{\rn}\lf(\dint_{|x-y|<1/\dz}+\dint_{|x-y|\ge
1/\dz}\r)  [G_{s,\dz}(x-y)]^{\tz_1 r'} |g(x)|^{r'} |f(y)|^{(1-\tz_2)r'}
[h_{s,\dz}(y)]^{\gz r'}\,dx\,dy\\ \notag
&=:\mathrm{I}_1+\mathrm{I}_2.
\end{align}

For $\mathrm{I}_1$, since $0<\wz\az\le \tz_1 r' (s-n)+n<n$ and $\dz|x-y|<1$,
it follows, from (i) and (ii) of Lemma \ref{lem-s3-Besselp} and \eqref{def-waz}, that
\begin{align*}
\lf[G_{s,\dz}(x-y)\r]^{\tz_1 r'}&=\lf[\dz^{n-s} G_s(\dz(x-y))\r]^{\tz_1 r'}
\ls \dz^{(n-s) \tz_1 r'} [w_s(\dz(x-y))]^{\tz_1r'}\\
&\ls \dz^{(n-s)\tz_1 r'+(\wz\az-n)}\,|x-y|^{\wz\az-n}.
\end{align*}
This implies that
\begin{align*}
\mathrm{I}_1\ls \dz^{(n-s)\tz_1 r' +\wz\az -n}
\lf\{\iint_{\rn\times \rn} |x-y|^{\wz\az-n} |g(x)|^{r'} |f(y)|^{(1-\tz_2)r'}
[h_{s,\dz}(y)]^{\gz r'}\,dx\,dy\r\}.
\end{align*}
By the assumption
$  \frac{1}{\rho_1}+\frac{1}{\rho_2}=\frac{\wz\az}{n}+1$ and the
boundedness of the Riesz potential of order $\wz\az$ from $L^{\rho_1}(\rn)$
to $L^{\rho_2'}(\rn)$, we obtain
\begin{align*}
\mathrm{I}_1\ls \dz^{(n-s)\tz_1r' +(\wz\az-n)}\lf\||g|^{r'}\r\|_{L^{\rho_1}(\rn)}\lf\|\lf\|f\r|^{(1-\tz_2)r'}
h_{s,\dz}^{\gz r'}\r\|_{L^{\rho_2}(\rn)},
\end{align*}
which is the desired estimate.

For $\mathrm{I}_2$, by Lemma \ref{lem-s3-Besselp}(iii), we know that
$[G_{s} (\dz(x-y))]^{\tz_1 r'}\ls e^{-a \tz_1 r'\dz|x-y|}\ls
(\dz|x-y|)^{\wz\az-n}$ for any $\dz|x-y|>1$. From this, we further deduce that
$\mathrm{I}_2$ satisfies the same estimate as that of
$\mathrm{I}_1$, which, combined with \eqref{eqn-dcW}, then completes the
proof of Lemma \ref{lem-EstW}.
\end{proof}

Based on Lemma \ref{lem-EstW}, we now show Proposition
\ref{prop-bdT4}(i).

\begin{proof}[Proof of Proposition \ref{prop-bdT4}(i)]
From \eqref{eqn-def-hlz} and \eqref{eqn-def-Wslz}, we first deduce
that, for any $s\in (0,n)$, $\dz\in (0,\fz)$, and $f, \,g\in \mathcal{S}(\rn)$,
\begin{align}\label{eqn-bdT4-1}
\lf|(T_{s,\dz}(f),g)\r|&=\lf|(V(G_{s,\dz}\ast f),g)\r|\\ \notag
&\le \iint_{\rn\times \rn} |V(x)|G_{s,\dz}(x-y)
|f(y)||g(x)|\,dx\,dy\\ \notag
&=\iint_{\rn\times \rn}\lf[ |V(x)|[G_{s,\dz}(x-y)]^{1-\tz_1}
[h_{s,\dz}(y)]^{-\gz} |f(y)|^{\tz_2}\r]\\ \notag
&\hs\times\lf[[G_{s,\dz}(x-y)]^{\tz_1}|g(x)||f(y)|^{1-\tz_2}
[h_{s,\dz}(y)]^{\gz}\r]dx\,dy\\ \notag
&\le W_{s,\dz}  \lf\{\iint_{\rn\times \rn}|V(x)|^r [G_{s,\dz}(x-y)]^{(1-\tz_1)r}
[h_{s,\dz}(y)]^{-\gz r} |f(y)|^{\tz_2 r}dx\,dy\r\}^{1/r}\\ \notag
&= W_{s,\dz}  \lf\{\dint_{\rn} [h_{s,\dz}(y)]^{(1-\gz) r} |f(y)|^{\tz_2 r}\,dy\r\}^{1/r}\\ \notag
&\le  W_{s,\dz} \lf\|[h_{s,\dz}]^{1-\gz}\r\|_{L^{r\nu}(\rn)} \lf\||f|^{\tz_2}\r\|_{L^{r\nu'}(\rn)},
\end{align}
where $W_{s,\dz}$ is as in \eqref{eqn-def-Wslz}.
Moreover, by \eqref{eqn-parameters}, the assumption
$  \frac{1}{q}<\frac{1}{p}+\frac{1}{t}<\min\{1,1+\tz_1(\frac{s}{n}-1)
+\frac{1}{q}-\frac{1}{r}\}$, and an elementary calculation, we conclude that
\begin{align*}
\begin{cases}\displaystyle
\frac{1}{r'\nu'\rho_2}+\frac{1}{r\nu'}=\frac{1}{p},\\
\displaystyle \frac{1}{r'\nu \rho_2}+\frac{1}{r\nu}=\frac{1}{t},\\
\displaystyle 1<\frac{1}{\rho_1}+\frac{1}{\rho_2}\le 1+\frac{\tz_1 r' (s-n)+n}{n},\\
\displaystyle r'\rho_1=q', \ (1-\tz_2)r'\nu'\rho_2=p, \ \gz r'\nu \rho_2 =t,\
(1-\gz)r\nu=t,\ \tz_2r\nu'=p.
\end{cases}
\end{align*}
Now, applying \eqref{eqn-bdT4-1}, \eqref{eqn-est-hslz}, and Lemma
\ref{lem-EstW} with $\wz\az\le \tz_1 r' (s-n)+n$ therein, we obtain
\begin{align*}
&\lf|(VG_{s,\dz}\ast f,g)\r|\\
&\hs\le W_{s,\dz} \lf\|[h_{s,\dz}]^{1-\gz}\r\|_{L^{r\nu}(\rn)} \lf\||f|^{\tz_2}\r\|_{L^{r\nu'}(\rn)}\\
&\hs\ls \dz^{(n-s)\tz_1 +(\wz\az-n)/r'}\lf\||g|^{r'}\r\|^{1/r'}_{L^{\rho_1}(\rn)}\lf\|\lf|f\r|^{(1-\tz_2)r'}
h_{s,\dz}^{\gz r'}\r\|^{1/r'}_{L^{\rho_2}(\rn)}\lf\|[h_{s,\dz}]^{1-\gz}\r\|_{L^{r\nu}(\rn)} \lf\||f|^{\tz_2}\r\|_{L^{r\nu'}(\rn)}\\
&\hs\ls  \dz^{(n-s)\tz_1 +(\wz\az-n)/r'} \|g\|_{L^{q'}(\rn)}\lf\||f|^{1-\tz_2}\r\|_{L^{r'\nu'\rho_2}(\rn)}
\lf\|[h_{s,\dz}]^\gz\r\|_{L^{r'\nu\rho_2}(\rn)} \lf\|[h_{s,\dz}]^{1-\gz}\r\|_{L^{r\nu}(\rn)} \lf\||f|^{\tz_2}\r\|_{L^{r\nu'}(\rn)}\\
&\hs\sim \dz^{(n-s)\tz_1 +(\wz\az-n)/r'} \|g\|_{L^{q'}(\rn)} \|f\|_{L^p(\rn)}\|h_{s,\dz}\|_{L^{t}(\rn)}\\
&\hs\ls   \dz^{(n-s)+(\az-n)/r+{(\wz\az-n)}/{r'}}M_{\az,r,t,1/\dz}(V)\|g\|_{L^{q'}(\rn)} \|f\|_{L^p(\rn)},
\end{align*}
which completes the proof of Proposition \ref{prop-bdT4}(i).
\end{proof}

To prove Proposition \ref{prop-bdT4}(ii), for any given $s\in (0,n)$ and
$\dz\in (0,\fz)$, and, for any $g\in \mathcal{S}(\rn)$, define
\begin{align*}
\wz W_{s,\dz}:=\lf\{\iint_{\rn\times \rn} [G_{s,\dz}(x-y)]^{\tz_1 r'}
|g(x)|^{r'} [h_{s,\dz}(y)]^{r'}\,dx\,dy\r\}^{1/r'},
\end{align*}
where $G_{s,\dz}$ and $h_{s,\dz}$ are, respectively, as in \eqref{def-Besselp}
and \eqref{eqn-def-hlz}.
Similarly to Lemma \ref{lem-EstW}, we have the following estimate for
$\wz W_{s,\dz}$.

\begin{lemma}\label{lem-EstwzW}
Let $s\in (0,n)$, $\dz\in (0,\fz)$, $r\in (1,2]$, and $\az\in (0,n)$ satisfy
$\az-n<r(s-n)+\frac{nr}{r'}$ and $2n>r'(n-s)$. Then
\begin{align*}
\wz W_{s,\dz}\ls \dz^{\az/r-s} M_{\az,r,\fz,1/\dz}(V)
\|g\|_{L^{r'}(\rn)}
\end{align*}
with the implicit positive constant independent of $\dz$.
\end{lemma}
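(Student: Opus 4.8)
The plan is to imitate the proof of Lemma~\ref{lem-EstW}, but with two simplifications dictated by the present situation: there is no factor involving $f$, and the target bound carries $\|g\|_{L^{r'}(\rn)}$ together with the \emph{endpoint} Morrey-type index $M_{\az,r,\fz,1/\dz}(V)$. The crucial point is that one should \emph{not} route the argument through the $L^{\rho_1}(\rn)$-$L^{\rho_2'}(\rn)$ boundedness of a Riesz potential acting on $|g|^{r'}$ (as was done in Lemma~\ref{lem-EstW}), since that would force a finite index $t<\fz$ on $V$; instead one separates the variables $x$ and $y$ by Fubini's theorem and places $h_{s,\dz}$ in $L^\fz(\rn)$. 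Concretely, I would first fix the parameter $\tz_1\in(0,1)$ appearing in \eqref{eqn-def-hlz} and in the definition of $\wz W_{s,\dz}$ so that
\[
\max\lf\{0,\,1-\frac{n-\az}{r(n-s)}\r\}<\tz_1<\min\lf\{1,\,\frac{n}{r'(n-s)}\r\};
\]
such a $\tz_1$ exists precisely because $2n>r'(n-s)$ (which makes $\frac{n}{r'(n-s)}>\frac{1}{2}$) and because $\az-n<r(s-n)+\frac{nr}{r'}$ is equivalent to $\az<rs$, which is exactly the condition that the lower endpoint above lies strictly below the upper one.

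Next I would assemble the two ingredients. From \eqref{eqn-est-hslz} with $t=\fz$ --- which is admissible since the choice of $\tz_1$ gives $\az-n\le(1-\tz_1)r(s-n)$ --- one gets
\[
\|h_{s,\dz}\|_{L^\fz(\rn)}\ls \dz^{(n-s)(1-\tz_1)+(\az-n)/r}\,M_{\az,r,\fz,1/\dz}(V).
\]
Secondly, by Lemma~\ref{lem-s3-Besselp}(i) one has $G_{s,\dz}(z)=\dz^{n-s}G_s(\dz z)$, so, after the substitution $w=\dz z$,
\[
\dint_\rn \lf[G_{s,\dz}(z)\r]^{\tz_1 r'}\,dz=\dz^{(n-s)\tz_1 r'-n}\dint_\rn\lf[G_s(w)\r]^{\tz_1 r'}\,dw<\fz ,
\]
the finiteness being seen by splitting into $|w|<1$, where $[G_s(w)]^{\tz_1 r'}\ls |w|^{(s-n)\tz_1 r'}$ by Lemma~\ref{lem-s3-Besselp}(ii) and this is locally integrable exactly because $\tz_1 r'(n-s)<n$, and into $|w|\ge1$, where $G_s$ decays exponentially by Lemma~\ref{lem-s3-Besselp}(iii). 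This near/far dichotomy is the remnant of the corresponding split in the proof of Lemma~\ref{lem-EstW}.

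Finally, writing by Fubini's theorem
\[
\wz W_{s,\dz}^{r'}=\dint_\rn |g(x)|^{r'}\lf[\dint_\rn [G_{s,\dz}(x-y)]^{\tz_1 r'}[h_{s,\dz}(y)]^{r'}\,dy\r]dx
\le \|h_{s,\dz}\|_{L^\fz(\rn)}^{r'}\,\|g\|_{L^{r'}(\rn)}^{r'}\dint_\rn [G_{s,\dz}(z)]^{\tz_1 r'}\,dz ,
\]
inserting the two displayed estimates, and taking $r'$-th roots, I would collect the powers of $\dz$, which add up to
\[
(n-s)(1-\tz_1)+\frac{\az-n}{r}+(n-s)\tz_1-\frac{n}{r'}=(n-s)+\frac{\az-n}{r}-\frac{n}{r'}=\frac{\az}{r}-s ,
\]
giving $\wz W_{s,\dz}\ls \dz^{\az/r-s}\,M_{\az,r,\fz,1/\dz}(V)\,\|g\|_{L^{r'}(\rn)}$ with implicit constant independent of $\dz$, as claimed.

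The only genuinely delicate step is the bookkeeping that matches the hypotheses $r\in(1,2]$, $2n>r'(n-s)$, and $\az-n<r(s-n)+\frac{nr}{r'}$ to the requirement that a single exponent $\tz_1$ \emph{simultaneously} renders $[G_s]^{\tz_1 r'}$ globally integrable and makes \eqref{eqn-est-hslz} usable at $t=\fz$; the remaining manipulations are routine applications of Fubini's theorem, Lemma~\ref{lem-s3-Besselp}, and \eqref{eqn-est-hslz}, so I anticipate no further obstruction.
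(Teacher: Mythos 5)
Your proof is correct, but it takes a genuinely different route from the paper's. The paper does not decouple $h_{s,\dz}$ from the kernel: it writes $\wz W_{s,\dz}^{r'}=\int_\rn Q_{s,\dz}(x)|g(x)|^{r'}dx$, applies the Minkowski integral inequality (using $r'/r\ge 1$) to $Q_{s,\dz}$, identifies the resulting convolution $[G_{s,\dz}]^{(1-\tz_1)r'}\ast[G_{s,\dz}]^{\tz_1 r'}$ with the single Bessel kernel $G_{r'(s-n)+2n,\dz}$ via Lemma \ref{lem-s3-Besselp}(iv) and the semigroup property, and then bounds $\|Q_{s,\dz}\|_{L^\fz(\rn)}^{1/r'}$ by the method of Lemma \ref{lem-estimatewzC}; the hypothesis $2n>r'(n-s)$ is exactly what guarantees that both orders $\tz_1 r'(s-n)+n$ and $(1-\tz_1)r'(s-n)+n$ are positive so that this identification is legitimate. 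You instead separate variables brutally, bounding $\wz W_{s,\dz}^{r'}$ by $\|h_{s,\dz}\|_{L^\fz(\rn)}^{r'}\|g\|_{L^{r'}(\rn)}^{r'}\|[G_{s,\dz}]^{\tz_1 r'}\|_{L^1(\rn)}$; your scaling bookkeeping $(n-s)(1-\tz_1)+\frac{\az-n}{r}+(n-s)\tz_1-\frac{n}{r'}=\frac{\az}{r}-s$ is right, and the admissible range for $\tz_1$ is nonempty. Your argument is more elementary (no Minkowski step, no Bessel convolution identity) and yields the same power of $\dz$ and the same endpoint quantity $M_{\az,r,\fz,1/\dz}(V)$, so nothing is lost here; the paper's convolution-based argument is the one that parallels Lemma \ref{lem-estimatewzC} and Proposition \ref{prop-bdT3} structurally. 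One small inaccuracy in your write-up: the nonemptiness of your $\tz_1$-interval is equivalent to $\az<rs$ alone (your constraints never actually invoke $2n>r'(n-s)$, which is needed only along the paper's route), so the parenthetical attributing part of the existence of $\tz_1$ to that hypothesis is a harmless misstatement rather than a gap.
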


\begin{proof}
Write
\begin{align*}
\wz W_{s,\dz}&=\lf\{\dint_\rn \lf[\dint_\rn  [G_{s,\dz}(x-y)]^{\tz_1 r'}[h_{s,\dz}(y)]^{r'}\,dy\r]
|g(x)|^{r'}\,dx\r\}^{1/r'}\\
&=:\lf\{\dint_\rn Q_{s,\dz}(x) |g(x)|^{r'}\,dx\r\}^{1/r'}.
\end{align*}
By \eqref{eqn-def-hlz},  $r'/r\ge 1$, and the Minkowski integral inequality, we conclude that,
for any $x\in \rn$,
\begin{align*}
Q_{s,\dz}(x)&=\lf\{ \dint_{\rn} \lf[G_{s,\dz}(x-y)\r]^{\tz_1 r'}
\lf[\dint_\rn |V(z)|^r\lf\{G_{s,\dz}(z-y)\r\}^{(1-\tz_1)r}\,dz\r]^{r'/r}\,dy\r\}^{\frac{r}{r'}\cdot \frac{r'}{r}}\\
&\le \lf[\dint_{\rn} \lf\{\dint_\rn  [G_{s,\dz}(z-y)]^{(1-\tz_1)r'}
[G_{s,\dz}(x-y)]^{\tz_1r'}\,dy\r\}^{r/r'} |V(z)|^r\,dz\r]^{\frac{r'}{r}}\\
&\le \lf[\dint_{\rn} \lf\{[G_{s,\dz}]^{(1-\tz_1)r'}\ast[G_{s,\dz}]^{\tz_1r'}(x-z)\r\}^{r/r'} |V(z)|^r\,dz\r]^{\frac{r'}{r}}.
\end{align*}
Using Lemma \ref{lem-s3-Besselp}(i), we obtain
\begin{align*}
[G_{s,\dz}]^{(1-\tz_1)r'}\ast[G_{s,\dz}]^{\tz_1r'}
&=\dz^{(n-s)r'}[G_{s}(\dz\cdot)]^{(1-\tz_1)r'}\ast[G_{s}(\dz\cdot)]^{\tz_1r'}.
\end{align*}
Moreover, by the assumptions $s\in (0,n)$ and $2n>r'(n-s)$, we know that
there exists a $\tz_1\in (0,1)$ such that $\tz_1r'(s-n)+n>0$ and
$(1-\tz_1)r'(s-n)+n>0$, which, combined with Lemma
\ref{lem-s3-Besselp}(iv), implies that
\begin{align*}
[G_{s}(\dz\cdot)]^{(1-\tz_1)r'}\ast[G_{s}(\dz\cdot)]^{\tz_1r'}
=G_{(1-\tz_1)r'(s-n)+n}(\dz\cdot)\ast G_{\tz_1r'(s-n)+n}(\dz\cdot).
\end{align*}
Thus, by Lemma \ref{lem-s3-Besselp}(i) again, we further obtain
\begin{align*}
[G_{s,\dz}]^{(1-\tz_1)r'}\ast[G_{s,\dz}]^{\tz_1r'}
&=\lf[\dz^{(1-\tz_1)(n-s)r'}G_{(1-\tz_1)r'(s-n)+n}(\dz\cdot)\r]
\ast\lf[\dz^{\tz_1(n-s)r'}G_{\tz_1r'(s-n)+n}(\dz\cdot)\r]\\
&=G_{(1-\tz_1)r'(s-n)+n,\dz}\ast G_{\tz_1r'(s-n)+n,\dz}
=G_{r'(s-n)+2n,\dz}.
\end{align*}
This implies that
\begin{align*}
\lf\|Q_{s,\dz}\r\|_{L^\fz(\rn)}^{1/r'}\ls
\lf\|\lf\{ \dint_{\rn} \lf[G_{r'(s-n)+2n,\dz}(\cdot-z)\r]^{r/r'} |V(z)|^r\,dz\r\}^{\frac{1}{r}}\r\|_{L^\fz(\rn)}.
\end{align*}
Using the assumption $  \az-n<r(s-n)+\frac{nr}{r'}$ and following the estimation
of $\wz C_{s,r,t,\dz}(V)$
in the proof of Lemma \ref{lem-estimatewzC}, we obtain
\begin{align*}
\lf\|Q_{s,\dz}\r\|^{1/r'}_{L^\fz(\rn)}\ls \dz^{\az/r-s} M_{\az,r,\fz,1/\dz}(V),
\end{align*}
which implies that
\begin{align*}
\wz W_{s,\dz}&=\lf\{\dint_\rn Q_{s,\dz}(x) |g(x)|^{r'}\,dx\r\}^{1/r'}
\ls \dz^{\az/r-s} M_{\az,r,\fz,1/\dz}(V) \|g\|_{L^{r'}(\rn)}
\end{align*}
and hence completes the proof of Lemma \ref{lem-EstwzW}.
\end{proof}

Applying Lemma \ref{lem-EstwzW}, we now show
Proposition \ref{prop-bdT4}(ii).

\begin{proof}[Proof of Proposition \ref{prop-bdT4}(ii)]
For any given $s\in (0,n)$ and $\dz\in (0,\fz)$, and any $f,\, g\in \mathcal{S}(\rn)$,
by Lemma \ref{lem-EstwzW}, we write
\begin{align*}
&\lf|(T_{s,\dz}(f),g)\r|\\
&\hs=\lf|(VG_{s,\dz}\ast f,g)\r|\\
&\hs\le \iint_{\rn\times \rn} |V(x)|G_{s,\dz}(x-y)
|f(y)||g(x)|\,dx\,dy\\
&\hs=\iint_{\rn\times \rn}\lf\{|V(x)|[G_{s,\dz}(x-y)]^{1-\tz_1}
[h_{s,\dz}(y)]^{-1} |f(y)|\r\}\lf\{[G_{s,\dz}(x-y)]^{\tz_1}
[h_{s,\dz}(y)]|g(x)|\r\}dx\,dy\\
&\hs\le \wz W_{s,\dz}  \lf\{\iint_{\rn\times \rn}|V(x)|^r [G_{s,\dz}(x-y)]^{(1-\tz_1)r}
[h_{s,\dz}(y)]^{-r} |f(y)|^r\,dx\,dy\r\}^{1/r}
= \wz W_{s,\dz} \lf\|f\r\|_{L^{r}(\rn)}\\
&\hs\ls \dz^{\az/r-s} M_{\az,r,\fz,1/\dz}(V) \lf\|f\r\|_{L^{r}(\rn)}\|g\|_{L^{r'}(\rn)}.
\end{align*}
This finishes the proof of Proposition \ref{prop-bdT4}(ii).
\end{proof}

\section{Perturbation estimates for resolvents \label{s4}}

This section is devoted to the perturbation estimate for the
resolvent related to the Schr\"odinger operator $\cl=P(D)+V$, which is
essential to the estimate of the heat kernel. We consider two kinds
of perturbations: the summation and the exponential perturbations.

\subsection{Summation perturbation estimates \label{s4.1}}

We first need the following estimates for the resolvent of the homogeneous elliptic operator
$P(D)$. For the sake of homogeneity, we always use $(\lambda^{2m}-T)^{-1}$, where
$\lz\in\cc$,
to denote the resolvent of a $2m$-order differential operator $T$ in this section.

\begin{lemma}\label{lem-resolventforLc}
Let $m\in\nn$, $\lambda^{2m}\in \cc\setminus[0,\fz)$, $q\in (1,\fz)$,
and $P(D)$ be a homogeneous elliptic operator of order
$2m$ with real constant coefficients as in \eqref{def-PD}.
\begin{itemize}
\item[{\rm (i)}] Then there exists a positive constant
$C$, independent of $\lambda$, such that, for any $f\in L^q(\rn)$,
\begin{align*}
\lf\|\lf(\lambda^{2m}-P(D)\r)^{-1}f\r\|_{L^q(\rn)}\le \frac{C}{|\lambda|^{2m}}\|f\|_{L^q(\rn)}.
\end{align*}

\item[{\rm (ii)}] For any $s\in (0,2m]$ and $p\in [q,\fz)$
satisfying $  \frac{n}{2}(\frac{1}{q}-\frac{1}{p})<m-s/2$, there exists a positive constant
$C_2$, independent of $\lambda$, such that, for any $f\in L^q(\rn)$,
\begin{align}\label{eqn-ConstatC3}
\lf\| \lf(|\lambda|^2-\Delta\r)^{s/2}\lf(\lambda^{2m}-P(D)\r)^{-1}
f\r\|_{L^p(\rn)}\le C_2|\lz|^{-[2m-s+n(\frac{1}{p}-\frac{1}{q})]} \|f\|_{L^q(\rn)}.
\end{align}
\end{itemize}
\end{lemma}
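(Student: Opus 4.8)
\textbf{Proof proposal for Lemma \ref{lem-resolventforLc}.}

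The plan is to treat both parts by the Fourier multiplier method, exploiting the homogeneity of $P(D)$ and the Mihlin--H\"ormander multiplier theorem. First I would prove (i). Since $P(D)$ has symbol $P(\xi)=\sum_{|\az|=2m}a_\az\xi^\az$, which by \eqref{eqn-uec} satisfies $P(\xi)\ge\lz_0|\xi|^{2m}$ (where $\lz_0$ is the ellipticity constant), the resolvent $(\lambda^{2m}-P(D))^{-1}$ is the Fourier multiplier with symbol $m_\lambda(\xi):=(\lambda^{2m}-P(\xi))^{-1}$. By the scaling $\xi\mapsto|\lambda|\xi$, one has $m_\lambda(\xi)=|\lambda|^{-2m}m_{\lambda/|\lambda|}(\xi/|\lambda|)$, so it suffices to bound the $L^q(\rn)$-multiplier norm of $m_{\omega}$ uniformly for $\omega$ on the unit circle with $\omega^{2m}\notin[0,\fz)$; the dilation invariance of the $L^q(\rn)$ multiplier norm then gives the factor $|\lambda|^{-2m}$. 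For the uniform bound I would verify the Mihlin condition $|\xi|^{|\beta|}|D^\beta m_\omega(\xi)|\le C$ for $|\beta|\le \lfloor n/2\rfloor+1$: the denominator $\omega^{2m}-P(\xi)$ stays away from zero on $|\xi|\sim 1$ by ellipticity and for large $|\xi|$ behaves like $-P(\xi)\sim|\xi|^{2m}$, while each $\xi$-derivative lowers the homogeneity by one; the constant is uniform in $\omega$ because the distance from $\omega^{2m}$ to the positive real axis is bounded below on the relevant compact set of $\omega$'s — more carefully, one tracks the dependence on $d(\lambda^{2m},[0,\fz))$, but since $|\lambda^{2m}-P(\xi)|\gtrsim |\lambda|^{2m}+|\xi|^{2m}$ up to a constant depending only on the sector, the estimate is clean. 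This is essentially the argument already used for Lemma \ref{lem-resolvent-Laplace}, applied to the symbol $P(\xi)$ in place of $|\xi|^2$.

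For (ii), the operator in question is the Fourier multiplier with symbol
\[
\Phi_\lambda(\xi):=\frac{(|\lambda|^2+|\xi|^2)^{s/2}}{\lambda^{2m}-P(\xi)}.
\]
I would split this as $\Phi_\lambda(\xi)=(|\lambda|^2+|\xi|^2)^{s/2}\cdot(|\lambda|^2+|\xi|^2)^{-m}\cdot\big[(|\lambda|^2+|\xi|^2)^m m_\lambda(\xi)\big]$ — the last bracket being a bounded Mihlin multiplier uniformly in $\lambda$ by the same reasoning as in (i) (both numerator and denominator are comparable to $(|\lambda|^2+|\xi|^2)^m$). The core is then to estimate the operator with symbol $(|\lambda|^2+|\xi|^2)^{(s-2m)/2}$ from $L^q(\rn)$ to $L^p(\rn)$. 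Writing $s-2m<0$, this is (a constant multiple of) the Bessel-type potential $(|\lambda|^2-\Delta)^{(s-2m)/2}$, and by the subordination formula
\[
\lf(|\lambda|^2-\Delta\r)^{(s-2m)/2}=\frac{1}{\Gamma(m-s/2)}\dint_0^\fz t^{m-s/2-1}e^{-|\lambda|^2 t}e^{t\Delta}\,dt,
\]
combined with the heat-kernel bound $\|e^{t\Delta}\|_{L^q(\rn)\to L^p(\rn)}\ls t^{-\frac{n}{2}(\frac1q-\frac1p)}$, I get
\[
\lf\|\lf(|\lambda|^2-\Delta\r)^{(s-2m)/2}\r\|_{L^q(\rn)\to L^p(\rn)}
\ls\dint_0^\fz t^{m-s/2-1-\frac{n}{2}(\frac1q-\frac1p)}e^{-|\lambda|^2 t}\,dt
\sim |\lambda|^{-2[m-s/2-\frac{n}{2}(\frac1q-\frac1p)]},
\]
where the integral converges precisely because the hypothesis $\frac n2(\frac1q-\frac1p)<m-s/2$ makes the exponent of $t$ exceed $-1$ near $0$ (and $e^{-|\lambda|^2t}$ handles $t\to\fz$). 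Simplifying the exponent gives $-[2m-s+n(\frac1p-\frac1q)]$, matching \eqref{eqn-ConstatC3}. Chaining the three factors (uniformly bounded Mihlin multiplier on $L^p$, then $L^q\to L^p$ Bessel potential, then — wait, the multiplier $[(|\lambda|^2+|\xi|^2)^m m_\lambda]$ acts on $L^q$ and I must compose it before the potential) I would order the composition as: apply $(|\lambda|^2+|\xi|^2)^m m_\lambda(\xi)$ (bounded $L^q\to L^q$ uniformly), then $(|\lambda|^2+|\xi|^2)^{(s-2m)/2}$ ($L^q\to L^p$ with the stated decay), giving the claim.

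The main obstacle I anticipate is making the uniformity in $\lambda$ genuinely clean in part (i) — specifically, checking that the Mihlin constants for $m_\omega$ do not blow up as $\omega^{2m}$ approaches the positive real axis, and isolating exactly how the bound degrades (it should only cost a factor controlled by the aperture of the sector containing $\lambda^{2m}$, which in the application is fixed by the contour $\Gamma$). A secondary bookkeeping point is handling the two regimes $|\xi|\lesssim|\lambda|$ and $|\xi|\gtrsim|\lambda|$ separately when verifying $|\xi|^{|\beta|}|D^\beta\Phi_\lambda(\xi)|$ is bounded, but after the rescaling $\xi\mapsto|\lambda|\xi$ this reduces to a single fixed estimate and is routine. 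Everything else — the subordination identity, the heat semigroup $L^q\to L^p$ bound, the Gamma-function computation of the $t$-integral — is standard and I would not belabor it.
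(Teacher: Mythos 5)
Your proposal is correct, and part (ii) essentially coincides with the paper's argument: the same factorization through $(|\lambda|^2-\Delta)^{-(m-s/2)}$, the same subordination formula, and the same heat-semigroup bound $\|e^{t\Delta}\|_{L^q(\rn)\to L^p(\rn)}\ls t^{-\frac{n}{2}(\frac1q-\frac1p)}$, with the hypothesis $\frac n2(\frac1q-\frac1p)<m-s/2$ guaranteeing convergence of the $t$-integral at the origin; the only difference is that you bound the middle factor $(|\lambda|^2-\Delta)^m(\lambda^{2m}-P(D))^{-1}$ directly as a uniformly Mihlin multiplier, whereas the paper expands $(|\lambda|^2-\Delta)^m$ binomially and combines the Gagliardo--Nirenberg inequality with the $L^q(\rn)$-boundedness of $\Delta^mP(D)^{-1}$ and part (i). Part (i) is where you genuinely diverge: the paper invokes the multiplier theorem only for $\lambda^{2m}\in(-\fz,0)$ and, for non-real $\lambda^{2m}$, follows Pang's method --- rotating the spectral parameter into the right half-plane, using holomorphy of the semigroup and the complex-time Gaussian kernel bound to represent a fractional power of the resolvent via the Phillips calculus, deriving the pointwise kernel bound $|K_{\lambda,k}(x,y)|\ls|x-y|^{2m/k-n}e^{-c_8|\lambda||x-y|}$, and concluding by Young's inequality. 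Your direct verification of the Mihlin condition for $(\lambda^{2m}-P(\xi))^{-1}$ after rescaling to $|\lambda|=1$ is shorter and avoids the semigroup machinery, at the price of tracking how the derivative estimates degrade as $\arg(\lambda^{2m})$ approaches $0$. On that point your caution is warranted, and in fact no proof can remove the dependence: for $q=2$ and $P(D)=(-\Delta)^m$ one has $\|(\lambda^{2m}-P(D))^{-1}\|_{L^2(\rn)\to L^2(\rn)}=[d(\lambda^{2m},[0,\fz))]^{-1}$, so a constant truly independent of $\arg(\lambda^{2m})$ is impossible. Both your constant and the paper's (which carries a factor controlled by $\cos\tz_0$, degenerating as $\lambda^{2m}$ approaches the positive real axis) depend on the aperture of the sector containing $\lambda^{2m}$; this is harmless in the application, since $\lambda$ ranges over the contour $\Gamma$ with $|\arg\lambda^{2m}|\ge\mu>0$.
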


\begin{proof}
We first prove (i). If $\lz^{2m}\in (-\fz,0)$, then let
$ m(\xi)=\frac{1}{1+P(\xi)}$ with $  P(\xi)=\sum_{|\az|=2m}a_\az
\xi^{\az}$ for any $\xi\in\rn$
as in \eqref{eqn-uec}. From an elementary calculation
and the uniform ellipticity condition \eqref{eqn-uec}, it follows that $m(\xi)$
satisfies the Mihlin condition
\begin{align*}
\lf|\pat_\xi^\az m(\xi)\r|\ls |\xi|^{-|\az|}
\end{align*}
for any multi-index $  |\az|\le \lfloor\frac{n}{2}\rfloor+1$ and
$\xi\in\rn\setminus \{\vec 0_n\}$. Thus, by the H\"ormander--Mihlin multiplier
theorem (see \cite[Theorem 5.2.7]{Graf14}), we know that $T_m$ is
bounded on $L^q(\rn)$  for any given $q\in (1,\fz)$. Moreover, for
any $\xi\in \rn$, let
$$m_{\lz}(\xi):=m\lf({\xi}/{\lz}\r)=\frac{1}{1+P\lf({\xi}/{\lz}\r)}=\frac{\lz^{2m}}
{\lz^{2m}+P(\xi)}$$ for any given $\lz^{2m}\in (0,\fz)$. By the
dilation invariant property of the $L^q(\rn)$ Fourier multiplier (see
\cite[Theorem 2.5.14]{Graf14}), we conclude that  $T_{m_{\lz}}$
is also bounded on $L^q(\rn)$ for any given $q\in (1,\fz)$. This
immediately shows that (i) holds true when $\lz^{2m}\in (-\fz,0)$.

We now consider the case $\lz^{2m}\in\cc\setminus \rr$. To this end,
we follow the idea used in the proof of \cite[Theorem 2.6]{Pang90}. For any given
$\lz^{2m}=|\lz|^{2m}e^{i\tz}\in\cc\setminus \rr$, let
$z=|\lz|^{2m}e^{i\tz_0}$ be a rotation of $\lz^{2m}$ to the right
half complex plane satisfying $  |\tz_0|<\frac{\pi}{2}$ and
$  |\tz-\tz_0|<\frac{\pi}{2}$. Since $P(D)$ is a sectorial operator of
angle $0$, we find that $P(D)$ generates a bounded holomorphic
semigroup in the right half complex plane (see, for instance,
\cite[Proposition 3.4.4]{Haase06}). Moreover, by the higher order
Gaussian upper bound  of its heat kernel (see Lemma \ref{lems2-1})
and an argument similar to that used in the proof of
\cite[Theorem 6.16]{Ouha05}, we know that,
for any given such $\tz$ and $\tz_0$, and any
 $t\in (0,\fz)$ and $x$, $y\in\rn$,
\begin{align}\label{eqn-complexhk}
\lf|K_{te^{-i(\tz-\tz_0)}}(x,y)\r|\ls \frac{1}{t^{n/(2m)}}
\exp\lf\{-\frac{\wz c_0|x-y|^{2m/(2m-1)}}{t^{1/(2m-1)}}\r\}.
\end{align}
Now, let $k\in\nn$ satisfy $kn>2m$.
Since the semigroup generated by $-P(D)$
is holomorphic in the right half complex plane,
we deduce from the Phillips calculus for semigroup generators
(see \cite[Proposition 3.3.5]{Haase06}),
$z=\lz^{2m}e^{i(\tz_0-\tz)}$, and
the change of variables ($t=e^{-i(\tz-\tz_0)}\wz t$) that
\begin{align*}
e^{i(\tz-\tz_0)}\lf(\lambda^{2m}+P(D)\r)^{-1/k}&=\frac{e^{i(\tz-\tz_0)}}{\Gamma(1/k)}
\int_0^\infty t^{1/k-1}e^{-t\lambda^{2m}}e^{-tP(D)}dt\\
&= \frac{e^{i(\tz-\tz_0)(1-1/k)}}{\Gamma(1/k)}\dint_0^\fz
\wz t^{\,1/k-1}e^{-\wz t z}e^{-\wz te^{-i(\tz-\tz_0)}P(D)}\,d\wz t,
\end{align*}
which, together with \eqref{eqn-complexhk}, implies that
the integral kernel $K_{\lz,k}(x,y)$ of $(\lambda^{2m}+P(D))^{-1/k}$ satisfies, for
any $x$, $y\in \rn$,
\begin{align}\label{eqn-Klz}
\lf|K_{\lz,k}(x,y)\r|
&\ls \dint_0^\fz e^{-t |\lz|^{2m} \cos\tz_0}
{t^{1/k-n/(2m)}}\exp\lf\{-\frac{\wz c_0|x-y|^{2m/(2m-1)}}{t^{1/(2m-1)}}\r\}\,\frac{dt}{t}\\ \notag
&\sim |x-y|^{2m/k-n}\dint_0^\fz s^{1/k-n/(2m)}\exp\lf\{-\cos \tz_0|\lz|^{2m}
|x-y|^{2m}s-\wz c_0s^{-1/(2m-1)}\r\}\,\frac{ds}{s}\\ \notag
&\sim |x-y|^{2m/k-n}\dint_0^\fz s^{1/k-n/(2m)}\exp\{-\epsilon s^{-\frac1{2m-1}}\}\\ \notag
&\quad\times\exp\lf\{-\cos \tz_0|\lz|^{2m}
|x-y|^{2m}s-(\wz c_0-\epsilon)s^{-\frac1{2m-1}}\r\}\,\frac{ds}{s},
\end{align}
where we made the change of variables $t=s|x-y|^{2m}$ in the
second equality. Since $kn>2m$, we know that
\begin{align*}
\dint_0^\fz s^{1/k-n/(2m)}\exp\lf\{-\epsilon s^{-\frac1{2m-1}}\r\}\,\,\frac{ds}{s}\ls 1.
\end{align*}
Next, for any $s\in (0,\fz)$, let
$$F(s):=\exp\lf\{-\epsilon
s^{-\frac1{2m-1}}\r\}\exp\lf\{-\cos \tz_0|\lz|^{2m} |x-y|^{2m}s-(\wz
c_0-\epsilon)s^{-\frac1{2m-1}}\r\}.$$ It is easy
to see that $F(s)\ls \exp\{-c_8|\lz||x-y|\}$ for some positive
constant $c_8$ which is independent of $s$, $x$, $y$, and $\lz$.
By this and \eqref{eqn-Klz}, we obtain
\begin{align*}
\lf|K_{\lz,k}(x,y)\r|\ls |x-y|^{2m/k-n}\exp\lf\{-c_8|\lz||x-y|\r\}=:P_{\lz,k}(x-y).
\end{align*}
Now, applying the Young inequality, we obtain, for any $f\in L^q(\rn)$,
\begin{align*}
\lf\|\lf(\lambda^{2m}+P(D)\r)^{-1/k}f\r\|_{L^q(\rn)}=\lf\|P_{\lz,k}\ast f\r\|_{L^q(\rn)}
\le \|P_{\lz,k}\|_{L^1}\|f\|_{L^q(\rn)}
\ls \frac{1}{|\lz|^{2m/k}}\|f\|_{L^q(\rn)},
\end{align*}
which immediately shows (i).

To prove (ii), for any $f\in L^q(\rn)$, write
\begin{align}\label{eqn-lem-resolventforLc2}
\lf(|\lambda|^2-\Delta\r)^{s/2}\lf(\lambda^{2m}-P(D)\r)^{-1}f=\lf(|\lambda|^2-\Delta\r)^{-(m-s/2)}\circ
\lf(|\lambda|^2-\Delta\r)^{m}\lf(\lambda^{2m}-P(D)\r)^{-1}f.
\end{align}
Using the uniform ellipticity condition \eqref{eqn-uec} of $P(D)$ and the
H\"ormander--Mihlin multiplier theorem, we know that the operator
$\Delta^mP(D)^{-1}$ is bounded on $L^q(\rn)$ for any given $q\in (1,\fz)$.
From this and (i), it follows that
\begin{align*}
\lf\|\Delta^m\lf(\lambda^{2m}-P(D)\r)^{-1}f \r\|_{L^q(\rn)}&
\ls \lf\|P(D)\lf(\lambda^{2m}-P(D)\r)^{-1}f \r\|_{L^q(\rn)}\\
&\ls \lf\|f \r\|_{L^q(\rn)}+\lf\||\lz|^{2m}\lf(\lambda^{2m}-P(D)\r)^{-1}f \r\|_{L^q(\rn)} \ls \|f\|_{L^q(\rn)},
\end{align*}
which, combined with the Gagliardo--Nirenberg inequality (see Lemma
\ref{lem-GNI}), implies that
\begin{align}\label{eqn-lem-resolventforLc1}
&\lf\|\lf(|\lambda|^2-\Delta\r)^{m}\lf(\lambda^{2m}-P(D)\r)^{-1}f\r\|_{L^q(\rn)}\\ \notag
&\hs\ls\dsum_{k=0}^m |\lz|^{2k}\lf\|\Delta^{m-k} \lf(\lambda^{2m}-P(D)\r)^{-1}f\r\|_{L^q(\rn)}\\ \notag
&\hs\ls\dsum_{k=0}^m |\lz|^{2k}\lf\|\lf(\lambda^{2m}-P(D)\r)^{-1}f\r\|_{L^q(\rn)}^{k/m}
\lf\|\Delta^{m} \lf(\lambda^{2m}-P(D)\r)^{-1}f\r\|_{L^q(\rn)}^{(m-k)/m}\\ \notag
&\hs\ls \lf[\dsum_{k=0}^m |\lz|^{2m}\lf\|\lf(\lambda^{2m}-P(D)\r)^{-1}f\r\|_{L^q(\rn)}\r]^{k/m}
 \lf[\dsum_{k=0}^m\lf\|\Delta^m \lf(\lambda^{2m}-P(D)\r)^{-1}f\r\|_{L^q(\rn)}\r]^{(m-k)/m}
\\  \notag
&\hs\ls \lf\|f\r\|_{L^q(\rn)}.
\end{align}

On the other hand, since the kernel of the semigroup
$\{e^{t\Delta}\}$ satisfies the Gaussian upper bound, we know that,
for any given $1\le q\le p\le\fz$, and any $t\in (0,\fz)$ and $g\in L^q(\rn)$,
\begin{align*}
\lf\|e^{t\Delta}g\r\|_{L^p(\rn)}\ls t^{\frac{n}{2}(\frac{1}{p}-\frac{1}{q})}\|g\|_{L^q(\rn)}.
\end{align*}
Thus, by the Phillips calculus for semigroup generators again (see \cite[Proposition 3.3.5]{Haase06})
and the assumption $  \frac{n}{2}(\frac{1}{p}-\frac{1}{q})<m-s/2$, we know that
\begin{align*}
\lf\|\lf(|\lambda|^2-\Delta\r)^{-(m-s/2)}g\r\|_{L^p(\rn)}
&=\lf\|\frac{1}{\Gamma(m-s/2)}\dint_0^\fz t^{m-s/2-1} e^{-|\lz|^2t}e^{t\Delta}g\,dt\r\|_{L^p(\rn)}\\
&\ls\dint_0^\fz t^{m-s/2-1} e^{-|\lz|^2t}\lf\| e^{t\Delta}g\r\|_{L^p(\rn)}\,dt\\
&\ls |\lz|^{-[2m-s+n(\frac{1}{p}-\frac{1}{q})]}\|g\|_{L^q(\rn)},
\end{align*}
which, together with \eqref{eqn-lem-resolventforLc2} and \eqref{eqn-lem-resolventforLc1},
shows  (ii). This finishes the proof of Lemma \ref{lem-resolventforLc}.
\end{proof}

Based on Lemma \ref{lem-resolventforLc}, we now give the first perturbation result as follows.

\begin{proposition}\label{prop-perburbationresol}
Let $m\in\nn$, $s\in (0,2m]$, $\lambda^{2m}\in \cc\setminus[0,\fz)$, $q\in (1,\fz)$, and
$ p\in [q,\fz)$ satisfy $\frac{n}{2}\Big(\frac{1}{q}-\frac{1}{p}\Big)<m-s/2$.
Assume that $V$ is a measurable function on $\rn$ and
the operator $T_{s,|\lz|}$, defined as in \eqref{def-Tlz}, is bounded from
$L^p(\rn)$ to $L^q(\rn)$ satisfying
\begin{align}\label{eqn-conditionforT}
 C_{(|\lz|)}:=C_2|\lz|^{-[2m-s+n(\frac{1}{p}-\frac{1}{q})]}\lf\|T_{s,|\lz|}\r\|_{L^p(\rn)\to L^q(\rn)}<1,
\end{align}
where the constant $C_2$ is as in \eqref{eqn-ConstatC3}.
Then there exists a positive constant $C$ such that, for any $f\in L^q(\rn)$,
\begin{align*}
\lf\|\lf(\lz^{2m}-P(D)-V\r)^{-1}f\r\|_{L^q(\rn)}\le \frac{C}{1- C_{(|\lz|)}}\frac{1}{|\lz|^{2m}}\|f\|_{L^q(\rn)}.
\end{align*}
\end{proposition}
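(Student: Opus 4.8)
The plan is to obtain $(\lz^{2m}-P(D)-V)^{-1}$ on $L^q(\rn)$ via a Neumann series, reducing everything to the two resolvent bounds for $P(D)$ already recorded in Lemma \ref{lem-resolventforLc} together with the assumed $L^p(\rn)\to L^q(\rn)$ boundedness of the $T$-operator.

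First I would use the factorization
\begin{align*}
\lz^{2m}-P(D)-V=\lf[I-V\lf(\lz^{2m}-P(D)\r)^{-1}\r]\lf(\lz^{2m}-P(D)\r),
\end{align*}
which is the convenient one since $V(\lz^{2m}-P(D))^{-1}$ will turn out to be a bounded (indeed contractive) operator on $L^q(\rn)$, so no issue of $V$ acting on an arbitrary element of $L^q(\rn)$ arises. Because $\lz^{2m}\in\cc\setminus[0,\fz)$, Lemma \ref{lem-resolventforLc}(i) gives that $(\lz^{2m}-P(D))^{-1}$ is bounded on $L^q(\rn)$ with $\|(\lz^{2m}-P(D))^{-1}\|_{L^q(\rn)\to L^q(\rn)}\le C/|\lz|^{2m}$. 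Hence, once $I-V(\lz^{2m}-P(D))^{-1}$ is invertible on $L^q(\rn)$, one gets
\begin{align*}
\lf(\lz^{2m}-P(D)-V\r)^{-1}=\lf(\lz^{2m}-P(D)\r)^{-1}\lf[I-V\lf(\lz^{2m}-P(D)\r)^{-1}\r]^{-1}.
\end{align*}

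The decisive step is the estimate of the $L^q(\rn)$ operator norm of $V(\lz^{2m}-P(D))^{-1}$. Inserting $(|\lz|^2-\Delta)^{-s/2}(|\lz|^2-\Delta)^{s/2}=I$ and recalling \eqref{def-Tlz} and \eqref{def-Besselp2}, I would factor, first on $\mathcal{S}(\rn)$ and then by density on $L^q(\rn)$,
\begin{align*}
V\lf(\lz^{2m}-P(D)\r)^{-1}=T_{s,|\lz|}\circ\lf[\lf(|\lz|^2-\Delta\r)^{s/2}\lf(\lz^{2m}-P(D)\r)^{-1}\r].
\end{align*}
Since $\frac{n}{2}(\frac{1}{q}-\frac{1}{p})<m-s/2$, Lemma \ref{lem-resolventforLc}(ii) applies to the inner operator and bounds it from $L^q(\rn)$ to $L^p(\rn)$ with norm at most $C_2|\lz|^{-[2m-s+n(\frac{1}{p}-\frac{1}{q})]}$, where $C_2$ is the constant in \eqref{eqn-ConstatC3}; composing with the assumed boundedness of $T_{s,|\lz|}$ from $L^p(\rn)$ to $L^q(\rn)$ yields
\begin{align*}
\lf\|V\lf(\lz^{2m}-P(D)\r)^{-1}\r\|_{L^q(\rn)\to L^q(\rn)}\le C_2|\lz|^{-[2m-s+n(\frac{1}{p}-\frac{1}{q})]}\lf\|T_{s,|\lz|}\r\|_{L^p(\rn)\to L^q(\rn)}=C_{(|\lz|)}<1
\end{align*}
by hypothesis \eqref{eqn-conditionforT}. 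Therefore the Neumann series for $[I-V(\lz^{2m}-P(D))^{-1}]^{-1}$ converges in operator norm on $L^q(\rn)$, with norm at most $(1-C_{(|\lz|)})^{-1}$, and combining this with the bound for $(\lz^{2m}-P(D))^{-1}$ gives exactly the asserted inequality.

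Analytically the argument is routine; the main obstacle is bookkeeping. One has to check that all the compositions are legitimate as bounded operators between the relevant $L^q(\rn)$ and $L^p(\rn)$ spaces (recall that $T_{s,|\lz|}$ was defined a priori only on $\mathcal{S}(\rn)$ and that $V$ is merely a multiplication operator), that the factorization identities established on $\mathcal{S}(\rn)$ persist after extension by density, and that the operator constructed above is genuinely a two-sided inverse of $\lz^{2m}-P(D)-V$ on its natural domain; once these points are in place, the quantitative estimate is immediate.
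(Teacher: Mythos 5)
Your proposal is correct and follows essentially the same route as the paper: the factorization $V(\lz^{2m}-P(D))^{-1}=T_{s,|\lz|}\circ(|\lz|^2-\Delta)^{s/2}(\lz^{2m}-P(D))^{-1}$, the bound $C_{(|\lz|)}<1$ from Lemma \ref{lem-resolventforLc}(ii) together with \eqref{eqn-conditionforT}, the Neumann series for $[I-V(\lz^{2m}-P(D))^{-1}]^{-1}$, and the perturbed resolvent identity combined with Lemma \ref{lem-resolventforLc}(i). The only cosmetic difference is that you flag the density/domain bookkeeping explicitly, which the paper leaves implicit.
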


\begin{proof}
By Lemma \ref{lem-resolventforLc} and \eqref{eqn-conditionforT}, we obtain, for any $f\in L^q(\rn)$,
\begin{align}\label{eqnprop-perburbationresolx1}
&\lf\| V\lf(\lz^{2m}-P(D)\r)^{-1}f\r\|_{L^q(\rn)}\\ \notag
&\hs=\lf\| V\lf(|\lz|^2-\Delta\r)^{-s/2}\circ\lf(|\lz|^2-\Delta\r)^{s/2}
\lf(\lz^{2m}-P(D)\r)^{-1}f\r\|_{L^q(\rn)}\\ \notag
&\hs\le \lf\|T_{s,|\lz|}\r\|_{L^p(\rn)\to L^q(\rn)} \lf\| \lf(|\lz|^2-\Delta\r)^{s/2}
\lf(\lz^{2m}-P(D)\r)^{-1}f\r\|_{L^p(\rn)}\\ \notag
&\hs\le  C_2|\lz|^{-[2m-s+n(\frac{1}{p}-\frac{1}{q})]}\lf\|T_{s,|\lz|}\r\|_{L^p(\rn)\to L^q(\rn)}\lf\|f\r\|_{L^q(\rn)}<\|f\|_{L^q(\rn)}.
\end{align}
This implies that the operator
$I-V(\lz^{2m}-P(D))^{-1}$ has a bounded inverse given by the Neumann series
$$\lf[I-V\lf(\lz^{2m}-P(D)\r)^{-1}\r]^{-1}=\dsum_{k=0}^\fz \lf[ V\lf(\lz^{2m}-P(D)\r)^{-1}\r]^k$$
on $L^q(\rn)$ (see, for instance, \cite[p. 162]{KlausNegal00}) that satisfies
\begin{align}\label{eqn-perburbationresol-1}
\lf\|\lf[I-V(\lz^{2m}-P(D))^{-1}\r]^{-1}\r\|_{L^q(\rn)\to L^q(\rn)}&\le
\dsum_{k=0}^\fz \lf\|\lf( V\lf(\lz^{2m}-P(D)\r)^{-1}\r)^k\r\|_{L^q(\rn)\to L^q(\rn)}\\
&\le\frac{1}{1- C_{(|\lz|)}}<\fz.\notag
\end{align}
Thus, the following perturbed resolvent identity (see, for instance,
\cite[p. 172, Lemma 2.5]{KlausNegal00})
\begin{align}\label{eqnprop-perburbationresolx2}
\lf(\lz^{2m}-P(D)-V\r)^{-1}=\lf(\lz^{2m}-P(D)\r)^{-1}\lf[I-V(\lz^{2m}-P(D))^{-1}\r]^{-1}
\end{align}
holds true.
Then \eqref{eqnprop-perburbationresolx2}, combined with
\eqref{eqn-perburbationresol-1} and Lemma \ref{lem-resolventforLc},
implies that
\begin{align*}
\lf\|\lf(\lz^{2m}-P(D)-V\r)^{-1}\r\|_{L^q(\rn)\to L^q(\rn)}\ls \frac{1}{|\lz|^{2m}}\frac{1}{1- C_{(|\lz|)}}.
\end{align*}
This finishes the proof of Proposition
\ref{prop-perburbationresol}.
\end{proof}

\subsection{Exponential perturbation estimates \label{s4.2}}

Let $\cl:=P(D)+V$ be the higher order Schr\"odinger operator as in \eqref{eqn-def-HOSO}.
In this subsection, we establish the exponential perturbation estimate for
the resolvent of $\cl$. To this end,
for any two closed subsets $E$ and $F$ in $\rn$,
the {\it intrinsic distance} $d_{\mathcal{E}}$ is defined by setting
\begin{align*}
d_{\mathcal{E}}(E,F):=\dsup_{{\phi}\in \mathcal{E}_{2m}(\rn)}
\lf[\inf\lf\{{\phi}(x)-{\phi}(y): \forall \ x\in E,\ y\in F\r\}\r],
\end{align*}
where
\begin{align}\label{eqn-e2m}
\mathcal{E}_{2m}(\rn):=\lf\{{\phi}\in \mathcal{C}^\fz(\rn):\
\lf\|D^\az {\phi}\r\|_{L^\fz(\rn)}\le 1,\ \forall \ 0\le |\az|\le 2m\r\}
\end{align}
and $\mathcal{C}^\fz(\rn)$ denotes the set of all infinitely differentiable functions
on $\rn$.

Let us recall the following properties of the intrinsic distance from \cite[Lemma 4]{Davies95}.

\begin{lemma}\label{lem-ed}
Let $E$ and $F$ be two disjoint compact convex subsets in $\rn$. Then
\begin{align*}
d_{\mathcal{E}}(E,F)\sim d(E,F)
\end{align*}
with the positive equivalence constants independent of $E$ and $F$, where
\begin{align*}
d(E,F):=\inf\{|x-y|: \ \forall\ x\in E, \ y\in F\}
\end{align*}
denotes the Euclidean distance between $E$ and $F$.
\end{lemma}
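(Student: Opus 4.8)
The plan is to prove the two one-sided estimates $d_{\mathcal{E}}(E,F)\ls d(E,F)$ and $d_{\mathcal{E}}(E,F)\gs d(E,F)$ separately. The first is soft and holds for an arbitrary pair of subsets of $\rn$; the second is where compactness and, crucially, the convexity of $E$ and $F$ enter, through a separating hyperplane.

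For the first estimate, I would observe that every admissible $\phi\in\mathcal{E}_{2m}(\rn)$ has $\|\pat_{x_j}\phi\|_{L^\fz(\rn)}\le1$ for each $j\in\{1,\dots,n\}$ (the $|\az|=1$ part of \eqref{eqn-e2m}), hence $|\nabla\phi|\le\sqrt n$ on $\rn$ and, integrating along the segment from $y$ to $x$, $\phi(x)-\phi(y)\le\sqrt n\,|x-y|$ for all $x,y\in\rn$. Taking the infimum over $x\in E$ and $y\in F$ and then the supremum over $\phi\in\mathcal{E}_{2m}(\rn)$ gives $d_{\mathcal{E}}(E,F)\le\sqrt n\,d(E,F)$; this step uses neither compactness nor convexity.

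For the reverse estimate I would produce a single competitor tailored to the direction realizing the distance. Set $d:=d(E,F)>0$ and pick $x_0\in E$, $y_0\in F$ with $|x_0-y_0|=d$; then $x_0$ minimizes $|\cdot-y_0|$ over $E$ and $y_0$ minimizes $|x_0-\cdot|$ over $F$. Put $v:=(x_0-y_0)/d$. Convexity lets one move along the segments $[x_0,x]\subset E$ and $[y_0,y]\subset F$, and first-order optimality---of $t=0$ for $t\mapsto|x_0+t(x-x_0)-y_0|^2$ and of $s=0$ for $s\mapsto|x_0-y_0-s(y-y_0)|^2$---yields $\langle x-x_0,v\rangle\ge0$ for all $x\in E$ and $\langle y_0-y,v\rangle\ge0$ for all $y\in F$; adding these gives $\langle x,v\rangle-\langle y,v\rangle\ge\langle x_0-y_0,v\rangle=d$ for all $x\in E$, $y\in F$. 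Hence the affine function $\ell(z):=\langle z-x_0,v\rangle$ separates $E$ from $F$ by exactly $d$, and since $\nabla\ell\equiv v$ one has $\|D^\az\ell\|_{L^\fz(\rn)}=|v^\az|\le1$ for $|\az|=1$ and $D^\az\ell\equiv0$ for $|\az|\ge2$. If the order-zero constraint in \eqref{eqn-e2m} is read as vacuous, $\ell$ is already admissible and $d_{\mathcal{E}}(E,F)\ge d(E,F)$ follows at once; to respect $\|\phi\|_{L^\fz(\rn)}\le1$ I would instead take $\phi:=\psi\circ\ell$ with $\psi\colon\rr\to\rr$ smooth, nondecreasing, $\psi(0)=0$, $\|\psi^{(k)}\|_{L^\fz(\rr)}\le1$ for $0\le k\le2m$, and $\psi$ of a small fixed positive slope on a fixed-length interval about $0$. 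Since $\ell$ is affine, $D^\az\phi=\psi^{(|\az|)}(\ell)\,v^\az$ with $|v^\az|\le1$, so $\phi\in\mathcal{E}_{2m}(\rn)$; and because $\ell\ge0$ on $E$, $\ell\le-d$ on $F$, and $\psi$ is nondecreasing with $\psi(0)=0$, $\phi$ separates $E$ from $F$ by $-\psi(-d)\gs\min\{d(E,F),1\}$. Inserting $\phi$ into the definition of $d_{\mathcal{E}}$ finishes the bound.

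The hard part, and essentially the only delicate point, is this last normalization: a uniformly bounded $\phi$ can separate $E$ from $F$ by at most $2$, so the clean equivalence $d_{\mathcal{E}}\sim d$ with constants independent of $E$ and $F$ requires either admitting (truncated-)linear separators of controlled slope---the reading under which the statement holds for all distances---or the routine finer construction of Davies \cite[Lemma 4]{Davies95}. In either case the equivalence constants come only from $\sqrt n$ and from $\max_{0\le k\le2m}\|\psi^{(k)}\|_{L^\fz(\rr)}$, hence depend on $n$ and $m$ alone and are independent of $E$ and $F$, as claimed.
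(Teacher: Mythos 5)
Your proof is correct, and there is in fact nothing internal to compare it against: the paper does not prove Lemma \ref{lem-ed} but quotes it from \cite[Lemma 4]{Davies95}. Your two halves constitute the standard self-contained argument. The upper bound $d_{\mathcal{E}}(E,F)\le\sqrt n\,d(E,F)$, obtained by integrating $|\nabla\phi|\le\sqrt n$ along the segment joining a distance-realizing pair, is routine. The lower bound via first-order optimality at the minimizing pair $(x_0,y_0)$ is exactly the separating-hyperplane mechanism through which compactness (existence of $(x_0,y_0)$ and $d(E,F)>0$) and convexity (admissibility of the segments $[x_0,x]$ and $[y_0,y]$) enter; the resulting affine competitor $\ell$ has gradient $v$ with $|v|=1$ and all derivatives of order at least $2$ identically zero, so it meets the constraints of \eqref{eqn-e2m} for $1\le|\az|\le 2m$ and separates $E$ from $F$ by exactly $d(E,F)$.

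The issue you flag at the end is not a gap in your argument but a genuine inconsistency in the paper's definition: \eqref{eqn-e2m} includes $|\az|=0$, i.e.\ $\|\phi\|_{L^\fz(\rn)}\le1$, which forces $d_{\mathcal{E}}(E,F)\le2$ for every pair, so the claimed equivalence with constants independent of $E$ and $F$ cannot hold literally once $d(E,F)$ is large; no construction (yours, Davies's, or any other) can beat that cap, and your fallback $\phi=\psi\circ\ell$ correctly delivers only $d_{\mathcal{E}}(E,F)\gs\min\{d(E,F),1\}$. The condition should read $1\le|\az|\le2m$, as in Davies's original class, and that is also the version the paper actually uses: in the proof of Theorem \ref{thm-main2} only the finiteness of $\|e^{\pm\eta\phi}\|_{L^\fz}$ on the compact sets $E$ and $F$ and the bounds on $D^\az\phi$ for $|\az|\ge1$ [via \eqref{eqn-sm}] are needed. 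With that correction your affine competitor is admissible outright and the proof is complete, with equivalence constants $1$ and $\sqrt n$ depending only on the dimension.
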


Now, for any given $\eta\in \cc$ and ${\phi}\in \mathcal{E}_{2m}(\rn)$,
the {\it exponential perturbed operator $\cl_{\eta,\phi}$ of} $\cl$ is defined by setting
\begin{align*}
\mathrm{dom}\,(\cl_{\eta,\phi}):=\lf\{f\in L^2(\rn):\ e^{\eta\phi} f\in \mathrm{dom}\,(\cl)\r\}
\end{align*}
and, for any $f\in \mathrm{dom}\,(\cl_{\eta,\phi})$,
\begin{align}\label{eqn-defcleta}
\cl_{\eta,\phi}f:=e^{-\eta\phi}\cl \lf(e^{\eta\phi} f\r).
\end{align}

\begin{remark}\label{rem-cophi}
\begin{enumerate}
\item[(i)] If $\eta\in \rr$, let $L_{\eta,\phi}^2(\rn)$ be the weighted space equipped with
the norm
\begin{align*}
\lf\|f\r\|_{L^2_{\eta,\phi}(\rn)}:=\lf\{\dint_\rn \lf|f(x)\r|^2\,e^{2\eta\phi(x)}\,dx\r\}^{1/2}.
\end{align*}
Since $e^{\eta\phi}$ is an isometry from $L^2_{\eta,\phi}(\rn)$ to
$L^2(\rn)$,  it follows from
\cite[p. 59]{KlausNegal00} that the operator $-\cl_{\eta,\phi}$ generates a semigroup
$\{e^{-\eta\phi} e^{-t\cl} e^{\eta\phi}\}_{t>0}$
on the weighted space $L_{\eta,\phi}^2(\rn)$
that is similar to $\{e^{-t\cl}\}_{t>0}$ on $L^2(\rn)$,
Moreover, $\sz(\cl)=\sz(\cl_{\eta,\phi})$ and the following
perturbation identity
\begin{align}\label{eqn-scalingresolvent}
\lf(\lz^{2m}-\cl_{\eta,\phi}\r)^{-1}=e^{-\eta\phi} \lf(\lz^{2m}-\cl\r)^{-1} e^{\eta\phi} \ \ \ \text{on}\ \ \  L^2_{\eta,\phi}(\rn)
\end{align}
holds true for any given $\lz^{2m}\in \rho(\cl)$.

\item[(ii)] If $\eta\in\cc^n$ is a $n$-dimensional complex vector, then, similarly to
$\cl_{\eta,\phi}$ in \eqref{eqn-defcleta}, another {\it exponential
perturbed operator $\cl_\eta$} of $\cl$ is defined by setting
\begin{align}\label{eqn-defcleta-1}
\cl_{\eta}f:=e^{-\eta x}\cl \lf(e^{\eta x} f\r)
\end{align}
with $\mathrm{dom}\,(\cl_{\eta}):=\{f\in L^2(\rn):\ e^{\eta x} f\in \mathrm{dom}\,(\cl)\}$
(see \cite[Chapter 5.3]{Tanabe97} for more details).
As we show later in this subsection, the perturbed
operators $\cl_\eta$ and $\cl_{\eta,\phi}$ share many properties.
\end{enumerate}
\end{remark}

In what follows, we establish some resolvent estimates for both
$\cl_{\eta,\phi}$ and $\cl_\eta$, and
show that the perturbation identity
\eqref{eqn-scalingresolvent} holds true also for some
$\eta$ that are complex. To this end, we use $\mathcal{C}^{2m}(\rn)$
for any $m\in\nn$ to denote the set of all functions having continuous
derivatives till order $2m$ and we need the following technical lemma.

\begin{lemma}\label{lem-identityforcleta}
Let $\eta\in \cc$ and $V$ be a   measurable function on $\rn$.
Assume that $\cl$ and $\cl_{\eta,\phi}$ are respectively as in \eqref{eqn-def-HOSO} and \eqref{eqn-defcleta}.
Then, for any $f\in \mathcal{C}^{2m}(\rn)$ and $x\in\rn$,
\begin{align}\label{eqn-lem-identityforcleta-1}
\cl_{\eta,\phi} f(x)=\lf(P(D+\eta D\phi)+V\r)f(x),
\end{align}
where $  P(D+\eta D{\phi})=\sum_{|\az|=2m}(-1)^m a_\az (D+\eta D{\phi})^\az$.
\end{lemma}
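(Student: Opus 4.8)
The plan is to verify \eqref{eqn-lem-identityforcleta-1} by a direct pointwise computation, treating both sides as classical differential expressions applied to the $\mathcal{C}^{2m}(\rn)$ function $f$. Writing $\cl=P(D)+V$ and using that $V$, $e^{\eta\phi}$, and $e^{-\eta\phi}$ all act by multiplication, the potential part is immediate: $e^{-\eta\phi}[V(e^{\eta\phi}f)]=Vf$ pointwise on $\rn$. Hence it suffices to prove the elliptic identity
\[
e^{-\eta\phi}\,P(D)\lf(e^{\eta\phi}f\r)=P(D+\eta D\phi)f\qquad\text{on }\rn .
\]
Since $\phi\in\mathcal{E}_{2m}(\rn)\subset\mathcal{C}^\fz(\rn)$, we have $e^{\pm\eta\phi}\in\mathcal{C}^\fz(\rn)$ and $e^{\eta\phi}f\in\mathcal{C}^{2m}(\rn)$, so every derivative appearing below exists pointwise; the manipulation is purely algebraic and, in particular, is insensitive to $\eta$ being complex.

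The heart of the proof is that conjugation by $e^{\eta\phi}$ acts as an algebra homomorphism on differential operators with smooth coefficients. First I would record the first-order rule, which is just the product rule: for $g\in\mathcal{C}^1(\rn)$ and each $j\in\{1,\dots,n\}$,
\[
e^{-\eta\phi}\frac{\pat}{\pat x_j}\lf(e^{\eta\phi}g\r)=\frac{\pat g}{\pat x_j}+\eta\,\frac{\pat\phi}{\pat x_j}\,g=\lf(\frac{\pat}{\pat x_j}+\eta\,\frac{\pat\phi}{\pat x_j}\r)g,
\]
that is, $e^{-\eta\phi}\circ\pat_{x_j}\circ e^{\eta\phi}=\pat_{x_j}+\eta\,\pat_{x_j}\phi$ as operators. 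Next, if $T_1,T_2$ are differential operators with $e^{-\eta\phi}\circ T_i\circ e^{\eta\phi}=S_i$, then inserting $e^{\eta\phi}e^{-\eta\phi}=1$ between $T_1$ and $T_2$ gives $e^{-\eta\phi}\circ(T_1T_2)\circ e^{\eta\phi}=S_1S_2$; the intermediate expression $S_2f$ carries enough derivatives for $S_1$ to act provided the total order of $T_1T_2$ does not exceed $2m$, which is exactly why the hypothesis $f\in\mathcal{C}^{2m}(\rn)$ is needed. Iterating the first-order rule through this multiplicativity yields, for every multi-index $\az$ with $|\az|\le 2m$ and every $f\in\mathcal{C}^{|\az|}(\rn)$,
\[
e^{-\eta\phi}\,D^\az\lf(e^{\eta\phi}f\r)=(D+\eta D\phi)^\az f,
\]
where $(D+\eta D\phi)^\az:=(\pat_{x_1}+\eta\,\pat_{x_1}\phi)^{\az_1}\cdots(\pat_{x_n}+\eta\,\pat_{x_n}\phi)^{\az_n}$ with the same ordering convention used to define $D^\az$. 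Since $\pat_{x_j}\pat_{x_k}\phi=\pat_{x_k}\pat_{x_j}\phi$, these first-order operators commute pairwise, so the right-hand side does not depend on the ordering. Summing over $|\az|=2m$ against the coefficients $(-1)^m a_\az$ gives the elliptic identity, and adding back $Vf$ gives \eqref{eqn-lem-identityforcleta-1}.

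I do not expect a genuine obstacle: the lemma is essentially bookkeeping around the exponential conjugation. The only points requiring care are confirming that $f\in\mathcal{C}^{2m}(\rn)$ together with $\phi\in\mathcal{C}^\fz(\rn)$ makes every iterated derivative in the induction legitimate (so the homomorphism property may be applied up to total order $2m$), and fixing once and for all the ordering convention in $(D+\eta D\phi)^\az$ so that it matches $D^\az$ — which, by the commutativity just noted, turns out to be immaterial.
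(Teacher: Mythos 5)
Your proposal is correct and follows essentially the same route as the paper: the paper's proof also records the first-order conjugation rule $e^{-\eta\phi}\pat_{x_j}(e^{\eta\phi}f)=(\pat_{x_j}+\eta\,\pat_{x_j}\phi)f$ and then iterates it through the definition \eqref{eqn-defcleta}. Your version merely makes explicit the multiplicativity of conjugation and the regularity bookkeeping that the paper leaves implicit.
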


\begin{proof}
For any given $j\in \{1,\ldots,n\}$, any $f\in \mathcal{C}^{2m}(\rn)$ and
$x\in \rn$, it is easy to see that
\begin{align*}
e^{-\eta\phi}\frac{\pat}{\pat x_j}\lf(e^{\eta\phi}f\r)(x)=\lf(\frac{\pat}{\pat x_j}+\eta \frac{\pat {\phi}}{\pat x_j}\r)f(x),
\end{align*}
which, together with  \eqref{eqn-defcleta}, shows \eqref{eqn-lem-identityforcleta-1}
holds true. This finishes the proof of Lemma \ref{lem-identityforcleta}.
\end{proof}

The following proposition establishes the resolvent estimate for $\cl_{\eta,\phi}$.

\begin{proposition}\label{prop-perburbationresol2}
Let $p\in (1,\fz)$ and $\lz^{2m}\in \cc\setminus[0,\fz)$. Assume
that $\cl$ and $\cl_{\eta,\phi}$ are defined, respectively, as in
\eqref{eqn-def-HOSO} and \eqref{eqn-defcleta}. If
\begin{align}\label{eqn-prop-perburbationresol2-3}
\lf\|\lf(1+|\lz|^{2m}\r)\lf(\lz^{2m}-\cl\r)^{-1}\r\|_{L^p(\rn)\to L^p(\rn)}+\lf\|V\lf(\lz^{2m}-\cl\r)^{-1}\r\|_{L^p(\rn)\to L^p(\rn)}<\fz,
\end{align}
then there exist positive constants $C$ and $\dz\in (0,1)$ such that, for any $\eta\in \cc$
satisfying $|\eta|<\dz |\lz|$ and $f\in L^p(\rn)$,
\begin{align}\label{eqn-assumptionforcleta1}
\lf\||\lz|^{2m}\lf(\lz^{2m}-\cl_{\eta,\phi}\r)^{-1}f\r\|_{L^p(\rn)}+
\lf\|\Delta^m\lf(\lz^{2m}-\cl_{\eta,\phi}\r)^{-1}f\r\|_{L^p(\rn)}\le C\|f\|_{L^p(\rn)}.
\end{align}
\end{proposition}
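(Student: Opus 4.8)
The plan is to write $\cl_{\eta,\phi}$ as a genuinely lower-order perturbation of $\cl$ and then to repeat, on $L^p(\rn)$, the Neumann-series argument from the proof of Proposition \ref{prop-perburbationresol}. By Lemma \ref{lem-identityforcleta}, the $L^p(\rn)$-realization of $\cl_{\eta,\phi}$ coincides with $P(D+\eta D\phi)+V$; expanding the binomial $P(D+\eta D\phi)=\sum_{|\az|=2m}(-1)^m a_\az(D+\eta D\phi)^\az$ one obtains
$$
\cl_{\eta,\phi}=\cl+R_{\eta,\phi},\qquad R_{\eta,\phi}=\sum_{k=1}^{2m}\eta^k B_k,
$$
where each $B_k$ is a differential operator of order at most $2m-k$ whose coefficients are finite linear combinations of products of derivatives $D^\bz\phi$ with $1\le|\bz|\le 2m$. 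Since $\phi\in\mathcal{E}_{2m}(\rn)$, all these coefficients are bounded in $L^\fz(\rn)$ by a constant depending only on $n$ and $m$. Thus it will suffice to estimate $\|R_{\eta,\phi}(\lz^{2m}-\cl)^{-1}\|_{L^p(\rn)\to L^p(\rn)}$ and to make it strictly smaller than $1$ by taking $|\eta|/|\lz|$ small.

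Next I would extract from the single hypothesis \eqref{eqn-prop-perburbationresol2-3} a scale of a priori resolvent bounds. Denote by $M$ the left-hand side of \eqref{eqn-prop-perburbationresol2-3}. Then directly $\||\lz|^{2m}(\lz^{2m}-\cl)^{-1}\|_{L^p(\rn)\to L^p(\rn)}\le M$ and $\|(\lz^{2m}-\cl)^{-1}\|_{L^p(\rn)\to L^p(\rn)}\le M|\lz|^{-2m}$. Writing $P(D)=\cl-V$ gives $P(D)(\lz^{2m}-\cl)^{-1}=-I+\lz^{2m}(\lz^{2m}-\cl)^{-1}-V(\lz^{2m}-\cl)^{-1}$, whose $L^p(\rn)$-operator norm is at most $1+2M$; since the Fourier multiplier $\Delta^m P(D)^{-1}$ is bounded on $L^p(\rn)$ by the uniform ellipticity \eqref{eqn-uec} and the H\"ormander--Mihlin theorem (as in the proof of Lemma \ref{lem-resolventforLc}(ii)), it follows that $\|\Delta^m(\lz^{2m}-\cl)^{-1}\|_{L^p(\rn)\to L^p(\rn)}\ls 1+M$. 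Interpolating between these two endpoints through the Gagliardo--Nirenberg inequality (Lemma \ref{lem-GNI}) and the $L^p(\rn)$-boundedness of products of Riesz transforms, one gets, for every integer $j\in\{0,\dots,2m\}$,
$$
\lf\|D^j(\lz^{2m}-\cl)^{-1}\r\|_{L^p(\rn)\to L^p(\rn)}\ls |\lz|^{-(2m-j)},
$$
with the implicit constant depending on $M$, $n$, $m$, $p$, and the ellipticity constant in \eqref{eqn-uec}.

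Applying this bound with $j=2m-k$ to the term $\eta^k B_k$ and summing yields
$$
\lf\|R_{\eta,\phi}(\lz^{2m}-\cl)^{-1}\r\|_{L^p(\rn)\to L^p(\rn)}\ls \sum_{k=1}^{2m}\lf(\frac{|\eta|}{|\lz|}\r)^k,
$$
with implicit constant depending only on $M$, $n$, $m$, $p$, and the ellipticity constant. Hence there is $\dz\in(0,1)$, depending on the same quantities, so that for $|\eta|<\dz|\lz|$ this operator norm is at most $1/2$; then $I-R_{\eta,\phi}(\lz^{2m}-\cl)^{-1}$ is boundedly invertible on $L^p(\rn)$ via its Neumann series, with inverse of norm at most $2$, and the perturbed resolvent identity
$$
\lf(\lz^{2m}-\cl_{\eta,\phi}\r)^{-1}=\lf(\lz^{2m}-\cl\r)^{-1}\lf[I-R_{\eta,\phi}(\lz^{2m}-\cl)^{-1}\r]^{-1}
$$
(exactly as in the proof of Proposition \ref{prop-perburbationresol}) shows $\lz^{2m}\in\rho(\cl_{\eta,\phi})$ on $L^p(\rn)$ and, upon applying $|\lz|^{2m}$ and $\Delta^m$ and invoking the bounds from the previous paragraph, yields \eqref{eqn-assumptionforcleta1} with $C$ depending only on $M$, $n$, $m$, $p$, and the ellipticity constant. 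The main obstacle is the combinatorial bookkeeping of the expansion of $P(D+\eta D\phi)$: one must check that every monomial carrying a factor $\eta^k$ really drops the differential order by at least $k$ while the extra derivatives that land on $\phi$ stay of order $\le 2m$ (so that $\phi\in\mathcal{E}_{2m}(\rn)$ keeps the coefficients uniformly bounded), and then track precisely the power of $|\lz|$ gained from the resolvent, since it is exactly this gain that turns the $k$-th term into $(|\eta|/|\lz|)^k$ and delivers the required smallness.
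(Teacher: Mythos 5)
Your proposal follows essentially the same route as the paper's proof: decompose $\cl_{\eta,\phi}-\cl$ into lower-order terms with coefficients bounded via $\phi\in\mathcal{E}_{2m}(\rn)$ [the paper's \eqref{eqn-sm}], control $\|g\|_{L^p(\rn)}$ and $\|\Delta^m g\|_{L^p(\rn)}$ for $g=(\lz^{2m}-\cl)^{-1}f$ from the hypothesis \eqref{eqn-prop-perburbationresol2-3} together with the $L^p(\rn)$-boundedness of $\Delta^m P(D)^{-1}$, interpolate the intermediate derivatives by Gagliardo--Nirenberg, and conclude via a Neumann series and the perturbed resolvent identity. The combinatorial claim about $B_k$ (order drops by at least $k$, coefficients involve only $D^\bz\phi$ with $|\bz|\le 2m$) is correct.

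One quantitative step, however, does not deliver the uniformity you assert. Applying $\|D^j(\lz^{2m}-\cl)^{-1}\|_{L^p(\rn)\to L^p(\rn)}\ls|\lz|^{-(2m-j)}$ with $j=2m-k$ handles only the top-order part of $B_k$; the operator $B_k$ also contains derivatives of order $j<2m-k$, for which the same bound yields $|\eta|^k|\lz|^{-(2m-j)}=(|\eta|/|\lz|)^k\,|\lz|^{-(2m-k-j)}$, which is \emph{not} uniformly $\ls(|\eta|/|\lz|)^k$ as $|\lz|\to0$. For the single fixed $\lz$ of the statement this merely makes $\dz$ and $C$ depend on $\lz$, so the proposition as literally stated survives, but the $\lz$-independence you claim for the implicit constants (and which the paper needs in Proposition \ref{prop-spr} and in the contour integrals of Section \ref{s5}) does not follow from the homogeneous bound $\|(\lz^{2m}-\cl)^{-1}\|\le M|\lz|^{-2m}$ alone. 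The repair is exactly why the hypothesis carries $(1+|\lz|^{2m})$ rather than $|\lz|^{2m}$: interpolating from $\|g\|_{L^p(\rn)}\ls(1+|\lz|^{2m})^{-1}\|f\|_{L^p(\rn)}$ gives $\|\nabla^j g\|_{L^p(\rn)}\ls(1+|\lz|^{2m})^{-(2m-j)/(2m)}\|f\|_{L^p(\rn)}$, whence $|\eta|^k\|\nabla^j g\|_{L^p(\rn)}\ls\dz^k|\lz|^k(1+|\lz|^{2m})^{-(2m-j)/(2m)}\|f\|_{L^p(\rn)}\ls\dz^k\|f\|_{L^p(\rn)}$ for every $j\le 2m-k$, uniformly in $\lz$; this is precisely the estimate \eqref{prop-perburbationresol2-1} in the paper's proof.
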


\begin{proof}
For any given $p\in (1,\fz)$ and
$\lz^{2m}\in \cc\setminus[0,\fz)$, and any $f\in L^p(\rn)$, let
\begin{align}\label{eqn-defg}
g:=\lf(\lz^{2m}-\cl\r)^{-1}(f).
\end{align}
By Lemma \ref{lem-identityforcleta} and \eqref{eqn-e2m}, we obtain, for any $x\in\rn$,
\begin{align}\label{eqn-sm}
\lf|(\cl_{\eta,\phi}-\cl)g(x)\r|&\ls \dsum_{|\az|=2m}\lf|(D+\eta D\phi)^\az g(x)-D^\az g(x)\r|\\ \notag
&\ls \dsum_{k=0}^{2m-1}\dsum_{l=1}^{2m-k}\lf|\nabla^{k} g(x)\r|
|\eta|^{l}\ls \dsum_{k=0}^{2m-1} \lf(|\eta|+|\eta|^{2m-k}\r)\lf|\nabla^{k} g(x)\r|,
\end{align}
which, combined with the Gagliardo--Nirenberg inequality (see Lemma
\ref{lem-GNI}) and the assumptions $|\eta|<\dz |\lz|$ with $\dz<1$, implies that
\begin{align}\label{prop-perburbationresol2-1}
\lf\|(\cl_{\eta,\phi}-\cl)g\r\|_{L^p(\rn)}&\ls \dsum_{k=0}^{2m-1}
\lf(|\eta|+|\eta|^{2m-k}\r)\lf\|\nabla^{k} g\r\|_{L^p(\rn)}\\ \notag
&\ls \dsum_{k=0}^{2m-1} \lf(|\eta|^{2m-k} \|g\|_{L^p(\rn)}^{\frac{2m-k}{2m}}
\lf\|\Delta^m g\r\|^{\frac{k}{2m}}_{L^p(\rn)}+|\eta|\|g\|_{L^p(\rn)}^{\frac{2m-k}{2m}}
\lf\|\Delta^m g\r\|^{\frac{k}{2m}}_{L^p(\rn)}\r)\\ \notag
&\ls \dz \dsum_{k=0}^{2m-1} \lf[\lf(|\lz|^{\frac{2m}{2m-k}}+|\lz|^{2m}\r) \|g\|_{L^p(\rn)}\r]^{\frac{2m-k}{2m}}
\lf\|\Delta^m g\r\|_{L^p(\rn)}^{\frac{k}{2m}}\\ \notag
&\ls \dz \lf[\lf(1+|\lz|^{2m}\r)\|g\|_{L^p(\rn)}+\|\Delta^{m}g\|_{L^p(\rn)}\r].
\end{align}
On the other hand, similarly to the proof of
Lemma \ref{lem-resolventforLc}(ii), we know that the operator $\Delta^m P(D)^{-1}$
is bounded on $L^p(\rn)$. By this, \eqref{eqn-prop-perburbationresol2-3}, and
\eqref{eqn-defg}, we obtain
\begin{align*}
&\lf(1+\lf|\lz\r|^{2m}\r)\lf\|g\r\|_{L^p(\rn)}+\lf\|\Delta^m g\r\|_{L^p(\rn)}\\
&\hs\ls\lf(1+\lf|\lz\r|^{2m}\r)\lf\|g\r\|_{L^p(\rn)}+\lf\|P(D) g\r\|_{L^p(\rn)}\\
&\hs\ls\lf(1+\lf|\lz\r|^{2m}\r)\lf\|g\r\|_{L^p(\rn)}+\lf\|\lf(P(D)+V-\lz^{2m}\r)g\r\|_{L^p(\rn)}
+\lf\|Vg\r\|_{L^p(\rn)}\ls \lf\|f\r\|_{L^p(\rn)}.
\end{align*}
Thus, if the constant $\dz$ in \eqref{prop-perburbationresol2-1} is
sufficiently small, we then have
\begin{align*}
\lf\|(\cl_{\eta,\phi}-\cl)\lf(\lz^{2m}-\cl\r)^{-1}\r\|_{L^p(\rn)\to L^p(\rn)}<1,
\end{align*}
which implies that the operator
$I-(\cl_{\eta,\phi}-\cl)\lf(\lz^{2m}-\cl\r)^{-1}$ has a bounded inverse on $L^p(\rn)$.
By the identity
\begin{align*}
\lz^{2m}-\cl_{\eta,\phi}=\lf[I-(\cl_{\eta,\phi}-\cl)\lf(\lz^{2m}-\cl\r)^{-1}\r](\lz^{2m}-\cl)
\end{align*}
and \eqref{eqn-prop-perburbationresol2-3},
we conclude that $(\lz^{2m}-\cl_{\eta,\phi})^{-1}$ exists and \eqref{eqn-assumptionforcleta1}
holds true. This finishes the proof of
Proposition \ref{prop-perburbationresol2}.
\end{proof}

\begin{remark}\label{rem-perburbationresol2}
Let $\cl_{\eta}$ be the exponential perturbed operator as in \eqref{eqn-defcleta-1} with $\eta\in\cc^n$.
Since, for any $j\in \{1,\ldots,n\}$, $f\in \mathcal{C}^{2m}(\rn)$, and $x\in \rn$,
\begin{align*}
e^{-\eta x}\frac{\pat}{\pat x_j}\lf(e^{\eta x}f\r)(x)=\lf(\frac{\pat}{\pat x_j}+\eta_j\r)f(x),
\end{align*}
it follows that, for any $f\in \mathcal{C}^{2m}(\rn)$ and $x\in\rn$,
\begin{align*}
\cl_{\eta} f(x)=\lf(P(D+\eta)+V\r)f(x),
\end{align*}
where $  P(D+\eta)=\sum_{|\az|=2m}(-1)^m a_\az (D+\eta)^\az$ (see also
\cite[(5.92)]{Tanabe97}). Similarly to \eqref{eqn-sm}, we also obtain,
for $g$ as in \eqref{eqn-defg} and any $x\in\rn$,
\begin{align*}
\lf|(\cl_{\eta}-\cl)g(x)\r|\ls \dsum_{|\az|=2m}\lf|(D+\eta)^\az g(x)-D^\az g(x)\r|
\ls\dsum_{k=0}^{2m-1}|\eta|^{2m-k}\lf|\nabla^{k} g(x)\r|.
\end{align*}
Here, since $\eta$ is a constant vector, we only have the term $|\eta|^{2m-k}$
in the last formula,  while in \eqref{eqn-sm} we need $|\eta|+|\eta|^{2m-k}$.
This is the main difference for applications of the exponential perturbed
operators $\cl_{\eta,\phi}$ and $\cl_{\eta}$.

Thus, following the argument used in \eqref{prop-perburbationresol2-1} in the proof of Proposition
\ref{prop-perburbationresol2}, we further obtain
\begin{align*}
\lf\||\lz|^{2m}\lf(\lz^{2m}-\cl_{\eta}\r)^{-1}f\r\|_{L^p(\rn)}
+\lf\|\Delta^m\lf(\lz^{2m}-\cl_{\eta}\r)^{-1}f\r\|_{L^p(\rn)}\ls \|f\|_{L^p(\rn)}
\end{align*}
with the implicit positive constant independent of $\lz$, $\eta$, and $f$,
under the assumption that
\begin{align*}
\lf\||\lz|^{2m}\lf(\lz^{2m}-\cl\r)^{-1}\r\|_{L^p(\rn)\to L^p(\rn)}
+\lf\|V\lf(\lz^{2m}-\cl\r)^{-1}\r\|_{L^p(\rn)\to L^p(\rn)}<\fz
\end{align*}
which is slightly different from \eqref{eqn-prop-perburbationresol2-3}.
\end{remark}

Now, for any given $\lz^{2m}\in \rho(\cl_{\eta,\phi})$, it is well known that
the resolvent $(\lz^{2m}-\cl_{\eta,\phi})^{-1}$ is holomorphic on
$\lz^{2m}$ due to the resolvent identity
$$\lf(\lz^{2m}-\cl_{\eta,\phi}\r)^{-1}-\lf(\zeta^{2m}-\cl_{\eta,\phi}\r)^{-1}
=\lf(\zeta^{2m}-\lz^{2m}\r)\lf(\lz^{2m}-\cl_{\eta,\phi}\r)^{-1}\lf(\zeta^{2m}-\cl_{\eta,\phi}\r)^{-1}.$$
In what follows, we show that $(\lz^{2m}-\cl_{\eta,\phi})^{-1}$ is
also holomorphic on $\eta$ for any fixed $\lz^{2m}\in \rho(\cl_{\eta,\phi})$. That is, for any $f\in
L^p(\rn)$, the limit
\begin{align}\label{eqn-hlmit}
\lim_{h\to 0}\frac{(\lz^{2m}-\cl_{\eta+h,\phi})^{-1}f-(\lz^{2m}-\cl_{\eta,\phi})^{-1}f}{h}
\end{align}
exists in $L^p(\rn)$ (see \cite[Section B.3]{HNVW16} for some
backgrounds about the vector-valued holomorphic
functions). To this end, we need the following resolvent identity.

\begin{lemma}\label{lem-resolventi2}
Let $\eta$, $h\in\cc$, $\lz^{2m}\in \cc\setminus [0,\fz)$, and $\cl$ and
$\cl_{\eta,\phi}$ be defined, respectively, as in \eqref{eqn-def-HOSO} and
\eqref{eqn-defcleta}. Assume that $\cl$ satisfies
\eqref{eqn-prop-perburbationresol2-3}, and $\eta$ and $h$ satisfy
$|\eta|+|h|<\dz |\lz|$ with $\dz$ as in Proposition
\ref{prop-perburbationresol2}. Then it holds true that
$$\lf(\lz^{2m}-\cl_{\eta+h,\phi}\r)^{-1}-\lf(\lz^{2m}-\cl_{\eta,\phi}\r)^{-1}
=\lf(\lz^{2m}-\cl_{\eta,\phi}\r)^{-1} \lf(\cl_{\eta+h,\phi}-\cl_{\eta,\phi}\r)
\lf(\lz^{2m}-\cl_{\eta+h,\phi}\r)^{-1}.$$
\end{lemma}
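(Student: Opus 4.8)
The plan is to derive the stated identity from the familiar second resolvent identity, once the two perturbed operators are seen to be well defined on a common domain and $\lz^{2m}$ is checked to lie in both resolvent sets. First I would invoke Proposition \ref{prop-perburbationresol2}: since $|\eta|\le|\eta|+|h|<\dz|\lz|$ and likewise $|\eta+h|\le|\eta|+|h|<\dz|\lz|$, and since $\cl$ satisfies \eqref{eqn-prop-perburbationresol2-3}, both $(\lz^{2m}-\cl_{\eta,\phi})^{-1}$ and $(\lz^{2m}-\cl_{\eta+h,\phi})^{-1}$ exist as bounded operators on $L^p(\rn)$; moreover, the estimate \eqref{eqn-assumptionforcleta1} (the $\Delta^m$-term together with the $|\lz|^{2m}$-term) shows that each of these resolvents maps $L^p(\rn)$ into $W^{2m,p}(\rn)$. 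In particular, the factorization $\lz^{2m}-\cl_{\eta,\phi}=[I-(\cl_{\eta,\phi}-\cl)(\lz^{2m}-\cl)^{-1}](\lz^{2m}-\cl)$ used in the proof of Proposition \ref{prop-perburbationresol2}, whose bracketed factor is boundedly invertible, gives $\mathrm{dom}\,(\cl_{\eta,\phi})=\mathrm{dom}\,(\cl)=W^{2m,p}(\rn)$, and the same reasoning with $\eta$ replaced by $\eta+h$ gives $\mathrm{dom}\,(\cl_{\eta+h,\phi})=W^{2m,p}(\rn)$; thus the two operators share this common domain.

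Next I would make sense of the middle factor $\cl_{\eta+h,\phi}-\cl_{\eta,\phi}$. By Lemma \ref{lem-identityforcleta}, on $\mathcal{C}^{2m}(\rn)$ one has $\cl_{\eta+h,\phi}-\cl_{\eta,\phi}=P(D+(\eta+h)D\phi)-P(D+\eta D\phi)$, which, after expanding, is a differential operator of order at most $2m-1$ whose coefficients are controlled by the bounds on $\phi\in\mathcal{E}_{2m}(\rn)$ and are polynomial in $\eta$ and $h$; hence it extends to a bounded operator from $W^{2m,p}(\rn)$ to $L^p(\rn)$. Consequently, for any $f\in L^p(\rn)$, putting $g:=(\lz^{2m}-\cl_{\eta+h,\phi})^{-1}f\in W^{2m,p}(\rn)=\mathrm{dom}\,(\cl_{\eta,\phi})$, each of $\cl_{\eta,\phi}g$, $\cl_{\eta+h,\phi}g$, and $(\cl_{\eta+h,\phi}-\cl_{\eta,\phi})g$ is a well-defined element of $L^p(\rn)$.

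Then I would run the standard algebraic computation. From $(\lz^{2m}-\cl_{\eta+h,\phi})g=f$ one obtains $(\lz^{2m}-\cl_{\eta,\phi})g=f+(\cl_{\eta+h,\phi}-\cl_{\eta,\phi})g$; applying the bounded operator $(\lz^{2m}-\cl_{\eta,\phi})^{-1}$ to both sides yields
\begin{align*}
g=\lf(\lz^{2m}-\cl_{\eta,\phi}\r)^{-1}f+\lf(\lz^{2m}-\cl_{\eta,\phi}\r)^{-1}\lf(\cl_{\eta+h,\phi}-\cl_{\eta,\phi}\r)g,
\end{align*}
and substituting back $g=(\lz^{2m}-\cl_{\eta+h,\phi})^{-1}f$ and rearranging gives the asserted identity evaluated at $f$. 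Since $f\in L^p(\rn)$ is arbitrary, this is precisely the claimed operator identity.

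I expect the only real obstacle to be the domain bookkeeping: verifying that $\mathrm{dom}\,(\cl_{\eta,\phi})$ on $L^p(\rn)$ is $W^{2m,p}(\rn)$ independently of $\eta$ in the admissible range, so that $g$ indeed lands in the domain of the other perturbed operator, and that $\cl_{\eta+h,\phi}-\cl_{\eta,\phi}$ extends to a bounded map $W^{2m,p}(\rn)\to L^p(\rn)$. Both facts follow from Lemma \ref{lem-identityforcleta} and the proof of Proposition \ref{prop-perburbationresol2}, but they need to be spelled out carefully; once they are in place, the identity itself is the textbook manipulation above and requires no further estimates.
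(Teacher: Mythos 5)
Your proposal is correct and follows essentially the same route as the paper: both first invoke Proposition \ref{prop-perburbationresol2} to guarantee that the two resolvents exist, and then carry out the identical algebraic manipulation based on $\lf(\lz^{2m}-\cl_{\eta+h,\phi}\r)+\lf(\cl_{\eta+h,\phi}-\cl_{\eta,\phi}\r)=\lz^{2m}-\cl_{\eta,\phi}$ (you phrase it pointwise on $f$, the paper writes it as an operator identity). Your additional bookkeeping on the common domain $W^{2m,p}(\rn)$ and the boundedness of the difference operator from $W^{2m,p}(\rn)$ to $L^p(\rn)$ is a legitimate filling-in of details the paper leaves implicit, not a different argument.
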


\begin{proof}
Since $\cl$ satisfies \eqref{eqn-prop-perburbationresol2-3} and
$|\eta|+|h|<\dz |\lz|$, we deduce from Proposition
\ref{prop-perburbationresol2} that both
$(\lz^{2m}-\cl_{\eta+h,\phi})^{-1}$ and
$(\lz^{2m}-\cl_{\eta,\phi})^{-1}$ exist. Moreover,  we have
\begin{align*}
&\lf(\lz^{2m}-\cl_{\eta,\phi}\r)^{-1}+\lf(\lz^{2m}-\cl_{\eta,\phi}\r)^{-1} \lf(\cl_{\eta+h,\phi}-\cl_{\eta,\phi}\r)
\lf(\lz^{2m}-\cl_{\eta+h,\phi}\r)^{-1}\\
&\hs=\lf(\lz^{2m}-\cl_{\eta,\phi}\r)^{-1}\lf[\lf(\lz^{2m}-\cl_{\eta+h,\phi}\r)+\lf(\cl_{\eta+h,\phi}-\cl_{\eta,\phi}\r)\r]
\lf(\lz^{2m}-\cl_{\eta+h,\phi}\r)^{-1}\\
&\hs=\lf(\lz^{2m}-\cl_{\eta,\phi}\r)^{-1}\lf(\lz^{2m}-\cl_{\eta,\phi}\r)\lf(\lz^{2m}-\cl_{\eta+h,\phi}\r)^{-1}
=\lf(\lz^{2m}-\cl_{\eta+h,\phi}\r)^{-1},
\end{align*}
which completes the proof of Lemma \ref{lem-resolventi2}.
\end{proof}

In what follows, for any $m\in\nn$ and $q\in (1,\fz)$,
the \emph{Sobolev space $W^{2m,q}(\rn)$} is defined by setting
\begin{align*}
W^{2m,q}(\rn):=\lf\{f\in L^p(\rn):\
\lf\|f\r\|_{W^{2m,q}(\rn)}:=
\lf[\dsum_{k=0}^{2m} \dsum_{|\az|=k}\lf\|D^\az f\r\|^p_{L^p(\rn)}\r]^{1/p}<\fz\r\}.
\end{align*}

\begin{proposition}\label{prop-perburbationresol3}
Let $\lz^{2m}\in \cc\setminus[0,\fz)$, $p\in (1,\fz)$, and $\cl$ and
$\cl_{\eta,\phi}$ be defined, respectively, as in \eqref{eqn-def-HOSO} and
\eqref{eqn-defcleta}. Assume that $\cl$ satisfies
\eqref{eqn-prop-perburbationresol2-3} and $\eta$ satisfies
$|\eta|<\dz |\lz|$ with $\dz$ as in Proposition
\ref{prop-perburbationresol2}. Then
\begin{itemize}
\item[{\rm (i)}] $(\lz^{2m}-\cl_{\eta,\phi})^{-1}$ is holomorphic on $\eta$.

\item[{\rm (ii)}]   For any $f\in L^p(\rn)$,
$(\lz^{2m}-\cl_{\eta,\phi})^{-1}f=e^{-\eta\phi} (\lz^{2m}-\cl)^{-1}(e^{\eta\phi}f)$ in $L^p(\rn)$.
\end{itemize}
\end{proposition}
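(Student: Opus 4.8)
The plan is to derive part (i) from the resolvent identity of Lemma \ref{lem-resolventi2} together with the uniform resolvent bounds of Proposition \ref{prop-perburbationresol2}, and then to read off part (ii) directly from the definition \eqref{eqn-defcleta} of $\cl_{\eta,\phi}$ (with part (i) supplying an alternative route). For (i), I would fix $\lz^{2m}\in\cc\setminus[0,\fz)$ and $\eta\in\cc$ with $|\eta|<\dz|\lz|$, take $f\in L^p(\rn)$, and for $h\in\cc$ with $|\eta|+|h|<\dz|\lz|$ rewrite the difference quotient in \eqref{eqn-hlmit}, via Lemma \ref{lem-resolventi2}, as
\begin{align*}
\frac{(\lz^{2m}-\cl_{\eta+h,\phi})^{-1}f-(\lz^{2m}-\cl_{\eta,\phi})^{-1}f}{h}=\lf(\lz^{2m}-\cl_{\eta,\phi}\r)^{-1}\lf[\frac{\cl_{\eta+h,\phi}-\cl_{\eta,\phi}}{h}\r]\lf(\lz^{2m}-\cl_{\eta+h,\phi}\r)^{-1}f.
\end{align*}
By Lemma \ref{lem-identityforcleta}, $\cl_{\eta,\phi}=P(D+\eta D\phi)+V$ with $P(D+\eta D\phi)$ polynomial in $\eta$, so on $\mathcal{C}^{2m}(\rn)$ one has $\cl_{\eta+h,\phi}-\cl_{\eta,\phi}=hR_\eta+h^2S_{\eta,h}$, where $R_\eta$ and $S_{\eta,h}$ are differential operators of order at most $2m-1$ with bounded coefficients (involving only $D\phi$, which is bounded since $\phi\in\mathcal{E}_{2m}(\rn)$; see \eqref{eqn-e2m}), depending polynomially on $\eta,h$. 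Hence for $g\in W^{2m,p}(\rn)$, by the Gagliardo--Nirenberg inequality (Lemma \ref{lem-GNI}) and the Young inequality (exactly as in the proof of Proposition \ref{prop-perburbationresol2}),
\begin{align*}
\lf\|R_\eta g\r\|_{L^p(\rn)}+\lf\|S_{\eta,h}g\r\|_{L^p(\rn)}\ls\dsum_{k=0}^{2m-1}\lf\|\nabla^k g\r\|_{L^p(\rn)}\ls\|g\|_{L^p(\rn)}+\lf\|\Delta^m g\r\|_{L^p(\rn)};
\end{align*}
applying this with $g=(\lz^{2m}-\cl_{\eta+h,\phi})^{-1}f$ and using Proposition \ref{prop-perburbationresol2} bounds the right-hand side by $\ls\|f\|_{L^p(\rn)}$ uniformly in small $|h|$, so $hS_{\eta,h}(\lz^{2m}-\cl_{\eta+h,\phi})^{-1}f\to0$ in $L^p(\rn)$ as $h\to0$. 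Feeding the same bound back into Lemma \ref{lem-resolventi2} and then applying $\Delta^m(\lz^{2m}-\cl_{\eta,\phi})^{-1}$ (bounded on $L^p(\rn)$, again by Proposition \ref{prop-perburbationresol2}) gives $\|(\lz^{2m}-\cl_{\eta+h,\phi})^{-1}f-(\lz^{2m}-\cl_{\eta,\phi})^{-1}f\|_{W^{2m,p}(\rn)}\ls|h|\|f\|_{L^p(\rn)}$, whence $R_\eta(\lz^{2m}-\cl_{\eta+h,\phi})^{-1}f\to R_\eta(\lz^{2m}-\cl_{\eta,\phi})^{-1}f$ in $L^p(\rn)$. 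Combining, the difference quotient converges in $L^p(\rn)$ to $(\lz^{2m}-\cl_{\eta,\phi})^{-1}R_\eta(\lz^{2m}-\cl_{\eta,\phi})^{-1}f$, so \eqref{eqn-hlmit} exists and $\eta\mapsto(\lz^{2m}-\cl_{\eta,\phi})^{-1}$ is holomorphic on $\{\eta\in\cc:|\eta|<\dz|\lz|\}$.

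For (ii), I would fix $\eta$ with $|\eta|<\dz|\lz|$ and $f\in L^p(\rn)$, set $g:=e^{-\eta\phi}(\lz^{2m}-\cl)^{-1}(e^{\eta\phi}f)$, and verify $(\lz^{2m}-\cl_{\eta,\phi})g=f$ directly. Since $\phi$ and its derivatives up to order $2m$ are bounded (see \eqref{eqn-e2m}), multiplication by $e^{\pm\eta\phi}$ is bounded on $L^p(\rn)$ and on $W^{2m,p}(\rn)$; moreover $(\lz^{2m}-\cl)^{-1}$ maps $L^p(\rn)$ into $W^{2m,p}(\rn)\subset\mathrm{dom}\,(\cl)$, by \eqref{eqn-prop-perburbationresol2-3}, the $L^p(\rn)$-boundedness of $\Delta^mP(D)^{-1}$ (as in the proof of Proposition \ref{prop-perburbationresol2}), and the identity $P(D)(\lz^{2m}-\cl)^{-1}=\lz^{2m}(\lz^{2m}-\cl)^{-1}-I-V(\lz^{2m}-\cl)^{-1}$. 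Thus $e^{\eta\phi}g=(\lz^{2m}-\cl)^{-1}(e^{\eta\phi}f)\in\mathrm{dom}\,(\cl)$, so $g\in\mathrm{dom}\,(\cl_{\eta,\phi})$, and \eqref{eqn-defcleta} yields
\begin{align*}
\cl_{\eta,\phi}g=e^{-\eta\phi}\cl\lf(e^{\eta\phi}g\r)=e^{-\eta\phi}\lf[\lz^{2m}e^{\eta\phi}g-e^{\eta\phi}f\r]=\lz^{2m}g-f,
\end{align*}
i.e., $(\lz^{2m}-\cl_{\eta,\phi})g=f$; since $(\lz^{2m}-\cl_{\eta,\phi})^{-1}$ exists by Proposition \ref{prop-perburbationresol2}, we get $g=(\lz^{2m}-\cl_{\eta,\phi})^{-1}f$, which is (ii). Alternatively, one may first establish (ii) for $\eta\in\rr$ by transferring \eqref{eqn-scalingresolvent} from $L^2_{\eta,\phi}(\rn)$ to $L^p(\rn)$, then observe that both sides of (ii) are holomorphic in $\eta$ on $\{|\eta|<\dz|\lz|\}$---the left-hand side by part (i), the right-hand side because $\eta\mapsto e^{\pm\eta\phi}$ is a holomorphic family of bounded multiplication operators---and extend from the real interval $(-\dz|\lz|,\dz|\lz|)$ (which has a limit point in the disk) to the whole disk by the identity theorem for vector-valued holomorphic functions.

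The main obstacle is the uniform control in part (i): because $R_\eta$ has order $2m-1$, one cannot work with $L^p(\rn)$-convergence of $(\lz^{2m}-\cl_{\eta+h,\phi})^{-1}f$ alone but must upgrade it to convergence in $W^{2m,p}(\rn)$, which forces one to run Lemma \ref{lem-resolventi2}, the Gagliardo--Nirenberg interpolation, and the uniform resolvent bounds of Proposition \ref{prop-perburbationresol2} together in a single estimate; once this is in place, the rest of (i) and all of (ii) are routine given Lemma \ref{lem-identityforcleta} and \eqref{eqn-prop-perburbationresol2-3}.
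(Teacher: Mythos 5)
Your proof of (i) is essentially the paper's: both hinge on the resolvent identity of Lemma \ref{lem-resolventi2}, the polynomial dependence of $\cl_{\eta,\phi}$ on $\eta$ from Lemma \ref{lem-identityforcleta}, and the uniform bounds of Proposition \ref{prop-perburbationresol2} combined with the Gagliardo--Nirenberg inequality; the paper merely organizes the estimate as $\|(\lz^{2m}-\cl_{\eta,\phi})(u_{\eta,h}-\nu)\|_{L^p(\rn)}\to0$ for the candidate derivative $\nu$ and then applies the resolvent bound once, whereas you prove convergence of each factor in the product representation of the difference quotient, which requires (and you correctly supply) the $W^{2m,p}(\rn)$-convergence of $u_{\eta+h}$ so that the order-$(2m-1)$ operator $R_\eta$ can be passed to the limit. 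For (ii) your primary route is genuinely different: you verify $(\lz^{2m}-\cl_{\eta,\phi})g=f$ directly for $g=e^{-\eta\phi}(\lz^{2m}-\cl)^{-1}(e^{\eta\phi}f)$ and conclude by injectivity, while the paper first proves the identity only for purely imaginary $\eta$ (where $e^{\eta\phi}$ is unitary on $L^2(\rn)$) and then extends to the full disk $|\eta|<\dz|\lz|$ via part (i) and the identity theorem for vector-valued holomorphic functions. Your route is shorter and does not need (i) at all, but it silently uses that the conjugation definition \eqref{eqn-defcleta} of $\cl_{\eta,\phi}$ and the differential operator $P(D+\eta D\phi)+V$ acting on $W^{2m,p}(\rn)$ --- which is the operator Proposition \ref{prop-perburbationresol2} actually inverts --- coincide on all of $W^{2m,p}(\rn)$, that is, that Lemma \ref{lem-identityforcleta} extends from $\mathcal{C}^{2m}(\rn)$ to $W^{2m,p}(\rn)$ by density and closedness; this is routine (the paper itself applies \eqref{eqn-sm} to resolvent images in the proof of Proposition \ref{prop-perburbationresol2}) but should be stated. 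The paper's continuation argument buys this identification for free on the seed set at the cost of requiring (i) first; your alternative second route for (ii) is the paper's argument, seeded on the real rather than the imaginary axis, and both seeds have a limit point in the disk, so either works.
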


\begin{proof}
We first prove (i). For any $f\in L^p(\rn)$, let
$u_\eta:=(\lz^{2m}-\cl_{\eta,\phi})^{-1}f$. We need to show the
limit of \eqref{eqn-hlmit} exists in $L^p(\rn)$. By Proposition
\ref{prop-perburbationresol2}, it is easy to see that
 $u_\eta\in W^{2m,p}(\rn)$ and
\begin{align}\label{eqn-defueta}
\lf(\lz^{2m}-\cl_{\eta,\phi}\r)u_\eta =f.
\end{align}
Differentiating both sides of the identity with respect to $\eta$,
we conclude that
\begin{align}\label{prop-perburbationresol3-1}
\lf(\lz^{2m}-\cl_{\eta,\phi}\r)\frac{\pat u_\eta}{\pat
\eta}=\frac{\pat\cl_{\eta,\phi}}{\pat \eta}u_\eta=:g,
\end{align}
where $  \frac{\pat\cl_{\eta,\phi}}{\pat \eta}=\sum_{|\az|=2m}(-1)^m
a_\az \frac{\pat}{\pat \eta }(D+\eta D{\phi})^\az$. Since $u_\eta\in
W^{2m,p}(\rn)$, we know that $g\in L^p(\rn)$. Now, let
$  \nu:=(\lz^{2m}-\cl_{\eta,\phi})^{-1}g\in L^p(\rn)$ and
$$u_{\eta,h}:=\frac{1}{h}\lf(u_{\eta+h}-u_\eta\r)$$
with $h\in\cc$ and $|h|\ll1$ small enough.
Then, using Lemma \ref{lem-resolventi2}, we obtain
\begin{align}\label{eqnlzclphi}
\lf(\lz^{2m}-\cl_{\eta,\phi}\r)\lf(u_{\eta,h}-\nu\r)
&=\frac{1}{h} \lf[\lf(\lz^{2m}-\cl_{\eta,\phi}\r)\lf(\lf(\lz^{2m}-\cl_{\eta+h,\phi}\r)^{-1}
-\lf(\lz^{2m}-\cl_{\eta,\phi}\r)^{-1}\r)\r]f-g\\ \notag
&=\frac{1}{h} \lf(\cl_{\eta+h,\phi}-\cl_{\eta,\phi}\r)\lf(\lz^{2m}-\cl_{\eta+h,\phi}\r)^{-1}f-g\\ \notag
&=\frac{1}{h} \lf(\cl_{\eta+h,\phi}-\cl_{\eta,\phi}\r)u_{\eta+h}-g.
\end{align}
Let $F_h:=\frac{1}{h}\lf(\cl_{\eta+h,\phi}-\cl_{\eta,\phi}\r)
(u_{\eta+h}-u_\eta)$. By \eqref{eqn-defueta} and Lemma
\ref{lem-resolventi2}, we easily know that
\begin{align}\label{eqn-Fh}
F_h=&\frac{1}{h}\lf(\cl_{\eta+h,\phi}-\cl_{\eta,\phi}\r)\lf(\lf(\lz^{2m}-\cl_{\eta+h,\phi}\r)^{-1}
-\lf(\lz^{2m}-\cl_{\eta,\phi}\r)^{-1}\r)f\\ \notag
=&\frac{1}{h}\lf(\cl_{\eta+h,\phi}-\cl_{\eta,\phi}\r)\lf(\lz^{2m}-\cl_{\eta,\phi}\r)^{-1}\circ
\lf(\cl_{\eta+h,\phi}-\cl_{\eta,\phi}\r)\lf(\lz^{2m}-\cl_{\eta+h,\phi}\r)^{-1}f.
\end{align}
From Lemma \ref{lem-identityforcleta}, \eqref{eqn-sm}, and the
assumptions $|\eta|<\dz |\lz|$, $|\dz|<1$, and  $|h|<1$, we deduce that, for any
$u\in W^{2m,p}(\rn)$,
\begin{align*}
\lf|\lf(\cl_{\eta+h,\phi}-\cl_{\eta,\phi}\r)u\r|&\ls \dsum_{|\az|=2m} \lf|\lf[\lf(D+\eta D\phi\r)+hD\phi\r]^\az u
-\lf(D+\eta D\phi\r)^\az u\r|\\
&\ls \dsum_{k=0}^{2m-1}\dsum_{l=1}^{2m-k} \dsum_{|\bz|=k}
\lf|\lf(D+\eta D\phi\r)^\bz u\r||h|^l\\
&\le C_{(\lz)} |h|\lf[\dsum_{k=0}^{2m-1} \lf|\nabla^k u\r|\r],
\end{align*}
where the positive constant $C_{(\lz)}$ depends on $\lz$, but is independent of
$h$ and $u$. Applying Proposition \ref{prop-perburbationresol2} and
letting $|h|\ll 1$ small enough, we obtain
\begin{align*}
&\lf\| \lf(\cl_{\eta+h,\phi}-\cl_{\eta,\phi}\r)\lf(\lz^{2m}-\cl_{\eta,\phi}\r)^{-1}\r\|_{L^p(\rn)\to L^p(\rn)}
+\lf\| \lf(\cl_{\eta+h,\phi}-\cl_{\eta,\phi}\r)\lf(\lz^{2m}-\cl_{\eta+h,\phi}\r)^{-1}\r\|_{L^p(\rn)\to L^p(\rn)}\\
&\hs\ls |h|,
\end{align*}
which, together with \eqref{eqn-Fh}, implies that
\begin{align*}
\|F_h\|_{L^p(\rn)}&= \lf\|\frac{1}{h}\lf(\cl_{\eta+h,\phi}-\cl_{\eta,\phi}\r)\lf(\lz^{2m}-\cl_{\eta,\phi}\r)^{-1}\circ
\lf(\cl_{\eta+h,\phi}-\cl_{\eta,\phi}\r)\lf(\lz^{2m}-\cl_{\eta+h,\phi}\r)^{-1} f\r\|_{L^p(\rn)}\\
&\ls |h|\|f\|_{L^p(\rn)},
\end{align*}
and hence $  \lim_{h\to 0}\|F_h\|_{L^p(\rn)}=0$.
By this, \eqref{eqnlzclphi}, and Proposition \ref{prop-perburbationresol2} again, we conclude that
\begin{align*}
\lf\|u_{\eta,h}-\nu\r\|_{L^p(\rn)}&\ls\lf\|\lf(\lz^{2m}-\cl_{\eta,\phi}\r)\lf(u_{\eta,h}-\nu\r)\r\|_{L^p(\rn)}\\
&\ls \lf\|\frac{1}{h}\lf(\cl_{\eta+h,\phi}-\cl_{\eta,\phi}\r)\lf(u_{\eta+h}-u_{\eta}\r)\r\|_{L^p(\rn)}
+\lf\|\frac{1}{h}\lf(\cl_{\eta+h,\phi}-\cl_{\eta,\phi}\r)u_{\eta}-g\r\|_{L^p(\rn)}\\
&\ls \|F_h\|_{L^p(\rn)}+\lf\|\frac{1}{h} \lf(\cl_{\eta+h,\phi}-\cl_{\eta,\phi}\r)u_\eta-g\r\|_{L^p(\rn)},
\end{align*}
which, together with \eqref{prop-perburbationresol3-1}, shows that the above norm
turns to $0$ as $h\to 0$. This shows that
\begin{align*}
\lim_{h\to 0}\frac{(\lz^{2m}-\cl_{\eta+h,\phi})^{-1}f-(\lz^{2m}-\cl_{\eta,\phi})^{-1}f}{h}=\nu
\end{align*}
in $L^p(\rn)$. Thus, the limit of \eqref{eqn-hlmit} exists in $L^p(\rn)$,
which shows (i).

For (ii), it is easy to see that (ii) holds true when $\eta$ is a pure
imaginary number, because, in this case, $e^{\eta\phi}$ is an isometry on
$L^2(\rn)$ for any ${\phi}\in\mathcal{E}_{2m}(\rn)$ (see also Remark
\ref{rem-cophi} for a similar case). Then, by (i), we conclude that
$(\lz^{2m}-\cl_{\eta,\phi})^{-1}$ is holomorphic on $\eta$ for any
$\eta\in\cc$ satisfying $|\eta|<\dz |\lz|$. This,
combined with the Morera theorem, shows that (ii) holds true for any
such $\eta$, which shows (ii) and
hence completes the proof of Proposition
\ref{prop-perburbationresol3}.
\end{proof}

\begin{remark}\label{rem-prop-perburbationresol3}
Let $\cl_{\eta}$ be the exponential perturbed operator as in \eqref{eqn-defcleta-1} with $\eta\in\cc^n$.
Following the proof of Proposition \ref{prop-perburbationresol3}, we obtain that
$(\lz^{2m}-\cl_{\eta})^{-1}$ is also holomorphic on $\eta$. Moreover, for any given
$p\in (1,\fz)$ and any $f\in L^p(\rn)$,
\begin{align}\label{eqn-resolventper}
(\lz^{2m}-\cl_{\eta})^{-1}f=e^{-\eta x} (\lz^{2m}-\cl)^{-1}(e^{\eta x}f)
\end{align}
in $L^p(\rn)$  (see \cite[Lemma 5.15 and (5.103)]{Tanabe97} for similar
results).
\end{remark}

\section{Estimates for heat kernels \label{s5}}

In this section, we prove the main results of this article, based on
the perturbation estimates established in Section
\ref{s4}. To begin with,
we first summarize some boundedness results of the resolvent of $\cl_{\eta,\phi}$
and $\cl_{\eta}$, respectively, as in \eqref{eqn-defcleta} and  \eqref{eqn-defcleta-1}.

\subsection{Preliminaries on parameters\label{s5.1}}

For any given $m\in\nn$, $q\in (1,\fz)$, $p\in [q,\fz)$, and $s\in (0,2m]$ satisfying
\begin{align}\label{ap1}
0\le n\lf(\frac{1}{q}-\frac{1}{p}\r)\le 2m-s,
\end{align}
let $V$ be a measurable function on $\rn$.
We summarize the following four groups of Schechter-type conditions
based on \eqref{eqn-conditionforT}
in Propositions \ref{prop-perburbationresol},
\ref{prop-bdT1}, \ref{prop-bdT2}, \ref{prop-bdT3},
and \ref{prop-bdT4}, respectively.
\begin{itemize}
\item [(i)] The parameters $p$, $q$, $s$, $\az$, and ${S}_1$ satisfy
\begin{align}\label{ap2}
\begin{cases}
q\in \lf[1,\dfrac{n}{n-s}\r), \\
\az\in (0,(s-n)q+n],\\
{S}_1:=s-2m+n\lf(\dfrac{1}{q}-\dfrac{1}{p}\r)+n-s+\dfrac{\az-n}{q},\\
C_2C|\lz|^{{S}_1}M_{\az,q,p',1/|\lz|}(V)<1
\end{cases}
\end{align}
with $C_2$ and $C$ as in \eqref{eqn-ConstatC3} and \eqref{BC1}.

\item [(ii)] The parameters $p$, $q$, $s$, $\az$, $t$, $\sz$, and ${S}_2$ satisfy
\begin{align}\label{ap3}
\begin{cases}
t,\ \sz\in [1,\fz],\\
\displaystyle \frac{1}{q}=\frac{1}{t}+\frac{1}{\sz},\\
\displaystyle \frac{1}{\sz}\le \frac{1}{p}\le \frac{1}{\sz}+\frac{s}{n},\\
\displaystyle {S}_2:=s-2m+n\lf(\frac{1}{q}-\frac{1}{p}\r)-s\lf[1-\frac{n}{s}\lf(\frac{1}{p}-\frac{1}{\sz}\r)\r],\\
\displaystyle C_2C|\lz|^{{S}_2}\|V\|_{L^t(\rn)}<1
\end{cases}
\end{align}
with $C_2$ and $C$ as in \eqref{eqn-ConstatC3} and \eqref{BC2}.

\item [(iii)] The parameters $p$, $q$, $s$, $\az$, $t$, and ${S}_3$ satisfy
\begin{align}\label{ap4}
\begin{cases}
s\in (0,n/2),\ q\in [2,\fz),\ t\in [q,\fz],\ r\in \lf[q,\dfrac{2n}{n-2s}\r),\\
\az\in \lf(0,n+\dfrac{(2s-n)r}{2}\r],\\
\dfrac{1}{t}+\dfrac{1}{r}=\dfrac{1}{q},\\
{S}_3:=s-2m+n\lf(\dfrac{1}{q}-\dfrac{1}{p}\r)+\dfrac{1}{r}\lf[\dfrac{(n-2s)r}{2}+\az-n\r],\\
C_2C|\lz|^{{S}_3}M_{\az,r,t,1/|\lz|}(V)<1
\end{cases}
\end{align}
with $C_2$ and $C$ as in \eqref{eqn-ConstatC3} and \eqref{BC3}.

\item [(iv)] The parameters $p$, $q$, $s$, $\az$, and ${S}_4$ satisfy
\begin{align}\label{ap5}
\begin{cases}
p=q\in (1,2],\ \az\in (0,n),\\
\az-n\le p(s-n)+\dfrac{np}{p'},\\
2n>p'(n-s),\\
{S}_4:=\az/p-2m,\\
C_2C|\lz|^{{S}_4}M_{\az,p,\fz,1/|\lz|}(V)<1
\end{cases}
\end{align}
with $C_2$ and $C$ as in \eqref{eqn-ConstatC3} and \eqref{BC4}.
\end{itemize}

For the sake of simplicity, we use the same notation $M_{|\lz|}(V)$
to denote the last quantity  in \eqref{ap2} through \eqref{ap5},
respectively. By \eqref{def-Mazqrlz}, it is easy to find that, for
any $c\in (0,\fz)$,
\begin{align}\label{HP}
M_{|\lz|}(cV)=cM_{|\lz|}(V).
\end{align}

We first show that, for parameters satisfying
\eqref{ap2} through \eqref{ap5}, the higher order Schr\"odinger operator
$\cl$ defined  in \eqref{eqn-def-HOSO} can be extended to a closed
operator on $W^{2m,q}(\rn)$ for any given $q\in (1,\fz)$.

\begin{proposition}\label{prop-reb}
Let $\lz^{2m}\in \cc\setminus[0,\fz)$,  $q\in (1,\fz)$, $p\in [q,\fz)$,
and $s\in (0,2m]$ satisfy
\eqref{ap1}. Assume that $\cl$ is as in \eqref{eqn-def-HOSO} and
one of \eqref{ap2} through \eqref{ap5} holds true. Then $\cl$ is a closed linear
operator on $W^{2m,q}(\rn)$.
\end{proposition}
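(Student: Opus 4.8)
The plan is to show that $\cl=P(D)+V$, originally defined as a self-adjoint operator on $L^2(\rn)$, extends to a closed operator on $W^{2m,q}(\rn)$ for each $q\in(1,\fz)$ by exhibiting $V$ as a relatively $P(D)$-bounded perturbation on $L^q(\rn)$ with relative bound strictly less than $1$. The natural route is to use the boundedness of the $T$-operator $T_{s,\dz}=V(\dz^2-\Delta)^{-s/2}$ established in Propositions \ref{prop-bdT1} through \ref{prop-bdT4} together with the resolvent estimate for $P(D)$ in Lemma \ref{lem-resolventforLc}. Concretely, the key observation is the factorization
\begin{align*}
V\lf(\lz^{2m}-P(D)\r)^{-1}=T_{s,|\lz|}\circ\lf(|\lz|^2-\Delta\r)^{s/2}\lf(\lz^{2m}-P(D)\r)^{-1},
\end{align*}
exactly as in \eqref{eqnprop-perburbationresolx1}. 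By Lemma \ref{lem-resolventforLc}(ii), the second factor maps $L^q(\rn)$ to $L^p(\rn)$ with norm at most $C_2|\lz|^{-[2m-s+n(1/p-1/q)]}$, provided $\tfrac{n}{2}(\tfrac1q-\tfrac1p)<m-s/2$, which is precisely the content of \eqref{ap1}. Each of the hypotheses \eqref{ap2}–\eqref{ap5} supplies, via the corresponding boundedness proposition for $T_{s,\dz}$, an estimate of the form $\|T_{s,|\lz|}\|_{L^p\to L^q}\le C|\lz|^{S_i+2m-s+n(1/p-1/q)}M_{|\lz|}(V)$, so that the product $C_{(|\lz|)}:=C_2|\lz|^{-[2m-s+n(1/p-1/q)]}\|T_{s,|\lz|}\|_{L^p\to L^q}$ is exactly $C_2C|\lz|^{S_i}M_{|\lz|}(V)<1$; this is the quantitative hypothesis \eqref{eqn-conditionforT} needed in Proposition \ref{prop-perburbationresol}.

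First I would fix $q\in(1,\fz)$ and pick $p$, $s$, and the auxiliary exponents according to whichever of \eqref{ap2}–\eqref{ap5} is assumed; verify \eqref{ap1} holds (it is built into each case or follows from the constraints there, e.g.\ $q<n/(n-s)$ gives $n(1/q-1/p)\le n/q-(n-s)=s-\text{(something)}$ after choosing $p$ appropriately, and in case (iv) $p=q$ makes the left side zero). Then invoke the relevant boundedness proposition to get $T_{s,|\lz|}:L^p(\rn)\to L^q(\rn)$ bounded, and combine with Lemma \ref{lem-resolventforLc}(ii) to conclude that $V(\lz^{2m}-P(D))^{-1}$ is bounded on $L^q(\rn)$ with operator norm $C_{(|\lz|)}<1$ under the stated smallness condition. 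Now Proposition \ref{prop-perburbationresol} applies: the Neumann series for $[I-V(\lz^{2m}-P(D))^{-1}]^{-1}$ converges on $L^q(\rn)$, the perturbed resolvent identity \eqref{eqnprop-perburbationresolx2} holds, and hence $\lz^{2m}-P(D)-V=\lz^{2m}-\cl$ has a bounded inverse on $L^q(\rn)$. Finally, a bounded operator on $L^q(\rn)$ possessing a bounded everywhere-defined inverse is closed, and its inverse maps $L^q(\rn)$ onto $\mathrm{dom}(\cl)$; since $(\lz^{2m}-P(D))^{-1}$ maps $L^q(\rn)$ into $W^{2m,q}(\rn)$ (the maximal regularity / elliptic regularity statement implicit in Lemma \ref{lem-resolventforLc}, via boundedness of $\Delta^m P(D)^{-1}$ on $L^q$), and $V(\lz^{2m}-P(D))^{-1}$ is $L^q$-bounded, the graph-norm closedness of $\cl$ on $W^{2m,q}(\rn)$ follows: if $f_k\to f$ in $L^q$ and $\cl f_k\to g$ in $L^q$, then applying the bounded inverse $(\lz^{2m}-\cl)^{-1}$ to $(\lz^{2m}f_k-\cl f_k)\to\lz^{2m}f-g$ gives $f_k\to(\lz^{2m}-\cl)^{-1}(\lz^{2m}f-g)$, forcing $f=(\lz^{2m}-\cl)^{-1}(\lz^{2m}f-g)\in\mathrm{dom}(\cl)$ and $\cl f=g$.

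The main obstacle, and the place where the argument needs genuine care rather than bookkeeping, is matching up the exponents: one must check in each of the four cases \eqref{ap2}–\eqref{ap5} that the dilation exponent produced by the $T$-operator boundedness proposition (the power of $\dz=|\lz|$ in \eqref{BC1}, \eqref{BC2}, \eqref{BC3}, \eqref{BC4}) combines with the $|\lz|^{-[2m-s+n(1/p-1/q)]}$ from Lemma \ref{lem-resolventforLc}(ii) to yield exactly $|\lz|^{S_i}$, and that the constraints on $(\az,r,t,\sz,\dots)$ imposed in \eqref{ap2}–\eqref{ap5} are precisely the hypotheses of the corresponding proposition (for instance $q\in[1,n/(n-s))$ and $\az\in(0,(s-n)q+n]$ for Proposition \ref{prop-bdT1}, and the condition $1/q=1/t+1/r$ with $r\in[q,2n/(n-2s))$ for Proposition \ref{prop-bdT3}). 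A secondary subtlety is that $\mathcal{S}(\rn)$ or $\mathcal{C}_{\mathrm c}^\fz(\rn)$ must be dense in $W^{2m,q}(\rn)$ so that the identities first proved on Schwartz functions extend, and that the various operator identities hold on the domain in the $L^q$ sense; these are standard but should be mentioned. Once the exponent matching is done, everything else is a direct citation of Proposition \ref{prop-perburbationresol} and elementary closedness arguments.
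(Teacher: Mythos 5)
Your proposal is correct and rests on the same key estimate as the paper's proof, namely the factorization $V(\lz^{2m}-P(D))^{-1}=T_{s,|\lz|}\circ(|\lz|^{2}-\Delta)^{s/2}(\lz^{2m}-P(D))^{-1}$ combined with Lemma \ref{lem-resolventforLc}(ii) and the $T$-operator bounds, which is exactly \eqref{eqnprop-perburbationresolx1} and yields $\|Vf\|_{L^q(\rn)}<\|P(D)f\|_{L^q(\rn)}+|\lz|^{2m}\|f\|_{L^q(\rn)}$ on $W^{2m,q}(\rn)$. The only divergence is the final step: the paper reads this estimate as saying that $V$ is relatively $P(D)$-bounded with relative bound strictly less than $1$ and cites the standard closedness criterion for such perturbations of the closed operator $P(D)$ (Engel--Nagel), whereas you run the Neumann series to invert $\lz^{2m}-\cl$ on $L^q(\rn)$ and deduce closedness from the existence of a bounded everywhere-defined inverse via a direct graph-convergence argument; both closings are standard and valid (your parenthetical phrase ``a bounded operator \dots is closed'' is a slip, since $\lz^{2m}-\cl$ is unbounded, but your subsequent argument uses only the boundedness of the inverse and is correct).
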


\begin{proof}
For any given $q\in (1,\fz)$, it is known that $P(D)$ can be extended to a closed linear operator
on $L^q(\rn)$ with the domain $$\mathrm{dom}_q(P(D))=W^{2m,q}(\rn).$$
If one of \eqref{ap2} through \eqref{ap5} holds true,
then, by applying Proposition \ref{prop-perburbationresol} [see, in particular,
\eqref{eqnprop-perburbationresolx1}],
we know that, for any $f\in L^q(\rn)$,
\begin{align*}
\lf\|Vf\r\|_{L^q(\rn)}&=\lf\|V\lf(\lz^{2m}-P(D)\r)^{-1}\circ \lf(\lz^{2m}-P(D)\r)f\r\|_{L^q(\rn)}\\ \notag
&<\lf\|P(D)f\r\|_{L^q(\rn)} +|\lz|^{2m}\|f\|_{L^q(\rn)},
\end{align*}
which implies that $V$ is relatively bounded with respect to $P(D)$,
with bound constant strictly less than $1$. Thus, by \cite[p. 171,
Lemma 2.4]{KlausNegal00}, we conclude that $P(D)+V$ is closed on
$W^{2m,q}(\rn)$. This finishes the proof of Proposition
\ref{prop-reb}.
\end{proof}

The next result summarizes the boundedness
of the resolvent $(\lz-\cl_{\eta,\phi})^{-1}$ based on the conditions \eqref{ap2} through \eqref{ap5}.

\begin{proposition}\label{prop-spr}
Let $m\in\nn$, $\lz^{2m}\in \cc\setminus[0,\fz)$, $q\in (1,\fz)$, $p\in
[q,\fz)$, and $s\in (0,2m]$ satisfy \eqref{ap1}. Assume that $\cl$
and $\cl_{\eta,\phi}$ are defined, respectively, as in \eqref{eqn-def-HOSO} and
\eqref{eqn-defcleta}. If one of \eqref{ap2} through \eqref{ap5}
holds true and
\begin{align}\label{eqn-assumptionadd1}
\lf\|\lf(\lz^{2m}-\cl\r)^{-1}\r\|_{L^q(\rn)\to L^q(\rn)}<\fz,
\end{align}
then there exist positive constants $C$ and $\dz\in (0,1)$ such that, for any $\eta\in \cc$
satisfying $|\eta|<\dz |\lz|$ and $f\in L^q(\rn)$,
\begin{align}\label{eqn-assumptionforcleta2x}
\lf\||\lz|^{2m}\lf(\lz^{2m}-\cl_{\eta,\phi}\r)^{-1}f\r\|_{L^q(\rn)}
+\lf\|\Delta^m\lf(\lz^{2m}-\cl_{\eta,\phi}\r)^{-1}f\r\|_{L^q(\rn)}\le C\|f\|_{L^q(\rn)}.
\end{align}
\end{proposition}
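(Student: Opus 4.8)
The plan is to reduce the claim to Proposition \ref{prop-perburbationresol2} by verifying that, under the hypothesis \eqref{eqn-assumptionadd1} together with any one of \eqref{ap2}--\eqref{ap5}, the operator $\cl$ satisfies the crucial structural bound \eqref{eqn-prop-perburbationresol2-3}, namely that
\begin{align*}
\lf\|\lf(1+|\lz|^{2m}\r)\lf(\lz^{2m}-\cl\r)^{-1}\r\|_{L^q(\rn)\to L^q(\rn)}
+\lf\|V\lf(\lz^{2m}-\cl\r)^{-1}\r\|_{L^q(\rn)\to L^q(\rn)}<\fz.
\end{align*}
Once this is in hand, \eqref{eqn-assumptionforcleta2x} is exactly the conclusion \eqref{eqn-assumptionforcleta1} of Proposition \ref{prop-perburbationresol2} applied on $L^q(\rn)$, so nothing further is required. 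Thus the whole proof amounts to controlling the two displayed summands.

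For the first summand, I would argue as follows. Whichever of \eqref{ap2}--\eqref{ap5} holds, the last inequality in that group says precisely that $M_{|\lz|}(V)<1$, which by the relevant one of Propositions \ref{prop-bdT1}--\ref{prop-bdT4} means that the operator norm $\|T_{s,|\lz|}\|_{L^p(\rn)\to L^q(\rn)}$ is finite and controlled so that the quantity $C_{(|\lz|)}$ in \eqref{eqn-conditionforT} is strictly less than $1$; here one checks the parameter compatibility with \eqref{ap1} and $\frac{n}{2}(\frac1q-\frac1p)<m-s/2$ (which follows from $0\le n(\frac1q-\frac1p)\le 2m-s$ and $s\in(0,2m]$). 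Hence the hypotheses of Proposition \ref{prop-perburbationresol} are met with $T=T_{s,|\lz|}$, and that proposition yields
\begin{align*}
\lf\|\lf(\lz^{2m}-P(D)-V\r)^{-1}\r\|_{L^q(\rn)\to L^q(\rn)}
\ls \frac{1}{1-C_{(|\lz|)}}\,\frac{1}{|\lz|^{2m}}.
\end{align*}
Multiplying by $1+|\lz|^{2m}$ gives a bound that is finite for each fixed $\lz$ (indeed uniformly bounded when $|\lz|\ge 1$, and finite for small $|\lz|$ because \eqref{eqn-assumptionadd1} is assumed separately — this is where \eqref{eqn-assumptionadd1} is genuinely used, since Proposition \ref{prop-perburbationresol} only controls the $|\lz|^{-2m}$-weighted norm). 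This disposes of the first summand.

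For the second summand, write
\begin{align*}
V\lf(\lz^{2m}-\cl\r)^{-1}
= V\lf(|\lz|^2-\Delta\r)^{-s/2}\circ\lf(|\lz|^2-\Delta\r)^{s/2}\lf(\lz^{2m}-P(D)\r)^{-1}\circ
\lf(\lz^{2m}-P(D)\r)\lf(\lz^{2m}-\cl\r)^{-1},
\end{align*}
and note that $(\lz^{2m}-P(D))(\lz^{2m}-\cl)^{-1}=I+V(\lz^{2m}-\cl)^{-1}$; rearranging, it suffices to bound $V(\lz^{2m}-\cl)^{-1}$ directly via the Neumann-series identity \eqref{eqnprop-perburbationresolx2} already established inside the proof of Proposition \ref{prop-perburbationresol}, together with the estimate \eqref{eqnprop-perburbationresolx1} for $\|V(\lz^{2m}-P(D))^{-1}\|_{L^q\to L^q}\le C_{(|\lz|)}<1$ and the uniform bound \eqref{eqn-perburbationresol-1} on the inverse Neumann series; this gives $\|V(\lz^{2m}-\cl)^{-1}\|_{L^q(\rn)\to L^q(\rn)}\le C_{(|\lz|)}/(1-C_{(|\lz|)})<\fz$. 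Consequently \eqref{eqn-prop-perburbationresol2-3} holds, and the proposition follows from Proposition \ref{prop-perburbationresol2}.

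The main obstacle I anticipate is the bookkeeping of parameters: for each of the four cases \eqref{ap2}--\eqref{ap5} one must confirm that the exponents $S_1,\dots,S_4$ appearing there are precisely the powers of $|\lz|$ produced by combining the $T$-operator bound (the relevant Proposition \ref{prop-bdT1}--\ref{prop-bdT4}) with the resolvent estimate \eqref{eqn-ConstatC3}, so that the smallness hypothesis $M_{|\lz|}(V)<1$ translates exactly into $C_{(|\lz|)}<1$ of \eqref{eqn-conditionforT}. This is routine but must be done case by case. The analytic content is otherwise light, being entirely a reduction to Propositions \ref{prop-perburbationresol} and \ref{prop-perburbationresol2}.
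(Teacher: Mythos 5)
Your proposal is correct and follows essentially the same route as the paper: verify \eqref{eqn-prop-perburbationresol2-3} by combining the smallness condition $C_{(|\lz|)}<1$ (equivalent to the last line of whichever of \eqref{ap2}--\eqref{ap5} holds, via Propositions \ref{prop-bdT1}--\ref{prop-bdT4}) with Proposition \ref{prop-perburbationresol} for the resolvent term and with the Neumann-series bounds \eqref{eqnprop-perburbationresolx1}--\eqref{eqnprop-perburbationresolx2} and \eqref{eqn-perburbationresol-1} for the term $V(\lz^{2m}-\cl)^{-1}$, then invoke Proposition \ref{prop-perburbationresol2}. The intermediate factorization you write for the second summand is an unnecessary detour you yourself abandon, but the argument you land on is exactly the paper's.
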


\begin{proof}
Let parameters $p$, $q$, $s$, $\az$, and $\{{S}_i\}_{i=1}^4$ satisfy \eqref{ap2} through \eqref{ap5}.
It is easy to see that
\begin{align*}
 C_{(|\lz|)}:=C_2|\lz|^{-[2m-s-n(\frac{1}{q}-\frac{1}{p})]}\lf\|T_{s,|\lz|}\r\|_{L^p(\rn)\to L^q(\rn)}<1.
\end{align*}
Thus, by Propositions \ref{prop-perburbationresol}, we know that, for any $f\in L^p(\rn)$,
\begin{align*}
\lf\||\lz|^{2m}\lf(\lz^{2m}-\cl\r)^{-1}f\r\|_{L^q(\rn)}\ls \frac{1}{1- C_{(|\lz|)}}\|f\|_{L^q(\rn)}.
\end{align*}
Moreover, by \eqref{eqnprop-perburbationresolx1} through \eqref{eqnprop-perburbationresolx2}, we have
\begin{align*}
\lf\|V\lf(\lz^{2m}-\cl\r)^{-1}f\r\|_{L^q(\rn)}&\ls
\lf\|V\lf(\lz^{2m}-P(D)\r)^{-1}\r\|_{L^q(\rn)\to L^q(\rn)}
\lf\|\lf[I-V(\lz^{2m}-P(D))^{-1}\r]^{-1}f\r\|_{L^q(\rn)}\\
&\ls \frac{1}{1- C_{(|\lz|)}}\|f\|_{L^q(\rn)},
\end{align*}
which, together with \eqref{eqn-assumptionadd1}, shows
\eqref{eqn-prop-perburbationresol2-3}. By Proposition
\ref{prop-perburbationresol2},  we know that
\eqref{eqn-assumptionforcleta2x} holds true. This finishes the proof of
Proposition \ref{prop-spr}.
\end{proof}

\begin{remark}\label{rem-prop-spr}
Let $\cl_{\eta}$ be the exponential perturbed operator as in \eqref{eqn-defcleta-1} with $\eta\in\cc^n$.
By Remark \ref{rem-perburbationresol2}, we know that \eqref{eqn-assumptionforcleta2x}  holds true with
$\cl_{\eta,\phi}$ replaced by $\cl_\eta$, under the same assumptions of
Proposition \ref{prop-spr} but without the condition \eqref{eqn-assumptionadd1}.
\end{remark}

\subsection{Davies--Gaffney estimates \label{s5.2}}

In this subsection,
we establish the Davies--Gaffney estimates for the semigroup
$\{e^{-t\cl}\}_{t>0}$ generated by $-\cl$ by proving Theorem \ref{thm-main2}.

\begin{proof}[Proof of Theorem \ref{thm-main2}]
To begin with, we first claim that $-\cl$ generates a bounded
holomorphic semigroup $\{e^{-t\cl}\}_{t>0}$ on $L^2(\rn)$.
Indeed, by Proposition \ref{prop-perburbationresol}, we know that,
for any given $\lz \in \cc\setminus [0,\fz)$ and any $f\in L^2(\rn)$,
\begin{align*}
\lf\|\lf(\lz-\cl\r)^{-1}f\r\|_{L^2(\rn)}\ls \frac{1}{|\lz|}\lf\|f\r\|_{L^2(\rn)},
\end{align*}
which implies that $\cl$ is a sectorial operator on $L^2(\rn)$.
This shows the above claim by applying \cite[Chapter II, Theorem 4.6]{KlausNegal00}.

Based on the $L^2(\rn)$ boundedness of $\{e^{-t\cl}\}_{t>0}$, we
know that, to prove \eqref{eqn-DGE},
it suffices to consider the case $t<[d(E,F)]^{2m}$. Using Proposition
\ref{prop-perburbationresol3}(ii), we find that, for any given $\lz\in
\cc\setminus [0,\fz)$, ${\phi}\in \mathcal{E}_{2m}(\rn)$, and $\eta\in
\rr_+$ satisfying $\eta<\dz |\lz|^{1/(2m)}$, and any $f\in L^2(\rn)$,
\begin{align*}
\lf(\lz-\cl_{\eta,\phi}\r)^{-1}f=e^{-\eta\phi}\lf(\lz-\cl\r)^{-1}(e^{\eta\phi}f)
\end{align*}
holds true in $L^2(\rn)$ for some $\dz\in (0,1)$ sufficiently small. Then, by
Proposition \ref{prop-perburbationresol2}
and Lemma \ref{lem-ed}, we obtain, for any given $\lz\in \cc\setminus [0,\fz)$
and any disjoint compact convex subsets $E$ and $F$,
\begin{align}\label{eqn-DG-1}
&\lf\|\mathbf{1}_E \lf(\lz-\cl\r)^{-1}\mathbf{1}_F\r\|_{L^2(\rn)\to L^2(\rn)}\\ \notag
&\hs=\lf\|\mathbf{1}_Ee^{\eta\phi}\circ e^{-\eta\phi}\lf(\lz-\cl\r)^{-1}e^{\eta\phi}\circ
e^{-\eta\phi}\mathbf{1}_F\r\|_{L^2(\rn)\to L^2(\rn)}\\ \notag
&\hs\le \lf\|e^{\eta\phi}\r\|_{L^\fz(E)}\lf\|\lf(\lz-\cl_{\eta,\phi}\r)^{-1}\r\|_{L^2(\rn)\to
L^2(\rn)}\lf\|e^{-\eta\phi}\r\|_{L^\fz(F)}\\ \notag
&\hs\ls \frac{1}{|\lz|} \exp\lf\{-\eta \lf[\inf_{y\in F} {\phi}(y)-\sup_{x\in E}{\phi}(x)\r]\r\}
\ls \frac{1}{|\lz|} \exp\lf\{-c\eta d(E,F) \r\}
\end{align}
for some $c\in (1,\fz)$. Using the following identity based on functional calculus
(see, for instance, \cite[(2.25)]{AuMcTc98})
\begin{align}\label{eqn-FCHS}
e^{-t\cl}=\frac{1}{2\pi i}\dint_\Gamma e^{-t\lz} \lf(\lz-\cl\r)^{-1}\,d\lz,
\end{align}
where $\Gamma$ is a path in the complex plane that consists of three parts:
\begin{align}\label{eqn-IC}
&\Gamma_{0}:=\lf\{\dz=Re^{i\tz}: |\tz|\ge \mu\r\},\
\Gamma_{+}:=\lf\{\dz=re^{i\mu}: r\ge R\r\}, \ \text{and}\\ \notag
&\Gamma_{-}:=\lf\{\dz=re^{-i\mu}: r\ge R\r\}
\end{align}
with $\mu\in (0,\pi/2)$ being fixed and $R\in (0,\fz)$ to be determined later (see Figure \ref{gamma}
below).
\begin{figure}[ht]
    \centering
    \includegraphics[width=0.5\textwidth]{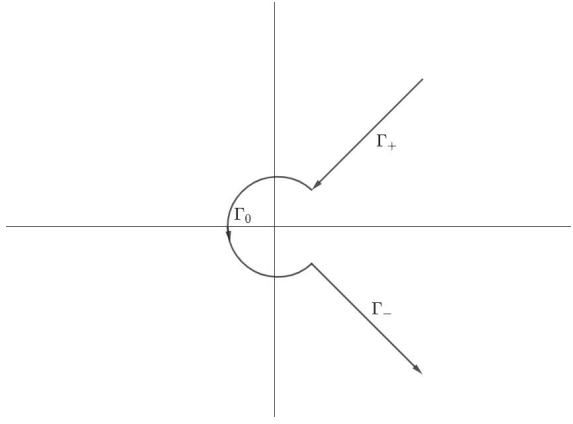}
    \caption{The path $\Gamma$}
    \label{gamma}
\end{figure}

Now, for any $f\in L^2(E)$ with $\supp f\subset E$,  we estimate $\|e^{-t\cl}f\|_{L^2(F)}$
by considering the corresponding integrals over $\Gamma_+$, $\Gamma_-$, and $\Gamma_0$,
 respectively.
To be precise, let $\eta:=\frac{\dz}{c}|\lz|^{1/(2m)}$ and $  R:=\epsilon [\frac{d(E,F)}{t}]^{2m/(2m-1)}$
with $c$ as in \eqref{eqn-DG-1} and $  \epsilon\in (0,1)$ satisfying
$\dz\epsilon^{1/(2m)}>\epsilon$. Then,
by \eqref{eqn-DG-1} and the assumptions $\mu\in (0,\pi/2)$ and $t<[d(E,F)]^{2m}$, we obtain
\begin{align*}
\mathrm{I}_+:=&\dint_{\Gamma_+} e^{-t\mathrm{Re}\,\lz} \lf\|\lf(\lz-\cl\r)^{-1}f\r\|_{L^2(F)}\lf|d\lz\r|\\
\ls & \dint_{R}^\fz \frac{1}{r}e^{-tr \cos \mu} e^{-c\eta d(E,F)}\,dr\lf\|f\r\|_{L^2(E)}\\
\ls & \lf(\frac{[d(E,F)]^{2m}}{t}\r)^{-1/(2m-1)}
\exp\lf\{-\dz\epsilon^{1/(2m)}\frac{[d(E,F)]^{2m/(2m-1)}}{t^{1/(2m-1)}}\r\}\lf\|f\r\|_{L^2(E)}\\
\ls& \exp\lf\{-\wz c_5\frac{[d(E,F)]^{2m/(2m-1)}}{t^{1/(2m-1)}}\r\}\lf\|f\r\|_{L^2(E)}
\end{align*}
by choosing a suitable constant $\wz c_5\in (0,\fz)$, which is the desired estimate. Similarly, we have
\begin{align*}
\mathrm{I}_-:=\dint_{\Gamma_-} e^{-t\mathrm{Re}\,\lz} \lf\|\lf(\lz-\cl\r)^{-1}f\r\|_{L^2(F)}\lf|d\lz\r|
\ls \exp\lf\{-\wz c_5\frac{[d(E,F)]^{2m/(2m-1)}}{t^{1/(2m-1)}}\r\}\lf\|f\r\|_{L^2(E)}.
\end{align*}
For the integral over $\Gamma_0$, using the fact that $  \int_0^{2\pi} e^{-s\cos\tz}\,d\tz\ls \frac{e^s}{s^{1/2}}$
and the assumption $\dz\epsilon^{1/(2m)}>\epsilon$,
we know that
\begin{align*}
\mathrm{I}_0:=&\dint_{\Gamma_0} e^{-t\mathrm{Re}\,\lz} \lf\|\lf(\lz-\cl\r)^{-1}f\r\|_{L^2(F)}\lf|d\lz\r|\\
\le&\lf|\dint_{\tz\in (\mu,-\mu)}e^{-tR\cos \tz}
d\tz\r| \exp\lf\{-\dz\epsilon^{1/(2m)}\frac{[d(E,F)]^{2m/(2m-1)}}{t^{1/(2m-1)}}\r\}\lf\|f\r\|_{L^2(E)}\\
\ls&  \exp\lf\{-\wz c_5\frac{[d(E,F)]^{2m/(2m-1)}}{t^{1/(2m-1)}}\r\}\lf\|f\r\|_{L^2(E)}
\end{align*}
by choosing a suitable constant $\wz c_5\in (0,\fz)$.

Combining the estimates for $\mathrm{I}_+$, $\mathrm{I}_-$, and $\mathrm{I}_0$, we conclude that
\eqref{eqn-DGE} holds true. This finishes the proof of Theorem \ref{thm-main2}.
\end{proof}

\begin{corollary}\label{cor-localDG}
Let $m\in\nn$,
$\cl=P(D)+V$ be the $2m$-order Schr\"odinger operator on
$\rn$ as in \eqref{eqn-def-HOSO}, and $V$ a measurable function on $\rn$.
If one of \eqref{ap2} through \eqref{ap5} holds true with $\sup_{|\lz|\in (w/2,\fz)}M_{|\lz|}(V)<1$
for some $w\in (0,\fz)$ and $M_{|\lz|}(V)$ as in \eqref{HP},
then there exist positive constants $C$ and $c_5$ such that, for any
$t\in (0,\fz)$, disjoint compact convex subsets $E$ and $F$, and $f\in L^2(E)$
with $\supp f\subset E$,
\begin{align}\label{eqn-lLDGE}
\lf\|e^{-t\cl}f\r\|_{L^2(F)}\le C
e^{wt}\exp\lf\{-c_5\frac{[d(E,F)]^{2m/(2m-1)}}{t^{1/(2m-1)}}\r\}\lf\|f\r\|_{L^2(E)}.
\end{align}
\end{corollary}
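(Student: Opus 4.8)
The plan is to deduce Corollary \ref{cor-localDG} from Theorem \ref{thm-main2} by the standard device of shifting the operator by a multiple of the identity. The point is that the hypothesis of Corollary \ref{cor-localDG} only controls $M_{|\lz|}(V)$ for $|\lz|\in(w/2,\fz)$ rather than for all $|\lz|\in(0,\fz)$; replacing $\cl$ by $\cl+w$ has the effect of rescaling the resolvent parameter so that the relevant range of $|\lz|$ is pushed into the region where the Schechter smallness condition is available. Concretely, set $\cl_w:=\cl+w=P(D)+(V+w)$. First I would observe that $\cl_w$ is again a $2m$-order Schr\"odinger operator with potential $\wz V:=V+w$, and that since $w$ is a bounded function, adding it does not affect the Schechter-type finiteness in \eqref{ap2} through \eqref{ap5}; more precisely, the quantity $M_{|\lz|}$ built from $\wz V$ differs from that built from $V$ by a controlled amount.

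Next I would verify that $\cl_w$ satisfies the full hypotheses of Theorem \ref{thm-main2}, i.e.\ that the appropriate one of \eqref{ap2} through \eqref{ap5} holds with $\sup_{|\lz|\in(0,\fz)}M_{|\lz|}(\wz V)<1$ and that \eqref{eqn-assumptionadd1} holds for $\cl_w$. The key algebraic fact is that $(\lz-\cl_w)^{-1}=(\lz-w-\cl)^{-1}$, so the resolvent of $\cl_w$ at $\lz$ with $\mathrm{Re}\,\lz$ bounded (say $\lz\in w+(\cc\setminus[0,\fz))$ with $|\lz-w|$ bounded below) corresponds to the resolvent of $\cl$ at a point of modulus $\gtrsim w/2$; hence the $T$-operator bound \eqref{eqn-conditionforT} feeding into Proposition \ref{prop-perburbationresol} only needs to be checked for those $|\lz|$, which is exactly the range covered by the hypothesis $\sup_{|\lz|\in(w/2,\fz)}M_{|\lz|}(V)<1$ (one should also absorb the contribution of the constant $w$, which is harmless). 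I would make this precise by choosing the contour $\Gamma$ in \eqref{eqn-IC} centered so that it lies in $w+(\cc\setminus[0,\fz))$ with $R\gtrsim w$, exactly as in the proof of Theorem \ref{thm-main2} but with the shift built in. This yields, via Proposition \ref{prop-perburbationresol2}, Proposition \ref{prop-perburbationresol3}(ii), and Lemma \ref{lem-ed}, the resolvent bound
\begin{align*}
\lf\|\mathbf{1}_E \lf(\lz-\cl_w\r)^{-1}\mathbf{1}_F\r\|_{L^2(\rn)\to L^2(\rn)}\ls
\frac{1}{|\lz-w|}\exp\lf\{-c\,\eta\, d(E,F)\r\}
\end{align*}
for $\lz$ on the shifted contour, with $\eta\sim|\lz-w|^{1/(2m)}$.

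Then I would run the contour-integral argument of Theorem \ref{thm-main2} verbatim for $\cl_w$: writing $e^{-t\cl_w}=\frac{1}{2\pi i}\int_{\Gamma} e^{-t\lz}(\lz-\cl_w)^{-1}\,d\lz$ with $\Gamma$ the shifted path, splitting into $\mathrm{I}_+$, $\mathrm{I}_-$, $\mathrm{I}_0$, and optimizing in $R$ and $\eta$ with $R\sim[d(E,F)/t]^{2m/(2m-1)}+w$, to obtain
\begin{align*}
\lf\|e^{-t\cl_w}f\r\|_{L^2(F)}\le C
\exp\lf\{-c_5\frac{[d(E,F)]^{2m/(2m-1)}}{t^{1/(2m-1)}}\r\}\lf\|f\r\|_{L^2(E)}.
\end{align*}
Finally, since $e^{-t\cl}=e^{wt}e^{-t\cl_w}$, multiplying through by $e^{wt}$ gives precisely \eqref{eqn-lLDGE}. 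An alternative, slightly slicker route is to quote Theorem \ref{thm-main2} as a black box applied to $\cl_w$, provided one has checked its hypotheses for $\cl_w$; the hypothesis-checking is the substantive content either way.

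The main obstacle I anticipate is the bookkeeping in verifying that the Schechter-type conditions \eqref{ap2}--\eqref{ap5} together with \eqref{eqn-assumptionadd1} genuinely transfer to $\cl_w$ with the \emph{global} smallness $\sup_{|\lz|\in(0,\fz)}M_{|\lz|}(\wz V)<1$: one must confirm that the contributions of the bounded perturbation $w$ to the $T$-operator norm and to the exponents $S_i$ are absorbable (they are, because near $\lz=0$ on the unshifted scale one never gets close, and the extra constant term only improves integrability), and that restricting attention to $|\lz-w|$ bounded below by a fixed multiple of $w$ along $\Gamma$ is compatible with the convergence of the contour integral (it is, since the decay $e^{-t\,\mathrm{Re}\,\lz}$ is unaffected and the resolvent bound $|\lz-w|^{-1}$ is integrable over $\Gamma$ away from its vertex). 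These are routine but must be stated carefully; once they are in place the rest is a word-for-word repetition of the proof of Theorem \ref{thm-main2}.
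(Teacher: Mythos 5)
Your proposal is correct and follows essentially the same route as the paper: shift to $\cl_w:=\cl+w$, use the identity $(\lz-\cl_w)^{-1}=(\lz-w-\cl)^{-1}$ together with the fact that points on the contour $\Gamma$ satisfy $|\arg\lz|\ge\mu$ and hence $|\lz-w|>w/2$ (so only the hypothesis $\sup_{|\lz|>w/2}M_{|\lz|}(V)<1$ is needed), rerun the contour argument of Theorem \ref{thm-main2}, and conclude via $e^{-t\cl}=e^{wt}e^{-t\cl_w}$. The only superfluous element is your first suggested route of regarding $V+w$ as a new potential and re-verifying the Schechter conditions for it; the paper avoids this bookkeeping entirely by working with the resolvent identity, exactly as in your ``key algebraic fact.''
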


\begin{proof}
Since one of \eqref{ap2} through \eqref{ap5} holds true, we know that
$  \sup_{|\lz|\in (w/2,\fz)}C_{(|\lz|^{1/(2m)})}<1$, where
$C_{(|\lz|^{1/(2m)})}$ is the same constant as in
\eqref{eqn-conditionforT}. Thus, following the proof of Proposition
\ref{prop-perburbationresol}, we know that, for any $\lz\in
\cc\setminus [0,\fz)$ satisfying $|\lz|>w/2$,
\begin{align}\label{eqn-cloe1}
\lf\|\lf(\lz-\cl\r)^{-1}\r\|_{L^2(\rn)\to L^2(\rn)}\ls \frac{1}{1-a}\frac{1}{|\lz|}.
\end{align}
On the other hand, let $\cl_w:=\cl+w$. It is easy to see that there exists a
$\mu\in (0,\pi/2)$ such that,
for any $\lz\in \cc\setminus [0,\fz)$ satisfying $|\arg \lz|\ge \nu$,
\begin{align*}
\lf|\lz-w\r|>w/2,
\end{align*}
which, together with \eqref{eqn-cloe1}, shows that, for any $\lz\in \cc\setminus [0,\fz)$ with $|\arg \lz|\ge \mu$,
\begin{align*}
\lf\|\lf(\lz-\cl_w\r)^{-1}\r\|_{L^2(\rn)\to L^2(\rn)}=
\lf\|\lf(\lz-w-\cl\r)^{-1}\r\|_{L^2(\rn)\to L^2(\rn)}\ls \frac{1}{1-a}\frac{1}{|\lz-w|}\ls \frac{1}{1-a}\frac{1}{w}.
\end{align*}
Thus, following the proof of Proposition \ref{prop-spr}, we obtain
\begin{align*}
\lf\||\lz|\lf(\lz-(\cl_w)_{\eta,\phi}\r)^{-1}f\r\|_{L^q(\rn)}
+\lf\|\Delta^m\lf(\lz-(\cl_w)_{\eta,\phi}\r)^{-1}f\r\|_{L^q(\rn)}\ls\|f\|_{L^q(\rn)}.
\end{align*}
Since the points $\lz$ in the integral path $\Gamma$ of
\eqref{eqn-FCHS} (see also Figure \ref{gamma}) satisfy $|\arg
\lz|\ge \mu$, following the proof of Theorem \ref{thm-main2}, we
conclude that there exists a positive constant $c_5$ such that,
for any $t\in (0,\fz)$, disjoint compact convex subsets $E$ and $F$, and
$f\in L^2(E)$ with $\supp f\subset E$,
\begin{align*}
\lf\|e^{-t\cl_w}f\r\|_{L^2(F)}\ls
\exp\lf\{-c_5\frac{[d(E,F)]^{2m/(2m-1)}}{t^{1/(2m-1)}}\r\}\lf\|f\r\|_{L^2(E)}.
\end{align*}
This, combined with the identity
\begin{align*}
e^{-t\cl}=e^{-t\cl_w}e^{wt},
\end{align*}
shows that \eqref{eqn-lLDGE} holds true.
This finishes the proof of Corollary
\ref{cor-localDG}.
\end{proof}

\begin{remark}\label{rem-pmDG}
\begin{enumerate}
\item[{\rm (i)}]
For any given $m\in\nn$ and $a\in (-2m,0)$, let $V(x)=\pm|x|^a$ for any $x\in\rn\setminus \{\vec 0_n\}$.
By taking the parameters in \eqref{ap1} and
\eqref{ap5} with $q=p=2$, $s\in (0,2m]$, $\az\in (0,n)$, and $\az\le 2s$,
and using Remark \ref{ex-GSC}, if further assuming $a\in (-{\min\{2s,n\}}/{2},0)$,
we then have
$M_{\az,2,\fz,1/|\lz|}(V)\sim |\lz|^{-\az/2-a}$ with
the positive equivalence constants independent of $\lz$.
This implies that
\begin{align*}
M_{|\lz|}(V)=C_2C|\lz|^{{S}_4}M_{\az,2,\fz,1/|\lz|}(V)\sim
|\lz|^{S_4-(\frac{\az}{2}+a)}
\end{align*}
with the positive equivalence constants independent of $\lz$.
Since $S_4-(\frac{\az}{2}+a)=-a-2m< 0$, it follows that
\begin{align*}
\dsup_{|\lz|>\epsilon_0}M_{|\lz|}(V)<1
\end{align*}
for some $\epsilon_0\in (0,\fz)$.
Applying Corollary \ref{cor-localDG}, we conclude that the semigroup
generated by $-(P(D)+V)$ satisfies the local Davies--Gaffney estimate
\eqref{eqn-lLDGE}.

\item[{\rm (ii)}]  For any given $m\in\nn$ and $a\in (-\fz,0)$, let
$V(x)=\pm (1+|x|)^a$ for any $x\in\rn$. It is easy to see that $\|V\|_{L^t(\rn)}\ls 1$ for any
given $t\in (-n/a,\fz)$, where the implicit positive constant depends on $a$ and $t$.
Thus, by taking the parameters in \eqref{ap1} and
\eqref{ap3} with $q=p=2$, $s=2m$, and $t\in ({n}/{(2m)},\fz)$, we obtain
\begin{align*}
S_2=-2m\lf(1-\frac{n}{2m}\frac{1}{t}\r)\le 0.
\end{align*}
This implies $\sup_{|\lz|>\epsilon_0}M_{|\lz|}(V)=C_2C|\lz|^{{S}_2}\|V\|_{L^t(\rn)}<1$
for some $\epsilon_0\in (0,\fz)$
and hence
the semigroup generated by $-(P(D)+V)$ satisfies the local Davies--Gaffney
estimate \eqref{eqn-lLDGE}.
\end{enumerate}
\end{remark}

\subsection{Gaussian estimates\label{s5.3}}

In this subsection, we establish the Gaussian estimate for the heat
kernel of $\cl$ by proving Theorem
\ref{thm-main1}. We begin with the following lemma concerning
the exponential perturbed operator $\cl_{\eta}f:=e^{-\eta x}\cl \lf(e^{\eta x} f\r)$
defined in \eqref{eqn-defcleta}.

\begin{lemma}\label{lem-dual}
Let $p\in (1,\fz)$. If, for any $\lz\in \cc\setminus [0,\fz)$ and
$\eta\in\cc^n$ satisfying $|\eta|<\dz|\lz|^{1/(2m)}$ with $\dz\in (0,1)$,
$\|(\lz-\cl_{\eta})^{-1}\|_{L^p(\rn)\to L^p(\rn)}<\fz$, then
$\|(\lz-\cl_{\eta})^{-1}\|_{L^{p'}(\rn)\to L^{p'}(\rn)}<\fz$
also holds true for any such $\lz$ and $\eta$.
\end{lemma}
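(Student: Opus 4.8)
## Proof plan for Lemma \ref{lem-dual}

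The plan is to exploit the duality between $\cl_\eta$ and $\cl_{-\eta}$, together with the identity \eqref{eqn-resolventper} from Remark \ref{rem-prop-perburbationresol3}. First I would observe that, by Remark \ref{rem-perburbationresol2}, for $g\in\mathcal{C}^{2m}(\rn)$ we have $\cl_\eta g=(P(D+\eta)+V)g$ with $P(D+\eta)=\sum_{|\az|=2m}(-1)^m a_\az(D+\eta)^\az$. Integrating by parts (that is, computing the formal adjoint of the constant-coefficient differential operator $P(D+\eta)$ and using that $V$ is a real multiplication operator), one checks that the formal adjoint of $\cl_\eta$ is exactly $\cl_{-\eta}$, since transposing each factor $(\del_{x_j}+\eta_j)$ turns it into $-(\del_{x_j}-\eta_j)$ and the sign $(-1)^m$ absorbs the $2m$ transpositions. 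Hence, on the duality pairing between $L^p(\rn)$ and $L^{p'}(\rn)$,
\begin{align*}
\lf(\lz-\cl_\eta\r)^\ast=\lz-\cl_{-\eta}\quad\text{on }L^{p'}(\rn).
\end{align*}

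Next, I would use the assumption: for \emph{any} $\lz\in\cc\setminus[0,\fz)$ and any $\eta\in\cc^n$ with $|\eta|<\dz|\lz|^{1/(2m)}$, the operator $(\lz-\cl_\eta)^{-1}$ is bounded on $L^p(\rn)$. Since the condition $|\eta|<\dz|\lz|^{1/(2m)}$ is symmetric under $\eta\mapsto-\eta$, the hypothesis applies equally to $-\eta$, so $(\lz-\cl_{-\eta})^{-1}$ is bounded on $L^p(\rn)$ for every such $\lz$ and $\eta$. Taking Banach-space adjoints and recalling that a bounded operator $T$ on $L^p(\rn)$ has $\|T^\ast\|_{L^{p'}(\rn)\to L^{p'}(\rn)}=\|T\|_{L^p(\rn)\to L^p(\rn)}$, together with $(T^{-1})^\ast=(T^\ast)^{-1}$, I would conclude
\begin{align*}
\lf\|\lf(\lz-\cl_\eta\r)^{-1}\r\|_{L^{p'}(\rn)\to L^{p'}(\rn)}
=\lf\|\lf[\lf(\lz-\cl_{-\eta}\r)^{-1}\r]^\ast\r\|_{L^{p'}(\rn)\to L^{p'}(\rn)}
=\lf\|\lf(\lz-\cl_{-\eta}\r)^{-1}\r\|_{L^p(\rn)\to L^p(\rn)}<\fz,
\end{align*}
which is the desired bound. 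Alternatively, one can run the same argument directly through the factorization \eqref{eqn-resolventper}: $(\lz-\cl_\eta)^{-1}f=e^{-\eta x}(\lz-\cl)^{-1}(e^{\eta x}f)$, reducing matters to the $L^{p'}(\rn)$-boundedness of $(\lz-\cl)^{-1}$, which in turn follows from that of $(\lz-\cl_{\eta'})^{-1}$ on $L^p(\rn)$ for suitable $\eta'$ by the same duality.

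The main obstacle is the identification $(\lz-\cl_\eta)^\ast=\lz-\cl_{-\eta}$ at the level of \emph{closed} operators, not merely on the core $\mathcal{C}^{2m}_{\mathrm c}(\rn)$: one must verify that both sides have the same domain and that no extra boundary-type terms or domain subtleties intervene. Here the key point is that, under the standing assumptions (one of \eqref{ap2} through \eqref{ap5} holds, so $V$ is relatively $P(D)$-bounded with bound strictly less than $1$), Proposition \ref{prop-perburbationresol2} and Remark \ref{rem-perburbationresol2} already give $\mathrm{dom}\,(\cl_\eta)=W^{2m,p}(\rn)$ with $(\lz-\cl_\eta)^{-1}$ a genuine bounded inverse; since $W^{2m,p}(\rn)$ is reflexive-stably dense and $\cl_\eta$ is a perturbation of the constant-coefficient operator $P(D+\eta)$ whose adjoint is computed by a clean integration-by-parts with no boundary contribution on $\rn$, the adjoint identity follows. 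Everything else is the routine Banach-space duality recalled above.
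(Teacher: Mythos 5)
Your proposal is correct and follows essentially the same route as the paper: identify the Banach-space adjoint of $(\lz-\cl_{\eta})^{-1}$ as the resolvent of another exponential perturbation whose parameter still satisfies $|\cdot|<\dz|\lz|^{1/(2m)}$, and then invoke the equality of operator norms under taking adjoints. The paper writes the (sesquilinear) adjoint as $(\overline\lz-\cl_{\overline\eta})^{-1}$ whereas you use the bilinear transpose $(\lz-\cl_{-\eta})^{-1}$; since only the modulus of the perturbation parameter enters the hypothesis, the two computations are interchangeable.
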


\begin{proof}
For any $\lz\in \cc\setminus [0,\fz)$ and $\eta\in\cc^n$ satisfying
$|\eta|<\dz|\lz|^{1/(2m)}$ with $\dz\in (0,1)$, it is easy to see
that
$$\lf[\lf(\lz-\cl_{\eta}\r)^{-1}\r]^*=\lf(\overline\lz-\cl_{\overline\eta}\r)^{-1}.$$
Since $\overline{\lz}\in \cc\setminus [0,\fz)$ and
$\overline{\eta}\in\cc^n$ also satisfy
$|\overline{\eta}|<\dz|\overline{\lz}|^{1/(2m)}$ with $\dz\in
(0,1)$, we have
\begin{align*}
\lf\|(\lz-\cl_{\eta})^{-1}\r\|_{L^{p'}(\rn)\to L^{p'}(\rn)}
=\lf\|(\overline{\lz}-\cl_{\overline\eta})^{-1}\r\|_{L^p(\rn)\to L^p(\rn)}<\fz.
\end{align*}
This finishes the proof of Lemma \ref{lem-dual}.
\end{proof}

The following Gagliardo--Nirenberg inequality and Sobolev embedding can be
deduced from \cite[(1.2)]{Tri14} and
\cite[Theorem 4.12, PART II]{AdFo03}, respectively.

\begin{lemma}\label{lem-GN2}
\begin{enumerate}
\item[{\rm (i)}] Let $1\le p\le \sz\le \fz$ and $m\in\nn$ satisfy
$0\le \frac{n}{2m}(\frac{1}{p}-\frac{1}{\sz})
\le 1$. Then there exists a positive constant $C$ such that, for any $f\in \mathcal{S}(\rn)$,
\begin{align*}
\lf\|f\r\|_{L^\sz(\rn)}\le C \lf\|f\r\|_{L^p(\rn)}^{1-\tz}\lf\|\Delta^m f\r\|_{L^p(\rn)}^{\tz}
\end{align*}
with $\tz:=\frac{n}{2m}(\frac{1}{p}-\frac{1}{\sz})$.

\item[{\rm (ii)}] Let $m\in\nn$ and $q\in (1,\fz)$ satisfy $(2m-1)q<n<2mq$.
Then $W^{2m,q}(\rn)\subset \mathcal{C}^\gz(\rn)$ with
$\gz=2m-n/q\in (0,1)$, where $\mathcal{C}^\gz(\rn)$ denotes the Lipschitz
space of order $\gz$ on $\rn$ equipped with the norm
\begin{align*}
\lf\|f\r\|_{\mathcal{C}^\gz(\rn)}:=\dsup_{x,y\in\rn,x\ne y}\frac{|f(x)-f(y)|}
{|x-y|^\gz}.
\end{align*}
\end{enumerate}
\end{lemma}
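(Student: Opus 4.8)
The plan is to obtain both parts from classical facts — the Gagliardo--Nirenberg interpolation inequality for (i) and the Sobolev embedding theorem for (ii) — so that the real work is only to match the parameters in our hypotheses to the standard statements and, for (i), to keep track of which range of exponents each available tool actually covers.

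For (i), when $p,\,\sigma\in(1,\fz)$ the estimate is the usual Gagliardo--Nirenberg interpolation inequality: it can be produced by interpolation between $L^p(\rn)$ and the homogeneous Sobolev space $\dot W^{2m,p}(\rn)$, using that $\|\Delta^m f\|_{L^p(\rn)}\sim\|f\|_{\dot W^{2m,p}(\rn)}$ for $p\in(1,\fz)$ by the H\"ormander--Mihlin multiplier theorem, or directly as a special case of the Gagliardo--Nirenberg inequalities underlying Lemma \ref{lem-GNI}. In every version the dimensional balance $\frac{n}{\sigma}=(1-\tz)\frac np+\tz\big(\frac np-2m\big)$ pins down $\tz=\frac{n}{2m}\big(\frac1p-\frac1\sigma\big)$, and admissibility of the interpolation parameter is exactly $0\le\tz\le1$, i.e.\ the hypothesis $0\le\frac{n}{2m}(\frac1p-\frac1\sigma)\le1$. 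For the endpoint exponents $p=1$ or $\sigma=\fz$, which fall outside the scope of Lemma \ref{lem-GNI}, one invokes the sharp Gagliardo--Nirenberg inequality recorded in \cite[(1.2)]{Tri14} — stated in a scale of spaces broad enough to accommodate these cases — together with the trivial embeddings of $L^p(\rn)$ and $L^\sigma(\rn)$ into the relevant endpoint spaces, in order to transfer the estimate back to the purely Lebesgue-space form claimed here.

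For (ii), the cleanest self-contained argument iterates the first-order Sobolev embedding. Starting from $r_0=q$ and applying $W^{k,r}(\rn)\subset W^{k-1,r^{\ast}}(\rn)$ with $\frac{1}{r^{\ast}}=\frac1r-\frac1n$ a total of $2m-1$ times, one reaches $W^{2m,q}(\rn)\subset W^{1,q_{\ast}}(\rn)$ with $\frac{1}{q_{\ast}}=\frac1q-\frac{2m-1}{n}$; the hypothesis $(2m-1)q<n$ keeps every intermediate exponent strictly below $n$, so each step is the genuine (non-critical) Sobolev embedding and $q_{\ast}$ is finite, while $n<2mq$ is precisely the statement $q_{\ast}>n$. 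Morrey's inequality then yields $W^{1,q_{\ast}}(\rn)\subset\mathcal{C}^\gz(\rn)$ with $\gz=1-\frac{n}{q_{\ast}}=1-\big(\frac nq-(2m-1)\big)=2m-\frac nq$, which lies in $(0,1)$ by the same two inequalities on $q$; this reproduces \cite[Theorem 4.12, PART II]{AdFo03}, to which one may alternatively appeal directly.

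The only delicate point is confined to part (i): away from $p=1$, $\sigma=\fz$ and the borderline value $\tz=1$ everything reduces to the bookkeeping of exponents above, but at those endpoints one genuinely needs the precise formulation of \cite{Tri14} — in particular the fact that one side there is measured in a Besov rather than a Lebesgue norm — and a little care is required to check that, after transferring, the resulting inequality is still of the stated $L^p$--$L^\sigma$ form throughout the range in which it is applied later. Part (ii) presents no obstacle beyond confirming that $(2m-1)q<n<2mq$ keeps the whole chain of embeddings in the non-critical range.
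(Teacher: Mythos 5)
Your proposal is correct and follows essentially the same route as the paper, which offers no argument of its own for this lemma beyond citing \cite[(1.2)]{Tri14} for (i) and \cite[Theorem 4.12, PART II]{AdFo03} for (ii); your interpolation bookkeeping for (i) and your iterated Sobolev--Morrey chain for (ii) are exactly the standard arguments those citations encapsulate. The one caveat you raise yourself (transferring the endpoint cases $p=1$, $\sigma=\infty$ to the $\Delta^m$ form) is handled with no less care than in the paper, and the only instances actually used later have $p\in(1,\infty)$ and $\theta<1$, where the argument is unproblematic.
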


Based on the aforementioned lemmas, we now turn to the proof of
Theorem \ref{thm-main1}

\begin{proof}[Proof of Theorem \ref{thm-main1}]
We first prove (a). Without loss of generality,
we may only consider the case that \eqref{ap1} and one of \eqref{ap2} through \eqref{ap5}
hold true for any given $q\in (1,2]$, and $$\sup_{|\lz|\in(0,\fz)}M_{|\lz|}(V)<1$$ with $M_{|\lz|}(V)$
as in \eqref{HP}. In this case, applying Proposition \ref{prop-spr} and Remark \ref{rem-prop-spr}, we obtain
\begin{align}\label{eqn-heatkernel-1}
\lf\||\lz|\lf(\lz-\cl_{\eta}\r)^{-1}\r\|_{L^q(\rn)\to L^q(\rn)}
+\lf\|\Delta^m\lf(\lz-\cl_{\eta}\r)^{-1}\r\|_{L^q(\rn)\to L^q(\rn)}\ls 1,
\end{align}
which, together with Lemma \ref{lem-dual}, implies that \eqref{eqn-heatkernel-1} holds true also for
any given $q\in[2, \fz)$, $\lz\in \cc\setminus [0,\fz)$, and
$\eta\in\cc^n$ satisfying $|\eta|<\dz|\lz|^{1/(2m)}$.

Now, take $l\in\zz_+$ and choose
the numbers $2=q_0<q_1<\cdots<q_{l-1}<q_l<q_{l+1}=\fz$
satisfying that, if $n< 4m$, then $l=1$; if $n\ge4m$, then
$2(l+1)>\frac{n}{2m}$ and
\begin{align}\label{eqn-sqj}
\begin{cases}
q_l\in \lf(\dfrac{n}{2m},\dfrac{n}{2m-1}\r),\\
\dfrac{n}{q_j}\notin\nn \ \ \text{for any}\ j\in\{1,\ldots,l\}, \\
\dfrac{1}{q_j}-\dfrac{1}{q_{j+1}}<\dfrac{2m}{n} \ \ \text{for any}\ j\in\{0,\ldots,l-1\}.
\end{cases}
\end{align}
Then, for any $j\in\{0,\,\ldots,l\}$, let
\begin{align*}
a_j:=\frac{n}{2m}\lf(\frac{1}{q_j}-\frac{1}{q_{j+1}}\r)\in (0,1)
\end{align*}
with the usual convention made when $q_{l+1}=\fz$. Then, by \eqref{eqn-heatkernel-1} and
Lemma \ref{lem-GN2}, we know that, for any $f\in \mathcal{S}(\rn)$,
\begin{align}\label{eqn-lfz-lql}
\lf\|\lf(\lz-\cl_{\eta}\r)^{-(l+1)}f\r\|_{L^\fz(\rn)}&\ls \lf\|\Delta^m
\lf(\lz-\cl_{\eta}\r)^{-(l+1)}f\r\|_{L^{q_l}(\rn)}^{a_l}\lf\|\lf(\lz-\cl_{\eta}\r)^{-(l+1)}f\r\|_{L^{q_l}(\rn)}^{1-a_l} \\ \notag
&\ls \lf|\lz\r|^{a_l-1}\lf\|\lf(\lz-\cl_{\eta}\r)^{-l} f\r\|_{L^{q_l}(\rn)}.
\end{align}
Repeating the above argument, we then obtain
\begin{align}\label{eqn-forin}
\lf\|\lf(\lz-\cl_{\eta}\r)^{-(l+1)}f\r\|_{L^\fz(\rn)}&\ls \lf|\lz\r|^{\sum_{j=0}^l(a_j-1)}
\lf\|f\r\|_{L^2(\rn)}\sim \lf|\lz\r|^{\frac{1}{2}\frac{n}{2m}-(l+1)}\|f\|_{L^2(\rn)},
\end{align}
which, combined with a duality argument, shows that, for any $g\in L^1(\rn)$,
\begin{align}\label{eqn-RA}
\lf\|\lf(\lz-\cl_{\eta}\r)^{-2(l+1)}g\r\|_{L^\fz(\rn)}\ls
\lf|\lz\r|^{\frac{n}{2m}-2(l+1)}\|g\|_{L^1(\rn)}.
\end{align}
By a well-known result from \cite[p. 503, Theorem 6]{DuSc88}, we know that the operator
$(\lz-\cl_\eta)^{-2(l+1)}$ has an integral kernel $K_{2(l+1),\eta}$ on $\rn\times \rn$ that satisfies,
for any $(x,\,y)\in \rn\times \rn$,
\begin{align}\label{eqn-holder3}
\lf|K_{2(l+1),\eta}(x,y)\r|\ls \lf|\lz\r|^{\frac{n}{2m}-2(l+1)}.
\end{align}
Moreover, using \eqref{eqn-resolventper},
we find that the operator $(\lz-\cl)^{-2(l+1)}$
also has an integral kernel $K_{2(l+1)}$ on $\rn\times \rn$
that satisfies, for any $(x,\,y)\in\rn\times\rn$,
\begin{align*}
\lf|K_{2(l+1)}(x,y)\r|\ls \lf|\lz\r|^{\frac{n}{2m}-2(l+1)}e^{(x-y)\eta}.
\end{align*}
Now taking $\eta:=-\mathrm{sgn}\,(x-y)\dz|\lz|^{1/(2m)}$, we conclude
that, for any $(x,\,y)\in\rn\times\rn$,
\begin{align*}
\lf|K_{2(l+1)}(x,y)\r|\ls \lf|\lz\r|^{\frac{n}{2m}-2(l+1)}\exp\lf\{-\dz|x-y||\lz|^{1/(2m)}\r\}.
\end{align*}
By the following formula
\begin{align*}
e^{-t\cl}=\frac{[2(l+1)-1]!}{2\pi i (-t)^{2(l+1)-1}}\dint_\Gamma e^{-t\lz} \lf(\lz-\cl\r)^{-2(l+1)}\,d\lz
\end{align*}
(see \cite[(3.8)]{AuMcTc98}),
where $\Gamma$ is the curve as in \eqref{eqn-IC} (see also Figure \ref{gamma}),
we know that $e^{-t\cl}$ has an integral kernel
$p_t$ on $(0,\fz)\times \rn\times \rn$ that satisfies, for any
$t\in (0,\fz)$ and $(x,\,y)\in\rn\times\rn$,
\begin{align}\label{eqn-functionalcal}
p_t(x,y)=\frac{[2(l+1)-1]!}{2\pi i (-t)^{2(l+1)-1}}\dint_\Gamma e^{-t\lz} K_{2(l+1)}(x,y)\,d\lz.
\end{align}
From this, we then deduce that
\begin{align*}
\lf|p_t(x,y)\r|&\ls \frac{1}{t^{2(l+1)-1}}\dint_\Gamma e^{-t \,\mathrm{Re}\, \lz} \lf|\lz\r|^{\frac{n}{2m}-2(l+1)}
\exp\lf\{-\dz |x-y||\lz|^{1/(2m)}\r\}\,|d\lz|\\
&=\frac{1}{t^{2(l+1)-1}}\lf(\dint_{\Gamma_0}+\dint_{\Gamma_+}+\dint_{\Gamma_-}\r)\cdots\,|d\lz|\\
&=:\mathrm{I}_0+\mathrm{I}_++\mathrm{I}_-.
\end{align*}

For $\mathrm{I}_+$, by the assumptions that $\mu\in (0,\,\pi/2)$ and $2(l+1)>\frac{n}{2m}$, we find that
\begin{align*}
\mathrm{I}_+&\ls \frac{1}{t^{2(l+1)-1}} \dint_R^\fz e^{-t r \cos \mu} r^{\frac{n}{2m}-2(l+1)}\,dr
\exp\lf\{-\dz |x-y|R^{\frac{1}{2m}}\r\}\\ \notag
&\ls \frac{(Rt)^{n/(2m)-2(l+1)}}{t^{n/(2m)}} \dint_R^\fz e^{-t r \cos \mu}\,d(tr)\,
\exp\lf\{-\dz |x-y|R^{\frac{1}{2m}}\r\}\\ \notag
&\ls \frac{(Rt)^{n/(2m)-2(l+1)}}{t^{n/(2m)}} \exp\lf\{-\dz |x-y| R^{\frac{1}{2m}}\r\}.
\end{align*}
Similarly, we also obtain
\begin{align*}
\mathrm{I}_-\ls \frac{(Rt)^{n/(2m)-2(l+1)}}{t^{n/(2m)}} \exp\lf\{-\dz |x-y| R^{\frac{1}{2m}}\r\}.
\end{align*}

For $\mathrm{I}_0$, we have
\begin{align*}
\mathrm{I}_0\ls \frac{1}{t^{2(l+1)-1}} \dint_{|\tz|\ge \mu}\exp\lf\{-tR\cos \tz\r\}\,d\tz\,
R^{\frac{n}{2m}-2(l+1)+1}\exp\lf\{-\dz |x-y|R^{\frac{1}{2m}}\r\}.
\end{align*}
Using the fact that $\int_0^{2\pi}e^{-s\cos \tz}\,d\tz\ls \frac{e^s}{s^{1/2}}$ for any $s\in (0,\fz)$
(see also \cite[(2.30)]{AuMcTc98}),
we know that
\begin{align*}
\mathrm{I}_0\ls \frac{1}{t^{n/(2m)}} (tR)^{n/(2m)-2(l+1)+1/2} \exp\lf\{tR-\dz |x-y|R^{\frac{1}{2m}}\r\}.
\end{align*}
Now taking $R:=\epsilon \frac{|x-y|^{2m/(2m-1)}}{|t|^{2m/(2m-1)}}$,
we then have $tR=\epsilon\frac{|x-y|^{2m/(2m-1)}}{|t|^{1/(2m-1)}}$ and hence
\begin{align*}
\dz |x-y|R^{\frac{1}{2m}}=\dz |x-y|\epsilon^{\frac{1}{2m}}\frac{|x-y|^{1/(2m-1)}}
{t^{1/(2m-1)}}=\dz \epsilon^{\frac{1}{2m}} \frac{|x-y|^{2m/(2m-1)}}
{t^{1/(2m-1)}}.
\end{align*}
Taking  $\epsilon\in (0,\fz)$ small enough such that $\dz
\epsilon^{1/(2m)}>\epsilon$ and using the assumption
$2(l+1)>\frac{n}{2m}$, we conclude that
\begin{align*}
\mathrm{I}_0&\ls \frac{1}{t^{n/(2m)}} \lf[\frac{|x-y|^{2m/(2m-1)}}
{t^{1/(2m-1)}}\r]^{n/(2m)-2(l+1)+1/2} \exp\lf\{-\lf(\dz\epsilon^{1/(2m)}-\epsilon\r)
\frac{|x-y|^{2m/(2m-1)}}{t^{1/(2m-1)}}\r\}\\
&\ls \frac{1}{t^{n/(2m)}} \exp\lf\{-c_3
\frac{|x-y|^{2m/(2m-1)}}{t^{1/(2m-1)}}\r\}
\end{align*}
by choosing a suitable constant $c_3\in (0,\fz)$.
Combining the estimates of $\mathrm{I}_0$, $\mathrm{I}_+$, and $\mathrm{I}_-$,
we know that \eqref{eqn-mGUB} holds true in the case $q\in (1,2]$.
The case that $q\in [2,\fz)$ is similar and we omit the details, which
shows (a).

We are now in a position to  prove (b). We use the same notation as
in the proof of (a).
Assume first that $n\ge 4m$. In this case,
using Lemma \ref{lem-GN2}(ii) and Proposition
\ref{prop-perburbationresol}, we know that, for any $f\in L^2(\rn)$ and
$x$, $h\in \rn$,
\begin{align*}
\lf|\lf(\lz-\cl\r)^{-2(l+1)}f(x+h)-\lf(\lz-\cl\r)^{-2(l+1)}f(x)\r|&
\ls
\lf\|\Delta^{m}\lf(\lz-\cl\r)^{-2(l+1)}f\r\|_{L^{q_l}(\rn)}|h|^{\gz}\\
\notag &\ls
\lf\|\lf(\lz-\cl\r)^{-2(l+1)+1}f\r\|_{L^{q_l}(\rn)}|h|^{\gz},
\end{align*}
where $\gz:=2m-\frac{n}{q_l}\in (0,1)$ by  \eqref{eqn-sqj}.
Following the argument used in
the proof of \eqref{eqn-RA}, we find that, for any $g\in L^1(\rn)$, and $x$, $h\in \rn$,
\begin{align*}
\lf|\lf(\lz-\cl\r)^{-2(l+1)}g(x+h)-\lf(\lz-\cl\r)^{-2(l+1)}g(x)\r|\ls |\lz|^{1-a_l}|\lz|^{\frac{n}{2m}-2(l+1)}\|g\|_{L^1(\rn)}
|h|^\gz,
\end{align*}
which implies that, for any $x$, $y\in \rn$ and any $\vec 0_n\ne h\in\rn$,
\begin{align}\label{eqn-holde2}
\lf|K_{2(l+1)}(x+h,y)-K_{2(l+1)}(x,y)\r|\ls |\lz|^{1-a_l}|\lz|^{\frac{n}{2m}-2(l+1)}|h|^\gz,
\end{align}
where $K_{2(l+1)}(x,y)$ denotes the integral kernel of
$(\lz-\cl)^{-2(l+1)}$.
Thus, using \eqref{eqn-functionalcal}, we
conclude that, for any $x$, $y$, $h\in \rn$ satisfying $0<|h|<
t^{1/2m}$,
\begin{align*}
\lf|p_t(x+h,y)-p_t(x,y)\r|\ls \frac{1}{t^{2(l+1)-1}}\dint_\Gamma \lf|e^{-t\lz}\r| \lf|K_{2(l+1)}(x+h,y)-K_{2(l+1)}(x,y)\r|\,|d\lz|
\end{align*}
with $\Gamma$ being the curve as in \eqref{eqn-IC}.

We now further assume that  $|x-y|<t^{1/(2m)}$. In this case, for
the integral over $\Gamma_+$ (the corresponding term is denoted by
$\wz{\mathrm{I}}_+$), by  $2(l+1)>n/(2m)$,
$ R=\epsilon\frac{|x-y|^{2m/(2m-1)}}{t^{2m/(2m-1)}}$, and
$|x-y|<t^{1/(2m)}$, we obtain
\begin{align*}
\wz{\mathrm{I}}_+&\ls \frac{|h|^\gz}{t^{2(l+1)-1}}\dint_{R}^\fz e^{-tr\cos\mu} r^{1-a_l+\frac{n}{2m}-2(l+1)}\,dr\\
&\ls \frac{|h|^\gz}{t^{n/(2m)}}\lf(Rt\r)^{n/(2m)-2(l+1)}\frac{1}{t^{1-a_l}}
\dint_{0}^\fz e^{-tr\cos\mu} (rt)^{1-a_l}\,d(rt)
\ls \frac{|h|^\gz}{t^{1-a_l}}\frac{1}{t^{n/(2m)}},
\end{align*}
which implies \eqref{eqn-mHE} by taking $\gz:=2m-\frac{n}{q_l}$ and using
$1-a_l=1-\frac{n}{2m}\cdot\frac{1}{q_l}
=\frac{1}{2m}(2m-\frac{n}{q_l})$. The estimates for the integrals over $\Gamma_-$ and $\Gamma_0$ are similar,
and we omit the details.

If $|x-y|\ge t^{1/2m}$, then, by \eqref{eqn-resolventper}, we first have, for any $h\in\rn$ satisfying
$|h|< t^{2m}$,
\begin{align*}
&\lf|K_{2(l+1)}(x+h,y)-K_{2(l+1)}(x,y)\r|\\
&\hs=\lf|e^{(x+h)\eta}e^{-y\eta}K_{2(l+1),\eta}(x+h,y)-e^{\eta x}e^{-y\eta}K_{2(l+1),\eta}(x,y)\r|\\
&\hs\le e^{(x-y)\eta}\lf|K_{2(l+1),\eta}(x+h,y)-K_{2(l+1),\eta}(x,y)\r|
+\lf|e^{h\eta}-1\r|e^{(x-y)\eta}\lf|K_{2(l+1),\eta}(x+h,y)\r|\\
&\hs=: H_h(x,y)+J_h(x,y).
\end{align*}

From this and \eqref{eqn-functionalcal}, we then deduce
\begin{align*}
\lf|p_t(x+h,y)-p_t(x,y)\r|&\ls \frac{1}{t^{2(l+1)-1}}\dint_\Gamma
\lf|e^{-t\lz}\r| \lf[H_h(x,y)+J_h(x,y)\r]\,|d\lz|
:=\mathrm{H}+\mathrm{J}
\end{align*}
with $\Gamma$ as in \eqref{eqn-IC}.

To estimate $\mathrm{H}$, similarly to \eqref{eqn-holde2}, we have,
for any $t\in (0,\fz)$, and any $x$, $y$, $h\in\rn$ satisfying $|x-y|\ge t^{1/2m}$ and $|h|< t^{2m}$,
\begin{align*}
\lf|K_{2(l+1),\eta}(x+h,y)-K_{2(l+1),\eta}(x,y)\r|\ls |\lz|^{1-a_l}|\lz|^{\frac{n}{2m}-2(l+1)}|h|^\gz
\end{align*}
with $\gz=2m-n/q_l$.
Thus, by letting $  \eta:=-\mathrm{sgn}\,(x-y)\dz|\lz|^{1/(2m)}$
and $  R:=\epsilon\frac{|x-y|^{2m/(2m-1)}}
{t^{2m/(2m-1)}}$, we know that
the integral of $\mathrm{H}_h$ over $\Gamma_+$ satisfies
\begin{align*}
&\frac{1}{t^{2(l+1)-1}}\dint_{\Gamma_+}
\lf|e^{-t\lz}\r|H_h(x,y)\,|d\lz|\\
&\hs\ls \frac{|h|^\gz}{t^{2(l+1)-1}}\dint_{R}^\fz
\exp\lf\{-\dz|x-y| r^{1/(2m)}\r\}e^{-tr\cos\mu} r^{1-a_l+\frac{n}{2m}-2(l+1)}\,dr\\
&\hs\ls \frac{|h|^\gz}{t^{1-a_l}}\frac{1}{t^{n/(2m)}}\exp\lf\{-\frac{c_4|x-y|^{2m/(2m-1)}}{t^{1/(2m-1)}}\r\},
\end{align*}
which implies \eqref{eqn-mHE} with $\gz:=2m-\frac{n}{q_l}$.
The estimates for the integrals of $\mathrm{H}_h$ over $\Gamma_-$ and $\Gamma_0$ are similar,
and we omit the details.

On the other hand, for the integral of $\mathrm{J}_h$ over $\Gamma_+$,
using the Taylor theorem, \eqref{eqn-holder3}, and the assumptions that $|x-y|\ge t^{1/2m}$ and
$|h|<t^{1/(2m)}$, we know that
there exists a positive constant $\tz\in (0,1)$ such that
\begin{align*}
&\frac{1}{t^{2(l+1)-1}}\dint_{\Gamma_+}
\lf|e^{-t\lz}\r|J_h(x,y)\,|d\lz|\\
&\hs\ls \frac{1}{t^{2(l+1)-1}}\dint_{R}^\fz
\exp\lf\{-\dz|x-y| r^{1/(2m)}\r\}e^{-tr\cos\mu} r^{\frac{n}{2m}-2(l+1)}
e^{\tz \dz t^{1/(2m)}r^{1/(2m)}}|h\eta|\,dr\\
&\hs\ls \frac{|h|}{t^{2(l+1)-1}}\dint_{R}^\fz
\exp\lf\{-\dz(1-\tz)|x-y| r^{1/(2m)}\r\}e^{-tr\cos\mu} r^{\frac{n+1}{2m}-2(l+1)}\,dr\\
&\hs\ls \frac{|h|}{t^{1/(2m)}}\frac{1}{t^{n/(2m)}}\exp\lf\{-\frac{c_4|x-y|^{2m/(2m-1)}}{t^{1/(2m-1)}}\r\},
\end{align*}
which implies that \eqref{eqn-mHE} holds true.
The estimates for the integrals of $\mathrm{J}_h$ over $\Gamma_-$ and $\Gamma_0$ are similar,
and we omit the details.

If $2m\le n<4m$, then, by the argument used in \eqref{eqn-sqj}, we have
$1=q_2'<q_1'<q_0=2<q_1<q_2=\fz$.  Without loss of generality, we may
take $q_1'\in (1,2)\cap (\frac{n}{2m},\frac{n}{2m-1})$. Since $(2m-1)q_1'<n<2mq_1'$,
we deduce from Lemma \ref{lem-GN2}(ii) and Proposition
\ref{prop-perburbationresol} that, for any $f\in L^1(\rn)$ and
$x$, $h\in \rn$,
\begin{align*}
\lf|\lf(\lz-\cl\r)^{-2}f(x+h)-\lf(\lz-\cl\r)^{-2}f(x)\r|&
\ls
\lf\|\Delta^{m}\lf(\lz-\cl\r)^{-2}f\r\|_{L^{q_1'}(\rn)}|h|^{\gz}\\
\notag &\ls
\lf\|\lf(\lz-\cl\r)^{-1}f\r\|_{L^{q_1'}(\rn)}|h|^{\gz},
\end{align*}
where $\gz:=2m-\frac{n}{q_1'}\in (0,1)$. Moreover,
using \eqref{eqn-lfz-lql} and a dual argument, we find that
\begin{align*}
\lf\|\lf(\lz-\cl\r)^{-1}f\r\|_{L^{q_1'}(\rn)}\ls |\lz|^{a_1-1}\|f\|_{L^1(\rn)}
\end{align*}
with $a_1:=\frac{n}{2m q_1}$. This, together with $\frac{\gz}{2m}=1-\frac{n}{2m q'_1}$,
implies that, for any $f\in L^1(\rn)$ and $x$, $h\in \rn$,
\begin{align*}
\lf|\lf(\lz-\cl\r)^{-2}f(x+h)-\lf(\lz-\cl\r)^{-2}f(x)\r|\ls |h|^\gz |\lz|^{\gz/(2m)+n/(2m)-2}\|f\|_{L^1(\rn)}
\end{align*}
and the integral kernel $K_2$ of $(\lz-\cl)^{-2}$ satisfies that, for any
$x$, $y$, $h\in\rn$ satisfying $\vec 0_n\ne h\in\rn$,
\begin{align*}
\lf|K_{2}(x+h,y)-K_{2}(x,y)\r|\ls |\lz|^{\gz/(2m)+n/(2m)-2}|h|^\gz.
\end{align*}
Since this estimate is similar to \eqref{eqn-holde2} [with $2(l+1)=2$ therein],
the remainder of the proof is similar to the case $n\ge 4m$, we omit the details. This
shows (b) holds true and hence
finishes the proof of Theorem \ref{thm-main1}.
\end{proof}

Similarly to Corollary \ref{cor-localDG}, we have the following local
Gaussian estimate for the heat kernel of $e^{-t\cl}$.

\begin{corollary}\label{cor-heatkernel}
Let $m\in\nn$, $\cl=P(D)+V$ be the $2m$-order Schr\"odinger operator on
$\rn$ as in \eqref{eqn-def-HOSO}, and $V$ a measurable function on $\rn$.
If \eqref{ap1} and one of \eqref{ap2} through \eqref{ap5} hold true
for any $q\in (1,2]$ or $[2,\fz)$ and
$\sup_{|\lz|\in(w/2,\fz)}M_{|\lz|}(V)<1$ for some  $w\in (0,\fz)$
and $M_{|\lz|}(V)$ as in \eqref{HP},
then the operator $\cl$ possesses a heat kernel $p_t$ on
$(0,\fz)\times \rn\times \rn$ that satisfies the following assertions:
\begin{itemize}
\item[{\rm(i)}]
there exist positive constants
$C$ and $c_3$ such that, for any $t\in (0,\fz)$ and $(x,\,y)\in \rn\times \rn$,
\begin{align}\label{eqn-lGUB}
\lf|p_t(x,y)\r|\le \frac{C}{t^{n/(2m)}} \exp\lf\{wt-c_3\frac{|x-y|^{2m/(2m-1)}}
{t^{1/(2m-1)}}\r\};
\end{align}

\item[{\rm(ii)}] if, in addition, $n\ge 2m$,
there exists a $\gz\in(0,1)$ and positive constants $C$ and $c_4$ such that,
for any $t\in (0,\fz)$, $(x,\,y)\in\rn\times\rn$, and $h\in\rn$ satisfying $|h|< t^{1/2m}$,
\begin{align}\label{eqn-lHE}
\lf|p_t(x+h,y)-p_t(x,y)\r| \le
\frac{C}{t^{n/(2m)}}\exp\lf\{wt-c_4\frac{|x-y|^{2m/(2m-1)}}{t^{1/(2m-1)}}\r\}
\lf[\frac{|h|}{t^{1/(2m)}}\r]^\gz.
\end{align}
\end{itemize}
\end{corollary}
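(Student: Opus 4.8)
The strategy is to reduce the statement to the already-established global estimates of Theorem \ref{thm-main1}, applied to a shifted operator, exactly in the spirit of the proof of Corollary \ref{cor-localDG}. First, since one of \eqref{ap2} through \eqref{ap5} holds true with $\sup_{|\lz|\in(w/2,\fz)}M_{|\lz|}(V)<1$, repeating the argument in the proof of Proposition \ref{prop-perburbationresol} but only over the range $|\lz|>w/2$, one obtains a positive constant $C$ such that, for any $\lz\in\cc\setminus[0,\fz)$ with $|\lz|>w/2$ and any $f\in L^q(\rn)$, $\|(\lz-\cl)^{-1}f\|_{L^q(\rn)}\le C|\lz|^{-1}\|f\|_{L^q(\rn)}$; likewise, the exponential perturbed resolvent bounds of Proposition \ref{prop-spr} and Remark \ref{rem-prop-spr} remain valid in this restricted range of $\lz$.

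Next, set $\cl_w:=\cl+w$. As in the proof of Corollary \ref{cor-localDG}, fix $\mu\in(0,\pi/2)$ so that every $\lz\in\cc\setminus[0,\fz)$ with $|\arg\lz|\ge\mu$ satisfies $|\lz-w|>w/2$; consequently the resolvent and exponential-perturbed resolvent estimates above transfer from $\cl$ to $\cl_w$, uniformly for $\lz$ in the sector $\{|\arg\lz|\ge\mu\}$, where one uses $|\lz-w|>w/2$ to bound $1/|\lz-w|$. The crucial point is that the contour $\Gamma=\Gamma_0\cup\Gamma_+\cup\Gamma_-$ appearing in the functional calculus formulas \eqref{eqn-FCHS}, \eqref{step3}, and \eqref{eqn-functionalcal} (see Figure \ref{gamma} and \eqref{eqn-IC}) lies entirely in $\{|\arg\lz|\ge\mu\}$, so no information about the resolvent of $\cl_w$ near the origin — which would be unavailable — is ever required.

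With these ingredients, one runs the proof of Theorem \ref{thm-main1} verbatim with $\cl$ replaced by $\cl_w$: choosing $l\in\zz_+$ and $2=q_0<\cdots<q_{l+1}=\fz$ as in \eqref{eqn-sqj}, iterating the Gagliardo--Nirenberg inequality of Lemma \ref{lem-GN2}(i) together with the perturbed resolvent bounds to reach \eqref{eqn-forin} for $(\cl_w)_\eta$, passing by duality to the $L^1(\rn)\to L^\fz(\rn)$ bound \eqref{eqn-RA}, extracting via \eqref{eqn-resolventper} the integral kernel of $(\lz-\cl_w)^{-2(l+1)}$ with the exponential factor $e^{-\dz|x-y||\lz|^{1/(2m)}}$ after the choice $\eta:=-\mathrm{sgn}\,(x-y)\dz|\lz|^{1/(2m)}$, and finally inserting this into \eqref{eqn-functionalcal} and splitting the contour integral over $\Gamma_0$, $\Gamma_+$, $\Gamma_-$ as before. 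This produces a heat kernel $\wz p_t$ for $\cl_w$ satisfying \eqref{eqn-mGUB} and, when $n\ge 2m$, the Hölder bound \eqref{eqn-mHE} (using Lemma \ref{lem-GN2}(ii) as in the proof of Theorem \ref{thm-main1}(b)). Since $e^{-t\cl}=e^{wt}e^{-t\cl_w}$, the heat kernel of $\cl$ is $p_t=e^{wt}\wz p_t$, which immediately yields \eqref{eqn-lGUB} and \eqref{eqn-lHE}.

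The main obstacle, as in Corollary \ref{cor-localDG}, is precisely the bookkeeping needed to guarantee that only values $\lz$ with $|\arg\lz|\ge\mu$ are used throughout: the radius $R$ chosen in the estimates of $\mathrm{I}_0$, $\mathrm{I}_+$, $\mathrm{I}_-$ depends on $|x-y|$ and $t$ and may be arbitrarily small, so one must check it never forces the contour into the region $|\lz|\le w/2$ where the resolvent of $\cl$ is uncontrolled; this is exactly what the passage to $\cl_w$ and the sector condition $|\arg\lz|\ge\mu$ arrange, since there $|\lz-w|>w/2$ regardless of $|\lz|$. Everything else is a routine repetition of the arguments in Sections \ref{s4} and \ref{s5.3}.
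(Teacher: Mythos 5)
Your proposal is correct and coincides with the paper's intended argument: the paper proves this corollary by exactly the reduction you describe, namely passing to $\cl_w:=\cl+w$, using that $|\arg\lz|\ge\mu$ forces $|\lz-w|>w/2$ so the resolvent and exponential-perturbation estimates remain valid on the whole contour $\Gamma$, rerunning the proof of Theorem \ref{thm-main1} for $\cl_w$, and then invoking $e^{-t\cl}=e^{wt}e^{-t\cl_w}$. Your remark that the contour never needs resolvent information outside the sector, even when $R$ is small, is precisely the point that makes the shift to $\cl_w$ work.
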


\begin{remark}\label{rem-pmGE}
\begin{enumerate}
\item[(i)] For any given $m\in \nn$ and $a\in (-2m,0)$, let $V(x)=\pm|x|^a$ for any $x\in\rn\setminus \{\vec 0_n\}$.
By taking the parameters in \eqref{ap1} and
\eqref{ap5} with $q=p=2$, $s\in (0,2m]$, $\az\in (0,n)$, and $\az\le 2s$,
as in Remark \ref{rem-pmDG}(i), if further assume $a\in (-{\min\{2s,n\}}/{2},0)$,
then
\begin{align*}
M_{|\lz|}(V)=C_2C|\lz|^{{S}_4}M_{\az,2,\fz,1/|\lz|}(V)\sim
|\lz|^{\frac{\az}2-2m-\lf(\frac{\az}{2}+a\r)}
\sim|\lz|^{-2m-a}
\end{align*}
and $\sup_{|\lz|>\epsilon_0}M_{|\lz|}(V)<1$
for some $\epsilon_0>0$ with the positive equivalence constants independent of
$\lz$.

On the other hand, in the case $q=1$, by Remark \ref{ex-GSC}, we know that
$V$ is in the Kato class $K_{2m}(\rn)$. Using \cite[Proposition 2.2]{DeDiYa14},
we find that the operator $T_{2m,\dz}$ defined as in \eqref{def-Tlz} is bounded on $L^1(\rn)$ and
\begin{align*}
\lim_{\dz\to \fz}\lf\|T_{2m,\dz}\r\|_{L^1(\rn)\to L^1(\rn)}=0.
\end{align*}
Following the arguments used in the proof of Proposition \ref{prop-spr} and using an
interpolation of linear operators
between $L^1(\rn)$ and $L^2(\rn)$, we conclude that, for any given $q\in (1,2)$,
\eqref{eqn-assumptionforcleta2x} holds true when $|\lz|$ large enough.
Thus, by Corollary \ref{cor-heatkernel}, we know that
the semigroup generated by $-(P(D)+V)$ has a heat kernel satisfying the local Gaussian estimates
\eqref{eqn-lGUB} and \eqref{eqn-lHE}.

\item[(ii)] For any given $m\in\nn$ and $a\in (-\fz,0)$, let
$V(x)=\pm(1+|x|)^a$ for any $x\in\rn$.
It is easy to see that $\|V\|_{L^t(\rn)}\ls 1$ for any given
$t\in (-n/a,\fz)$ with the implicit positive constant depending only on $a$,
$t$, and $n$. Thus, by taking the parameters in \eqref{ap1} and
\eqref{ap3} with $q=p\in (1,\fz)$, $s:=2m$, and $t\in[\frac{n}{2m},\fz)$, we obtain
\begin{align*}
S_2=-2m\lf(1-\frac{n}{2mt}\r)\le 0.
\end{align*}
This indicates that the semigroup generated
by $-(P(D)+V)$ has a heat kernel satisfying the local Gaussian estimates \eqref{eqn-lGUB} and
\eqref{eqn-lHE} when $n\ge 2m$.

Moreover, if $a\in (-\fz, -2m)$ and $V(x)=\pm c(1+|x|)^a$ for any $x\in\rn$
with $c\in (0,\fz)$,
then $S_2=0$ by taking $t:=n/(2m)$. Thus, by taking $c$ sufficiently small,
we have
\begin{align*}
\dsup_{|\lz|\in (0,\fz)}M_{|\lz|}(V)<1.
\end{align*}
This, combined with Theorem \ref{thm-main1}, implies that the semigroup generated
by $-(P(D)+V)$ has a heat kernel satisfying the
Gaussian estimates \eqref{eqn-mGUB} and \eqref{eqn-mHE}
when $n\ge 2m$.
\end{enumerate}
\end{remark}

\noindent\textbf{Acknowledgements.}\quad The authors would like to
thank Professor Jacek Dziuba\'{n}ski for a motivating discussion on a related
subject of this article which led us to consider the problem of this article.
The authors would also like to thank Professor Xiaohua Yao for some helpful discussions
on the subject of this article.

\bigskip

{\small\noindent Jun Cao

\smallskip

\noindent
Department of Applied Mathematics, Zhejiang University of Technology,
Hangzhou 310023, People's Republic of China

\smallskip

\noindent{\it E-mail:} \texttt{caojun1860@zjut.edu.cn}

\bigskip

\noindent {Yu Liu}

\smallskip

\noindent School of Mathematics and Physics, University of Science and Technology Beijing,
Beijing 100083, People's Republic of China

\smallskip

\noindent{\it E-mail:} \texttt{liuyu75@pku.org.cn}

\bigskip

\noindent {Dachun  Yang} (Corresponding author)

\smallskip

\noindent Laboratory of Mathematics and Complex Systems (Ministry of Education of China),
School of Mathematical Sciences, Beijing Normal University, Beijing 100875, People's Republic of China

\smallskip

\noindent{\it E-mail:} \texttt{dcyang@bnu.edu.cn}

\bigskip

\noindent {Chao Zhang}

\smallskip

\noindent School of Statistics and Mathematics,
Zhejiang Gongshang University, Hangzhou 310018,
People's Republic of China

\smallskip

\noindent{\it E-mail:} \texttt{zaoyangzhangchao@163.com}}


\begin{thebibliography}{99}

\bibitem{AdFo03}
R. A.  Adams and J. J. F. Fournier,
Sobolev Spaces, Second edition, Pure and Applied Mathematics (Amsterdam),
140, Elsevier/Academic Press, Amsterdam, 2003.

\vspace{-0.3cm}

\bibitem{AuMcTc98}
P.~Auscher, A.~McIntosh and P.~Tchamitchian,
Heat kernels of second order complex elliptic operators and applications,
J. Funct. Anal. 152 (1998), 22-73.

\vspace{-0.3cm}

\bibitem{AQ00}
P.~Auscher and M.~Qafsaoui,
Equivalence between regularity theorems and heat kernel estimates for
higher order elliptic operators and systems under divergence form,
J. Funct. Anal. 177 (2000), 310-364.

\vspace{-0.3cm}

\bibitem{Bar17}
G.~Barbatis,
Gaussian estimates with best constants for higher-order
Schr\"odinger operators with Kato potentials,
Proc. Amer. Math. Soc. 145 (2017), 191-200.

\vspace{-0.3cm}

\bibitem{BaBr18}
G.  Barbatis and B. Branikas,
On the heat kernel of a class of fourth order operators in two dimensions:
sharp Gaussian estimates and short time asymptotics,
J. Differential Equations 265 (2018), 5237-5261.

\vspace{-0.3cm}

\bibitem{BaDa96}
G.~Barbatis and E.~B. Davies,
Sharp bounds on heat kernels of higher order uniformly elliptic operators,
J. Operator Theory 36 (1996), 179-198.

\vspace{-0.3cm}

\bibitem{BaGa13}
G. Barbatis and F. Gazzola,
Higher order linear parabolic equations,
in: Recent trends in nonlinear partial differential equations. I. Evolution problems,
77-97,  Contemp. Math., 594, Amer. Math. Soc., Providence, RI, 2013.

\vspace{-0.3cm}

\bibitem{BGK12}
M. T. Barlow, A. Grigor'yan and T. Kumagai,
On the equivalence of parabolic Harnack inequalities and heat kernel estimates,
J. Math. Soc. Japan 64 (2012), 1091-1146.

\vspace{-0.3cm}

\bibitem{BeSe90}
A.~G. Bely\u{\i} and Y.~A. Semenov,
On the $L^p$-theory of Schrödinger semigroups. II,
Sibirsk. Mat. Zh. 31 (1990), 16-26, 220;
translation in Siberian Math. J. 31 (1990), 540-549.

\vspace{-0.3cm}

\bibitem{BoDzSz19}
K.~Bogdan, J.~Dziuba\'{n}ski and K.~Szczypkowski,
Sharp Gaussian estimates for heat kernels of Schr\"odinger operators,
Integral Equations Operator Theory 91 (2019), Paper No. 3, 20 pp.

\vspace{-0.3cm}

\bibitem{BoCoSi15}
S. Boutayeb, T. Coulhon and A. Sikora,
A new approach to pointwise heat kernel upper bounds on doubling metric measure spaces,
Adv. Math. 270 (2015), 302-374.

\vspace{-0.3cm}

\bibitem{CaChYaYa14}
J.~Cao, D.-C. Chang, D.~Yang and S.~Yang,
Boundedness of second order Riesz transforms associated to
Schr\"odinger operators on Musielak-Orlicz-Hardy spaces,
Commun. Pure Appl. Anal. 13 (2014), 1435-1463.

\vspace{-0.3cm}

\bibitem{Dav90}
E.~B. Davies,
Heat Kernels and Spectral Theory,
Cambridge Tracts in Mathematics 92,
Cambridge University Press, Cambridge, 1990.

\vspace{-0.3cm}

\bibitem{Davies95}
E.~B. Davies,
Uniformly elliptic operators with measurable coefficients,
J. Funct. Anal. 132 (1995), 141-169.

\vspace{-0.3cm}

\bibitem{Dav97}
E.~B. Davies,
{$L^p$} spectral theory of higher-order elliptic differential operators,
Bull. London Math. Soc. 29 (1997), 513-546.

\vspace{-0.3cm}

\bibitem{DaHi98}
E.~B. Davies and A.~M. Hinz,
Kato class potentials for higher order elliptic operators,
J. London Math. Soc. (2) 58 (1998), 669-678.

\vspace{-0.3cm}

\bibitem{DaSi91}
E.~B. Davies and B.~Simon,
{$L^p$} norms of noncritical Schr\"odinger semigroups,
J. Funct. Anal. 102 (1991), 95-115.

\vspace{-0.3cm}

\bibitem{DeDiYa14}
Q.~Deng, Y.~Ding and X.~Yao,
Gaussian bounds for higher-order elliptic differential operators with Kato type potentials,
J. Funct. Anal. 266 (2014), 5377-5397.


\vspace{-0.3cm}

\bibitem{DeDiYa18}
Q.~Deng, Y.~Ding and X.~Yao,
Riesz transforms associated with higher-order Schr\"odinger type operators,
Potential Anal. 49 (2018), 381-410.

\vspace{-0.3cm}

\bibitem{Dev19}
B.~Devyver,
Heat kernel and Riesz transform of Schr\"odinger operators,
Ann. Inst. Fourier (Grenoble) 69 (2019), 457-513.

\vspace{-0.3cm}

\bibitem{DuSc88}
N.~Dunford and J.~T. Schwartz,
Linear Operators. Part I. General Theory,
Reprint of the 1958 original,
Wiley Classics Library, A Wiley-Interscience Publication,
John Wiley $\&$ Sons, Inc., New York, 1988.

\vspace{-0.3cm}

\bibitem{DuRo96}
X. T. Duong and D. W. Robinson,
Semigroup kernels, Poisson bounds, and holomorphic functional calculus,
J. Funct. Anal. 142 (1996), 89-128.

\vspace{-0.3cm}

\bibitem{DuYaZh14}
X.~T. Duong, L.~Yan and C.~Zhang,
On characterization of Poisson integrals of Schr\"odinger operators with BMO traces,
J. Funct. Anal. 266 (2014), 2053-2085.

\vspace{-0.3cm}

\bibitem{DzZi99}
J.~Dziuba\'{n}ski and J.~Zienkiewicz,
Hardy space $H^1$ associated to Schr\"odinger operator with potential satisfying
reverse H\"older inequality,
Rev. Mat. Iberoam. 15 (1999), 279-296.

\vspace{-0.3cm}

\bibitem{DzZi04}
J.~Dziuba\'{n}ski and J.~Zienkiewicz,
Hardy spaces $H^1$ for Schr\"odinger operators with certain potentials,
Studia Math. 164 (2004), 39-53.

\vspace{-0.3cm}

\bibitem{KlausNegal00}
K.-J. Engel and R.~Nagel,
One-Parameter Semigroups for Linear Evolution Equations,
Graduate Texts in Mathematics 194, Springer-Verlag, New York, 2000.

\vspace{-0.3cm}

\bibitem{FSWY20}
H. Feng, A. Soffer, Z. Wu and X. Yao,
Decay estimates for higher-order elliptic operators,
Trans. Amer. Math. Soc. 373 (2020), 2805-2859.

\vspace{-0.3cm}

\bibitem{FeGaGr08}
A.~Ferrero, F.~Gazzola and H.-C. Grunau,
Decay and eventual local positivity for biharmonic parabolic equations,
Discrete Contin. Dyn. Syst. 21 (2008), 1129-1157.

\vspace{-0.3cm}

\bibitem{Graf14}
L.~Grafakos,
Classical Fourier Analysis, Third edition,
Graduate Texts in Mathematics 249, Springer, New York, 2014.

\vspace{-0.3cm}

\bibitem{Gri09}
A.~Grigor'yan,
Heat Kernel and Analysis on Manifolds,
AMS/IP Studies in Advanced Mathematics 47,
American Mathematical Society, Providence, RI;
International Press, Boston, MA, 2009.

\vspace{-0.3cm}

\bibitem{GHL14}
A. Grigor'yan, J. Hu and K.-S. Lau,
Heat kernels on metric measure spaces, in: Geometry and analysis of fractals, 147-207,
Springer Proc. Math. Stat., 88, Springer, Heidelberg, 2014.

\vspace{-0.3cm}

\bibitem{Haase06}
M.~Haase,
The Functional Calculus for Sectorial Operators,
Operator Theory: Advances and Applications 169,
Birkh\"auser Verlag, Basel, 2006.

\vspace{-0.3cm}

\bibitem{HMOW11}
H.~Hajaiej, L.~Molinet, T.~Ozawa and B.~Wang,
Necessary and sufficient conditions for the fractional Gagliardo--Nirenberg
inequalities and applications to Navier-Stokes and generalized boson equations,
in: Harmonic Analysis and Nonlinear Partial Differential Equations, 159-175,
RIMS K\^{o}ky\^{u}roku Bessatsu, B26, Res. Inst. Math. Sci. (RIMS), Kyoto, 2011.

\vspace{-0.3cm}

\bibitem{Hie96}
M. Hieber,
Gaussian estimates and holomorphy of semigroups on $L^p$ spaces,
J. London Math. Soc. (2) 54 (1996), 148-160.

\vspace{-0.3cm}

\bibitem{HP97}
M. Hieber and H. Pr\"uss,
Heat kernels and maximal $L^p$-$L^q$
estimates for parabolic evolution equations,
Comm. Partial Differential Equations 22 (1997), 1647-1669.

\vspace{-0.3cm}

\bibitem{HLMMY11}
S.~Hofmann, G.~Lu, D.~Mitrea, M.~Mitrea and L.~Yan,
Hardy spaces associated to non-negative self-adjoint
operators satisfying Davies--Gaffney estimates,
Mem. Amer. Math. Soc. 214 (2011), no. 1007, vi+78 pp.

\vspace{-0.3cm}

\bibitem{HuLiLi18}
J.~Huang, P.~Li and Y.~Liu,
Regularity properties of the heat kernel and area integral characterization of
Hardy space $H^1$ related to degenerate Schrödinger operators,
J. Math. Anal. Appl. 466 (2018), 447-470.

\vspace{-0.3cm}

\bibitem{HuWaZeDu18}
S.~Huang, M.~Wang, Q.~Zheng and Z.~Duan,
{$L^p$} estimates for fractional Schrödinger operators with
Kato class potentials, J. Differential Equations 265 (2018), 4181-4212.


\vspace{-0.3cm}

\bibitem{HNVW16}
T.~Hyt\"{o}nen, J.~van Neerven, M.~Veraar and L.~Weis,
Analysis in Banach Spaces. Vol. I. Martingales and Littlewood-Paley Theory,
Ergebnisse der Mathematik und ihrer Grenzgebiete, 3, Folge,
A Series of Modern Surveys in Mathematics
[Results in Mathematics and Related Areas, 3rd Series,
A Series of Modern Surveys in Mathematics], 63, Springer, Cham, 2016.

\vspace{-0.3cm}

\bibitem{IsKaOu17}
K.~Ishige, Y.~Kabeya and E.~M. Ouhabaz,
The heat kernel of a Schr\"odinger operator with inverse square potential,
Proc. Lond. Math. Soc. (3) 115 (2017), 381-410.

\vspace{-0.3cm}

\bibitem{JiLi20}
R.~Jiang and F.~Lin,
Riesz transform under perturbations via heat kernel regularity,
J. Math. Pures Appl. (9) 133 (2020), 39-65.

\vspace{-0.3cm}

\bibitem{JiaYaZh09}
R.~Jiang, D.~Yang and Y.~Zhou,
Localized Hardy spaces associated with operators,
Appl. Anal. 88 (2009), 1409-1427.

\vspace{-0.3cm}

\bibitem{Kato72}
T.~Kato,
Schr\"odinger operators with singular potentials,
Israel J. Math. 13 (1972), 135-148 (1973).

\vspace{-0.3cm}

\bibitem{Kato95}
T.~Kato,
Perturbation Theory for Linear Operators,
Reprint of the 1980 edition,
Classics in Mathematics, Springer-Verlag, Berlin, 1995.

\vspace{-0.3cm}

\bibitem{Ku99}
P.~C. Kunstmann,
Heat kernel estimates and $L^p$ spectral independence of elliptic operators,
Bull. London Math. Soc. 31 (1999),  345-353.

\vspace{-0.3cm}

\bibitem{LiYau86}
P.~Li and S.-T. Yau,
On the parabolic kernel of the Schr\"odinger operator,
Acta Math. 156 (1986), 153-201.

\vspace{-0.3cm}

\bibitem{LiSe98}
V.~Liskevich and Y.~Semenov,
Two-sided estimates of the heat kernel of the Schr\"odinger operator,
Bull. London Math. Soc. 30 (1998), 596-602.

\vspace{-0.3cm}

\bibitem{LiDo10}
Y.~Liu and J.~Dong,
Some estimates of higher order Riesz transform related to Schrödinger type operators,
Potential Anal. 32 (2010), 41-55.

\vspace{-0.3cm}

\bibitem{MoTe05}
L.~Moschini and A.~Tesei,
Harnack inequality and heat kernel estimates for the Schr\"odinger operator with
Hardy potential,
Atti Accad. Naz. Lincei Cl. Sci. Fis. Mat. Natur. Rend. Lincei (9) Mat. Appl. 16
(2005), 171-180.

\vspace{-0.3cm}

\bibitem{Ouha05}
E.~M. Ouhabaz,
Analysis of Heat Equations on Domains,
London Mathematical Society Monographs Series 31,
Princeton University Press, Princeton, NJ, 2005.

\vspace{-0.3cm}

\bibitem{Pang90}
M.~M.~H. Pang,
Resolvent estimates for Schr\"odinger operators in $L^p(\mathbb{R}^N)$
and the theory of exponentially bounded C-semigroups,
Semigroup Forum 41 (1990), 97-114.

\vspace{-0.3cm}

\bibitem{ReedSimon75-II}
M.~Reed and B.~Simon,
Methods of Modern Mathematical Physics. II.
Fourier Analysis, Self-Adjointness,
Academic Press [Harcourt Brace Jovanovich, Publishers],
New York--London, 1975.

\vspace{-0.3cm}

\bibitem{Sch86}
M.~Schechter,
Spectra of Partial Differential Operators, Second edition,
North-Holland Series in Applied Mathematics and Mechanics 14,
North-Holland Publishing Co., Amsterdam, 1986.

\vspace{-0.3cm}

\bibitem{Shen95}
Z.~Shen,
$L^p$ estimates for Schr\"odinger operators with certain potentials,
Ann. Inst. Fourier (Grenoble) 45 (1995), 513-546.

\vspace{-0.3cm}

\bibitem{Shen02}
Z.~Shen,
The periodic Schr\"odinger operators with potentials in the Morrey class,
J. Funct. Anal. 193 (2002), 314-345.

\vspace{-0.3cm}

\bibitem{Simon84}
B.~Simon,
Schr\"odinger semigroups,
Bull. Amer. Math. Soc. (N.S.) 7 (1982), 447-526.

\vspace{-0.3cm}

\bibitem{Sim00}
B.~Simon,
Schr\"odinger operators in the twentieth century,
J. Math. Phys. 41 (2000), 3523-3555.

\vspace{-0.3cm}

\bibitem{Soya10}
L.~Song and L.~Yan,
Riesz transforms associated to Schr\"odinger operators on weighted Hardy spaces,
J. Funct. Anal. 259 (2010), 1466-1490.

\vspace{-0.3cm}

\bibitem{Ste70}
E.~M. Stein,
Singular Integrals and Differentiability Properties of Functions,
Princeton Mathematical Series, No. 30, Princeton University Press, Princeton, N.J. 1970.

\vspace{-0.3cm}

\bibitem{Stu56}
F.~Stummel,
Singul\"are elliptische Differential-operatoren in Hilbertschen R\"aumen,
Math. Ann. 132 (1956), 150-176.

\vspace{-0.3cm}

\bibitem{Tanabe97}
H.~Tanabe,
Functional Analytic Methods for Partial Differential Equations,
Monographs and Textbooks in Pure and Applied Mathematics 204,
Marcel Dekker, Inc., New York, 1997.

\vspace{-0.3cm}

\bibitem{Tri14}
H.~Triebel,
Gagliardo--Nirenberg Inequalities,
Reprint of Tr. Mat. Inst. Steklova 284 (2014), 271-287,
Proc. Steklov Inst. Math. 284 (2014), 263-279.

\vspace{-0.3cm}

\bibitem{WuYa16}
L.~Wu and L.~Yan,
Heat kernels, upper bounds and Hardy spaces associated to the
generalized Schr\"odinger operators,
J. Funct. Anal. 270 (2016), 3709-3749.

\vspace{-0.3cm}

\bibitem{YaZh11}
D.~Yang and Y.~Zhou,
Localized Hardy spaces $H^1$
related to admissible functions on RD-spaces and applications to Schr\"odinger operators,
Trans. Amer. Math. Soc. 363 (2011), 1197-1239.

\vspace{-0.3cm}

\bibitem{Zhang03}
Q.~S. Zhang,
A sharp comparison result concerning Schr\"odinger heat kernels,
Bull. London Math. Soc. 35 (2003), 461-472.

\vspace{-0.3cm}

\bibitem{ZhYa09}
Q.~Zheng and X.~Yao,
Higher-order Kato class potentials for Schr\"odinger operators,
Bull. Lond. Math. Soc. 41 (2009) 293-301.

\end{thebibliography}
\end{document}